\newcommand{\g}{\mathfrak{g}}
\newcommand{\HH}{\mathcal{H}}
\newcommand{\clfw}{\overline{\Lambda}} 
\newcommand{\clsr}{\overline{\alpha}} 
\newcommand{\inner}[2]{\left\langle #1, #2 \right\rangle}
\newcommand{\iso}{\cong}
\newcommand{\sgn}{\!\operatorname{sgn}}
\newcommand{\Sol}{\mathcal{S}} 
\newcommand{\modgamma}{\check{\gamma}} 
\newcommand{\vdomino}[2]{\begin{array}{|c|} \hline #1 \\\hline #2 \\\hline \end{array}} 
\newcommand{\vace}{ {\color{gray} \phantom{1} 1} } 
\DeclareMathOperator{\RC}{RC} 
\DeclareMathOperator{\wt}{wt} 
\DeclareMathOperator{\cc}{cc} 
\DeclareMathOperator{\id}{id} 
\newcommand{\hwRC}{\RC^{\nabla}} 
\newcommand{\ZZ}{\mathbb{Z}}
\newcommand{\bze}{\overline{0}}
\newcommand{\bon}{\overline{1}}
\newcommand{\btw}{\overline{2}}
\newcommand{\bth}{\overline{3}}
\newcommand{\bfo}{\overline{4}}
\newcommand{\bfive}{\overline{5}}
\newcommand{\bsix}{\overline{6}}
\newcommand{\bseven}{\overline{7}}
\newcommand{\bn}{\overline{n}}
\definecolor{darkred}{rgb}{0.7,0,0} 
\newcommand{\defn}[1]{{\color{darkred}\emph{#1}}} 
\lstdefinelanguage{Sage}[]{Python}
{morekeywords={False,sage,True},sensitive=true}
\definecolor{dblackcolor}{rgb}{0.0,0.0,0.0}
\definecolor{dbluecolor}{rgb}{0.01,0.02,0.7}
\definecolor{dgreencolor}{rgb}{0.2,0.4,0.0}
\definecolor{dgraycolor}{rgb}{0.30,0.3,0.30}
\protected\def\specialmergetwolists{%
  \begingroup
  \@ifstar{\def\cnta{1}\@specialmergetwolists}
    {\def\cnta{0}\@specialmergetwolists}%
}
\def\@specialmergetwolists#1#2#3#4{%
  \def\tempa##1##2{%
    \edef##2{%
      \ifnum\cnta=\@ne\else\expandafter\@firstoftwo\fi
      \unexpanded\expandafter{##1}%
    }%
  }%
  \tempa{#2}\tempb\tempa{#3}\tempa
  \def\cnta{0}\def#4{}%
  \foreach \x in \tempb{%
    \xdef\cnta{\the\numexpr\cnta+1}%
    \gdef\cntb{0}%
    \foreach \y in \tempa{%
      \xdef\cntb{\the\numexpr\cntb+1}%
      \ifnum\cntb=\cnta\relax
        \xdef#4{#4\ifx#4\empty\else,\fi\x#1\y}%
        \breakforeach
      \fi
    }%
  }%
  \endgroup
}
\DeclareDocumentCommand\rpp{ m m g }{
	\foreach \x [count=\s from 1] in {#1}{
	        {\ifnum\s=1
	                \draw (0,-\s)--(\x,-\s);
	                \fi}
	   \draw (0,-\s-1) to (\x,-\s-1);
	   \foreach \y in {0, ..., \x} {\draw (\y,-\s)--(\y,-\s-1);}
	}
	\specialmergetwolists{/}{#1}{#2}\ziplist
	\foreach \x/\y [count=\yi from 1] in \ziplist{
	    \node[anchor=west,font=\small] at (\x,-\yi - .5) {$\y$};
	}
	\IfValueT {#3}
	{\foreach \z [count=\zi from 1] in {#3} {\node[anchor=east,font=\small] at (0,-\zi - .5) {$\z$};}}
	{}
}
\theoremstyle{plain}
\newtheorem{thm}{Theorem}[section]
\newtheorem{lemma}[thm]{Lemma}
\newtheorem{conj}[thm]{Conjecture}
\newtheorem{prop}[thm]{Proposition}
\theoremstyle{definition}
\newtheorem{dfn}[thm]{Definition}
\newtheorem{ex}[thm]{Example}
\newtheorem{remark}[thm]{Remark}
\numberwithin{equation}{section}
\begin{document}
\title[Uniform approach to SCA using rigged configurations]{A uniform approach to soliton cellular automata using rigged configurations}

\author[X.~Liu]{Xuan Liu}
\address[X.~Liu]{School of Mathematics, University of Minnesota, 206 Church St.\ SE, Minneapolis, MN 55455}
\curraddr[X.~Liu]{Department of Statistics, North Carolina State University, 2311 Stinson Dr., Raleigh, NC 27695}
\email{xliu65@ncsu.edu} 
\urladdr{https://sites.google.com/ncsu.edu/xliu65/home}

\author[T.~Scrimshaw]{Travis Scrimshaw}
\address[T.~Scrimshaw]{School of Mathematics and Physics, University of Queensland, St.\ Lucia, QLD 4072, Australia}
\email{tcscrims@gmail.com}
\urladdr{https://people.smp.uq.edu.au/TravisScrimshaw/}

\keywords{soliton, crystal, rigged configuration, cellular automaton}
\subjclass{17B37, 05E10, 82B23, 37B15}

\thanks{TS was partially supported by the National Science Foundation RTG grant NSF/DMS-1148634.}

\begin{abstract}
For soliton cellular automata, we give a uniform description and proofs of the solitons, the scattering rule of two solitons, and the phase shift using rigged configurations in a number of special cases.
In particular, we prove these properties for the soliton cellular automata using $B^{r,1}$ when $r$ is adjacent to $0$ in the Dynkin diagram or there is a Dynkin diagram automorphism sending $r$ to $0$.
\end{abstract}

\maketitle

\section{Introduction}
\label{sec:introduction}

A soliton cellular automaton (SCA) is a discrete dynamical system on a one-dimensional lattice that evolves according to a particular deterministic rule.
Some of the key properties of SCA are that they possess stable configurations called solitons, isolated solitons move proportional to their length, and the number of solitons and their lengths do not change after collisions.
A classic and, despite its simplicity, surprisingly rich example is the box-ball system of Takahashi and Satsuma~\cite{TS90}.

The box-ball system is an integrable nonlinear dynamical system and is related to the difference analog of the Lotka--Volterra equation under tropicalization~\cite{TNS96,TTMS96}.
It is also an ultradiscrete version of the Korteweg--de Vries (KdV) equation~\cite{Boussinesq1877, KdV1895}: a nonlinear partial differential equation that models shallow water waves in 1D (such as a thin channel).
Solutions of the KdV equation were shown to separate in solitary waves, where they retained their shape after interaction, by Kruskal and Zabusky~\cite{KZ64}.
These solitary waves are called solitons, and solitons in the box-ball system are the ultradiscrete analog.
The inverse scattering transform was constructed in~\cite{GGKM74} and applied to the KdV equation, showing $m$-soliton solutions exist and that the KdV equation is an exactly solvable model.

The next breakthrough in studying SCA came from using the theory of Kashiwara's crystal bases~\cite{K90, K91} and certain finite crystals for affine Kac--Moody algebras called Kirillov--Reshetikhin (KR) crystals~\cite{FOS09,HKOTY99,HKOTT02,JS10,KKMMNN92,KMOY07,OS08,Yamane98}.
SCA were reformulated using tensor products of KR crystals $(B^{r,1})^{\otimes \infty}$,\footnote{This could be given more generally as $\bigotimes_{i=1}^{\infty} B^{r_i,s_i}$ for any sequence $\{ (r_i, s_i) \in I_0 \times \ZZ_{\geq 0} \}_{i=1}^{\infty}$. However, we will not consider this level of generality.} where the time evolution was given using the combinatorial $R$-matrix with a carrier $B^{r,s}$ (typically for $s \gg 1$) and the invariants were described using the local energy function~\cite{binMohamad12,FOY00,HHIKTT01,HKOTY02,HKT00,MOW12,MW13,TNS96,Yamada04,Yamada07}.
Based on these results, it is conjectured that solitons are parameterized by ``decoupled'' KR crystals (removing nodes $0$ and $r$ from the Dynkin diagram and taking the appropriate affinization), the scattering rule is determined by the ``decoupled'' combinatorial $R$-matrix, and the phase shift is given by the local energy function for all SCA.
We note that the phase shift corresponds to the change in the coefficient of the null root $\delta$ if we consider the affinization crystal (we refer the reader to~\cite[Ch.~10]{HK02} for details) of ``decoupled'' KR crystals.
Additionally, the time evolution can also be described using the row-to-row transfer matrix of integrable 2D lattice models at $q = 0$.
In particular, the box-ball system is given using the KR crystal $B^{1,1}$ in type $A_1^{(1)}$.

SCA are also intimately connected to the Bethe ansatz~\cite{B31} of Heisenberg spin chains.
The connection comes from the fact that the Hamiltonian of the Heisenberg spin chain commutes with the row-to-row transfer matrix of the 2D lattice model and can be simultaneously diagonalized~\cite{KKMMNN91}.
The analysis of the 2D lattice model and the Bethe ansatz led to the $X = M$ conjecture~\cite{HKOTY99, HKOTT02}, but there is a more direct combinatorial interpretation.
Baxter's corner transfer matrix~\cite{B89} can be to solve the 2D lattice model, which naturally corresponds to classically highest weight elements in a tensor product of KR crystals.
In~\cite{KKR86, KR86}, Kerov, Kirillov, and Reshetikhin introduced combinatorial objects called rigged configurations to parameterize solutions to the Bethe ansatz and developed a bijection $\Phi$ between rigged configurations and classically highest weight elements in $\bigotimes_{i=1}^N B^{1,s_i}$ in type $A_n^{(1)}$.
The bijection $\Phi$ was later extended to an affine crystal isomorphism with the full tensor product in general (\textit{i.e.}, for $\bigotimes_{i=1}^N B^{r_i, s_i}$) in type $A_n^{(1)}$~\cite{DS06, KSS02, SW10}.

The SCA for type $A_n^{(1)}$ has similar dynamics to the box-ball system, but now there are $n$ colored balls (for $r = 1$).
In the bijection $\Phi$, the addition of vacuum states to the end of the tensor product does not change the rigged configuration, and thus we can extend $\Phi$ to be a bijection between rigged configurations and states of the SCA.
Moreover, the combinatorial $R$-matrix intertwines with the identity map on rigged configurations under $\Phi$, and thus time evolution acts by increasing the riggings $J_i^{(r)}$ by $i$.
Hence, the corresponding rigged configuration under $\Phi^{-1}$ encodes the action-angle variables of the SCA, and in particular, the partition $\nu^{(r)}$ of the rigged configuration encodes the sizes of the solitons (with no interactions)~\cite{KNTW18,KS09,KOSTY06,Takagi05}.
The scattering of two solitons has been identified with the affine combinatorial $R$-matrices~\cite{KSY07,Sakamoto08}, which ensures the Yang--Baxter property and is also valid for all intermediate states during multiple scatterings.
In~\cite{KSY07}, it was shown that $\Phi$ can be described by a tropicalization of the $\tau$ function from the Kadomtsev--Petviashvili (KP) hierarchy (we refer the reader to~\cite{JM83} for details).
Additionally, time evolution is a tropicalization of the nonautonomous discrete KP equation~\cite{HHIKTT01}.

In type $A_n^{(1)}$, there is an intermediate geometric (or rational) model between the KdV equation and the box-ball system introduced by Hirota~\cite{Hirota81} called the discrete KdV equation.
For the discrete KdV equation, Kashiwara's crystals are replaced by geometric crystals~\cite{BK00, BK07}; more specifically, with the affine geometric crystals of~\cite{KNO08,LP12}.
The geometric $R$-matrix was described by Yamada~\cite{Yamada01}, where the ring of geometric $R$-matrix invariants was studied in~\cite{LP13}.
This led to a conjectural description of a geometric version of $\Phi$ in~\cite{LPS15, Scrimshaw17II} using these invariants, where it is known that it tropicalizes to describe $\nu^{(1)}$~\cite{LPS15}.

Transitioning back to the more general setting, SCA have been well studied through explicit description of the action of the combinatorial $R$-matrix; \textit{e.g.},~\cite{binMohamad12,FOY00,HHIKTT01,HKOTY02,HKT00,MOW12,MW13,TNS96,Yamada04,Yamada07}.
Moreover, rigged configurations have been used to describe properties of SCA in types $A_n^{(1)}$~\cite{KOSTY06,KSY07,Sakamoto08} and $D_n^{(1)}$~\cite{KSY11,Sakamoto17}.
There exists an analogous (conjectural) bijection $\Phi$ for all types~\cite{DS06, JS10, KSS02, OSSS16, OS12, OSS17, OSS03, OSS03III, OSS03II, SchillingS15, SS2006, SW10, Scrimshaw15, Scrimshaw17}.
Thus, it is expected that similar results hold for all types; in particular, that rigged configurations and the (conjectural) bijection $\Phi$ can be applied to show properties about SCA.
The main goal of this paper is to do this in as general as possible with uniform methods.
In particular, our results give an interpretation of~\cite[App.~B]{HKOTT02} as ``decoupling'' rules on the level of rigged configurations by forgetting $\nu^{(r)}$ and instead using $\nu^{(r)}$ to describe the ``decoupled'' KR crystals.
Moreover, our results subsume the results on the scattering and phase shift of SCA (see Conjecture~\ref{conj:SCA}) from~\cite{binMohamad12,FOY00,HHIKTT01,HKOTY02,HKT00,MOW12,MW13,TNS96,Yamada04,Yamada07} as the requisite properties of $\Phi$ are known~\cite{KSS02,OSSS16,OSS17,Scrimshaw17}.
Our results also include other types that have not previously been considered; \textit{e.g.}, $E_{6,7}^{(1)}$ and $F_4^{(1)}$.

One key aspect of our results is that they are largely type-independent, typically relying on the properties of the KR crystals.
Moreover, the proofs are typically short and straightforward, relying on (expected) properties of $\Phi$ and utilizing the natural information of rigged configurations.
An important advantage to this approach is that we do not require an understanding of the often intricate combinatorial $R$-matrix as it (conjecturally) becomes the identity map on rigged configurations under $\Phi$.
In contrast, directly studying the SCA using KR crystals requires detailed descriptions of the combinatorial $R$-matrix or the evolution rules of~\cite{HKT:2001}, as well as doing multiple applications, in order to prove properties about the SCA, leading to complicated (and type-dependent) proofs.
See, \textit{e.g.},~\cite{binMohamad12,FOY00,HHIKTT01,HKOTY02,HKT00,MOW12,MW13,TNS96,Yamada04,Yamada07}.

We note that in order to give a uniform description and proofs of SCA, we require a formal, uniform definition of solitons and their length.
As far as the authors are aware, no such definition has been given in the literature.
Provided the conjectured properties of $\Phi$ are true, we mostly achieve this, but our description of length is somewhat \textit{ad-hoc}.
Our description of length requires a special considerations for elements not in the maximal component $B(\clfw_r) \subseteq B^{r,1}$ despite being entirely determined by the crystal.
It is based upon the (conjectural) algorithm for $\Phi$ and that $\nu^{(r)}$ should describe the lengths of the solitons (which naturally shows their speed corresponds to their length).
The latter property is expected as all rows of $\nu^{(r)}$ are connected with solitons of the KP hierarchy~\cite{KSY07}.
Hence, we are not certain our definition of length will fully generalize.
Specifically in type $C_n^{(1)}$, we can contrast this with the description of the solitons given in~\cite[Sec.~2.3]{HKOTY02} and~\cite[Sec.~3.2]{HKT00}, where a vacuum element gets bounded as part of the soliton.

Our approach also allows us to give a simple description of the phase shift, the shift to the left of the larger soliton after scattering compared to its movement without scattering, in terms of the vacancy numbers of the rigged configuration.
In particular, the phase shift is measuring the change in the vacancy numbers, where we discard any contribution from the tensor factors $(B^{r,1})^{\otimes \infty}$, of $\nu^{(r)}$ after adding a smaller soliton.
Furthermore, the larger class of SCA that we can examine from our results allows us to construct a number of examples where the phase shift is negative, a phenomenon only previously observed in types $D_4^{(3)}$~\cite{Yamada07} and $G_2^{(1)}$~\cite{MOW12}.
We construct such examples not just in nonexceptional types, but the simply-laced type $D_4^{(1)}$.
Moreover, our results suggest that a large class of SCA can exhibit negative phase shifts.
In addition, our results also connect the phase shift to the local energy function of the decoupled in certain special cases, recovering previous known results.

Rigged configurations and the bijection $\Phi$ are known to be well behaved under the virtualization map~\cite{OSS17, OSS03III, OSS03II, SchillingS15, Scrimshaw15, Scrimshaw17}, an embedding of a crystal of non-simply-laced type into one of simply-laced type.
Furthermore, it is known that an SCA constructed using $B^{1,1}$ in every nonexceptional type can be embedded in a type $D_n^{(1)}$ SCA~\cite{KTT04}.
Therefore, we expect that our results could be applied to obtain analogous embedding results for other SCA and SCA in exceptional types.

We note that some of the KR crystals we consider in this paper have not been shown to exist (more specifically, in the exceptional types).
Yet, this is implicit in our assumption that the combinatorial $R$-matrix corresponds to the identity map on rigged configurations under $\Phi$.
Thus, our results give further evidence that these KR crystals should exist.
Furthermore, they are additional evidence for some of the conjectural properties of KR crystals.

This paper is organized as follows.
In Section~\ref{sec:background}, we give the necessary background on crystals, KR crystals, SCA, and rigged configurations.
In Section~\ref{sec:solitons}, we describe solitons in a number of cases using the properties of the KR crystals.
In Section~\ref{sec:SCA_RC}, we give our main results under some natural conjectures, where we show rigged configurations encode the sizes of the solitons and give simple uniform proofs of scattering and the phase shift.
In Section~\ref{sec:summary}, we summarize the cases when our results are not based on any conjectures.

\section{Background}
\label{sec:background}

Let $\g$ be an affine Kac--Moody Lie algebra with index set $I$, Cartan matrix $(A_{ij})_{i,j \in I}$, simple roots $(\alpha_i)_{i \in I}$, fundamental weights $(\Lambda_i)_{i \in I}$, weight lattice $P$, simple coroots $(\alpha_i^{\vee})_{i \in I}$, and canonical pairing $\langle\ ,\ \rangle \colon P^{\vee} \times P \to \ZZ$ given by $\inner{\alpha_i^{\vee}}{\alpha_j} = A_{ij}$.
Let $U_q(\g)$ denote the corresponding (Drinfel'd--Jimbo) quantum group.
Define $t_i^{\vee} := \max(c_i^{\vee}/c_i, c_0)$, where $c_i$ and $c_i^{\vee}$ are the Kac and dual Kac labels respectively~\cite[Table Aff1-3]{kac90}.
We write $i \sim j$ if $A_{ij} \neq 0$ and $i \neq j$.

Let $\g_0$ (resp.~$\g_{0,r}$) denote the canonical semisimple Lie algebra given by the index set $I_0 = I \setminus \{0\}$ (resp.~$I_{0,r} = I \setminus \{0,r\}$) and quantum group $U_q(\g_0)$ (resp.~$U_q(\g_{0,r})$).
Let $\clfw_i$ and $\clsr_i$ denote the natural projection of $\Lambda_i$ and $\alpha_i$, respectively, onto the weight lattice $\overline{P}$ of $\g_0$.
Note that $(\clsr_i)_{i \in I_0}$ are the simple roots in $\g_0$.
Denote the fundamental weights of $\g_{0,r}$ by $\{\varpi_i\}_{i \in I_{0,r}}$.

We say an $r \in I_0$ is \defn{special} if it is in the orbit of $0$ in $I$ under Dynkin diagram automorphisms.
We say $r$ is \defn{minuscule} if it is special and $\g$ is of dual untwisted type (\textit{i.e.}, it is the dual type to an untwisted type).
In particular, if a $r$ node is minuscule, then $B(\clfw_r)$ is a minuscule $U_q(\g_0)$-representation.

\begin{figure}[t]
\begin{center}
\begin{tikzpicture}[scale=0.43, baseline=-20]
\draw (-0.8,0) node[anchor=east] {$E_6^{(1)}$};
\draw (0 cm,0) -- (8 cm,0);
\draw (4 cm, 0 cm) -- +(0,2 cm);
\draw (4 cm, 2 cm) -- +(0,2 cm);
\draw[fill=white] (4 cm, 4 cm) circle (.25cm) node[right=3pt]{$0$};
\draw[fill=white] (0 cm, 0 cm) circle (.25cm) node[below=4pt]{$1$};
\draw[fill=white] (2 cm, 0 cm) circle (.25cm) node[below=4pt]{$3$};
\draw[fill=white] (4 cm, 0 cm) circle (.25cm) node[below=4pt]{$4$};
\draw[fill=white] (6 cm, 0 cm) circle (.25cm) node[below=4pt]{$5$};
\draw[fill=white] (8 cm, 0 cm) circle (.25cm) node[below=4pt]{$6$};
\draw[fill=white] (4 cm, 2 cm) circle (.25cm) node[right=3pt]{$2$};
\begin{scope}[xshift=11.8cm]
\draw (-0.8,0) node[anchor=east] {$E_7^{(1)}$};
\draw (0 cm,0) -- (12 cm,0);
\draw (6 cm, 0 cm) -- +(0,2 cm);
\draw[fill=white] (0 cm, 0 cm) circle (.25cm) node[below=4pt]{$0$};
\draw[fill=white] (2 cm, 0 cm) circle (.25cm) node[below=4pt]{$1$};
\draw[fill=white] (4 cm, 0 cm) circle (.25cm) node[below=4pt]{$3$};
\draw[fill=white] (6 cm, 0 cm) circle (.25cm) node[below=4pt]{$4$};
\draw[fill=white] (8 cm, 0 cm) circle (.25cm) node[below=4pt]{$5$};
\draw[fill=white] (10 cm, 0 cm) circle (.25cm) node[below=4pt]{$6$};
\draw[fill=white] (12 cm, 0 cm) circle (.25cm) node[below=4pt]{$7$};
\draw[fill=white] (6 cm, 2 cm) circle (.25cm) node[right=3pt]{$2$};
\end{scope}
\end{tikzpicture}

\begin{tikzpicture}[scale=0.43]
\draw (-1,0) node[anchor=east] {$E_8^{(1)}$};
\draw (0 cm,0) -- (12 cm,0);
\draw (4 cm, 0 cm) -- +(0,2 cm);
\draw (12 cm,0) -- +(2 cm,0);
\draw[fill=white] (14 cm, 0 cm) circle (.25cm) node[below=4pt]{$0$};
\draw[fill=white] (0 cm, 0 cm) circle (.25cm) node[below=4pt]{$1$};
\draw[fill=white] (2 cm, 0 cm) circle (.25cm) node[below=4pt]{$3$};
\draw[fill=white] (4 cm, 0 cm) circle (.25cm) node[below=4pt]{$4$};
\draw[fill=white] (6 cm, 0 cm) circle (.25cm) node[below=4pt]{$5$};
\draw[fill=white] (8 cm, 0 cm) circle (.25cm) node[below=4pt]{$6$};
\draw[fill=white] (10 cm, 0 cm) circle (.25cm) node[below=4pt]{$7$};
\draw[fill=white] (12 cm, 0 cm) circle (.25cm) node[below=4pt]{$8$};
\draw[fill=white] (4 cm, 2 cm) circle (.25cm) node[right=3pt]{$2$};
\end{tikzpicture}

\begin{tikzpicture}[scale=0.43,baseline=10]
\draw (-1,0) node[anchor=east] {$F_4^{(1)}$};
\draw (0 cm,0) -- (2 cm,0);
{
\pgftransformxshift{2 cm}
\draw (0 cm,0) -- (2 cm,0);
\draw (2 cm, 0.1 cm) -- +(2 cm,0);
\draw (2 cm, -0.1 cm) -- +(2 cm,0);
\draw (4.0 cm,0) -- +(2 cm,0);
\draw[shift={(3.2, 0)}, rotate=0] (135 : 0.45cm) -- (0,0) -- (-135 : 0.45cm);
\draw[fill=white] (0 cm, 0 cm) circle (.25cm) node[below=4pt]{$1$};
\draw[fill=white] (2 cm, 0 cm) circle (.25cm) node[below=4pt]{$2$};
\draw[fill=white] (4 cm, 0 cm) circle (.25cm) node[below=4pt]{$3$};
\draw[fill=white] (6 cm, 0 cm) circle (.25cm) node[below=4pt]{$4$};
}
\draw[fill=white] (0 cm, 0 cm) circle (.25cm) node[below=4pt]{$0$};

\begin{scope}[xshift=15cm]
\draw (-1,0) node[anchor=east] {$E_6^{(2)}$};
\draw (0 cm,0) -- (2 cm,0);
{
\pgftransformxshift{2 cm}
\draw (0 cm,0) -- (2 cm,0);
\draw (2 cm, 0.1 cm) -- +(2 cm,0);
\draw (2 cm, -0.1 cm) -- +(2 cm,0);
\draw (4.0 cm,0) -- +(2 cm,0);
\draw[shift={(2.8, 0)}, rotate=180] (135 : 0.45cm) -- (0,0) -- (-135 : 0.45cm);
\draw[fill=white] (0 cm, 0 cm) circle (.25cm) node[below=4pt]{$1$};
\draw[fill=white] (2 cm, 0 cm) circle (.25cm) node[below=4pt]{$2$};
\draw[fill=white] (4 cm, 0 cm) circle (.25cm) node[below=4pt]{$3$};
\draw[fill=white] (6 cm, 0 cm) circle (.25cm) node[below=4pt]{$4$};
}
\draw[fill=white] (0 cm, 0 cm) circle (.25cm) node[below=4pt]{$0$};
\end{scope}
\end{tikzpicture}

\begin{tikzpicture}[scale=0.43,baseline=20]
\begin{scope}
\draw (-1,0) node[anchor=east] {$G_2^{(1)}$};
\draw (2 cm,0) -- (4.0 cm,0);
\draw (0, 0.15 cm) -- +(2 cm,0);
\draw (0, -0.15 cm) -- +(2 cm,0);
\draw (0,0) -- (2 cm,0);
\draw (0, 0.15 cm) -- +(2 cm,0);
\draw (0, -0.15 cm) -- +(2 cm,0);
\draw[shift={(0.8, 0)}, rotate=180] (135 : 0.45cm) -- (0,0) -- (-135 : 0.45cm);
\draw[fill=white] (0 cm, 0 cm) circle (.25cm) node[below=4pt]{$1$};
\draw[fill=white] (2 cm, 0 cm) circle (.25cm) node[below=4pt]{$2$};
\draw[fill=white] (4 cm, 0 cm) circle (.25cm) node[below=4pt]{$0$};
\end{scope}

\begin{scope}[xshift=15cm]
\draw (-1,0) node[anchor=east] {$D_4^{(3)}$};
\draw (2 cm,0) -- (4.0 cm,0);
\draw (0, 0.15 cm) -- +(2 cm,0);
\draw (0, -0.15 cm) -- +(2 cm,0);
\draw (0,0) -- (2 cm,0);
\draw (0, 0.15 cm) -- +(2 cm,0);
\draw (0, -0.15 cm) -- +(2 cm,0);
\draw[shift={(1.2, 0)}, rotate=0] (135 : 0.45cm) -- (0,0) -- (-135 : 0.45cm);
\draw[fill=white] (0 cm, 0 cm) circle (.25cm) node[below=4pt]{$2$};
\draw[fill=white] (2 cm, 0 cm) circle (.25cm) node[below=4pt]{$1$};
\draw[fill=white] (4 cm, 0 cm) circle (.25cm) node[below=4pt]{$0$};
\end{scope}
\end{tikzpicture}
\end{center}
\caption{Dynkin diagrams for the exceptional affine types with labels from~\cite{kac90}.}
\label{figure:exceptional_types}
\end{figure}
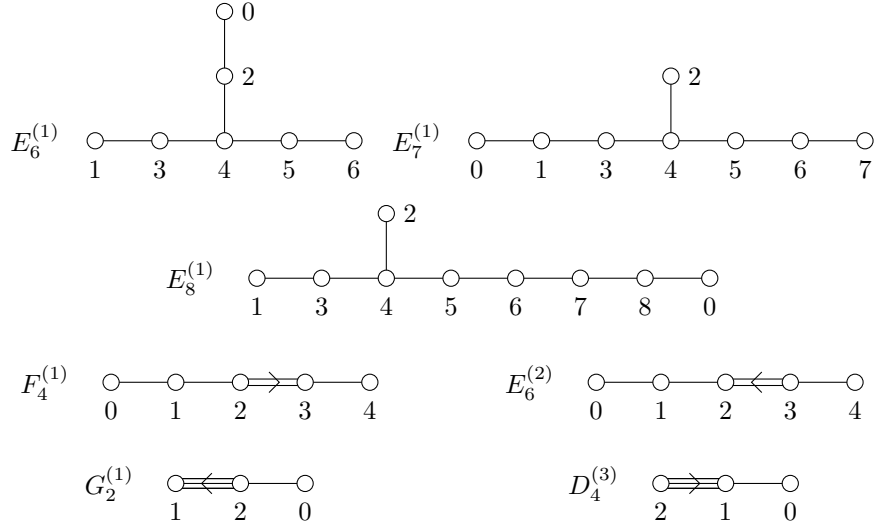

An \defn{abstract $U_q(\g)$-crystal} is a set $B$ with \defn{crystal operators} $e_i, f_i \colon B \to B \sqcup \{0\}$, for $i \in I$, and \defn{weight function} $\wt \colon B \to P$ that satisfy the following conditions:
\begin{itemize}
\item[(1)] $\varphi_i(b) = \varepsilon_i(b) + \inner{h_i}{\wt(b)}$ for all $b \in B$ and $a \in I$,
\item[(2)] $f_i b = b'$ if and only if $b = e_i b'$ for $b, b' \in B$ and $a \in I$,
\end{itemize}
where $\varepsilon_i, \varphi_i \colon  B \to \ZZ_{\geq 0}$ are the statistics
\[
\varepsilon_i(b) := \max \{ k \mid e_i^k b \neq 0 \},
\qquad \qquad \varphi_i(b) := \max \{ k \mid f_i^k b \neq 0 \}.
\]
Hence, we can express an entire $i$-string through an element $b \in B$ diagrammatically by
\[
e_i^{\varepsilon_i(b)}b \xrightarrow[\hspace{12pt}]{i}
\cdots \xrightarrow[\hspace{12pt}]{i}
e_i^2 b \xrightarrow[\hspace{12pt}]{i}
e_i b \xrightarrow[\hspace{12pt}]{i}
b \xrightarrow[\hspace{12pt}]{i}
f_i b \xrightarrow[\hspace{12pt}]{i}
f_i^2 b \xrightarrow[\hspace{12pt}]{i}
\cdots \xrightarrow[\hspace{12pt}]{i}
f_i^{\varphi_i(b)}b.
\]
More generally, we identify the abstract crystal $B$ with its crystal graph, an $I$-edge-colored weighted directed graph, where there is an edge $b \to b'$ if $f_i b = b'$.
We say an element $b \in B$ is \defn{highest weight} if $e_i b = 0$ for all $a \in I$.

\begin{remark}
The definition of an abstract crystal given in this paper is sometimes called a \defn{regular} or \defn{seminormal} abstract crystal in the literature.
\end{remark}

We call an abstract $U_q(\g)$-crystal $B$ a \defn{$U_q(\g)$-crystal} if $B$ is the crystal basis of some $U_q(\g)$-module.
Kashiwara has shown that the irreducible highest weight module $V(\lambda)$ admits a crystal basis~\cite{K91}.
We denote this crystal basis by $B(\lambda)$, and let $u_{\lambda} \in B(\lambda)$ denote the unique highest weight element, which is the unique element of weight $\lambda$.
Since the crystal graph of $B(\lambda)$ is acyclic, we regard $B(\clfw_r)$ as a poset with $b \leq b'$ if there exists a path $f_{i_L} \cdots f_{i_1} b = b'$.
In particular, the crystal graph is the Hasse diagram of this poset.

We define the \defn{tensor product} of abstract $U_q(\g)$-crystals $B_1$ and $B_2$ as the crystal $B_2 \otimes B_1$ that is the Cartesian product $B_2 \times B_1$ with the crystal structure
\begin{align*}
e_i(b_2 \otimes b_1) & = \begin{cases}
e_i b_2 \otimes b_1 & \text{if } \varepsilon_i(b_2) > \varphi_i(b_1), \\
b_2 \otimes e_i b_1 & \text{if } \varepsilon_i(b_2) \leq \varphi_i(b_1),
\end{cases}
\\ f_i(b_2 \otimes b_1) & = \begin{cases}
f_i b_2 \otimes b_1 & \text{if } \varepsilon_i(b_2) \geq \varphi_i(b_1), \\
b_2 \otimes f_i b_1 & \text{if } \varepsilon_i(b_2) < \varphi_i(b_1),
\end{cases}
\\ \varepsilon_i(b_2 \otimes b_1) & = \max(\varepsilon_i(b_1), \varepsilon_i(b_2) - \inner{h_i}{\wt(b_1)}),
\\ \varphi_i(b_2 \otimes b_1) & = \max(\varphi_i(b_2), \varphi_i(b_1) + \inner{h_i}{\wt(b_2)}),
\\ \wt(b_2 \otimes b_1) & = \wt(b_2) + \wt(b_1).
\end{align*}

\begin{remark}
Our tensor product convention is opposite of Kashiwara~\cite{K91}.
\end{remark}

Consider $U_q(\g)$-crystals $B_1, \dotsc, B_L$.
The action of the crystal operators on the tensor product $B = B_L \otimes \cdots \otimes B_2 \otimes B_1$ can be computed by the \defn{signature rule}.
Let $b = b_L \otimes \cdots \otimes b_2 \otimes b_1 \in B$, and for $i \in I$, we write
\[
\underbrace{-\cdots-}_{\varphi_i(b_L)}\
\underbrace{+\cdots+}_{\varepsilon_i(b_L)}\
\cdots\
\underbrace{-\cdots-}_{\varphi_i(b_1)}\
\underbrace{+\cdots+}_{\varepsilon_i(b_1)}\ .
\]
Then by successively deleting any $+-$-pairs (in that order) in the above sequence, we obtain a sequence
\[
\sgn_i(b) :=
\underbrace{-\cdots-}_{\varphi_i(b)}\
\underbrace{+\cdots+}_{\varepsilon_i(b)}
\]
called the \defn{reduced signature}.
Suppose $1 \leq j_-, j_+ \leq L$ are such that $b_{j_-}$ contributes the rightmost $-$ in $\sgn_i(b)$ and $b_{j_+}$ contributes the leftmost $+$ in $\sgn_i(b)$. 
Then, we have
\begin{align*}
e_i b &= b_L \otimes \cdots \otimes b_{j_++1} \otimes e_ib_{j_+} \otimes b_{j_+-1} \otimes \cdots \otimes b_1, \\
f_i b &= b_L \otimes \cdots \otimes b_{j_-+1} \otimes f_ib_{j_-} \otimes b_{j_--1} \otimes \cdots \otimes b_1.
\end{align*}

Let $B_1$ and $B_2$ be two abstract $U_q(\g)$-crystals.
A \defn{crystal morphism} $\psi \colon B_1 \to B_2$ is a map $B_1 \sqcup \{0\} \to B_2 \sqcup \{0\}$ with $\psi(0) = 0$ such that the following properties hold for all $b \in B_1$ and $i \in I$:
\begin{itemize}
\item[(1)] If $\psi(b) \in B_2$, then $\wt\bigl(\psi(b)\bigr) = \wt(b)$, $\varepsilon_i\bigl(\psi(b)\bigr) = \varepsilon_i(b)$, and $\varphi_i\bigl(\psi(b)\bigr) = \varphi_i(b)$.
\item[(2)] We have $\psi(e_i b) = e_i \psi(b)$ if $\psi(e_i b) \neq 0$ and $e_i \psi(b) \neq 0$.
\item[(3)] We have $\psi(f_i b) = f_i \psi(b)$ if $\psi(f_i b) \neq 0$ and $f_i \psi(b) \neq 0$.
\end{itemize}
An \defn{embedding} (resp.~\defn{isomorphism}) is a crystal morphism such that the induced map $B_1 \sqcup \{0\} \to B_2 \sqcup \{0\}$ is an embedding (resp.~bijection).

Let $r$ be a minuscule node.
Following~\cite{JS10, Scrimshaw17}, we first note that $b \in B(\clfw_r)$ is determined by the subset
\[
X_b := \{i \mid \varphi_i(b) = 1\} \sqcup \{\overline{\imath} \mid \varepsilon_i(b) = 1\}
\]
where $i \in I$.
To ease notation, we will write this simply as a word, which we call the \defn{minuscule word} of $b$.
See Figure~\ref{fig:crystal_graphs} for two examples.
For $r \sim 0$, we decompose the crystal $B(\clfw_r) = \{x_{\alpha}\} \sqcup \{y_i\}$, where
\begin{itemize}
\item $x_{\alpha}$ is the unique element weight $\alpha$, which is a root of the root system of $\g$;
\item $y_i$ satisfies $\varepsilon_j(y_i) = \varphi_j(y_i) = \delta_{ij}$ (note that $\wt(y_i) = 0$).
\end{itemize}
See Figure~\ref{fig:adjoint_ex_crystal} for an example.
We will also represent elements of $B(\clfw_1)$ in nonexceptional types by the common $1, \dotsc, \bon$ from~\cite{KM94,KN94}.

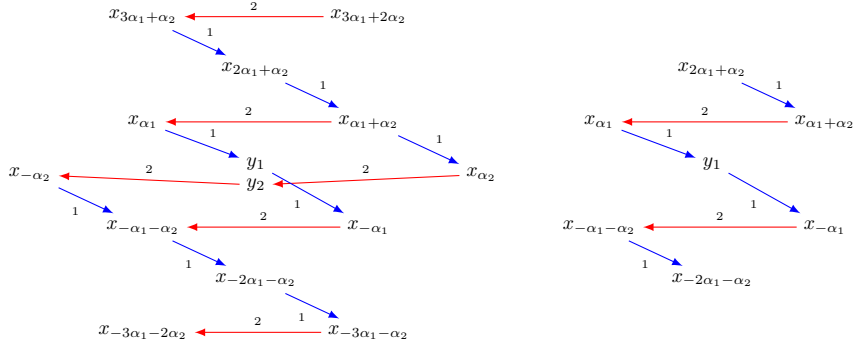
\begin{figure}
\[
\begin{tikzpicture}[>=latex,line join=bevel,xscale=1.5,yscale=0.7,every node/.style={scale=0.75},baseline=0]
\node (3a2b) at (1,3) {$x_{3\alpha_1+2\alpha_2}$};
\node (3ab) at (-1,3) {$x_{3\alpha_1+\alpha_2}$};
\node (2ab) at (0,2) {$x_{2\alpha_1+\alpha_2}$};
\node (ab) at (1,1) {$x_{\alpha_1+\alpha_2}$};
\node (a) at (-1,1) {$x_{\alpha_1}$};
\node (b) at (2,0) {$x_{\alpha_2}$};
\node (y1) at (0,0.2) {$y_1$};
\node (y2) at (0,-0.2) {$y_2$};
\node (mb) at (-2,0) {$x_{-\alpha_2}$};
\node (ma) at (1,-1) {$x_{-\alpha_1}$};
\node (mab) at (-1,-1) {$x_{-\alpha_1-\alpha_2}$};
\node (m2ab) at (0,-2) {$x_{-2\alpha_1-\alpha_2}$};
\node (m3ab) at (1,-3) {$x_{-3\alpha_1-\alpha_2}$};
\node (m3a2b) at (-1,-3) {$x_{-3\alpha_1-2\alpha_2}$};
\draw[->,red] (3a2b) -- node[midway,above,black] {\tiny $2$} (3ab);
\draw[->,blue] (3ab) -- node[midway,above right,black] {\tiny $1$} (2ab);
\draw[->,blue] (2ab) -- node[midway,above right,black] {\tiny $1$} (ab);
\draw[->,red] (ab) -- node[midway,above,black] {\tiny $2$} (a);
\draw[->,blue] (ab) -- node[midway,above right,black] {\tiny $1$} (b);
\draw[->,blue] (a) -- node[midway,above right,black] {\tiny $1$} (y1);
\draw[->,red] (b) -- node[midway,above,black] {\tiny $2$} (y2);
\draw[->,blue] (y1) -- node[midway,below left,black] {\tiny $1$} (ma);
\draw[->,red] (y2) -- node[midway,above,black] {\tiny $2$} (mb);
\draw[->,red] (ma) -- node[midway,above,black] {\tiny $2$} (mab);
\draw[->,blue] (mb) -- node[midway,below left,black] {\tiny $1$} (mab);
\draw[->,blue] (mab) -- node[midway,below left,black] {\tiny $1$} (m2ab);
\draw[->,blue] (m2ab) -- node[midway,below left,black] {\tiny $1$} (m3ab);
\draw[->,red] (m3ab) -- node[midway,above,black] {\tiny $2$} (m3a2b);
\end{tikzpicture}
\qquad
\begin{tikzpicture}[>=latex,line join=bevel,xscale=1.5,yscale=0.7,every node/.style={scale=0.75},baseline=0]
\node (2ab) at (0,2) {$x_{2\alpha_1+\alpha_2}$};
\node (ab) at (1,1) {$x_{\alpha_1+\alpha_2}$};
\node (a) at (-1,1) {$x_{\alpha_1}$};
\node (y1) at (0,0.2) {$y_1$};
\node (ma) at (1,-1) {$x_{-\alpha_1}$};
\node (mab) at (-1,-1) {$x_{-\alpha_1-\alpha_2}$};
\node (m2ab) at (0,-2) {$x_{-2\alpha_1-\alpha_2}$};
\draw[->,blue] (2ab) -- node[midway,above right,black] {\tiny $1$} (ab);
\draw[->,red] (ab) -- node[midway,above,black] {\tiny $2$} (a);
\draw[->,blue] (a) -- node[midway,above right,black] {\tiny $1$} (y1);
\draw[->,blue] (y1) -- node[midway,below left,black] {\tiny $1$} (ma);
\draw[->,red] (ma) -- node[midway,above,black] {\tiny $2$} (mab);
\draw[->,blue] (mab) -- node[midway,below left,black] {\tiny $1$} (m2ab);
\end{tikzpicture}
\]
\caption{The crystal $B(\clfw_2) \subseteq B^{2,1}$ in type $G_2^{(1)}$ (left) and $B(\clfw_1) \subseteq B^{1,1}$ in type $D_4^{(3)}$ (right) (this is the ``little'' adjoint representation of $G_2$).}
\label{fig:adjoint_ex_crystal}
\end{figure}

Next, we want to characterize the elements in the unique connected component $B(s\clfw_r) \subseteq B(\clfw_r)^{\otimes s}$.

\begin{prop}[{\cite[Prop.~7.29]{Scrimshaw17}}]
\label{prop:minuscule_description}
Let $r$ be such that $\clfw_r$ is a minuscule weight. Then $B(s\clfw_r) \subseteq B(\clfw_r)^{\otimes s}$ is characterized by
\[
\{ b_1 \otimes \cdots \otimes b_s \mid b_1 \leq \cdots \leq b_s \}.
\]
\end{prop}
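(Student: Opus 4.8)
The plan is to show that the set $S := \{ b_1 \otimes \cdots \otimes b_s \mid b_1 \leq \cdots \leq b_s \}$ is precisely the connected component of the highest weight element $u^{\otimes s}$, where $u := u_{\clfw_r}$. Since $u$ is highest weight it is the unique minimal element of the poset $B(\clfw_r)$, so $u^{\otimes s} \in S$; moreover $B(\clfw_r)^{\otimes s}$ is the crystal of a finite-dimensional $U_q(\g_0)$-module, so each of its connected components is some $B(\mu)$ with a unique highest weight element (source). Hence it suffices to prove (i) that $S$ is stable under every $e_i$ and $f_i$, so that $S$ is a union of components, and (ii) that $u^{\otimes s}$ is the only source lying in $S$; together these force $S$ to be the single component $B(s\clfw_r)$. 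First I would note that by transitivity the condition $b_1 \leq \cdots \leq b_s$ is equivalent to the pairwise conditions $b_k \leq b_{k+1}$ for all $k$, which localizes everything to consecutive factors.

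For (ii), I would use a short signature argument. If $b = b_1 \otimes \cdots \otimes b_s \in S$ is a source and $b_s \neq u$, then $\varepsilon_i(b_s) = 1$ for some $i \in I_0$, and since $\clfw_r$ is minuscule we have $\varphi_i(b_s) = 0$; thus $b_s$ contributes a single $+$ as the rightmost symbol of the $i$-signature of $b$. This $+$ can never be cancelled, so $\varepsilon_i(b) \geq 1$ and $e_i b \neq 0$, a contradiction. Therefore $b_s = u$, and since $u$ is the minimum and $b_1 \leq \cdots \leq b_s = u$, every $b_k = u$, i.e.\ $b = u^{\otimes s}$.

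The substance is (i). Applying $f_i$ to $b \in S$ acts, by the signature rule, on the factor $b_j$ supplying the rightmost uncancelled $-$; reading the reduced signature as a parenthesization (with $+$ as an opening and $-$ as a closing bracket) shows that then $\varphi_i(b_{j+1}) = 0$. The only new consecutive pairs to check are $b_{j-1} \leq f_i b_j$ and $f_i b_j \leq b_{j+1}$; the former is immediate from $b_{j-1} \leq b_j \leq f_i b_j$, while the latter is the crux, and it is an instance of the following monotonicity statement (whose $e_i$-analogue, obtained by reversing the order, handles stability under $e_i$): for minuscule $B(\clfw_r)$ and $b \leq b'$ with $\varphi_i(b) = 1$ and $\varphi_i(b') = 0$, one has $f_i b \leq b'$.

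I expect this lemma to be the main obstacle, as it is exactly where minusculeness is essential. I would prove it by induction on the height $\mathrm{ht}\bigl(\wt(b) - \wt(b')\bigr) \in \ZZ_{\geq 0}$. Take the first step $c = f_j b$ of a saturated chain from $b$ to $b'$, so $c \leq b'$. If $j = i$, then $f_i b = c \leq b'$ and we are done. If $j \neq i$, then $\langle h_i, \wt(c) \rangle = 1 - A_{ij} \geq 1$, and the minuscule bound $\langle h_i, \cdot \rangle \in \{-1, 0, 1\}$ forces $A_{ij} = 0$ and $\varphi_i(c) = 1$; the inductive hypothesis applied to $c \leq b'$ then gives $f_i c \leq b'$. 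Finally, $A_{ij} = 0$ makes $f_i$ and $f_j$ commute where defined, so $f_i c = f_j(f_i b) \geq f_i b$, whence $f_i b \leq b'$. The $e_i$-version is proved identically (or deduced via the Lusztig involution carrying $B(\clfw_r)$ to its minuscule dual with the order reversed). Combining (i) and (ii) yields $S = B(s\clfw_r)$, as claimed.
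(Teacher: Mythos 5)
The paper does not contain a proof of this proposition at all: it is quoted from \cite[Prop.~7.29]{Scrimshaw17}, so there is no internal argument to compare yours against. Judged on its own terms, your proof is correct, and it has the virtue of being self-contained where the paper simply imports the result. The skeleton is sound: writing $S$ for the set of weakly increasing tensors, you show (i) $S$ is stable under all $e_i, f_i$, hence a union of classical components, and (ii) $u_{\clfw_r}^{\otimes s}$ is the only source in $S$, which together force $S = B(s\clfw_r)$. The signature argument in (ii) is right (a terminal $+$ can never be cancelled), the localization of $f_i$-stability to the single pair $f_i b_j \leq b_{j+1}$ is right, and your monotonicity lemma ($b \leq b'$, $\varphi_i(b)=1$, $\varphi_i(b')=0$ implies $f_i b \leq b'$) together with its induction on $\operatorname{ht}\bigl(\wt(b)-\wt(b')\bigr)$ is exactly where minusculeness must enter, via the bound $\inner{h_i}{\mu} \in \{-1,0,1\}$ forcing $A_{ij}=0$ for the first step of the chain.

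Two steps deserve to be made explicit, though neither is a fatal gap in your setting. First, the claim that $f_i$ acting on $b_j$ forces $\varphi_i(b_{j+1})=0$ does not follow from bracket-matching alone; it also uses $\varepsilon_i(b_j)=0$, i.e.\ minusculeness of the factor carrying the rightmost $-$. (For non-minuscule factors the claim is false: in the $\mathfrak{sl}_2$ crystal $B(2)\otimes B(2)$, $f$ can act on the left factor while the right factor still has $\varphi = 1$, the intervening $+-$ pair having cancelled.) Second, the commutation $f_i f_j = f_j f_i$ when $A_{ij}=0$, including the preservation of definedness that you use to conclude $f_j(f_i b) = f_i c \neq 0$, is a property of crystals of integrable $U_q(\g_0)$-modules (restrict to the $A_1 \times A_1$ subalgebra generated by nodes $i,j$), not of abstract seminormal crystals; since $B(\clfw_r)$ is a genuine crystal basis by Kashiwara's theorem, this is available to you, but it should be invoked as such.
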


We will write the elements of $B(s\clfw_r)$ as the single row tableaux
\[
\begin{array}{|c|c|c|c|}
\hline b_1 & b_2 & \cdots & b_s
\\\hline 
\end{array}\ .
\]
This was also used to describe the elements of $B(s\clfw_1)$ for types $A_n$, $B_n$, $C_n$, and $D_n$ in~\cite{KN94} and for type $G_2$ in~\cite{KM94}.
However, when there is an element $y_1 \in B(\clfw_1)$ of weight $0$ (and is the unique such element and denoted $0$ in~\cite{KM94,KN94}), it can only appear once in a tableau.
For example, a typical element in $B(s\clfw_1)$ in type $B_n$ is of the form:
\[
\begin{array}{|c|c|c|c|c|c|c|c|c|c|c|c|c|}
\hline
\raisebox{-1pt}{$1$} & \cdots & \raisebox{-1pt}{$1$} & \raisebox{-1pt}{$2$} & \cdots & \raisebox{-1pt}{$n$} & \raisebox{-1pt}{$0$} & \raisebox{-1pt}{$\bn$} & \cdots & \raisebox{-1pt}{$\btw$} & \raisebox{-1pt}{$\bon$} & \cdots & \raisebox{-1pt}{$\bon$}
\\\hline
\end{array}
\]

\afterpage{\clearpage}
\begin{figure}[p]
\[
\begin{tikzpicture}[>=latex,line join=bevel,xscale=0.9,yscale=0.45,every node/.style={scale=0.65}]
\node (node_26) at (77.0bp,9.5bp) [draw,draw=none] {$\overline{6}$};
  \node (node_24) at (77.0bp,155.5bp) [draw,draw=none] {$\overline{4}5$};
  \node (node_25) at (77.0bp,82.5bp) [draw,draw=none] {$\overline{5}6$};
  \node (node_22) at (103.0bp,301.5bp) [draw,draw=none] {$\overline{1}\overline{2}3$};
  \node (node_23) at (77.0bp,228.5bp) [draw,draw=none] {$\overline{2}\overline{3}4$};
  \node (node_20) at (50.0bp,447.5bp) [draw,draw=none] {$\overline{1}\overline{5}4$};
  \node (node_21) at (61.0bp,374.5bp) [draw,draw=none] {$\overline{1}\overline{4}23$};
  \node (node_9) at (130.0bp,812.5bp) [draw,draw=none] {$\overline{6}2$};
  \node (node_8) at (39.0bp,666.5bp) [draw,draw=none] {$\overline{3}16$};
  \node (node_7) at (50.0bp,739.5bp) [draw,draw=none] {$\overline{4}36$};
  \node (node_6) at (61.0bp,812.5bp) [draw,draw=none] {$\overline{2}\overline{5}46$};
  \node (node_5) at (103.0bp,885.5bp) [draw,draw=none] {$\overline{5}26$};
  \node (node_4) at (50.0bp,885.5bp) [draw,draw=none] {$\overline{2}5$};
  \node (node_3) at (77.0bp,958.5bp) [draw,draw=none] {$\overline{4}25$};
  \node (node_2) at (77.0bp,1031.5bp) [draw,draw=none] {$\overline{3}4$};
  \node (node_1) at (77.0bp,1104.5bp) [draw,draw=none] {$\overline{1}3$};
  \node (node_0) at (77.0bp,1176.5bp) [draw,draw=none] {$1$};
  \node (node_19) at (130.0bp,374.5bp) [draw,draw=none] {$\overline{2}1$};
  \node (node_18) at (50.0bp,301.5bp) [draw,draw=none] {$\overline{3}2$};
  \node (node_17) at (119.0bp,447.5bp) [draw,draw=none] {$\overline{4}12$};
  \node (node_16) at (135.0bp,593.5bp) [draw,draw=none] {$\overline{5}3$};
  \node (node_15) at (15.0bp,593.5bp) [draw,draw=none] {$\overline{1}6$};
  \node (node_14) at (110.0bp,520.5bp) [draw,draw=none] {$\overline{3}\overline{5}14$};
  \node (node_13) at (39.0bp,520.5bp) [draw,draw=none] {$\overline{1}\overline{6}5$};
  \node (node_12) at (75.0bp,593.5bp) [draw,draw=none] {$\overline{3}\overline{6}15$};
  \node (node_11) at (110.0bp,666.5bp) [draw,draw=none] {$\overline{4}\overline{6}35$};
  \node (node_10) at (119.0bp,739.5bp) [draw,draw=none] {$\overline{2}\overline{6}4$};
  \draw [black,->] (node_20) ..controls (48.848bp,428.85bp) and (48.615bp,414.29bp)  .. (51.0bp,402.0bp) .. controls (51.543bp,399.2bp) and (52.354bp,396.31bp)  .. (node_21);
  \definecolor{strokecol}{rgb}{0.0,0.0,0.0};
  \pgfsetstrokecolor{strokecol}
  \draw (59.5bp,411.0bp) node {$4$};
  \draw [red,->] (node_3) ..controls (71.052bp,943.57bp) and (68.302bp,936.95bp)  .. (66.0bp,931.0bp) .. controls (62.662bp,922.38bp) and (59.164bp,912.73bp)  .. (node_4);
  \draw (74.5bp,922.0bp) node {$2$};
  \draw [black,->] (node_6) ..controls (54.699bp,797.8bp) and (52.199bp,791.18bp)  .. (51.0bp,785.0bp) .. controls (49.379bp,776.65bp) and (48.968bp,767.25bp)  .. (node_7);
  \draw (59.5bp,776.0bp) node {$4$};
  \draw [black,->] (node_5) ..controls (110.44bp,864.94bp) and (117.61bp,846.08bp)  .. (node_9);
  \draw (128.5bp,849.0bp) node {$6$};
  \draw [blue,->] (node_14) ..controls (93.106bp,499.51bp) and (76.272bp,479.59bp)  .. (node_20);
  \draw (96.5bp,484.0bp) node {$1$};
  \draw [black,->] (node_24) ..controls (77.0bp,135.04bp) and (77.0bp,116.45bp)  .. (node_25);
  \draw (85.5bp,119.0bp) node {$5$};
  \draw [blue,->] (node_8) ..controls (23.853bp,652.07bp) and (18.634bp,645.82bp)  .. (16.0bp,639.0bp) .. controls (12.86bp,630.87bp) and (12.292bp,621.27bp)  .. (node_15);
  \draw (24.5bp,630.0bp) node {$1$};
  \draw [black,->] (node_10) ..controls (116.53bp,719.04bp) and (114.18bp,700.45bp)  .. (node_11);
  \draw (124.5bp,703.0bp) node {$4$};
  \draw [red,->] (node_9) ..controls (126.99bp,792.04bp) and (124.1bp,773.45bp)  .. (node_10);
  \draw (134.5bp,776.0bp) node {$2$};
  \draw [blue,->] (node_0) ..controls (77.0bp,1157.4bp) and (77.0bp,1138.8bp)  .. (node_1);
  \draw (85.5bp,1141.0bp) node {$1$};
  \draw [blue,->] (node_12) ..controls (65.026bp,572.83bp) and (55.331bp,553.71bp)  .. (node_13);
  \draw (70.5bp,557.0bp) node {$1$};
  \draw [green,->] (node_22) ..controls (95.835bp,280.94bp) and (88.931bp,262.08bp)  .. (node_23);
  \draw (102.5bp,265.0bp) node {$3$};
  \draw [black,->] (node_15) ..controls (12.307bp,574.76bp) and (11.433bp,559.82bp)  .. (16.0bp,548.0bp) .. controls (17.399bp,544.38bp) and (19.528bp,540.92bp)  .. (node_13);
  \draw (24.5bp,557.0bp) node {$6$};
  \draw [green,->] (node_16) ..controls (128.11bp,572.94bp) and (121.47bp,554.08bp)  .. (node_14);
  \draw (134.5bp,557.0bp) node {$3$};
  \draw [black,->] (node_3) ..controls (84.165bp,937.94bp) and (91.069bp,919.08bp)  .. (node_5);
  \draw (102.5bp,922.0bp) node {$5$};
  \draw [black,->] (node_13) ..controls (37.848bp,501.85bp) and (37.615bp,487.29bp)  .. (40.0bp,475.0bp) .. controls (40.543bp,472.2bp) and (41.354bp,469.31bp)  .. (node_20);
  \draw (48.5bp,484.0bp) node {$5$};
  \draw [red,->] (node_5) ..controls (91.3bp,864.72bp) and (79.833bp,845.34bp)  .. (node_6);
  \draw (95.5bp,849.0bp) node {$2$};
  \draw [black,->] (node_12) ..controls (84.697bp,572.83bp) and (94.122bp,553.71bp)  .. (node_14);
  \draw (105.5bp,557.0bp) node {$5$};
  \draw [black,->] (node_7) ..controls (66.894bp,718.51bp) and (83.728bp,698.59bp)  .. (node_11);
  \draw (96.5bp,703.0bp) node {$6$};
  \draw [black,->] (node_8) ..controls (48.974bp,645.83bp) and (58.669bp,626.71bp)  .. (node_12);
  \draw (70.5bp,630.0bp) node {$6$};
  \draw [black,->] (node_6) ..controls (77.33bp,791.51bp) and (93.604bp,771.59bp)  .. (node_10);
  \draw (105.5bp,776.0bp) node {$6$};
  \draw [black,->] (node_4) ..controls (48.848bp,866.85bp) and (48.615bp,852.29bp)  .. (51.0bp,840.0bp) .. controls (51.543bp,837.2bp) and (52.354bp,834.31bp)  .. (node_6);
  \draw (59.5bp,849.0bp) node {$5$};
  \draw [black,->] (node_2) ..controls (77.0bp,1011.0bp) and (77.0bp,992.45bp)  .. (node_3);
  \draw (85.5bp,995.0bp) node {$4$};
  \draw [green,->] (node_11) ..controls (100.3bp,645.83bp) and (90.878bp,626.71bp)  .. (node_12);
  \draw (105.5bp,630.0bp) node {$3$};
  \draw [red,->] (node_21) ..controls (72.7bp,353.72bp) and (84.167bp,334.34bp)  .. (node_22);
  \draw (95.5bp,338.0bp) node {$2$};
  \draw [blue,->] (node_19) ..controls (122.56bp,353.94bp) and (115.39bp,335.08bp)  .. (node_22);
  \draw (128.5bp,338.0bp) node {$1$};
  \draw [green,->] (node_1) ..controls (77.0bp,1084.0bp) and (77.0bp,1065.4bp)  .. (node_2);
  \draw (85.5bp,1068.0bp) node {$3$};
  \draw [green,->] (node_7) ..controls (43.699bp,724.8bp) and (41.199bp,718.18bp)  .. (40.0bp,712.0bp) .. controls (38.379bp,703.65bp) and (37.968bp,694.25bp)  .. (node_8);
  \draw (48.5bp,703.0bp) node {$3$};
  \draw [red,->] (node_17) ..controls (122.01bp,427.04bp) and (124.9bp,408.45bp)  .. (node_19);
  \draw (134.5bp,411.0bp) node {$2$};
  \draw [black,->] (node_11) ..controls (116.89bp,645.94bp) and (123.53bp,627.08bp)  .. (node_16);
  \draw (134.5bp,630.0bp) node {$5$};
  \draw [black,->] (node_23) ..controls (77.0bp,208.04bp) and (77.0bp,189.45bp)  .. (node_24);
  \draw (85.5bp,192.0bp) node {$4$};
  \draw [red,->] (node_18) ..controls (56.067bp,282.93bp) and (61.199bp,268.4bp)  .. (66.0bp,256.0bp) .. controls (67.043bp,253.31bp) and (68.178bp,250.47bp)  .. (node_23);
  \draw (74.5bp,265.0bp) node {$2$};
  \draw [blue,->] (node_17) ..controls (102.67bp,426.51bp) and (86.396bp,406.59bp)  .. (node_21);
  \draw (105.5bp,411.0bp) node {$1$};
  \draw [black,->] (node_14) ..controls (112.47bp,500.04bp) and (114.82bp,481.45bp)  .. (node_17);
  \draw (124.5bp,484.0bp) node {$4$};
  \draw [black,->] (node_25) ..controls (77.0bp,62.042bp) and (77.0bp,43.449bp)  .. (node_26);
  \draw (85.5bp,46.0bp) node {$6$};
  \draw [green,->] (node_21) ..controls (54.699bp,359.8bp) and (52.199bp,353.18bp)  .. (51.0bp,347.0bp) .. controls (49.379bp,338.65bp) and (48.968bp,329.25bp)  .. (node_18);
  \draw (59.5bp,338.0bp) node {$3$};
\end{tikzpicture}
\hspace{80pt}
\begin{tikzpicture}[>=latex,line join=bevel,xscale=1.0,yscale=0.435,every node/.style={scale=0.65}]
\node (node_13) at (50.0bp,273.0bp) [draw,draw=none] {$\btw\bfive3$};
  \node (node_14) at (50.0bp,153.0bp) [draw,draw=none] {$\bth4$};
  \node (node_9) at (30.0bp,633.0bp) [draw,draw=none] {$\bon\bfo3$};
  \node (node_8) at (110.0bp,513.0bp) [draw,draw=none] {$\bfive1$};
  \node (node_7) at (90.0bp,633.0bp) [draw,draw=none] {$\bth15$};
  \node (node_6) at (70.0bp,753.0bp) [draw,draw=none] {$\btw\bfo13$};
  \node (node_5) at (90.0bp,873.0bp) [draw,draw=none] {$\bfo2$};
  \node (node_4) at (10.0bp,753.0bp) [draw,draw=none] {$\bon4$};
  \node (node_3) at (50.0bp,873.0bp) [draw,draw=none] {$\btw14$};
  \node (node_2) at (70.0bp,993.0bp) [draw,draw=none] {$\bth24$};
  \node (node_1) at (70.0bp,1113.0bp) [draw,draw=none] {$\bfive3$};
  \node (node_10) at (50.0bp,513.0bp) [draw,draw=none] {$\bon\bth25$};
  \node (node_11) at (30.0bp,393.0bp) [draw,draw=none] {$\btw5$};
  \node (node_0) at (70.0bp,1233.0bp) [draw,draw=none] {$5$};
  \node (node_15) at (50.0bp,33.0bp) [draw,draw=none] {$\bfo$};
  \node (node_12) at (70.0bp,393.0bp) [draw,draw=none] {$\bon\bfive2$};
  \draw [blue,->] (node_6) ..controls (54.885bp,707.41bp) and (48.749bp,689.31bp)  .. (node_9);
  \definecolor{strokecol}{rgb}{0.0,0.0,0.0};
  \pgfsetstrokecolor{strokecol}
  \draw (61.5bp,693.0bp) node {$1$};
  \draw [red,->] (node_2) ..controls (56.424bp,961.9bp) and (52.849bp,951.67bp)  .. (51.0bp,942.0bp) .. controls (49.424bp,933.76bp) and (48.646bp,924.85bp)  .. (node_3);
  \draw (59.5bp,933.0bp) node {$2$};
  \draw [black,->] (node_10) ..controls (57.736bp,466.36bp) and (60.403bp,450.62bp)  .. (node_12);
  \draw (70.5bp,453.0bp) node {$5$};
  \draw [green,->] (node_13) ..controls (50.0bp,226.36bp) and (50.0bp,210.62bp)  .. (node_14);
  \draw (58.5bp,213.0bp) node {$3$};
  \draw [black,->] (node_7) ..controls (97.736bp,586.36bp) and (100.4bp,570.62bp)  .. (node_8);
  \draw (109.5bp,573.0bp) node {$5$};
  \draw [black,->] (node_2) ..controls (77.736bp,946.36bp) and (80.403bp,930.62bp)  .. (node_5);
  \draw (89.5bp,933.0bp) node {$4$};
  \draw [green,->] (node_6) ..controls (77.736bp,706.36bp) and (80.403bp,690.62bp)  .. (node_7);
  \draw (89.5bp,693.0bp) node {$3$};
  \draw [blue,->] (node_8) ..controls (94.885bp,467.41bp) and (88.749bp,449.31bp)  .. (node_12);
  \draw (100.5bp,453.0bp) node {$1$};
  \draw [black,->] (node_14) ..controls (50.0bp,106.36bp) and (50.0bp,90.625bp)  .. (node_15);
  \draw (58.5bp,93.0bp) node {$4$};
  \draw [green,->] (node_1) ..controls (70.0bp,1066.4bp) and (70.0bp,1050.6bp)  .. (node_2);
  \draw (78.5bp,1053.0bp) node {$3$};
  \draw [red,->] (node_12) ..controls (62.264bp,346.36bp) and (59.597bp,330.62bp)  .. (node_13);
  \draw (70.5bp,333.0bp) node {$2$};
  \draw [black,->] (node_11) ..controls (28.145bp,348.51bp) and (28.783bp,335.6bp)  .. (31.0bp,324.0bp) .. controls (32.213bp,317.66bp) and (34.17bp,311.07bp)  .. (node_13);
  \draw (39.5bp,333.0bp) node {$5$};
  \draw [green,->] (node_9) ..controls (28.145bp,588.51bp) and (28.783bp,575.6bp)  .. (31.0bp,564.0bp) .. controls (32.213bp,557.66bp) and (34.17bp,551.07bp)  .. (node_10);
  \draw (39.5bp,573.0bp) node {$3$};
  \draw [blue,->] (node_7) ..controls (74.885bp,587.41bp) and (68.749bp,569.31bp)  .. (node_10);
  \draw (80.5bp,573.0bp) node {$1$};
  \draw [blue,->] (node_3) ..controls (35.827bp,842.08bp) and (31.383bp,831.67bp)  .. (28.0bp,822.0bp) .. controls (25.101bp,813.72bp) and (22.41bp,804.72bp)  .. (node_4);
  \draw (36.5bp,813.0bp) node {$1$};
  \draw [red,->] (node_5) ..controls (82.264bp,826.36bp) and (79.597bp,810.62bp)  .. (node_6);
  \draw (89.5bp,813.0bp) node {$2$};
  \draw [black,->] (node_0) ..controls (70.0bp,1186.4bp) and (70.0bp,1170.6bp)  .. (node_1);
  \draw (78.5bp,1173.0bp) node {$5$};
  \draw [black,->] (node_3) ..controls (48.145bp,828.51bp) and (48.783bp,815.6bp)  .. (51.0bp,804.0bp) .. controls (52.213bp,797.66bp) and (54.17bp,791.07bp)  .. (node_6);
  \draw (59.5bp,813.0bp) node {$4$};
  \draw [red,->] (node_10) ..controls (36.424bp,481.9bp) and (32.849bp,471.67bp)  .. (31.0bp,462.0bp) .. controls (29.424bp,453.76bp) and (28.646bp,444.85bp)  .. (node_11);
  \draw (39.5bp,453.0bp) node {$2$};
  \draw [black,->] (node_4) ..controls (8.145bp,708.51bp) and (8.7826bp,695.6bp)  .. (11.0bp,684.0bp) .. controls (12.213bp,677.66bp) and (14.17bp,671.07bp)  .. (node_9);
  \draw (19.5bp,693.0bp) node {$4$};
\end{tikzpicture}
\]
\caption{The crystal $B(\clfw_1)$ of type $E_6$ (left) and $B(\clfw_5)$ of type $D_5$ (right).}
\label{fig:crystal_graphs}
\end{figure}
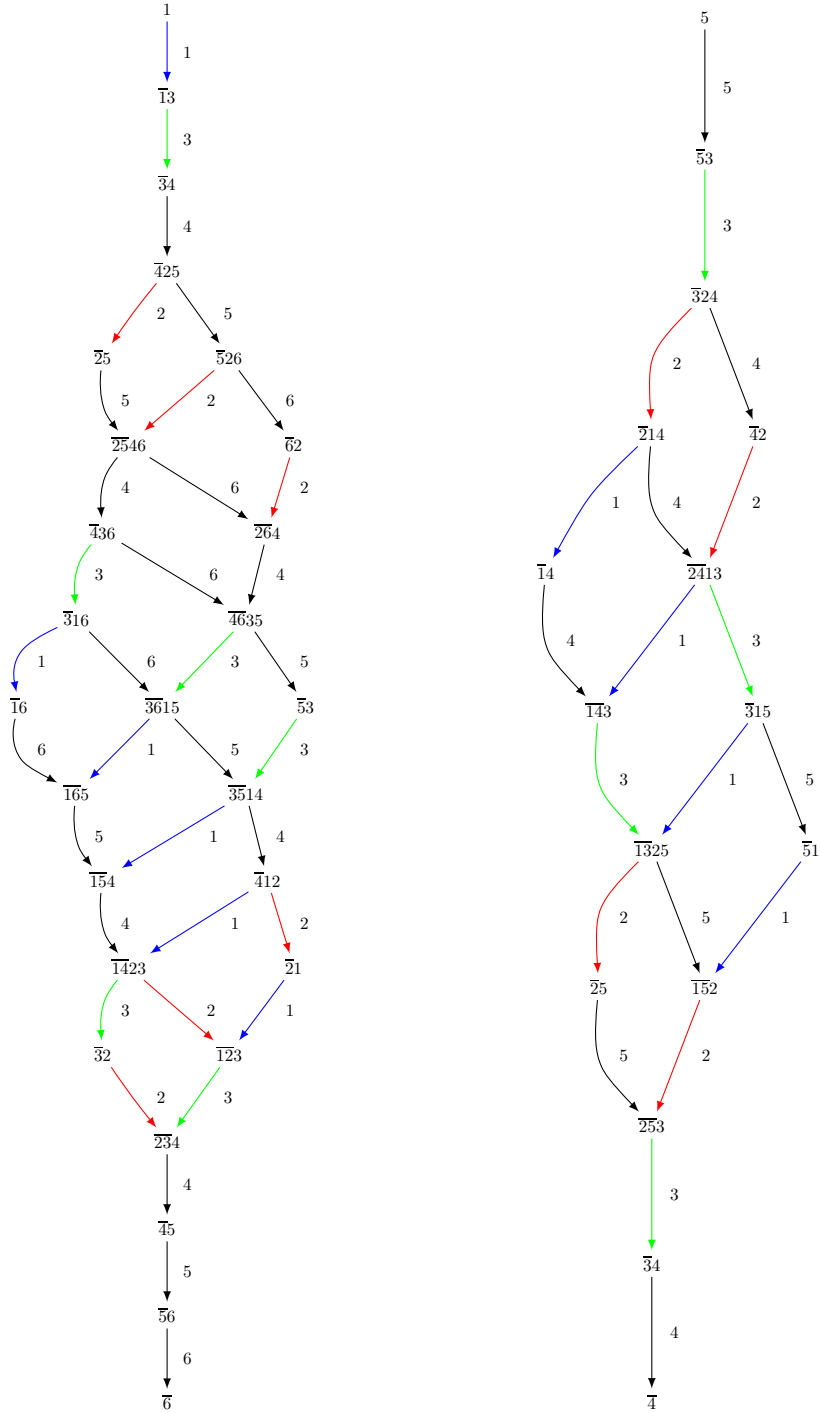

\subsection{Kirillov--Reshetikhin crystals}

Let $c_a$ denote the Kac labels~\cite[Table~Aff1-3]{kac90}.
Let $U_q'(\g) := U_q([\g, \g])$, and we note that the weight lattice is given by $P' = P / \ZZ\delta$, where $\delta = \sum_{a \in I} c_a \alpha_a$ is the null root.
In particular, the simple roots in $P'$ have a linear dependence.
We will not be considering $U_q(\g)$-crystals in this paper, and so we abuse notation and denote the $U_q'(\g)$-weight lattice by $P$.
For a $U_q'(\g)$-crystal $B$, we say $b \in B$ is \defn{classically highest weight} if $e_i b = 0$ for all $i \in I_0$.

An important class of finite-dimensional irreducible $U_q'(\g)$-modules are \defn{Kirillov--Reshetikhin (KR) modules}, which are characterized by their Drinfel'd polynomials~\cite{CP95, CP98}.
A (conjectural~\cite{HKOTY99, HKOTT02}) remarkable property of KR modules is that they admit crystal bases, which are known as \defn{Kirillov--Reshetikhin (KR) crystals}.
A KR crystal is denoted by $B^{r, s}$, where $r \in I_0$ and $s \in \ZZ_{>0}$.
KR crystals were shown to exist in all nonexceptional types in~\cite{OS08} and a combinatorial model given in~\cite{FOS09}.
Jones and Schilling showed KR crystals exist and gave a combinatorial model for $B^{r,s}$, where $r = 1, 2, 6$, in type $E_6^{(1)}$~\cite{JS10}.
The cases $B^{1,s}$ in type $D_4^{(3)}$~\cite{KMOY07} and $B^{2,s}$ in type $G_2^{(1)}$~\cite{Yamane98} are also known to exist and have a combinatorial model.
The KR crystal $B^{r,1}$ in all types was constructed uniformly using projected level-zero LS paths by the work of Naito and Sagaki~\cite{NS08II, NS08}.

We will also describe the elements of $B^{r,1}$ when $r \sim 0$ following~\cite{BFKL06}.
Specifically for $\g$ not of type $A_n^{(1)}$ or $A_{2n}^{(2)\dagger}$, we have $B^{r,1} \iso B(\clfw_r) \oplus B(0)$ as $U_q(\g_0)$-crystals.
We denote the unique element $\emptyset \in B(0)$, which plays the role of $y_0$.
For type $A_{2n}^{(2)\dagger}$, we have $B^{r,1} \iso B(\clfw_r)$ as $U_q(\g_0)$-crystals.

There exists a unique classical component $B(s\clfw_r) \subseteq B^{r,s}$.
Moreover, we have $B(s\clfw_r) \iso B^{r,s}$ as $U_q(\g_0)$-crystals when $r$ is a minuscule node.
Let $u(B^{r,s}) = u_{r\clfw_s} \in B(s\clfw_r) \subseteq B^{r,s}$ denote the \defn{maximal element}, which is the unique element in $B^{r,s}$ of classical weight $s \clfw_r$.
For $B = \bigotimes_{i=1}^N B^{r_i, s_i}$, the maximal element is $u(B) = u(B^{r_1,s_1}) \otimes \cdots \otimes u(B^{r_N,s_N})$, which is the unique element of classical weight $\sum_{i=1}^N s_i \clfw_{r_i}$.
Let $v(B)$ denote the \defn{minimal element} of $B$, which is the unique element of classical weight $-\sum_{i=1}^N s_i \clfw_{r_i}$.

It is conjectured that tensor products of KR crystals are connected, which is known in nonexceptional types~\cite{Okado13} and when the KR crystals are perfect of the same level~\cite{FSS07, ST12}.
Since tensor products of KR crystals are (conjecturally) connected, there exists a unique $U_q'(\g)$-crystal morphism $R \colon B \otimes B' \to B' \otimes B$, called the \defn{combinatorial $R$-matrix}, defined by
\[
R\bigl(u(B) \otimes u(B')\bigr) = u(B') \otimes u(B).
\]
We denote the combinatorial $R$-matrix $R(b \otimes b') = \widetilde{b}' \otimes \widetilde{b}$ pictorially by
\[
\begin{tikzpicture}[baseline=-10]
\node (bp) at (1,0) {$b'$};
\node (b) at (0,1) {$b$};
\node (bt) at (0,-1) {$\widetilde{b}$};
\node (btp) at (-1,0) {$\widetilde{b}'$};
\draw[->] (b) -- (bt);
\draw[->] (bp) -- (btp);
\end{tikzpicture}\ .
\]

We now describe an important statistic that arises from mathematical physics called the \defn{local energy function}.
Let $\widetilde{b}' \otimes \widetilde{b} = R(b \otimes b')$, and define the following conditions:
\begin{itemize}
\item[(LL)] $e_0(b \otimes b') = e_0 b \otimes b' \text{ and } e_0(\widetilde{b}' \otimes \widetilde{b}) = e_0 \widetilde{b}' \otimes \widetilde{b}$;
\item[(RR)] $e_0(b \otimes b') = b \otimes e_0 b' \text{ and } e_0(\widetilde{b}' \otimes \widetilde{b}) = \widetilde{b}' \otimes e_0 \widetilde{b}$.
\end{itemize}
The local energy function $H \colon B^{r,s} \otimes B^{r',s'} \to \ZZ$ is defined by
\begin{equation}
\label{eq:local_energy}
H\bigl( e_i(b \otimes b') \bigr) = H(b \otimes b') + \begin{cases}
-1 & \text{if } i = 0 \text{ and } (LL), \\
1 & \text{if } i = 0 \text{ and } (RR), \\
0 & \text{otherwise,}
\end{cases}
\end{equation}
and it is known $H$ is defined up to an additive constant~\cite{KKMMNN91}.
We normalize $H$ by setting $H\bigl( u(B^{r ,s}) \otimes u(B^{r',s'}) \bigr) = 0$.
Note that $H$ is constant on classical components.

For special nodes, the local energy has the following form.

\begin{thm}[{\cite[Thm.~7.5]{Scrimshaw17}}]
\label{thm:minuscule_local_energy}
Let $r$ be a special node.
Then the classically highest weight elements of $B^{r,s} \otimes B^{r,s'}$ are of the form $b \otimes u(B^{r,s'})$ and $H\bigl(b \otimes u(B^{r,s'})\bigr)$ equals the number of $r$-arrows in the path from $b$ to $u_{s\clfw_r}$ in $B(s\clfw_r)$.
\end{thm}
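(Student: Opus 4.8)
The plan is to lean on the two defining features of a special node $r$: that $B^{r,s}$ is classically irreducible, $B^{r,s} \cong B(s\clfw_r)$, so that it has a unique $\g_0$-highest weight element; and that a Dynkin diagram automorphism $\sigma$ carries $0$ to $r$, which is what will let me convert the appearance of the affine node $0$ in the energy recursion into the classical node $r$.

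\emph{Shape of the classically highest weight elements.} Let $x \otimes y \in B^{r,s} \otimes B^{r,s'}$ be classically highest weight. Applying the tensor product rule for $e_i$ with $i \in I_0$, the alternative $\varepsilon_i(x) > \varphi_i(y)$ would force $\varepsilon_i(x) \geq 1$ and hence $e_i(x \otimes y) = e_i x \otimes y \neq 0$; so for every $i \in I_0$ we must instead be in the case $\varepsilon_i(x) \leq \varphi_i(y)$ with $e_i y = 0$. Thus $y$ is classically highest weight in $B^{r,s'}$, and since $r$ is special $B^{r,s'} \cong B(s'\clfw_r)$ is classically irreducible, so $y = u(B^{r,s'}) =: u'$ is the unique such element. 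As $u'$ is classically highest weight, $\varphi_i(u') = \inner{\alpha_i^{\vee}}{s'\clfw_r} = s'\delta_{ir}$, and the remaining constraints $\varepsilon_i(x) \leq \varphi_i(u')$ become $\varepsilon_i(x) = 0$ for $i \in I_{0,r}$ and $\varepsilon_r(x) \leq s'$. Hence $x = b$ is $\g_{0,r}$-highest weight, and every classically highest weight element has the asserted form $b \otimes u'$.

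\emph{Well-definedness of the $r$-arrow count.} Each $e_i$-step up from $b$ raises the weight by $\clsr_i$, so a path from $b$ to $u_{s\clfw_r}$ using $n_i$ arrows of color $i$ satisfies $s\clfw_r - \wt(b) = \sum_{i \in I_0} n_i \clsr_i$. Since the $\clsr_i$ are linearly independent, each $n_i$ — in particular $n_r =: n_r(b)$ — is determined by $\wt(b)$ alone, so the number of $r$-arrows is independent of the chosen path.

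\emph{The energy (main step).} I would prove $H(b \otimes u') = n_r(b)$ by induction on $n_r(b)$, the base case $b = u_{s\clfw_r} = u(B^{r,s})$ being the normalization $H(u(B^{r,s}) \otimes u') = 0$. Recall from \eqref{eq:local_energy} that $H$ is constant on $\g_0$-components and is adjusted by $\pm 1$ only along $0$-arrows. The goal is therefore to connect the $\g_0$-component of $b \otimes u'$ to that of a classically highest weight $b' \otimes u'$ with $n_r(b') = n_r(b) - 1$ through a single $0$-arrow of type (LL); by the first step every component is indexed by such a $b' \otimes u'$, and this move gives $H(b \otimes u') = H(b' \otimes u') + 1$, whence the claim follows from the inductive hypothesis. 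The crux — and the place the special hypothesis is essential — is the identification of these inter-component $0$-arrows with the $r$-arrows of $B(s\clfw_r)$: transporting the recursion \eqref{eq:local_energy} through the automorphism $\sigma$ with $\sigma(0) = r$ interchanges the role of the affine node $0$ with that of the classical node $r$, so that one $e_0$-step between $\g_0$-components removes exactly one $r$-arrow from the path up to $u_{s\clfw_r}$.

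The main obstacle I anticipate is precisely this last identification: making rigorous, uniformly in type, that the energy's dependence on the affine node $0$ becomes the $r$-arrow count $n_r(b)$ under $\sigma$, including the bookkeeping that the relevant $e_0$-step is genuinely of type (LL) and lands one component ``closer'' to $u(B^{r,s}) \otimes u'$. As a consistency check (and an alternative route in the minuscule case), when $\clfw_r$ is minuscule one can instead make the argument explicit: Proposition~\ref{prop:minuscule_description} realizes $b$ and $u'$ as weakly increasing single rows in $B(\clfw_r)^{\otimes s}$ and $B(\clfw_r)^{\otimes s'}$, the combinatorial $R$-matrix becomes a transparent sorting of these rows, and the resulting $0$-arrow count can be matched directly against $n_r(b)$.
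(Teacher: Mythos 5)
The paper itself gives no proof of this theorem --- it is imported verbatim from \cite{Scrimshaw17} --- so your argument has to stand on its own. Its first two steps do: the classification of classically highest weight elements (with the paper's tensor convention, every $e_i$ with $i \in I_0$ must act on the right factor, forcing it to be classically highest weight, hence equal to $u(B^{r,s'})$ by classical irreducibility of $B^{r,s'}$ for special $r$; the left factor is then $I_{0,r}$-highest weight with $\varepsilon_r(b) \leq s'$), and the path-independence of the $r$-arrow count via linear independence of the $\clsr_i$, are both correct and complete.

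The gap is exactly the step you flag, and it is not a formality: it is the entire content of the theorem. Your induction needs, for each classically highest weight $b \otimes u'$ with $n_r(b) = k \geq 1$, a $0$-arrow of controlled type joining its classical component to one whose highest weight element has $n_r = k-1$, and the justification you offer --- transporting the recursion \eqref{eq:local_energy} through an automorphism $\sigma$ with $\sigma(0) = r$ --- is circular as stated. The automorphism does not act on $B^{r,s} \otimes B^{r,s'}$: twisting the crystal structure by $\sigma$ replaces the $I_0$-structure by the $(I \setminus \{r\})$-structure (since $\sigma(I_0) = I \setminus \{r\}$), so what $\sigma$ naturally relates $H$ to is a \emph{different} energy-type function whose recursion jumps across $r$-arrows rather than $0$-arrows; asserting that the two agree on classically highest weight elements is a restatement of the theorem, and even identifying the twist of $B^{r,s}$ as a KR crystal is a nontrivial promotion-type fact, not a consequence of the diagram symmetry alone. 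The bookkeeping you defer is also where naive choices genuinely fail. In type $A_3^{(1)}$ with $r = 2$ and $s = s' = 1$, the middle classically highest weight element is $(f_2 u) \otimes u$, and $f_0\bigl((f_2 u) \otimes u\bigr) = (f_0 f_2 u) \otimes u = 0$, so the arrow linking its component downward cannot be based at the highest weight element; one must locate arrows elsewhere in the components (here $e_0(u \otimes u) = (e_0 u) \otimes u$ lands in the middle component at a non-highest-weight element, while $e_0\bigl((f_2 u) \otimes u\bigr) = v \otimes u$ reaches the $n_r = 2$ component). Finally, the type/sign analysis cannot be waved through: with the paper's literal conventions (its tensor rule combined with the signs in \eqref{eq:local_energy}), the chain of (LL)-arrows starting at $u \otimes u'$ produces $H(b \otimes u') = -n_r(b)$, not $+n_r(b)$, so one must first reconcile conventions with \cite{Scrimshaw17} before the increments you assert even have the right sign. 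A correct proof must construct these connecting arrows and verify their (LL)/(RR) types uniformly in type --- precisely the work the proposal leaves undone, and your minuscule-case fallback via Proposition~\ref{prop:minuscule_description} is likewise only a sketch, since the claimed ``sorting'' description of the $R$-matrix on $B^{r,s} \otimes B^{r,s'}$ is not established there either.
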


For $r \sim 0$, the local energy has the following form.

\begin{thm}[{\cite[Thm.~6.2]{BFKL06}}]
\label{thm:adjoint_local_energy}
Let $r \sim 0$. Then the local energy of $B^{r,1} \otimes B^{r,1}$ is given in Table~\ref{table:adjoint_local_energy} (it is specified on the $I_0$-highest weight elements).
\end{thm}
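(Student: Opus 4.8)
The plan is to compute $H$ directly from its defining recurrence~\eqref{eq:local_energy} together with the normalization $H\bigl(x_\theta \otimes x_\theta\bigr) = 0$, where $x_\theta = u(B^{r,1})$ is the maximal element and $\theta := \clfw_r$ is the highest root (recall that for $r \sim 0$ the crystal $B(\clfw_r)$ is of adjoint type). Since $H$ is constant on $U_q(\g_0)$-components, it suffices to evaluate it on one representative of each component, and the $I_0$-highest weight elements indexing the rows of Table~\ref{table:adjoint_local_energy} are the natural choice. First I would enumerate these elements. Using the decomposition $B^{r,1} \iso B(\clfw_r) \oplus B(0)$ of~\cite{BFKL06} together with the structure $B(\clfw_r) = \{x_\alpha\} \sqcup \{y_i\}$ and the distinguished $\emptyset \in B(0)$, I would apply the signature rule to $B^{r,1} \otimes B^{r,1}$ to list all $b \otimes b'$ with $e_i(b \otimes b') = 0$ for every $i \in I_0$. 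Because $x_\theta$ is the unique $I_0$-highest weight element of $B(\clfw_r)$ and $\emptyset$ is $I_0$-highest in $B(0)$, these are expected to be of the form $x_\theta \otimes b'$ for those $b'$ whose left $i$-signature is saturated by $x_\theta$, together with the mixed terms $x_\theta \otimes \emptyset$, $\emptyset \otimes x_\theta$, and $\emptyset \otimes \emptyset$; this reproduces the classical branching of the tensor square of the adjoint crystal.

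For the energy values, I would connect each such $I_0$-highest weight element $b \otimes b'$ to the reference element $x_\theta \otimes x_\theta$ by an explicit sequence of crystal operators and read off $H$ by summing the contributions in~\eqref{eq:local_energy}. Since every $I_0$-operator and every $0$-arrow internal to a single classical component contributes $0$, the value of $H$ is governed entirely by the $0$-arrows passing between distinct classical components. Concretely, I would use the affine structure of $B^{r,1}$ in the long-root case: from $\inner{h_0}{\theta} = -2$ one gets $f_0 x_\theta = 0$, hence $\varphi_0(x_\theta) = 0$ and $\varepsilon_0(x_\theta) = 2$, so that $e_0^2 x_\theta = x_{-\theta} = v(B^{r,1})$ passes through a weight-zero intermediate element, and $e_0, f_0$ mix $B(\clfw_r)$ with $\emptyset \in B(0)$. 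Tracking one $0$-arrow at a time outward from $x_\theta \otimes x_\theta$, I would determine for each step whether condition (LL) or (RR) holds and accumulate the running total of $\pm 1$ to obtain $H$ on each component.

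The determination of (LL) versus (RR) at each $0$-arrow is the crux: it reduces, via the signature rule, to deciding whether $e_0$ acts on the left or on the right tensor factor, which requires the precise values of $\varepsilon_0$ and $\varphi_0$ on the relevant elements of $B^{r,1}$. The most delicate case is the zero-weight space $\{y_i\} \sqcup \{\emptyset\}$, where $\varphi_0 = \varepsilon_0$ forces nontrivial $0$-arrows and where a single misidentified signature would shift an entire tabulated energy. I expect this bookkeeping on the zero-weight space to be the main obstacle, and I would organize it by first pinning down the boundary values $\varepsilon_0(x_\theta) = \varphi_0(x_{-\theta}) = 2$ and the corresponding intermediate values, then propagating the comparisons through the tensor product. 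A secondary subtlety is the dependence on whether $\theta$ is long or short (equivalently on $t_r^{\vee}$), which controls the length of the $0$-string and hence the largest energy appearing; I would treat the long-root case in full and indicate the modifications for the remaining types so that the tabulated values hold uniformly.
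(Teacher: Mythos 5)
The first thing to note is that the paper does not prove this statement at all: Theorem~\ref{thm:adjoint_local_energy} is imported wholesale from \cite[Thm.~6.2]{BFKL06}, renormalized so that $H\bigl(u(B^{r,1}) \otimes u(B^{r,1})\bigr) = 0$, and with two erroneous entries of the published table corrected (see the remark following the theorem). So your plan --- computing $H$ directly from the recurrence~\eqref{eq:local_energy} by enumerating classically highest weight representatives and accumulating signed $0$-arrow contributions --- is a genuinely different route, and it is the standard, correct strategy for such a computation. You also miss a simplification that would help you: since both tensor factors are the same crystal $B^{r,1}$, the combinatorial $R$-matrix on $B^{r,1} \otimes B^{r,1}$ is the identity map, so (LL) and (RR) collapse to ``$e_0$ acts on the left factor'' and ``$e_0$ acts on the right factor'' respectively; every $0$-arrow then contributes exactly $\pm 1$, and the whole bookkeeping you worry about is just the $i = 0$ signature rule.

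There are, however, two genuine gaps. First, your enumeration of the $I_0$-highest weight elements has the tensor factors in the wrong order for this paper's conventions. The tensor product here is opposite to Kashiwara's, and $b_2 \otimes b_1$ is $I_0$-highest weight if and only if the \emph{right} factor $b_1$ is $I_0$-highest weight and $\varepsilon_i(b_2) \leq \varphi_i(b_1)$ for all $i \in I_0$; this is why every entry of Table~\ref{table:adjoint_local_energy} has $u$, $u_{\clfw_r}$, or $\emptyset$ as the right factor. Your elements $x_\theta \otimes b'$ (highest weight on the left) are the correct enumeration only in Kashiwara's convention, and the discrepancy is not cosmetic: in the paper's convention $f_r(u \otimes u) = u \otimes (f_r u)$, so $H\bigl(u \otimes (f_r u)\bigr) = 0$, whereas the table entry you must reproduce is $H\bigl((f_r u) \otimes u\bigr) = -A_{0,r} \neq 0$. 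Since your entire computation hinges on deciding which factor $e_0$ acts on, running it with the factors reversed would yield a systematically wrong table. Second, deferring the short-root cases to ``modifications'' postpones precisely the content of the theorem: the entries $H\bigl((f_r u) \otimes u\bigr) = -A_{0,r}$ and $H(x \otimes u_{\clfw_r}) = 2$ differ from the values printed in \cite{BFKL06} exactly in types $D_{n+1}^{(2)}$ and $A_{2n}^{(2)}$, i.e.\ the types where $A_{0,r} = -2$ and $\clfw_r$ is a short root, so that your derivation $\inner{h_0}{\theta} = -2$, $\varepsilon_0(x_\theta) = 2$, $e_0^2 x_\theta = x_{-\theta}$ (which assumes $\clfw_r = \theta$ is the highest root) does not apply there. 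A proof that handles only the long-root case reproves the easy part and omits the cases where the cited source itself went wrong; those cases must be done explicitly, with their different $0$-string lengths, for the argument to establish the table as stated.
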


\begin{table}
\[
\scalebox{0.93}{$
\begin{array}{ccccccccc}
\toprule
b' \otimes b & u \otimes u_{\clfw_r} & (f_r u) \otimes u & \emptyset \otimes u & u \otimes \emptyset & x \otimes u_{\clfw_r} & y_r \otimes u & v \otimes u & \emptyset \otimes \emptyset
\\ \midrule
H(b' \otimes b) & 0 & -A_{0,r} & 1 & 1 & 2 & 2 & 2 & 2
\\ \bottomrule
\end{array}
$}
\]
\caption{Local energy on $B^{r,1} \otimes B^{r,1}$ for $r \sim 0$, where $u = u(B^{r,1})$, $v = v(B^{r,1})$, $x \in B(\clfw_r) \setminus \{ u, f_r u \}$.}
\label{table:adjoint_local_energy}
\end{table}

\begin{remark}
The local energy function on $B^{r,1} \otimes B^{r,1}$ from~\cite{BFKL06} in Table~\ref{table:adjoint_local_energy} is renormalized to our convention.
Let $u = u(B^{r,1})$.
We note that there are two minor errors in~\cite{BFKL06}, where it is stated $H(x \otimes u) = 1$ and $H(f_r u \otimes u) = 1$ (the only difference is for types $D_{n+1}^{(2)}$ and $A_{2n}^{(2)}$).
\end{remark}

%

Define $\varphi(b) = \sum_{i \in I} \varphi_i(b) \Lambda_i$.
Let $b^{\sharp} \in B^{r,s}$ be the unique element such that $\varphi(b^{\sharp}) = \ell \Lambda_0$, where $\ell = \min \{ \inner{c}{\varphi(b)} \mid b \in B^{r,s} \}$ and $c$ is the canonical central element.
For example, when $r$ is a special node, we have $b^{\sharp} = v(B^{r,s})$.
Following~\cite{HKOTT02}, we then define $D_{B^{r,s}} \colon B^{r,s} \to \ZZ$ by
\[
D_{B^{r,s}}(b) = H(b \otimes b^{\sharp}) - H(u(B^{r,s}) \otimes b^{\sharp}).
\]
Let $B = \bigotimes_{k=1}^N B^{r_k,s_k}$. We define \defn{energy}~\cite{HKOTY99} $D \colon B \to \ZZ$ by
\begin{equation}
\label{eq:energy_function}
D = \sum_{1 \leq j < k \leq N} H_j R_{j+1} R_{j+2} \cdots R_{k-1} + \sum_{j=1}^N D_{B^{r_j,s_j}} R_1 R_2 \cdots R_{j-1},
\end{equation}
where $R_j$ and $H_j$ are the combinatorial $R$-matrix and local energy function, respectively, acting on the $j$-th and $(j+1)$-th factors and $D_{B^{r_j,s_j}}$ acts on the rightmost factor. Note that $D$ is constant on classical components since $H$ is and $R$ is a $U_q'(\g)$-crystal isomorphism.

If we restrict ourselves to the case when $B = (B^{r,s})^{\otimes N}$ for $r$ a minuscule node, then we can simplify the energy function to
\begin{equation}
\label{eq:energy_function_simple}
D(b_1 \otimes \cdots \otimes b_N) = \sum_{j=1}^{N-1} j H(b_j \otimes b_{j+1}).
\end{equation}

\subsection{Soliton cellular automata}

A \defn{soliton cellular automaton (SCA)} using $B^{r,1}$ of type $\g$ is a discrete dynamical system, where a \defn{state} is an element in $\bigotimes_{k=-\infty}^{0} B^{r,1}$ of the form
\[
\cdots \otimes u_1 \otimes u_1 \otimes b_L \otimes \cdots \otimes b_1 \otimes b_0,
\]
where $u_s$ is the maximal element of $B^{r,s}$, for some $L \gg 1$.
The element $u_1$ in a state is called a \defn{vacuum element}.
Given a state $b$, define the \defn{time evolution operator} $T_{\ell}(b)$ by
\[
u_{\ell} \otimes T_{\ell}(b) = \cdots R_{-3} R_{-2} R_{-1} R_0(b \otimes u_{\ell}).
\]
Note that this is well defined because eventually we have $R(u_1 \otimes u_{\ell}) = u_{\ell} \otimes u_1$.
We depict this by
\[
\begin{tikzpicture}[baseline=-20]
\node (u0) at (0.4,0) {$u^{(0)} = u_{\ell}$};
\foreach \x in {-4, ..., 0} {
    \node (b\x) at (2*\x-1,0.9) {$b_{\x}$};
    \node (bt\x) at (2*\x-1,-1) {$\widetilde{b}_{\x}$};
    \draw[->] (b\x) -- (bt\x);
}
\foreach \x [evaluate=\x as \xm using {int(\x-1)}] in {1, ..., 4} {
    \node (u\x) at (-2*\x, 0) {$u^{(\x)}$};
    \draw[->] (u\xm) -- (u\x);
}
\node (u5) at (-10, 0) {$\cdots$};
\draw[->] (u4) -- (u5);
\end{tikzpicture}.
\]
The \defn{state energy} is defined as
\[
E_{\ell}(b) = \sum_{k=0}^{\infty} H(b_{-k} \otimes u^{(k)}).
\]
When $\ell = 1$, the state energy may be simplified as
\[
E_1(b) = \sum_{k=0}^{L+1} H(b_k \otimes b_{k+1}),
\]
where $b_1 = b_{-L-1} = u_1$, since the combinatorial $R$-matrix $R \colon B^{r,s} \otimes B^{r,s} \to B^{r,s} \otimes B^{r,s}$ is the identity map.

\begin{dfn}
\label{def:soliton}
Consider an SCA using $B^{r,1}$ of type $\g$.
A \defn{soliton} is a tensor product $b = b_{-L} \otimes \cdots \otimes b_0$ such that $b_k \neq u_1$, for all $k$, and $E_1(b) = H(f_r u_1 \otimes u_1)$ (with $b$ considered as a state by $\cdots \otimes u_1 \otimes u_1 \otimes b$).
The \defn{length} of a soliton is $\sum_{k=0}^L N_r(b_{-k})$, where
\[
N_r(b_{-k}) = \begin{cases}
1 & \text{if } b_{-k} = \emptyset, \\
\lvert \{a \mid i_a = r \} \rvert & \text{otherwise},
\end{cases}
\]
for $e_{i_1} \dotsm e_{i_m} b_{-k} = u_{\clfw_r}$.
\end{dfn}

\begin{ex}
\label{ex:carrier_type_C}
Consider $r = 1$ in type $C_3^{(1)}$.
Then $2 \otimes \bon$ is a soliton of length $3$ as $e_1 e_2 e_3 e_2 e_1 \bon = 1 = u_1$ and $E_1(2 \otimes \bon) = H(2 \otimes 1) = 1$.
Therefore, $\ell$ is not necessarily equal $L$.
In particular, we apply $T_5(\cdots \otimes 1 \otimes 1 \otimes 2 \otimes \bon)$:
\[
\begin{tikzpicture}[baseline=-20,scale=0.9]
\node (u0) at (0,0) {$1^5$};
\node (u1) at (-2,0) {$1^3$};
\node (u2) at (-4,0) {$1^2 2$};
\node (u3) at (-6,0) {$1^3 2\bon$};
\node (u4) at (-8,0) {$1^4 2$};
\node (u5) at (-10,0) {$1^5$};
\node (u6) at (-12,0) {$\cdots$};
\node (b0) at (-1,0.9) {$\bon$};
\node (b-1) at (-3,0.9) {$2$};
\foreach \x in {-5, ..., -2} {
    \node (b\x) at (2*\x-1,0.9) {$1$};
}
\foreach \x in {-5, -2, -1, 0} {
    \node (bt\x) at (2*\x-1,-1) {$1$};
}
\node (bt-3) at (-7,-1) {$\bon$};
\node (bt-4) at (-9,-1) {$2$};
\foreach \x in {-5, ..., 0} {
    \draw[->] (b\x) -- (bt\x);
}
\foreach \x [evaluate=\x as \xm using {int(\x-1)}] in {1, ..., 6} {
    \draw[->] (u\xm) -- (u\x);
}
\end{tikzpicture}.
\]
Note that the soliton moved $3$ steps to the left, which agrees with its length.
\end{ex}

Let $\Sol^{(r)}(\ell_1, \ell_2, \dotsc, \ell_m)$ denote the set of states consisting of a solitons of length $\ell_1, \dotsc, \ell_m$, in that order from right to left, in the SCA using $B^{r,1}$ of type $\g$ that are ``far apart;'' \textit{i.e.}, the solitons (pairwise) will not be interacting after applying $T_{\infty}$.
Note that the set of solitons of length $\ell$ is equivalent to $\Sol^{(r)}(\ell)$ up to removing vacuum states.

\begin{ex}
Let $\g$ be of type $D_5^{(2)}$.
In this example and all subsequent examples, unless otherwise stated, we will evolve the SCA by $T_{\infty}$ (or $T_{\ell}$ for $\ell \gg 1$) and we omit the tensor products.
We give the evolution of the SCA starting with a state in $\Sol^{(1)}(2,3,4)$:
\[
{\begin{array}{c|c}
t = 0 & \cdots {\color{gray} 1} {\color{gray} 1} {\color{gray} 1} {\color{gray} 1} {\color{gray} 1} {\color{gray} 1} {\color{gray} 1} {\color{gray} 1} {\color{gray} 1} {\color{gray} 1} {\color{gray} 1} {\color{gray} 1} {\color{gray} 1} {\color{gray} 1} {\color{gray} 1} {\color{gray} 1} {\color{gray} 1} {\color{gray} 1} {\color{gray} 1} {\color{gray} 1} {\color{gray} 1} {\color{gray} 1} {\color{gray} 1} {\color{gray} 1} {\color{gray} 1} {\color{gray} 1} {\color{gray} 1} {\color{gray} 1} {\color{gray} 1} {\color{gray} 1} {\color{gray} 1} {\color{gray} 1} {\color{gray} 1} {\color{gray} 1} {\color{gray} 1} 2 \overline{3} {\color{gray} 1} {\color{gray} 1} {\color{gray} 1} 4 0 \overline{2} {\color{gray} 1} {\color{gray} 1} {\color{gray} 1} {\color{gray} 1} 3 \overline{4} \overline{3} \overline{3} {\color{gray} 1} \\
t = 1 & \cdots {\color{gray} 1} {\color{gray} 1} {\color{gray} 1} {\color{gray} 1} {\color{gray} 1} {\color{gray} 1} {\color{gray} 1} {\color{gray} 1} {\color{gray} 1} {\color{gray} 1} {\color{gray} 1} {\color{gray} 1} {\color{gray} 1} {\color{gray} 1} {\color{gray} 1} {\color{gray} 1} {\color{gray} 1} {\color{gray} 1} {\color{gray} 1} {\color{gray} 1} {\color{gray} 1} {\color{gray} 1} {\color{gray} 1} {\color{gray} 1} {\color{gray} 1} {\color{gray} 1} {\color{gray} 1} {\color{gray} 1} {\color{gray} 1} {\color{gray} 1} {\color{gray} 1} {\color{gray} 1} {\color{gray} 1} 2 \overline{3} {\color{gray} 1} {\color{gray} 1} 4 0 \overline{2} {\color{gray} 1} {\color{gray} 1} {\color{gray} 1} 3 \overline{4} \overline{3} \overline{3} {\color{gray} 1} {\color{gray} 1} {\color{gray} 1} {\color{gray} 1} {\color{gray} 1} \\
t = 2 & \cdots {\color{gray} 1} {\color{gray} 1} {\color{gray} 1} {\color{gray} 1} {\color{gray} 1} {\color{gray} 1} {\color{gray} 1} {\color{gray} 1} {\color{gray} 1} {\color{gray} 1} {\color{gray} 1} {\color{gray} 1} {\color{gray} 1} {\color{gray} 1} {\color{gray} 1} {\color{gray} 1} {\color{gray} 1} {\color{gray} 1} {\color{gray} 1} {\color{gray} 1} {\color{gray} 1} {\color{gray} 1} {\color{gray} 1} {\color{gray} 1} {\color{gray} 1} {\color{gray} 1} {\color{gray} 1} {\color{gray} 1} {\color{gray} 1} {\color{gray} 1} {\color{gray} 1} 2 \overline{3} {\color{gray} 1} 4 0 \overline{2} {\color{gray} 1} {\color{gray} 1} 3 \overline{4} \overline{3} \overline{3} {\color{gray} 1} {\color{gray} 1} {\color{gray} 1} {\color{gray} 1} {\color{gray} 1} {\color{gray} 1} {\color{gray} 1} {\color{gray} 1} {\color{gray} 1} \\
t = 3 & \cdots {\color{gray} 1} {\color{gray} 1} {\color{gray} 1} {\color{gray} 1} {\color{gray} 1} {\color{gray} 1} {\color{gray} 1} {\color{gray} 1} {\color{gray} 1} {\color{gray} 1} {\color{gray} 1} {\color{gray} 1} {\color{gray} 1} {\color{gray} 1} {\color{gray} 1} {\color{gray} 1} {\color{gray} 1} {\color{gray} 1} {\color{gray} 1} {\color{gray} 1} {\color{gray} 1} {\color{gray} 1} {\color{gray} 1} {\color{gray} 1} {\color{gray} 1} {\color{gray} 1} {\color{gray} 1} {\color{gray} 1} 2 4 \overline{3} {\color{gray} 1} 0 \overline{2} {\color{gray} 1} 3 \overline{4} \overline{3} \overline{3} {\color{gray} 1} {\color{gray} 1} {\color{gray} 1} {\color{gray} 1} {\color{gray} 1} {\color{gray} 1} {\color{gray} 1} {\color{gray} 1} {\color{gray} 1} {\color{gray} 1} {\color{gray} 1} {\color{gray} 1} {\color{gray} 1} \\
t = 4 & \cdots {\color{gray} 1} {\color{gray} 1} {\color{gray} 1} {\color{gray} 1} {\color{gray} 1} {\color{gray} 1} {\color{gray} 1} {\color{gray} 1} {\color{gray} 1} {\color{gray} 1} {\color{gray} 1} {\color{gray} 1} {\color{gray} 1} {\color{gray} 1} {\color{gray} 1} {\color{gray} 1} {\color{gray} 1} {\color{gray} 1} {\color{gray} 1} {\color{gray} 1} {\color{gray} 1} {\color{gray} 1} {\color{gray} 1} {\color{gray} 1} 2 4 0 \overline{3} {\color{gray} 1} {\color{gray} 1} \overline{4} \overline{2} 3 \overline{3} \overline{3} {\color{gray} 1} {\color{gray} 1} {\color{gray} 1} {\color{gray} 1} {\color{gray} 1} {\color{gray} 1} {\color{gray} 1} {\color{gray} 1} {\color{gray} 1} {\color{gray} 1} {\color{gray} 1} {\color{gray} 1} {\color{gray} 1} {\color{gray} 1} {\color{gray} 1} {\color{gray} 1} {\color{gray} 1} \\
t = 5 & \cdots {\color{gray} 1} {\color{gray} 1} {\color{gray} 1} {\color{gray} 1} {\color{gray} 1} {\color{gray} 1} {\color{gray} 1} {\color{gray} 1} {\color{gray} 1} {\color{gray} 1} {\color{gray} 1} {\color{gray} 1} {\color{gray} 1} {\color{gray} 1} {\color{gray} 1} {\color{gray} 1} {\color{gray} 1} {\color{gray} 1} {\color{gray} 1} {\color{gray} 1} 2 4 0 \overline{3} {\color{gray} 1} {\color{gray} 1} {\color{gray} 1} \overline{4} \overline{4} \overline{2} 4 \overline{3} {\color{gray} 1} {\color{gray} 1} {\color{gray} 1} {\color{gray} 1} {\color{gray} 1} {\color{gray} 1} {\color{gray} 1} {\color{gray} 1} {\color{gray} 1} {\color{gray} 1} {\color{gray} 1} {\color{gray} 1} {\color{gray} 1} {\color{gray} 1} {\color{gray} 1} {\color{gray} 1} {\color{gray} 1} {\color{gray} 1} {\color{gray} 1} {\color{gray} 1} \\
t = 6 & \cdots {\color{gray} 1} {\color{gray} 1} {\color{gray} 1} {\color{gray} 1} {\color{gray} 1} {\color{gray} 1} {\color{gray} 1} {\color{gray} 1} {\color{gray} 1} {\color{gray} 1} {\color{gray} 1} {\color{gray} 1} {\color{gray} 1} {\color{gray} 1} {\color{gray} 1} {\color{gray} 1} 2 4 0 \overline{3} {\color{gray} 1} {\color{gray} 1} {\color{gray} 1} {\color{gray} 1} \overline{4} \overline{4} \overline{2} {\color{gray} 1} 4 \overline{3} {\color{gray} 1} {\color{gray} 1} {\color{gray} 1} {\color{gray} 1} {\color{gray} 1} {\color{gray} 1} {\color{gray} 1} {\color{gray} 1} {\color{gray} 1} {\color{gray} 1} {\color{gray} 1} {\color{gray} 1} {\color{gray} 1} {\color{gray} 1} {\color{gray} 1} {\color{gray} 1} {\color{gray} 1} {\color{gray} 1} {\color{gray} 1} {\color{gray} 1} {\color{gray} 1} {\color{gray} 1} \\
t = 7 & \cdots {\color{gray} 1} {\color{gray} 1} {\color{gray} 1} {\color{gray} 1} {\color{gray} 1} {\color{gray} 1} {\color{gray} 1} {\color{gray} 1} {\color{gray} 1} {\color{gray} 1} {\color{gray} 1} {\color{gray} 1} 2 4 0 \overline{3} {\color{gray} 1} {\color{gray} 1} {\color{gray} 1} {\color{gray} 1} {\color{gray} 1} \overline{4} \overline{4} \overline{2} {\color{gray} 1} {\color{gray} 1} 4 \overline{3} {\color{gray} 1} {\color{gray} 1} {\color{gray} 1} {\color{gray} 1} {\color{gray} 1} {\color{gray} 1} {\color{gray} 1} {\color{gray} 1} {\color{gray} 1} {\color{gray} 1} {\color{gray} 1} {\color{gray} 1} {\color{gray} 1} {\color{gray} 1} {\color{gray} 1} {\color{gray} 1} {\color{gray} 1} {\color{gray} 1} {\color{gray} 1} {\color{gray} 1} {\color{gray} 1} {\color{gray} 1} {\color{gray} 1} {\color{gray} 1} \\
t = 8 & \cdots {\color{gray} 1} {\color{gray} 1} {\color{gray} 1} {\color{gray} 1} {\color{gray} 1} {\color{gray} 1} {\color{gray} 1} {\color{gray} 1} 2 4 0 \overline{3} {\color{gray} 1} {\color{gray} 1} {\color{gray} 1} {\color{gray} 1} {\color{gray} 1} {\color{gray} 1} \overline{4} \overline{4} \overline{2} {\color{gray} 1} {\color{gray} 1} {\color{gray} 1} 4 \overline{3} {\color{gray} 1} {\color{gray} 1} {\color{gray} 1} {\color{gray} 1} {\color{gray} 1} {\color{gray} 1} {\color{gray} 1} {\color{gray} 1} {\color{gray} 1} {\color{gray} 1} {\color{gray} 1} {\color{gray} 1} {\color{gray} 1} {\color{gray} 1} {\color{gray} 1} {\color{gray} 1} {\color{gray} 1} {\color{gray} 1} {\color{gray} 1} {\color{gray} 1} {\color{gray} 1} {\color{gray} 1} {\color{gray} 1} {\color{gray} 1} {\color{gray} 1} {\color{gray} 1}\\
t = 9 & \cdots {\color{gray} 1} {\color{gray} 1} {\color{gray} 1} {\color{gray} 1} 2 4 0 \overline{3} {\color{gray} 1} {\color{gray} 1} {\color{gray} 1} {\color{gray} 1} {\color{gray} 1} {\color{gray} 1} {\color{gray} 1} \overline{4} \overline{4} \overline{2} {\color{gray} 1} {\color{gray} 1} {\color{gray} 1} {\color{gray} 1} 4 \overline{3} {\color{gray} 1} {\color{gray} 1} {\color{gray} 1} {\color{gray} 1} {\color{gray} 1} {\color{gray} 1} {\color{gray} 1} {\color{gray} 1} {\color{gray} 1} {\color{gray} 1} {\color{gray} 1} {\color{gray} 1} {\color{gray} 1} {\color{gray} 1} {\color{gray} 1} {\color{gray} 1} {\color{gray} 1} {\color{gray} 1} {\color{gray} 1} {\color{gray} 1} {\color{gray} 1} {\color{gray} 1} {\color{gray} 1} {\color{gray} 1} {\color{gray} 1} {\color{gray} 1} {\color{gray} 1} {\color{gray} 1}\\
\end{array}}
\]
\end{ex}

Next, we recall some conjectures about solitons.
First, we define $\widehat{\g}_{0,r}$ roughly by removing the node $r$ from the classical Dynkin diagram and taking the affine version of each of the remaining components, taking the twisted version if $\g$ was twisted.
More specifically, suppose $\g$ is of type $X_{\mathfrak{R}(n)}^{(t)}$.
If $\g$ is of nonexceptional type except for $r = n-2,n-1$ in type $D_n^{(1)}$, then $\widehat{\g}_{0,r}$ is of type $A_{r-1}^{(1)} \otimes X_{\mathfrak{R}(n-r)}^{(t)}$.\footnote{We consider $D_3^{(1)} = A_3^{(1)}$ and $D_2^{(1)} = A_1^{(1)} \times A_1^{(1)}$. Furthermore, we consider $X_{\mathfrak{R}(1)}^{(t)} = A_1^{(1)}$ whenever $X_{\mathfrak{R}(1)}^{(t)} \neq A_2^{(2)}$.}
Because of the trivalent node in the classical Dynkin diagram of $D_n$, for type $D_n^{(1)}$, we have $\widehat{\g}_{0,n-1} = \widehat{\g}_{0,n}$, which has type $A_{n-1}^{(1)}$, and $\widehat{\g}_{0,n-2}$ of type $A_{n-3}^{(1)} \times A_1^{(1)} \times A_1^{(1)}$.
For type $E_{6,7,8}^{(1)}$, we consider the untwisted affine version of $\g_{0,r}$ (see Table~\ref{table:restrictions_E}).
Otherwise, $\widehat{\g}_{0,r}$ for the $r$ we consider is given by Table~\ref{table:restrictions}.
Let  $(\gamma_i)_{i \in I}$ be given by Table~\ref{table:scaling_factors} and
\[
\modgamma_i = \begin{cases}
1 & \text{if } \g = A_{2n}^{(2)} \text{ and } i = n, \\
2 & \text{if } \g = A_{2n}^{(2)\dagger} \text{ and } i = n, \\
\gamma_i & \text{otherwise.}
\end{cases}
\]

\begin{table}
\[
\begin{array}{|c|c|}
\hline
r & \widehat{\g}_{0,r} \\\hline
1 &  D_{n-1}^{(1)} \\
2 & A_{n-1}^{(1)} \\
3 & A_{n-2}^{(1)} \times A_1^{(1)} \\
4 & A_{n-4}^{(1)} \times A_2^{(1)} \times A_1^{(1)} \\
5 & A_4^{(1)} \times A_{n-5}^{(1)} \\
6 & D_5^{(1)} \times A_{n-6}^{(1)} \\
7 & E_6^{(1)} \times A_{n-7}^{(1)} \\
8 & E_7^{(1)}
\\\hline
\end{array}
\]
\caption{The restrictions for type $E_n^{(1)}$, where we disregard any $A_k^{(1)}$ with $k \leq 0$.}
\label{table:restrictions_E}
\end{table}

\begin{table}
\[
\begin{array}{cccccccccc}
\toprule
\g & F_4^{(1)} & F_4^{(1)} & F_4^{(1)} & E_6^{(2)} & E_6^{(2)} & E_6^{(2)} & G_2^{(1)} & D_4^{(3)} \\ \midrule
r & 1 & 2,3 & 4 & 1 & 2,3 & 4 & 1,2 & 1,2 \\ \midrule
\widehat{\g}_{0,r} & C_3^{(1)} & A_2^{(1)} \times A_1^{(1)} & B_3^{(1)} & D_4^{(2)} & A_2^{(1)} \times A_1^{(1)} & A_5^{(2)} & A_1^{(1)} & A_1^{(1)} \\
\bottomrule
\end{array}
\]
\caption{The restrictions given by removing node $r$ in type $\g$ considered.}
\label{table:restrictions}
\end{table}

Three of the fundamental questions about SCA are given as the following conjecture.\footnote{Two other fundamental questions include determining the soliton equations and an ultradiscrete description.}
This has not been explicitly stated in the literature as far as the authors are aware, but it is known to experts.

\begin{conj}
\label{conj:SCA}
Consider an SCA using $B^{r,1}$ of type $\g$, and fix some integers $\ell_1 < \ell_2$.
\begin{enumerate}[{\rm (1)}]
\item \label{conj:SCA_bijection} There exists a bijection $\Psi$ between solitons of length $\ell$ and elements in
\[
\mathcal{B}^{(r)}(\ell) := \bigotimes_{r' \sim r} \begin{cases}
B^{r', \ell \modgamma_r / \gamma_{r'}} & \text{if $\modgamma_r / \gamma_{r'} \in \ZZ$}, \\
B^{r', \lfloor \ell / 2 \rfloor} \otimes B^{r', \lceil \ell / 2 \rceil} & \text{if $\modgamma_r / \gamma_{r'} = 1/2$}, \\
B^{r', \lfloor \ell / 3 \rfloor} \otimes B^{r', \lfloor \ell / 3 \rfloor + \sigma} \otimes B^{r', \lfloor \ell / 3 \rfloor + \tau} & \text{if $\modgamma_r / \gamma_{r'} = 1/3$},
\end{cases}
\]
of type $\widehat{\g}_{r,0}$, where $\sigma = 1$ (resp.~$\tau = 1$) if the remainder of $\ell / 3$ is at least $1$ (resp.\ equals $2$) and $0$ otherwise.
\item \label{conj:SCA_scattering} The \defn{scattering rule} of two solitons, a state in $\Sol^{(r)}(\ell_1, \ell_2)$ evolving to $\Sol^{(r)}(\ell_2, \ell_1)$ under sufficiently many time evolutions, is given by the combinatorial $R$-matrix
\[
R \colon \mathcal{B}^{(r)}(\ell_1) \otimes \mathcal{B}^{(r)}(\ell_2) \to \mathcal{B}^{(r)}(\ell_2) \otimes \mathcal{B}^{(r)}(\ell_1)
\]
of type $\widehat{\g}_{0,r}$.
\item \label{conj:SCA_phase_shift} The \defn{phase shift}, the shift of the soliton positions after scattering compared to if there was no interaction, is given by
\[
\delta = 2\ell_1 - C_r H\bigl(x \otimes y\bigr),
\]
where $x \in \mathcal{B}^{(r)}(\ell_1)$ and $y \in \mathcal{B}^{(r)}(\ell_2)$ correspond to the solitons given by Part~{\rm(\ref{conj:SCA_bijection})} and $C_r \in \ZZ$.
\end{enumerate}
\end{conj}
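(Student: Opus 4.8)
The plan is to transport every statement across the (conjectural) bijection $\Phi$ between rigged configurations and $\bigotimes B^{r,1}$, exploiting the two facts that the combinatorial $R$-matrix becomes the identity on rigged configurations under $\Phi$ and that time evolution acts by a simple shift of the riggings indexed by $r$ (increasing $J^{(r)}_i$ by $i$). First I would extend $\Phi$ to states of the SCA using the observation that appending vacuum elements $u_1$ does not alter the underlying rigged configuration, so that a state is realized as a rigged configuration $(\nu, J)$ in which $\nu^{(r)}$ records the soliton data. The key structural input is that a soliton of length $\ell$ corresponds exactly to a configuration whose $\nu^{(r)}$ is a single row of length $\ell$, the ``far apart'' hypothesis translating into the riggings in $J^{(r)}$ being widely separated; the energy condition $E_1(b) = H(f_r u_1 \otimes u_1)$ in Definition~\ref{def:soliton} is what pins down the single-row shape.

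For Part~(\ref{conj:SCA_bijection}), I would use $\nu^{(r)}$ not as part of the data but as a prescription of boundary shapes: delete the node $r$ from the classical Dynkin diagram and reinterpret the remaining partitions $(\nu^{(i)})_{i \in I_{0,r}}$ with their riggings and (recomputed) vacancy numbers as a rigged configuration for $\widehat{\g}_{0,r}$. Applying the bijection $\Phi$ for $\widehat{\g}_{0,r}$ produces an element of the decoupled tensor product, and a weight/dimension bookkeeping argument—using the restrictions of Table~\ref{table:restrictions} together with the scaling factors $\modgamma$—identifies this tensor product with $\mathcal{B}^{(r)}(\ell)$ (the three cases of the conjecture reflecting the three possibilities for $\modgamma_r / \gamma_{r'}$). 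The map $\Psi$ is then the composite of $\Phi^{-1}$, this reinterpretation, and $\Phi$ for $\widehat{\g}_{0,r}$, and bijectivity is automatic once the reinterpretation is shown to be a bijection at the level of rigged configurations.

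Parts~(\ref{conj:SCA_scattering}) and~(\ref{conj:SCA_phase_shift}) are then comparatively short. For scattering, since $R$ intertwines with the identity on rigged configurations, the evolution of a state in $\Sol^{(r)}(\ell_1,\ell_2)$ is computed entirely from how the riggings of the two rows of $\nu^{(r)}$ rearrange as their relative order reverses under sufficiently many $T_\ell$; the reinterpretation of Part~(\ref{conj:SCA_bijection}) turns this rearrangement into precisely the combinatorial $R$-matrix for $\widehat{\g}_{0,r}$. For the phase shift, I would compute the net displacement of the larger soliton as the difference of the positions encoded by the riggings of $\nu^{(r)}$; this difference is controlled by the change in the vacancy numbers after the smaller soliton is inserted, and the vacancy-number formula expresses it through the local energy $H$ of $\widehat{\g}_{0,r}$, giving $\delta = 2\ell_1 - C_r H(x \otimes y)$ with the appropriate $C_r \in \ZZ$.

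The main obstacle will be Part~(\ref{conj:SCA_bijection}), and within it the correct treatment of elements outside the maximal component $B(\clfw_r) \subseteq B^{r,1}$: the weight-zero elements $y_i$ and the singleton $\emptyset$ arising when $r \sim 0$. These complicate both the notion of soliton length—already flagged as \emph{ad-hoc}—and the passage to $\widehat{\g}_{0,r}$, and they force the two regimes ($r$ special, via Theorem~\ref{thm:minuscule_local_energy}, and $r \sim 0$, via Theorem~\ref{thm:adjoint_local_energy}) to be handled separately. Verifying that the reinterpretation of rigged-configuration data is a genuine bijection compatible with all the conjectured properties of $\Phi$ in each regime is where the real work lies; once $\Phi$ is known to linearize the dynamics, the scattering and phase-shift statements follow with little additional effort.
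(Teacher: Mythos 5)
Your proposal takes essentially the same route as the paper's Section~\ref{sec:SCA_RC}: there, under Conjectures~\ref{conj:bijection} and~\ref{conj:R_matrix_identity}, the paper shows that $\nu^{(r)}$ records soliton lengths (Theorem~\ref{thm:RC_soliton_sizes}), that time evolution becomes the rigging shift $A_s$ (Proposition~\ref{prop:top_commutes}), that deleting $\nu^{(r)}$ is the decoupling isomorphism onto rigged configurations of type $\widehat{\g}_{0,r}$ (Proposition~\ref{prop:RC_restriction}), and then deduces scattering from a commuting cube (Theorem~\ref{thm:RC_scattering}) and the phase shift from the change in vacancy numbers expressed via the local energy (Theorem~\ref{thm:RC_phase_shift}). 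The caveat you flag is also the paper's: the argument only establishes the conjecture in the regimes where the soliton structure and the commutativity of the side faces (Proposition~\ref{prop:sides_commute}, Conjecture~\ref{conj:sides_commute}) can be verified, not in full generality.
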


\begin{remark}
Conjecture~\ref{conj:SCA}(\ref{conj:SCA_bijection}) can be considered an interpretation of the formulas given in~\cite[App.~B]{HKOTT02}.
\end{remark}

We make some remarks about Conjecture~\ref{conj:SCA}.
First, the ordering of the tensor factors in Part~(\ref{conj:SCA_bijection}) does not matter by the combinatorial $R$-matrix.
Additionally, we can extend Part~(\ref{conj:SCA_bijection}) to a more general case.
\begin{dfn}[Decoupling rule]
\label{def:decoupling}
Define
\[
\mathcal{B}^{(r)}(\ell_1, \dotsc, \ell_k) := \bigotimes_{i=1}^k \mathcal{B}^{(r)}(\ell_i).
\]
We construct a bijection between $\Sol^{(r)}(\ell_1, \dotsc, \ell_k)$ and $\mathcal{B}^{(r)}(\ell_1, \dotsc, \ell_k)$ by applying the bijection $\Psi$ from Part~(\ref{conj:SCA_bijection}) on each soliton of a fixed state $p \in \Sol^{(r)}(\ell_1, \dotsc, \ell_k)$.
We call the resulting bijection the \defn{decoupling rule}.
\end{dfn}
By slight abuse of notation, we also denote the decoupling rule by $\Psi$.
More explicitly, consider solitons $b_1, \dotsc, b_k$ (in that order), then
\[
\Psi(p) = \Psi(b_1) \otimes \cdots \otimes \Psi(b_k).
\]
We also note that we have $\modgamma_r / \gamma_{r'} \neq 1$ only if $\widehat{\g}_{0,r} = A_{n-1}^{(1)}$.
Furthermore, the phase shift of one soliton is the negative phase shift of the other in two body scattering.

\begin{ex}
Consider an SCA starting with a state in $\Sol^{(1)}(2,3)$ of type $A_3^{(1)}$:
\[
{\begin{array}{c|c}
t = 0 & \cdots {\color{gray} 1} {\color{gray} 1} {\color{gray} 1} {\color{gray} 1} {\color{gray} 1} {\color{gray} 1} {\color{gray} 1} {\color{gray} 1} {\color{gray} 1} {\color{gray} 1} {\color{gray} 1} {\color{gray} 1} {\color{gray} 1} {\color{gray} 1} {\color{gray} 1} {\color{gray} 1} {\color{gray} 1} {\color{gray} 1} {\color{gray} 1} {\color{gray} 1} {\color{gray} 1} {\color{gray} 1} {\color{gray} 1} {\color{gray} 1} {\color{gray} 1} {\color{gray} 1} {\color{gray} 1} {\color{gray} 1} {\color{gray} 1} {\color{gray} 1} {\color{gray} 1} {\color{gray} 1} \underline{3 4} {\color{gray} 1} {\color{gray} 1} {\color{gray} 1} \underline{2 3 4} {\color{gray} 1} \\
t = 1 & \cdots {\color{gray} 1} {\color{gray} 1} {\color{gray} 1} {\color{gray} 1} {\color{gray} 1} {\color{gray} 1} {\color{gray} 1} {\color{gray} 1} {\color{gray} 1} {\color{gray} 1} {\color{gray} 1} {\color{gray} 1} {\color{gray} 1} {\color{gray} 1} {\color{gray} 1} {\color{gray} 1} {\color{gray} 1} {\color{gray} 1} {\color{gray} 1} {\color{gray} 1} {\color{gray} 1} {\color{gray} 1} {\color{gray} 1} {\color{gray} 1} {\color{gray} 1} {\color{gray} 1} {\color{gray} 1} {\color{gray} 1} {\color{gray} 1} {\color{gray} 1} \underline{3 4} {\color{gray} 1} {\color{gray} 1} \underline{2 3 4} {\color{gray} 1} {\color{gray} 1} {\color{gray} 1} {\color{gray} 1} \\
t = 2 & \cdots {\color{gray} 1} {\color{gray} 1} {\color{gray} 1} {\color{gray} 1} {\color{gray} 1} {\color{gray} 1} {\color{gray} 1} {\color{gray} 1} {\color{gray} 1} {\color{gray} 1} {\color{gray} 1} {\color{gray} 1} {\color{gray} 1} {\color{gray} 1} {\color{gray} 1} {\color{gray} 1} {\color{gray} 1} {\color{gray} 1} {\color{gray} 1} {\color{gray} 1} {\color{gray} 1} {\color{gray} 1} {\color{gray} 1} {\color{gray} 1} {\color{gray} 1} {\color{gray} 1} {\color{gray} 1} {\color{gray} 1} \underline{3 4} {\color{gray} 1} \underline{2 3 4} {\color{gray} 1} {\color{gray} 1} {\color{gray} 1} {\color{gray} 1} {\color{gray} 1} {\color{gray} 1} {\color{gray} 1}\\
t = 3 & \cdots {\color{gray} 1} {\color{gray} 1} {\color{gray} 1} {\color{gray} 1} {\color{gray} 1} {\color{gray} 1} {\color{gray} 1} {\color{gray} 1} {\color{gray} 1} {\color{gray} 1} {\color{gray} 1} {\color{gray} 1} {\color{gray} 1} {\color{gray} 1} {\color{gray} 1} {\color{gray} 1} {\color{gray} 1} {\color{gray} 1} {\color{gray} 1} {\color{gray} 1} {\color{gray} 1} {\color{gray} 1} {\color{gray} 1} {\color{gray} 1} {\color{gray} 1} {\color{gray} 1} \underline{3 4} \underline{2 3 4} {\color{gray} 1} {\color{gray} 1} {\color{gray} 1} {\color{gray} 1} {\color{gray} 1} {\color{gray} 1} {\color{gray} 1} {\color{gray} 1} {\color{gray} 1} {\color{gray} 1} \\
t = 4 & \cdots {\color{gray} 1} {\color{gray} 1} {\color{gray} 1} {\color{gray} 1} {\color{gray} 1} {\color{gray} 1} {\color{gray} 1} {\color{gray} 1} {\color{gray} 1} {\color{gray} 1} {\color{gray} 1} {\color{gray} 1} {\color{gray} 1} {\color{gray} 1} {\color{gray} 1} {\color{gray} 1} {\color{gray} 1} {\color{gray} 1} {\color{gray} 1} {\color{gray} 1} {\color{gray} 1} {\color{gray} 1} {\color{gray} 1} 3 \underline{4} \underline{\underline{4} 2 3} {\color{gray} 1} {\color{gray} 1} {\color{gray} 1} {\color{gray} 1} {\color{gray} 1} {\color{gray} 1} {\color{gray} 1} {\color{gray} 1} {\color{gray} 1} {\color{gray} 1} {\color{gray} 1} {\color{gray} 1} {\color{gray} 1} \\
t = 5 & \cdots {\color{gray} 1} {\color{gray} 1} {\color{gray} 1} {\color{gray} 1} {\color{gray} 1} {\color{gray} 1} {\color{gray} 1} {\color{gray} 1} {\color{gray} 1} {\color{gray} 1} {\color{gray} 1} {\color{gray} 1} {\color{gray} 1} {\color{gray} 1} {\color{gray} 1} {\color{gray} 1} {\color{gray} 1} {\color{gray} 1} {\color{gray} 1} {\color{gray} 1} 3 4 \underline{\underline{4 {\color{gray} 1}} 2} 3 {\color{gray} 1} {\color{gray} 1} {\color{gray} 1} {\color{gray} 1} {\color{gray} 1} {\color{gray} 1} {\color{gray} 1} {\color{gray} 1} {\color{gray} 1} {\color{gray} 1} {\color{gray} 1} {\color{gray} 1} {\color{gray} 1} {\color{gray} 1} {\color{gray} 1} \\
t = 6 & \cdots {\color{gray} 1} {\color{gray} 1} {\color{gray} 1} {\color{gray} 1} {\color{gray} 1} {\color{gray} 1} {\color{gray} 1} {\color{gray} 1} {\color{gray} 1} {\color{gray} 1} {\color{gray} 1} {\color{gray} 1} {\color{gray} 1} {\color{gray} 1} {\color{gray} 1} {\color{gray} 1} {\color{gray} 1} 3 4 \underline{4 \underline{{\color{gray} 1} {\color{gray} 1}}} 2 3 {\color{gray} 1} {\color{gray} 1} {\color{gray} 1} {\color{gray} 1} {\color{gray} 1} {\color{gray} 1} {\color{gray} 1} {\color{gray} 1} {\color{gray} 1} {\color{gray} 1} {\color{gray} 1} {\color{gray} 1} {\color{gray} 1} {\color{gray} 1} {\color{gray} 1} {\color{gray} 1} {\color{gray} 1} \\
t = 7 & \cdots {\color{gray} 1} {\color{gray} 1} {\color{gray} 1} {\color{gray} 1} {\color{gray} 1} {\color{gray} 1} {\color{gray} 1} {\color{gray} 1} {\color{gray} 1} {\color{gray} 1} {\color{gray} 1} {\color{gray} 1} {\color{gray} 1} {\color{gray} 1} 3 4 \underline{4 {\color{gray} 1} \underline{{\color{gray} 1}}} \underline{{\color{gray} 1}} 2 3 {\color{gray} 1} {\color{gray} 1} {\color{gray} 1} {\color{gray} 1} {\color{gray} 1} {\color{gray} 1} {\color{gray} 1} {\color{gray} 1} {\color{gray} 1} {\color{gray} 1} {\color{gray} 1} {\color{gray} 1} {\color{gray} 1} {\color{gray} 1} {\color{gray} 1} {\color{gray} 1} {\color{gray} 1} {\color{gray} 1} {\color{gray} 1} \\
t = 8 & \cdots {\color{gray} 1} {\color{gray} 1} {\color{gray} 1} {\color{gray} 1} {\color{gray} 1} {\color{gray} 1} {\color{gray} 1} {\color{gray} 1} {\color{gray} 1} {\color{gray} 1} {\color{gray} 1} 3 4 \underline{4 {\color{gray} 1} {\color{gray} 1}} \underline{{\color{gray} 1} {\color{gray} 1}} 2 3 {\color{gray} 1} {\color{gray} 1} {\color{gray} 1} {\color{gray} 1} {\color{gray} 1} {\color{gray} 1} {\color{gray} 1} {\color{gray} 1} {\color{gray} 1} {\color{gray} 1} {\color{gray} 1} {\color{gray} 1} {\color{gray} 1} {\color{gray} 1} {\color{gray} 1} {\color{gray} 1} {\color{gray} 1} {\color{gray} 1} {\color{gray} 1} {\color{gray} 1} {\color{gray} 1} \\
t = 9 & \cdots {\color{gray} 1} {\color{gray} 1} {\color{gray} 1} {\color{gray} 1} {\color{gray} 1} {\color{gray} 1} {\color{gray} 1} {\color{gray} 1} 3 4 \underline{4 {\color{gray} 1} {\color{gray} 1}} {\color{gray} 1} \underline{{\color{gray} 1} {\color{gray} 1}} 2 3 {\color{gray} 1} {\color{gray} 1} {\color{gray} 1} {\color{gray} 1} {\color{gray} 1} {\color{gray} 1} {\color{gray} 1} {\color{gray} 1} {\color{gray} 1} {\color{gray} 1} {\color{gray} 1} {\color{gray} 1} {\color{gray} 1} {\color{gray} 1} {\color{gray} 1} {\color{gray} 1} {\color{gray} 1} {\color{gray} 1} {\color{gray} 1} {\color{gray} 1} {\color{gray} 1} {\color{gray} 1} {\color{gray} 1} \\
t = 10 & \cdots {\color{gray} 1} {\color{gray} 1} {\color{gray} 1} {\color{gray} 1} {\color{gray} 1} 3 4 \underline{4 {\color{gray} 1} {\color{gray} 1}} {\color{gray} 1} {\color{gray} 1} \underline{{\color{gray} 1} {\color{gray} 1}} 2 3 {\color{gray} 1} {\color{gray} 1} {\color{gray} 1} {\color{gray} 1} {\color{gray} 1} {\color{gray} 1} {\color{gray} 1} {\color{gray} 1} {\color{gray} 1} {\color{gray} 1} {\color{gray} 1} {\color{gray} 1} {\color{gray} 1} {\color{gray} 1} {\color{gray} 1} {\color{gray} 1} {\color{gray} 1} {\color{gray} 1} {\color{gray} 1} {\color{gray} 1} {\color{gray} 1} {\color{gray} 1} {\color{gray} 1} {\color{gray} 1} {\color{gray} 1} \\
\end{array}}
\]
We underline the positions how the solitons propagate under no interaction.
We note that the phase shift is $\pm 2$.
Under Conjecture~\ref{conj:SCA}(\ref{conj:SCA_bijection}), the two solitons correspond to $23 \otimes 123 \in B^{1,2} \otimes B^{1,3}$ in type $A_2^{(1)}$.
We have $R(23 \otimes 123) = 233 \otimes 12$ and $H(23 \otimes 123) = 2$, which agrees with Part~(\ref{conj:SCA_scattering}) and Part~(\ref{conj:SCA_phase_shift}).
\end{ex}

\subsection{Rigged configurations}
%
Denote $\HH_0 := I_0 \times \ZZ_{>0}$. Fix a tensor product of KR crystals $B = \bigotimes_{k=1}^N B^{r_k, s_k}$.
A \defn{configuration} $\nu = \bigl(\nu^{(i)}\bigr)_{i \in I_0}$ is a sequence of partitions.
Let $m_{\ell}^{(i)}$ denote the multiplicity of $\ell$ in $\nu^{(i)}$.
Define the \defn{vacancy numbers}
\begin{equation}
\label{eq:vacancy}
p_{\ell}^{(i)}(\nu; B) := \sum_{k \in \ZZ_{>0}} L_k^{(i)} \min(\ell, k) - \sum_{(j,k) \in \HH_0} \frac{A_{ij}}{\gamma_j} \min(\modgamma_i \ell, \modgamma_j k) m_k^{(j)},
\end{equation}
where $L_s^{(r)}$ equals the number of factors $B^{r,s}$ that occur in $B$.
When there is no danger of confusion, we will simply write $p_{\ell}^{(i)}$.

\begin{table}
\[
\begin{array}{|c|c|}
\hline
\g & (\gamma_i)_{i \in I} \\\hline
\text{dual untwisted}  & (1,\dotsc,1) \\
B_n^{(1)} & (1, 1, 2, \dotsc, 2) \\
C_n^{(1)} & (2, 1, \dotsc,1, 2) \\
A_{2n}^{(2)} & (1, \dotsc, 1, 2) \\
A_{2n}^{(2)\dagger} & (2, 1, \dotsc, 1) \\
F_4^{(1)} & (2, 2, 2, 1, 1) \\
G_2^{(1)} & (3, 1, 3) \\\hline
\end{array}
\]
\caption{The factors $(\gamma_i)_{i \in I}$.}
\label{table:scaling_factors}
\end{table}

A \defn{$B$-rigged configuration} is the pair $(\nu, J)$, where $\nu$ is a configuration and $J = (J_{\ell}^{(i)})_{(i,\ell) \in \HH_0}$ be such that $J_{\ell}^{(i)}$ is a multiset $\{x \in \ZZ \mid x \leq p_{\ell}^{(i)}(\nu; B) \}$\footnote{If $\g = A_{2n}^{(2)\dagger}$ and $i = n$ and $i$ odd, then we take $x \in \ZZ + \frac{1}{2}$.} with $\lvert J_{\ell}^{(i)} \rvert = m_{\ell}^{(i)}$ for all $(i, \ell) \in \HH_0$.
The integers in $J_{\ell}^{(i)}$ are called \defn{riggings} or \defn{labels}, and we can associate each rigging in $J_{\ell}^{(i)}$ to a row of length $\ell$ in $\nu^{(i)}$.
The \defn{corigging} or \defn{colabel} of a rigging $x \in J_{\ell}^{(i)}$ is defined as $p_{\ell}^{(i)} - x$.
For any $K \subseteq I$, a \defn{$K$-highest weight} $B$-rigged configuration is a rigged configuration $(\nu, J)$ such that $\min J_{\ell}^{(i)} \geq 0$ (we define the minimum to be $0$ when $\lvert J_{\ell}^{(i)} \rvert = 0$) for all $(i, \ell) \in K \times \ZZ_{>0}$.
When $K = I$, we say the $B$-rigged configuration is \defn{highest weight}.
Let $\hwRC(B)$ denote the set of all highest weight $B$-rigged configurations.
When $B$ is clear, we call a $B$-rigged configuration simply a rigged configuration.

Next, let $\RC(B)$ denote the closure of $\hwRC(B)$ under the following crystal operators.
For simplicity of the exposition, we consider $\g$ to not be of type $A_{2n}^{(2)}$ nor $A_{2n}^{(2)\dagger}$ and refer the reader to~\cite[Def.~3.1]{SchillingS15}.
Fix a $B$-rigged configuration $(\nu, J)$ and $i \in I_0$.
For simplicity, we assume there exists a row of length $0$ in $\nu^{(a)}$ with a rigging of $0$.
Let $x = \min \{ \min J_{\ell}^{(i)} \mid \ell \in \ZZ_{>0} \}$; i.e., the smallest rigging in $\nu^{(i)}$.
\begin{description}
\item[\defn{$e_i$}] If $x = 0$, then define $e_i(\nu, J) = 0$.
Otherwise, remove a box from the smallest row with rigging $x$, replace that label with $x + 1$, and change all other riggings so that the coriggings remain fixed.
The result is $e_i(\nu, J)$.

\item[\defn{$f_i$}] If the smallest corigging of $\nu^{(i)}$ is $0$, then define $f_i(\nu, J) = 0$.
Otherwise, add a box from the largest row with rigging $x$, replace that label with $x - 1$, and change all other riggings so that the coriggings remain fixed.
The result is $f_i(\nu, J)$.
\end{description}
We finish the $U_q(\g_0)$-crystal structure on $\RC(B)$ by defining
\[
\overline{\wt}(\nu, J) = \sum_{(i, \ell) \in \HH_0} \ell \left( L_{\ell}^{(i)} \clfw_i - m_{\ell}^{(i)} \clsr_i \right).
\]

\begin{thm}[{\cite{S06,SchillingS15}}]
\label{thm:rc_crystal}
Let $B$ be a tensor product of KR crystals. Fix some $(\nu, J) \in \hwRC(B)$. Let $X_{(\nu, J)}$ denote the closure of $(\nu, J)$ under $e_i$ and $f_i$ for all $a \in I_0$.
Then, we have $X_{(\nu, J)} \iso B(\lambda)$, where $\lambda = \overline{\wt}(\nu, J)$.
\end{thm}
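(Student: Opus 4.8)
The plan is to equip the set of rigged configurations with the structure of a seminormal abstract $U_q(\g_0)$-crystal, identify $(\nu,J)$ as the unique highest weight source of its connected component $X_{(\nu,J)}$, and then upgrade this to an isomorphism with $B(\lambda)$ via a local characterization. For the crystal axioms, the operators $e_i$ and $f_i$ are mutually inverse essentially by construction: $f_i$ adds a box to the largest row selected by the ``smallest rigging'' rule and lowers its label by one, while $e_i$ removes a box from the smallest such row and raises its label by one, and the prescription of holding all coriggings fixed makes these operations undo one another, giving the axiom $f_i(\nu,J) = (\nu',J') \Leftrightarrow e_i(\nu',J') = (\nu,J)$. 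For the weight axiom I would compute $\varepsilon_i(\nu,J) = -\min\{0,\min_\ell \min J^{(i)}_\ell\}$ directly from the $e_i$-rule and express $\varphi_i(\nu,J)$ from the $f_i$-rule in terms of the vacancy numbers; the identity $\varphi_i - \varepsilon_i = \inner{h_i}{\overline{\wt}(\nu,J)}$ then follows from the large-$\ell$ behavior of $p^{(i)}_\ell$ in \eqref{eq:vacancy} matched against the definition of $\overline{\wt}$. Finally, by the $e_i$-rule, $e_i(\nu,J)=0$ for every $i \in I_0$ exactly when all riggings are nonnegative, i.e.\ when $(\nu,J) \in \hwRC(B)$; thus $X_{(\nu,J)}$ is connected with the single highest weight source $(\nu,J)$ of weight $\overline{\wt}(\nu,J) = \lambda$.

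The main step is to prove $X_{(\nu,J)} \cong B(\lambda)$. When $\g_0$ is simply laced I would verify Stembridge's local axioms directly on rigged configurations: for each pair $i,j \in I_0$ one tracks how the rows selected by $e_i$ and $e_j$ interact and checks the prescribed relations among the increments $\Delta\varepsilon_i$, $\Delta\varphi_j$ and the symmetric/nonsymmetric two-color conditions. Since $X_{(\nu,J)}$ is connected, seminormal, and has a unique highest weight element of weight $\lambda$, Stembridge's theorem then forces $X_{(\nu,J)} \cong B(\lambda)$. For non--simply-laced $\g_0$ I would instead use virtualization: embed $\RC(B)$ into the rigged configurations $\RC(\widehat{B})$ of the simply-laced covering type $\widehat{\g}$, with $\nu^{(i)}$ sent to $\gamma_i$-scaled partitions, and show this embedding intertwines $e_i$ with the product $\prod \widehat{e}_j^{\,\modgamma_i}$ over the folding orbit of $i$. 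Reducing to the already-settled simply-laced case and using that $\sigma$-invariant virtual crystals characterize $B(\lambda)$ completes the argument.

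The hard part will be the simply-laced local verification, equivalently the fidelity of the virtualization intertwiner. The delicate cases arise when the rows chosen by $e_i$ and $e_j$ for adjacent nodes $i \sim j$ coincide in length, or when adding or removing a box changes which string attains the smallest rigging; in these situations one must check that the vacancy-number shifts dictated by \eqref{eq:vacancy} produce exactly the corigging changes demanded by Stembridge's axioms. In the non--simply-laced reduction the analogous obstacle is confirming that the scaling factors $\gamma_i$ and $\modgamma_i$ align each operator $e_i$ with its simply-laced lift, so that $\sigma$-invariance is preserved and the virtual image lands in the correct component.
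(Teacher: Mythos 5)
Your plan is, in essence, the proof from the literature: the paper does not prove Theorem~\ref{thm:rc_crystal} itself but imports it from~\cite{S06,SchillingS15}, where the simply-laced case is established precisely by verifying Stembridge's local axioms for the rigged-configuration operators (with the identification of $\hwRC(B)$ as the set of classically highest weight elements and $\varepsilon_i$, $\varphi_i$ computed from riggings and vacancy numbers, as you describe), and the non-simply-laced case is reduced to it by virtualization with the scaling factors $\gamma_i$, $\modgamma_i$. So your outline takes essentially the same route as the cited proof, with the hard local verifications you flag being exactly the content of those references.
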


%
%

Rigged configurations also come with a natural statistics from mathematical physics called \defn{cocharge} given by
\[
\cc(\nu, J) = \frac{1}{2} \sum_{\substack{(i, \ell) \in \HH_0 \\ (j, k) \in \HH_0}} \frac{t_i^{\vee} A_{ij}}{\gamma_k} \min(\modgamma_i \ell, \modgamma_j k) m_{\ell}^{(i)} m_k^{(j)} + \sum_{(i, \ell) \in \HH_0} t_i^{\vee} \sum_{x \in J_{\ell}^{(i)}} x.
\]
Cocharge is invariant under the classical crystal operators.
\begin{prop}[{\cite{S06, SchillingS15}}]
\label{prop:cocharge_classical_invar}
Fix a classical component $X_{(\nu, J)}$ as given in Theorem~\ref{thm:rc_crystal}.
Cocharge is constant on $X_{(\nu, J)}$.
\end{prop}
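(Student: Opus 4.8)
The plan is to verify directly that $\cc$ is unchanged by each classical crystal operator. Since the component $X_{(\nu,J)}$ is by construction connected under the operators $e_i, f_i$ for $i \in I_0$, and since the crystal axioms make $e_i$ and $f_i$ mutually inverse partial bijections, it suffices to show $\cc\bigl(f_i(\nu,J)\bigr) = \cc(\nu,J)$ whenever $f_i(\nu,J) \neq 0$, for each fixed $i \in I_0$. The first step is to separate the $\nu$-dependence of $\cc$ from the fixed input data $B$. Writing $Q_\ell^{(i)} := \sum_{k} L_k^{(i)} \min(\ell,k)$ for the $B$-dependent part of the vacancy number, so that $Q_\ell^{(i)} - p_\ell^{(i)} = \sum_{(j,k) \in \HH_0} \frac{A_{ij}}{\gamma_j}\min(\modgamma_i\ell, \modgamma_j k) m_k^{(j)}$, the quadratic term of $\cc$ collapses and we obtain the convenient form
\[
\cc(\nu, J) = \frac{1}{2}\sum_{(i,\ell) \in \HH_0} t_i^{\vee} m_\ell^{(i)}\bigl(Q_\ell^{(i)} - p_\ell^{(i)}\bigr) + \sum_{(i,\ell) \in \HH_0} t_i^{\vee}\sum_{x \in J_\ell^{(i)}} x.
\]

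Next I would spell out the change induced by $f_i$. By definition $f_i$ adds one box to the selected length-$\ell$ row of $\nu^{(i)}$, so $m_\ell^{(i)} \mapsto m_\ell^{(i)} - 1$ and $m_{\ell+1}^{(i)} \mapsto m_{\ell+1}^{(i)} + 1$ while all other multiplicities are fixed; the rigging of the modified row passes from $x$ to $x-1$; and every other rigging is shifted so that its corigging is preserved. Because the corigging of a length-$k$ row of $\nu^{(j)}$ equals $p_k^{(j)}$ minus its rigging, preservation forces each unmodified rigging to change by exactly $\Delta p_k^{(j)}$, the change in the vacancy number. From the definition of $p_k^{(j)}$ only the two altered multiplicities contribute, giving
\[
\Delta p_k^{(j)} = -\frac{A_{ji}}{\gamma_i}\Bigl(\min\bigl(\modgamma_j k, \modgamma_i(\ell+1)\bigr) - \min\bigl(\modgamma_j k, \modgamma_i\ell\bigr)\Bigr).
\]

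Then I would assemble $\Delta\cc = \cc(f_i(\nu,J)) - \cc(\nu,J)$ from three pieces: the decrement $-t_i^{\vee}$ from the modified row's rigging, the total $\sum_{(j,k)} t_j^{\vee} m_k^{(j)} \Delta p_k^{(j)}$ coming from the corigging-preserving shifts of the other rows, and the change $\frac{1}{2}\sum t_j^{\vee}\,\Delta\!\bigl[m_k^{(j)}(Q_k^{(j)} - p_k^{(j)})\bigr]$ of the quadratic term. The engine of the cancellation is the symmetrization identity $\frac{t_i^{\vee} A_{ij}}{\gamma_j} = \frac{t_j^{\vee} A_{ji}}{\gamma_i}$ together with the symmetry of $\min(\modgamma_i\ell,\modgamma_j k)$ under $(i,\ell) \leftrightarrow (j,k)$, which makes the quadratic form symmetric and lets the quadratic change be re-expressed through the same $\Delta p_k^{(j)}$ appearing in the linear shifts; these then telescope against the linear contributions, while a residual diagonal contribution from node $i$ at lengths $\ell$ and $\ell+1$ cancels the remaining $-t_i^{\vee}$, yielding $\Delta\cc = 0$.

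The main obstacle I anticipate is the careful bookkeeping that isolates the modified row from the others in the linear sum, and in particular handling the diagonal contributions $j = i$, $k \in \{\ell, \ell+1\}$ in the quadratic term, where the factor $\frac{1}{2}$ and the self-interaction terms $\min(\modgamma_i\ell,\modgamma_i(\ell+1))$ must be combined with the modified row's own rigging decrement. A secondary technical point is verifying the symmetrization identity $t_i^{\vee}\gamma_i A_{ij} = t_j^{\vee}\gamma_j A_{ji}$ uniformly across the relevant affine types, including the twisted cases where $\modgamma$ differs from $\gamma$ on the special node; once this identity is in hand the cancellation is forced, but establishing it case-by-case is what gives the argument a type-dependent flavor despite the uniform organization.
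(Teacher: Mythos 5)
Your proposal is correct and is essentially the same argument by which this fact is actually established: the paper gives no proof of Proposition~\ref{prop:cocharge_classical_invar}, deferring to~\cite{S06, SchillingS15}, and the proofs there are precisely your direct computation --- rewrite $\cc$ through the vacancy numbers, track the change of the quadratic form and of the riggings under $f_i$ (noting $\Delta p_{\ell}^{(i)} = 0$ for the selected row, so the bookkeeping you worry about is harmless), and cancel everything using the symmetry $t_i^{\vee} A_{ij}/\gamma_j = t_j^{\vee} A_{ji}/\gamma_i$. One caveat worth recording: your residual diagonal term is $t_i^{\vee} \modgamma_i/\gamma_i$, which equals the needed $t_i^{\vee}$ exactly when $\modgamma_i = \gamma_i$, i.e., outside types $A_{2n}^{(2)}$ and $A_{2n}^{(2)\dagger}$; this is the same restriction the paper itself imposes when defining the crystal operators on rigged configurations, so your proof covers the generality in which the statement is presented here.
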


Next, fix $B = (B^{r,1})^{\otimes N}$, where $r$ is a minuscule node.
We define a bijection $\Phi \colon \RC(B) \to B$ by repeating the process below for each factor from left to right.
Start at $b = u_{\clfw_r}$, and set $\ell_0 = 1$.
Consider step $j$.
Let $\ell_j$ denote the minimal $k_i \geq \ell_{j-1}$ over all $i \in I_0$ such that $f_i b \neq 0$ and $\nu^{(i)}$ has a singular row of length $k_i$ that has not been previously selected.
If no such row exists, terminate, set all $\ell_{j'} = \infty$ for $j' \geq j$, and return $b$.
Otherwise, select such a row and repeat the above with $f_{i'}(b)$, where $\nu^{(i')}$ is the partition that the row was selected from.
We then construct $\delta(\nu, J)$ by removing a box from all selected rows and keeping those rows singular.

We sketch the inverse bijection $\Phi^{-1}$ when $r$ is a minuscule node.
This is given by adding factors right to left by starting at $b$ and finding a path to $u_{\clfw_r}$ by selecting the largest singular rows that were at most as large as the previously selected row.
We terminate when we reach $u_{\clfw_r}$.

When $r \sim 0$, the bijection $\Phi$ is similar to when $r$ is minuscule except we are allowed to select a row twice when at a negative root and we can select a quasisingular row, a row such that the rigging equals $p_{\ell}^{(i)} - 1$ and $\max J_{\ell}^{(i)} = p_{\ell}^{(i)} - 1$, when going into $y_i$ for some $i \in I_0$.
For the remaining nodes, we need the box-splitting map, which we do not describe here.
Instead, for a precise description of $\Phi$, we refer the reader to~\cite{KSS02, OSSS16, OSS17, Scrimshaw17}.

We recall some conjectural properties of the bijection $\Phi$ (see, \textit{e.g.},~\cite{SchillingS15}).
We will need the map $\theta \colon \RC(B) \to \RC(B)$ on highest weight rigged configurations that sends every rigging to its corresponding corigging and extended as a classical crystal isomorphism.

\begin{conj}
\label{conj:bijection}
Let $B = \bigotimes_{k=1}^N B^{r_k,s_k}$.
The map $\Phi \colon \RC(B) \to B$ is a classical crystal isomorphism such that $\cc = D \circ \Phi \circ \theta$.
\end{conj}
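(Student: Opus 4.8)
The plan is to prove the two assertions of Conjecture~\ref{conj:bijection} separately: first that $\Phi$ is a classical crystal isomorphism, and then the statistic identity $\cc = D \circ \Phi \circ \theta$. Throughout, the guiding principle is to reduce everything to a single elementary step of the algorithm defining $\Phi$ --- the box-removal map $\delta$ that peels off the leftmost tensor factor --- and to induct on the size of $B$ (the number of factors together with the total number of boxes of $\nu$).

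For the crystal-isomorphism part, I would first dispose of weight preservation, which is immediate by comparing the weight formula $\overline{\wt}(\nu,J) = \sum_{(i,\ell)\in\HH_0} \ell\bigl(L_\ell^{(i)}\clfw_i - m_\ell^{(i)}\clsr_i\bigr)$ with the weight of $\Phi(\nu,J)$ read off from the letters produced by the algorithm. The substance is that $\Phi$ intertwines $e_i$ and $f_i$ for every $i \in I_0$. Here I would establish the key commutation lemma: $\delta$ commutes with the classical crystal operators (up to the behavior on the stripped factor), so that the recursive structure of $\Phi$ transports the combinatorial crystal structure on $\RC(B)$ (defined through the box-adding and box-removing rules) to the signature-rule structure on $B$. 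Granting this lemma, an induction on $N$ yields that $\Phi$ is a classical crystal morphism, and since $\RC(B) \cong \bigoplus_\lambda B(\lambda)$ by Theorem~\ref{thm:rc_crystal} while $\Phi$ sends $\hwRC(B)$ bijectively onto the classically highest weight elements of $B$ of the matching weight, $\Phi$ is an isomorphism. I expect this commutation lemma to be the principal obstacle: it is exactly the content that fails to be routine, requires a careful case analysis of how the selected singular rows interact with an $i$-string, and in the non-minuscule and $r \sim 0$ regimes must account for the box-splitting and quasisingular subtleties.

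For the statistic identity, I would exploit that both sides are constant on classical components. Indeed, $\cc$ is constant on components by Proposition~\ref{prop:cocharge_classical_invar}; the map $\theta$ is a classical crystal automorphism of $\RC(B)$ and $\Phi$ is (by the first part) a classical crystal isomorphism, while $D$ is constant on the classical components of $B$ by the remark following~\eqref{eq:energy_function}. Hence $D \circ \Phi \circ \theta$ is also constant on components, and it suffices to verify $\cc(\nu,J) = D\bigl(\Phi(\theta(\nu,J))\bigr)$ for a single representative of each component, for which I would take $(\nu,J)\in\hwRC(B)$. Since the coriggings of a highest weight rigged configuration are nonnegative, $\theta(\nu,J)$ is again highest weight with the same underlying configuration $\nu$, so the check reduces to relating the fermionic statistic $\cc$ on $\hwRC(B)$ to the energy $D$ of the corresponding classically highest weight element of $B$. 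This I would prove by the same induction on $\delta$: tracking the change in $\cc$ under a single box removal and matching it term-by-term against the change in $D$ governed by the local energy function $H$, using the normalization $H(u(B^{r,s})\otimes u(B^{r',s'})) = 0$ and the decomposition~\eqref{eq:energy_function} of $D$ into $H$- and $D_{B^{r,s}}$-contributions.

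The remaining difficulties are of two kinds. First, the bookkeeping in the inductive step must correctly carry the scaling data $\gamma_i$, $\modgamma_i$, and $t_i^\vee$ that appear in the vacancy numbers~\eqref{eq:vacancy} and in $\cc$; these are precisely the factors that distinguish the simply-laced, non-simply-laced, and twisted cases, and getting the matching exactly right is where the non-simply-laced types demand the most care. To control them uniformly I would invoke the virtualization map, which embeds a non-simply-laced $\RC(B)$ (and the corresponding $B$) into a simply-laced one compatibly with $\Phi$, $\cc$, $D$, and $\theta$, thereby deducing the general case from the simply-laced one where the factors are trivial. Second, and more fundamentally, in several types --- notably the exceptional ones --- neither the requisite KR crystals nor the map $\Phi$ itself have been constructed, so the argument is necessarily conditional there; in those cases the statement should be read as asserting compatibility of the conjectural $\Phi$ with the conjectural KR crystals, and the proof is complete only in the types where $\Phi$ is already known to exist.
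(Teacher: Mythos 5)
The first thing to say is that the paper does not prove this statement at all: it is Conjecture~\ref{conj:bijection}, stated without proof and then \emph{assumed} throughout Section~\ref{sec:SCA_RC}. The paper's only stance on its validity is a pointer to the literature: it is known for type $A_n^{(1)}$ and the nonexceptional types~\cite{KSS02,OSSS16,OSS17}, and in special cases for exceptional types~\cite{OS12,SchillingS15,SS2006,Scrimshaw15,Scrimshaw17}. So there is no proof in the paper to compare against; a complete argument would be a new contribution, not a reconstruction, and the statement should be treated as open in general.

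Second, your proposal has genuine gaps exactly where the conjecture is hard. Both pillars of your plan --- the ``key commutation lemma'' that $\delta$ intertwines the classical crystal operators, and the term-by-term matching of the change in $\cc$ under a box removal against the change in $D$ governed by $H$ --- are deferred rather than proved, and these are precisely the content that occupies the cited papers, where they are established by long type-by-type case analyses (including the box-splitting and quasisingular complications you flag). No uniform argument for them is known; that is why the statement remains a conjecture. Your fallback, virtualization, does not close the gap either: it reduces non-simply-laced types to simply-laced ones, but the simply-laced case itself (types $E_{6,7,8}^{(1)}$ for a general $\bigotimes_{k} B^{r_k,s_k}$) is open --- indeed the paper notes that some of the relevant KR crystals are not even known to exist --- so the reduction lands on another unproven instance of the same conjecture. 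The reductions you do carry out correctly (weight preservation; constancy of both sides of $\cc = D \circ \Phi \circ \theta$ on classical components via Proposition~\ref{prop:cocharge_classical_invar}, the classical invariance of $D$, and the fact that $\theta$ preserves highest weight rigged configurations, hence reduction to highest weight representatives) are sound but constitute the easy part. As written, your proposal is a roadmap of the strategy used in the literature, conditional on the same open lemmas the conjecture encodes; this is consistent with, but does not go beyond, the paper's own treatment of the statement as an assumption rather than a theorem.
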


\begin{conj}
\label{conj:R_matrix_identity}
Let $B = \bigotimes_{k=1}^N B^{r_k,s_k}$.
The diagram
\[
\xymatrixcolsep{4em}
\xymatrixrowsep{3em}
\xymatrix{
\RC(B) \ar[r]^{\id} \ar[d]_{\Phi} & \RC(B') \ar[d]^{\Phi}
\\ B \ar[r]_{R} & B'
}
\]
commutes for any reordering $B' = \bigotimes_{k=1}^N B^{r'_k,s'_k}$.
\end{conj}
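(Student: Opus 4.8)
The plan is to derive the statement from Conjecture~\ref{conj:bijection} together with the order-independence of the rigged configuration data. The starting observation is that $\RC(B)$ and $\RC(B')$ coincide as sets: by~\eqref{eq:vacancy} the vacancy numbers $p_\ell^{(i)}(\nu; B)$ depend on $B$ only through the multiplicities $L_k^{(i)}$ of the factors, which are manifestly invariant under reordering, so a configuration with riggings is a valid $B$-rigged configuration if and only if it is a valid $B'$-rigged configuration. Writing $\Phi_B$ and $\Phi_{B'}$ for the two bijections, I may therefore set $\rho := \Phi_{B'} \circ \Phi_B^{-1} \colon B \to B'$; by Conjecture~\ref{conj:bijection} both $\Phi_B$ and $\Phi_{B'}$ are classical crystal isomorphisms, hence so is $\rho$. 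The goal is to identify $\rho$ with the combinatorial $R$-matrix.

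The key leverage is that $\rho$ preserves energy. Since the cocharge $\cc$ and the complementation map $\theta$ are defined purely in terms of the configuration, the riggings, and the (order-independent) vacancy numbers, they are literally the same function on the common set $\RC(B) = \RC(B')$. Applying the identity $\cc = D \circ \Phi \circ \theta$ of Conjecture~\ref{conj:bijection} to both orderings and cancelling the bijection $\theta$ gives $D_B \circ \Phi_B = D_{B'} \circ \Phi_{B'}$, and hence $D_{B'} \circ \rho = D_B$. Thus $\rho$ is a classical crystal isomorphism $B \to B'$ that intertwines the two energy functions, and I note for later use that both $\rho$ and $R$ send the image of the empty rigged configuration, namely $u(B)$, to $u(B')$.

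It then remains to show that a classical crystal isomorphism preserving energy must equal $R$. First I would reduce to a simple transposition: the combinatorial $R$-matrices satisfy the Yang--Baxter equation, so $R$ for an arbitrary reordering factors as a composite of $R$-matrices for adjacent transpositions, while the identity on $\RC$ is trivially such a composite; it therefore suffices to treat a single adjacent transposition, where $R$ acts on two neighboring factors $B^{r,s} \otimes B^{r',s'}$. Using connectedness of the tensor product (known in the cases at hand) $R$ is the unique $U_q'(\g)$-isomorphism there. Since $D$ is constant on classical components and records the affine degree, energy-preservation forces $\rho$ to agree with $R$ on every classical component whose weight-and-energy pair occurs with multiplicity one, and in particular on the top component through $u(B)$.

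The main obstacle is precisely the classical components that are \emph{not} separated by the pair (weight, energy). For these one cannot conclude from the data above alone, and I would instead argue that $\rho$ commutes with $e_0$ and $f_0$, upgrading it to a $U_q'(\g)$-isomorphism and hence to $R$. The natural tool is the affinization $B_{\mathrm{aff}}$ of~\cite[Ch.~10]{HK02}: the affine crystal structure on a tensor product of KR crystals is recovered from its classical structure together with the $D$-grading, so an energy-preserving classical isomorphism should lift to an isomorphism of affinizations and thereby respect the $0$-arrows. Making this lift precise---equivalently, verifying directly that $\rho\, e_0 = e_0\, \rho$ from energy-preservation and the defining relation~\eqref{eq:local_energy} of the local energy function---is the delicate point, and is where a genuinely type-uniform argument is hardest to secure.
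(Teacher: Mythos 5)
The statement you are trying to prove is Conjecture~\ref{conj:R_matrix_identity} of the paper: the authors give \emph{no proof} of it. It is assumed throughout Section~\ref{sec:SCA_RC}, and the paper only records that it is known in type $A_n^{(1)}$ and the nonexceptional types by~\cite{KSS02,OSSS16,OSS17}, and in scattered exceptional cases by~\cite{OS12,SchillingS15,SS2006,Scrimshaw15,Scrimshaw17}. So there is no proof in the paper to compare against, and your attempt must stand on its own; it also cannot be a proof of the conjecture in the intended sense, because from the outset it invokes Conjecture~\ref{conj:bijection}, which is equally unproven in general. As a conditional reduction, the first half is sound: $\RC(B)$ and $\RC(B')$ coincide as sets because the vacancy numbers~\eqref{eq:vacancy} only see the multiplicities $L_k^{(i)}$; the map $\rho = \Phi_{B'} \circ \Phi_B^{-1}$ is then a classical crystal isomorphism; and cancelling $\theta$ in $\cc = D \circ \Phi \circ \theta$ for both orderings does give $D_{B'} \circ \rho = D_B$, with $\rho(u(B)) = u(B')$.

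The genuine gap is the final step, and you concede it yourself: you need that a classical crystal isomorphism $B \to B'$ preserving $D$ and the maximal element must equal the combinatorial $R$-matrix, equivalently that it commutes with $e_0$. That assertion is not a known fact; it is where the entire difficulty of the conjecture is concentrated. Your multiplicity-one argument (which, incidentally, also silently uses $D_{B'} \circ R = D_B$ to know that $R$ and $\rho$ hit the same component) says nothing about classical components occurring with multiplicity greater than one in a fixed pair (classical weight, energy), and such multiplicities do occur already in type $A$, since Kostka--Foulkes polynomials can have coefficients $\geq 2$. The proposed repair via affinization is the unproven claim restated: the $D$-grading tells you at which degree each element sits, but not where the $0$-arrows go, and an energy-preserving classical isomorphism that permutes two isomorphic components of equal energy has no a priori reason to commute with $e_0$. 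Note that the proofs that do exist in the literature (e.g.~\cite{KSS02} in type $A_n^{(1)}$) take a completely different, non-abstract route: they analyze the algorithm defining $\Phi$ directly and show, by induction on the steps of the box-removal map $\delta$ and a case analysis of how it interacts with the $R$-matrix on two adjacent factors, that reordering the factors does not change the rigged configuration. Some argument of that combinatorial kind --- or a genuinely new uniqueness principle --- is what would be needed to close your last step.
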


It is known that Conjecture~\ref{conj:bijection} and Conjecture~\ref{conj:R_matrix_identity} hold in general for nonexceptional types $A_n^{(1)}$~\cite{KSS02,OSSS16,OSS17}.
These conjectures have been proven in a number of special cases for exceptional types~\cite{OS12,SchillingS15,SS2006,Scrimshaw15,Scrimshaw17}.

\section{Solitons}
\label{sec:solitons}

In this section, we describe the solitons of length $\ell$ by using tensor products of KR crystals in a number of special cases.
Two particular cases we cover in general are when $r$ is minuscule and $r \sim 0$.

We adopt the following notation.
Let $(\varpi_r)_{r \in I_{0,r}}$ denote the fundamental weights of type $\g_{0,r}$.
Let $\varpi = \sum_{r' \sim r} \varpi_{r'}$ and $u_{\varpi} = f_r u_{\clfw_r}$.
Note that this is a slight abuse of notation, but $u_{\varpi}$ is the unique $I_{0,r}$-highest weight element of the unique $U_q(\g_{0,r})$-crystal $B(\varpi) \subseteq B(\clfw_r)$.
Let $v_{\varpi}$ denote the $I_{0,r}$-lowest weight element of $B(\varpi) \subseteq B(\clfw_r)$.
Let $u = u(B^{r,1})$ and $v  = v(B^{r,1})$.

\begin{prop}
\label{prop:soliton_H1}
Let $r$ be a special node or $r = 1$ for $\g$ of type $B_n^{(1)}$, $C_n^{(1)}$, $A_{2n}^{(2)\dagger}$, or $D_4^{(3)}$.
For a soliton $p = b_{-L} \otimes \cdots \otimes b_0 \in \Sol^{(r)}(\ell)$, we have
\[
u < b_{-L} \leq \cdots \leq b_0 \leq w = \begin{cases}
v & \text{if $\g = C_n^{(1)}, A_{2n}^{(2)\dagger}$ and $r = 1$},
\\ v_{\varpi} & \text{otherwise},
\end{cases}
\]
where $y_r$ appears at most once.
Moreover, we have $\ell = L + m_v + 1$, where $m_v$ is the number of factors equal to $v$ in $p$.
\end{prop}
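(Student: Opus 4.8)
The plan is to turn the defining identity $E_1(p) = H(f_r u_1 \otimes u_1)$ of Definition~\ref{def:soliton} into a constraint on the consecutive local energies of $p$, and then read off both the chain and the length from the poset structure of $B(\clfw_r)$. First I would pin down the right-hand side: for $r$ special, Theorem~\ref{thm:minuscule_local_energy} gives $H(f_r u_1 \otimes u_1) = 1$, since the path from $f_r u_{\clfw_r} = u_\varpi$ to $u_{\clfw_r}$ in $B(\clfw_r)$ uses exactly one $r$-arrow; for the cases $r = 1 \sim 0$ (types $C_n^{(1)}$, $A_{2n}^{(2)\dagger}$, $D_4^{(3)}$) I would instead read $H(f_r u_1 \otimes u_1) = -A_{0,r}$ off Table~\ref{table:adjoint_local_energy}.

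The engine of the argument is the energy-zero characterization $H(b' \otimes b) \geq 0$, with equality exactly when $b' \otimes b \in B(2\clfw_r)$, i.e.\ when $b' \leq b$. This follows from Theorem~\ref{thm:minuscule_local_energy} and Proposition~\ref{prop:minuscule_description}: the classical components of $B^{r,1} \otimes B^{r,1}$ are indexed by their classically highest weight elements $b \otimes u_{\clfw_r}$, the energy on such a component equals the number of $r$-arrows from $b$ to $u_{\clfw_r}$ (so it is nonnegative and vanishes only on the top component $B(2\clfw_r)$), and $B(2\clfw_r)$ consists precisely of the sorted pairs. Decomposing $E_1(p)$ into the nonnegative local energies of consecutive factors and using that the total is the minimal positive value $H(f_r u_1 \otimes u_1)$, I would conclude that all interior bonds between soliton factors must vanish, which forces $b_{-L} \leq b_{-L+1} \leq \cdots \leq b_0$; the lower bound $u < b_{-L}$ is then immediate from $b_{-L} \neq u_1 = u_{\clfw_r}$ and the fact that $u_{\clfw_r}$ is the minimum of $B(\clfw_r)$.

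The single remaining unit of energy bounds $b_0$ from above. For $r$ special one has $H(b_0 \otimes u_{\clfw_r}) = N_r(b_0)$ by Theorem~\ref{thm:minuscule_local_energy}, so the soliton condition becomes $N_r(b_0) = 1$, placing $b_0$ in the component $B(\varpi)$ entered from $u_{\clfw_r}$ by the single arrow $u_{\clfw_r} \to u_\varpi$; hence $b_0 \leq v_\varpi = w$. For $r = 1 \sim 0$ the bound instead follows from the structure of $B(\clfw_r)$ together with Table~\ref{table:adjoint_local_energy}: the only classical components of $B^{r,1}$ admissible under the energy budget are $\{u\}$, $B(\varpi)$, and (for $C_n^{(1)}, A_{2n}^{(2)\dagger}$) the bottom component $\{v\}$, which is exactly why $w = v$ there and $w = v_\varpi$ otherwise. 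In either case the vanishing interior bonds force $y_r$ to occur at most once: a second copy of $y_r$ would be squeezed into an adjacent pair in the sorted chain, but $y_r \otimes y_r \notin B(2\clfw_r)$ since the weight-zero entry cannot be repeated in a single-row tableau, so $H(y_r \otimes y_r) > 0$, a contradiction.

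Finally, for the length I would evaluate $N_r$ along the chain. Each factor satisfies $u < b_{-k} \leq w$, hence lies in $B(\varpi)$ or equals $v$. For $b \in B(\varpi)$ we have $N_r(b) = 1$, since reaching $B(\varpi)$ from $u_{\clfw_r}$ uses the one arrow $u_{\clfw_r} \to u_\varpi$ while all further steps inside $B(\varpi)$ use $f_i$ with $i \in I_{0,r}$; equivalently, the coefficient of $\clsr_r$ in $\clfw_r - \wt(b)$ is $1$. In the $C_n^{(1)}, A_{2n}^{(2)\dagger}$ cases the element $v$, of weight $-\clfw_r$, lies one component below $B(\varpi)$, so $N_r(v) = 2$; in all other cases no factor equals $v$ and $m_v = 0$. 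Summing gives $\ell = (L + 1 - m_v) + 2 m_v = L + m_v + 1$. The step I expect to be the main obstacle is making the decomposition of $E_1(p)$ into consecutive local energies precise and uniform across the special and $r \sim 0$ cases — in particular identifying which boundary bond carries the residual energy and correctly accounting for the extra element $\emptyset$ and the bottom element $v$ in the $r \sim 0$ cases, where the carrier genuinely changes as it passes the soliton and the naive rightmost bond $H(b_0 \otimes u_1)$ does not by itself give the correct total. Once this energy bookkeeping and the energy-zero lemma are in hand, the poset-theoretic conclusions above are routine.
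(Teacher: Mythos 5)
Your proposal follows the same skeleton as the paper's proof: use nonnegativity of the local energy to force all interior bonds $H(b_j \otimes b_{j+1})$ to vanish and the single unit of energy to sit in the bond $H(b_0 \otimes u)$, identify zero-energy pairs with sorted pairs in $B(2\clfw_r)$ via Proposition~\ref{prop:minuscule_description} (with the weight-zero caveat, which you handle explicitly and correctly), and then count $N_r$ along the chain to get $\ell = L + m_v + 1$. For $r$ special this is exactly the paper's argument, resting on Theorem~\ref{thm:minuscule_local_energy}.

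The genuine gap is in precisely the cases the proposition adds beyond special nodes, namely $r=1$ in $C_n^{(1)}$ and $A_{2n}^{(2)\dagger}$, where you justify the energy values and the admissibility of the bottom component $\{v\}$ by Table~\ref{table:adjoint_local_energy}. That table does not apply there: although $1 \sim 0$ holds literally, $B^{1,1}$ in these types is the vector representation $B(\clfw_1)$ with $\clfw_1 \neq \theta$, not the crystal $B(\theta) \oplus B(0)$ of~\cite{BFKL06} whose adjoint structure ($x_\alpha$'s, $y_i$'s, $\emptyset$) Theorem~\ref{thm:adjoint_local_energy} presupposes. Worse, the table's values point the wrong way for your conclusion: it gives $H(v \otimes u) = 2$, which under the energy budget $E_1(p)=1$ would \emph{exclude} $b_0 = v$ — contradicting the statement $w = v$ and the paper's Example~\ref{ex:carrier_type_C}, where $2 \otimes \bon$ is a soliton in $C_3^{(1)}$ with $b_0 = v$. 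The correct value in these two types is $H(v \otimes u) = 1$, and this single fact is exactly what makes $w = v$ there; it cannot be read off any of the quoted uniform statements but requires a direct affine computation. This is what the paper's proof supplies: it reduces to $I_{0,r}$-highest weight elements and exhibits explicit $f_0$-strings, e.g.\ $f_0(v \otimes u) = u \otimes u$ (so $v \otimes u$ sits one $0$-arrow above the top component) and $f_0 f_1^2 f_2 \dotsm f_n \dotsm f_2 (u_{\varpi} \otimes u) = u \otimes u_{\varpi}$ (giving $H(u_{\varpi} \otimes u) = 1$), with a finite check for $D_4^{(3)}$. Your use of the table is legitimate for $D_4^{(3)}$, where $B^{1,1}$ really is of adjoint form, but for $C_n^{(1)}$ and $A_{2n}^{(2)\dagger}$ you assert the right conclusion with a justification that actually contradicts it. (A smaller omission: you never address $B_n^{(1)}$ with $r=1$; node $1$ is in fact special in $B_n^{(1)}$, so Theorem~\ref{thm:minuscule_local_energy} covers it, but this should be said — the paper instead verifies it by another explicit $f_0$-computation.)
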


\begin{proof}
We claim $H(b_0 \otimes u) = 1$ if and only if $b_0 \neq u$ in type $C_n^{(1)}$, $A_{2n}^{(2)\dagger}$ or $b_0 \in B(\varpi)$ otherwise.
First, note that $f_r(b_0 \otimes u) = b_0 \otimes (f_r u)$ and $f_i(b_0 \otimes u) = (f_i b_0) \otimes u$ for all $i \in I_{0,r}$.
Recall that $H$ is constant on classical components.
Thus, it remains to show the claim on all $I_{0,r}$-highest weight elements, and we show this case by case.
For special nodes, the claim follows from Theorem~\ref{thm:minuscule_local_energy}.
For type $B_n^{(1)}$, we have $f_0 f_2 f_3 \dotsm f_n f_n  \dotsm f_2 (u_{\varpi} \otimes u) = u \otimes u$ and $f_0(v \otimes u) = u_{\varpi} \otimes u$.
For type $C_n^{(1)}$, we have $f_0 f_1^2 f_2 \dotsm f_n \dotsm f_2 (u_{\varpi} \otimes u) = u \otimes u_{\varpi}$ and $f_0(v \otimes u) = u \otimes u$.
For type $A_{2n}^{(2)\dagger}$, we have $f_0 f_1^2 f_2 \dotsm f_n f_n \dotsm f_2 (u_{\varpi} \otimes u) = u \otimes u_{\varpi}$ and $f_0(v \otimes u) = u \otimes u$.
For types $D_4^{(3)}$, this is a finite computation.
Note that $b_0 \neq u$ by the definition of soliton.

We have $H(u_{\varpi} \otimes u) = 1$ in all cases, and thus we require $E_1(p) = 1$.
Since the local energy is nonnegative and $E_1(p)$ is the sum of local energy, if $H(b_0 \otimes u) \geq 2$, we will have  $E_1(p) \geq 2$, so $p$ is not a soliton.
Therefore, in order to have $E_1(p) = 1$, we need $H(b_0 \otimes u) = 1$.

Next, we must have $H(b_{-1} \otimes b_0) = 0$ in order to have $E_1(p) = 1$.
Recall that this implies $b_{-1} \otimes b_0$ is in the $I_0$-connected component of $u \otimes u$, which is isomorphic to $B(2\clfw_r)$.
From Proposition~\ref{prop:minuscule_description}, we have $b_{-1} \leq b_0$.
Similarly, we must have $b_j \leq b_{j+1}$ for all $-L \leq j < 0$.

From the definition of the length of a soliton, we have $\ell = L + m_v + 1$.
\end{proof}

\begin{ex}
\label{ex:SCA_D43}
Consider type $D_4^{(3)}$ and $r = 1$, and note $v = \bon$, $v_{\omega_1} = \btw$, and $y_1 = 0$.
Consider an SCA starting with a state in $\Sol^{(1)}(1,2,3)$ of:
\[
{\begin{array}{c|c}
t = 0 & \cdots {\color{gray} 1} {\color{gray} 1} {\color{gray} 1} {\color{gray} 1} {\color{gray} 1} {\color{gray} 1} {\color{gray} 1} {\color{gray} 1} {\color{gray} 1} {\color{gray} 1} {\color{gray} 1} {\color{gray} 1} {\color{gray} 1} {\color{gray} 1} {\color{gray} 1} {\color{gray} 1} {\color{gray} 1} {\color{gray} 1} {\color{gray} 1} {\color{gray} 1} {\color{gray} 1} {\color{gray} 1} {\color{gray} 1} {\color{gray} 1} {\color{gray} 1} {\color{gray} 1} {\color{gray} 1} {\color{gray} 1} {\color{gray} 1} 3 {\color{gray} 1} {\color{gray} 1} {\color{gray} 1} 2 3 {\color{gray} 1} {\color{gray} 1} {\color{gray} 1} 2 2 2 {\color{gray} 1} \\
t = 1 & \cdots {\color{gray} 1} {\color{gray} 1} {\color{gray} 1} {\color{gray} 1} {\color{gray} 1} {\color{gray} 1} {\color{gray} 1} {\color{gray} 1} {\color{gray} 1} {\color{gray} 1} {\color{gray} 1} {\color{gray} 1} {\color{gray} 1} {\color{gray} 1} {\color{gray} 1} {\color{gray} 1} {\color{gray} 1} {\color{gray} 1} {\color{gray} 1} {\color{gray} 1} {\color{gray} 1} {\color{gray} 1} {\color{gray} 1} {\color{gray} 1} {\color{gray} 1} {\color{gray} 1} {\color{gray} 1} {\color{gray} 1} 3 {\color{gray} 1} {\color{gray} 1} 2 3 {\color{gray} 1} {\color{gray} 1} 2 2 2 {\color{gray} 1} {\color{gray} 1} {\color{gray} 1} {\color{gray} 1} \\
t = 2 & \cdots {\color{gray} 1} {\color{gray} 1} {\color{gray} 1} {\color{gray} 1} {\color{gray} 1} {\color{gray} 1} {\color{gray} 1} {\color{gray} 1} {\color{gray} 1} {\color{gray} 1} {\color{gray} 1} {\color{gray} 1} {\color{gray} 1} {\color{gray} 1} {\color{gray} 1} {\color{gray} 1} {\color{gray} 1} {\color{gray} 1} {\color{gray} 1} {\color{gray} 1} {\color{gray} 1} {\color{gray} 1} {\color{gray} 1} {\color{gray} 1} {\color{gray} 1} {\color{gray} 1} {\color{gray} 1} 3 {\color{gray} 1} 2 3 {\color{gray} 1} 2 2 2 {\color{gray} 1} {\color{gray} 1} {\color{gray} 1} {\color{gray} 1} {\color{gray} 1} {\color{gray} 1} {\color{gray} 1} \\
t = 3 & \cdots {\color{gray} 1} {\color{gray} 1} {\color{gray} 1} {\color{gray} 1} {\color{gray} 1} {\color{gray} 1} {\color{gray} 1} {\color{gray} 1} {\color{gray} 1} {\color{gray} 1} {\color{gray} 1} {\color{gray} 1} {\color{gray} 1} {\color{gray} 1} {\color{gray} 1} {\color{gray} 1} {\color{gray} 1} {\color{gray} 1} {\color{gray} 1} {\color{gray} 1} {\color{gray} 1} {\color{gray} 1} {\color{gray} 1} {\color{gray} 1} {\color{gray} 1} {\color{gray} 1} 3 2 3 2 2 2 {\color{gray} 1} {\color{gray} 1} {\color{gray} 1} {\color{gray} 1} {\color{gray} 1} {\color{gray} 1} {\color{gray} 1} {\color{gray} 1} {\color{gray} 1} {\color{gray} 1} \\
t = 4 & \cdots {\color{gray} 1} {\color{gray} 1} {\color{gray} 1} {\color{gray} 1} {\color{gray} 1} {\color{gray} 1} {\color{gray} 1} {\color{gray} 1} {\color{gray} 1} {\color{gray} 1} {\color{gray} 1} {\color{gray} 1} {\color{gray} 1} {\color{gray} 1} {\color{gray} 1} {\color{gray} 1} {\color{gray} 1} {\color{gray} 1} {\color{gray} 1} {\color{gray} 1} {\color{gray} 1} {\color{gray} 1} {\color{gray} 1} {\color{gray} 1} {\color{gray} 1} 0 0 2 2 {\color{gray} 1} {\color{gray} 1} {\color{gray} 1} {\color{gray} 1} {\color{gray} 1} {\color{gray} 1} {\color{gray} 1} {\color{gray} 1} {\color{gray} 1} {\color{gray} 1} {\color{gray} 1} {\color{gray} 1} {\color{gray} 1} \\
t = 5 & \cdots {\color{gray} 1} {\color{gray} 1} {\color{gray} 1} {\color{gray} 1} {\color{gray} 1} {\color{gray} 1} {\color{gray} 1} {\color{gray} 1} {\color{gray} 1} {\color{gray} 1} {\color{gray} 1} {\color{gray} 1} {\color{gray} 1} {\color{gray} 1} {\color{gray} 1} {\color{gray} 1} {\color{gray} 1} {\color{gray} 1} {\color{gray} 1} {\color{gray} 1} {\color{gray} 1} {\color{gray} 1} {\color{gray} 1} 2 \overline{1} 2 {\color{gray} 1} {\color{gray} 1} {\color{gray} 1} {\color{gray} 1} {\color{gray} 1} {\color{gray} 1} {\color{gray} 1} {\color{gray} 1} {\color{gray} 1} {\color{gray} 1} {\color{gray} 1} {\color{gray} 1} {\color{gray} 1} {\color{gray} 1} {\color{gray} 1} {\color{gray} 1} \\
t = 6 & \cdots {\color{gray} 1} {\color{gray} 1} {\color{gray} 1} {\color{gray} 1} {\color{gray} 1} {\color{gray} 1} {\color{gray} 1} {\color{gray} 1} {\color{gray} 1} {\color{gray} 1} {\color{gray} 1} {\color{gray} 1} {\color{gray} 1} {\color{gray} 1} {\color{gray} 1} {\color{gray} 1} {\color{gray} 1} {\color{gray} 1} {\color{gray} 1} {\color{gray} 1} 2 0 \emptyset {\color{gray} 1} 2 {\color{gray} 1} {\color{gray} 1} {\color{gray} 1} {\color{gray} 1} {\color{gray} 1} {\color{gray} 1} {\color{gray} 1} {\color{gray} 1} {\color{gray} 1} {\color{gray} 1} {\color{gray} 1} {\color{gray} 1} {\color{gray} 1} {\color{gray} 1} {\color{gray} 1} {\color{gray} 1} {\color{gray} 1} \\
t = 7 & \cdots {\color{gray} 1} {\color{gray} 1} {\color{gray} 1} {\color{gray} 1} {\color{gray} 1} {\color{gray} 1} {\color{gray} 1} {\color{gray} 1} {\color{gray} 1} {\color{gray} 1} {\color{gray} 1} {\color{gray} 1} {\color{gray} 1} {\color{gray} 1} {\color{gray} 1} {\color{gray} 1} {\color{gray} 1} 2 3 \overline{3} {\color{gray} 1} {\color{gray} 1} {\color{gray} 1} 2 {\color{gray} 1} {\color{gray} 1} {\color{gray} 1} {\color{gray} 1} {\color{gray} 1} {\color{gray} 1} {\color{gray} 1} {\color{gray} 1} {\color{gray} 1} {\color{gray} 1} {\color{gray} 1} {\color{gray} 1} {\color{gray} 1} {\color{gray} 1} {\color{gray} 1} {\color{gray} 1} {\color{gray} 1} {\color{gray} 1} \\
t = 8 & \cdots {\color{gray} 1} {\color{gray} 1} {\color{gray} 1} {\color{gray} 1} {\color{gray} 1} {\color{gray} 1} {\color{gray} 1} {\color{gray} 1} {\color{gray} 1} {\color{gray} 1} {\color{gray} 1} {\color{gray} 1} {\color{gray} 1} {\color{gray} 1} 2 3 0 2 {\color{gray} 1} {\color{gray} 1} {\color{gray} 1} {\color{gray} 1} 2 {\color{gray} 1} {\color{gray} 1} {\color{gray} 1} {\color{gray} 1} {\color{gray} 1} {\color{gray} 1} {\color{gray} 1} {\color{gray} 1} {\color{gray} 1} {\color{gray} 1} {\color{gray} 1} {\color{gray} 1} {\color{gray} 1} {\color{gray} 1} {\color{gray} 1} {\color{gray} 1} {\color{gray} 1} {\color{gray} 1} {\color{gray} 1} \\
t = 9 & \cdots {\color{gray} 1} {\color{gray} 1} {\color{gray} 1} {\color{gray} 1} {\color{gray} 1} {\color{gray} 1} {\color{gray} 1} {\color{gray} 1} {\color{gray} 1} {\color{gray} 1} {\color{gray} 1} 2 3 3 2 2 {\color{gray} 1} {\color{gray} 1} {\color{gray} 1} {\color{gray} 1} {\color{gray} 1} 2 {\color{gray} 1} {\color{gray} 1} {\color{gray} 1} {\color{gray} 1} {\color{gray} 1} {\color{gray} 1} {\color{gray} 1} {\color{gray} 1} {\color{gray} 1} {\color{gray} 1} {\color{gray} 1} {\color{gray} 1} {\color{gray} 1} {\color{gray} 1} {\color{gray} 1} {\color{gray} 1} {\color{gray} 1} {\color{gray} 1} {\color{gray} 1} {\color{gray} 1} \\
t = 10 & \cdots {\color{gray} 1} {\color{gray} 1} {\color{gray} 1} {\color{gray} 1} {\color{gray} 1} {\color{gray} 1} {\color{gray} 1} {\color{gray} 1} 2 3 3 {\color{gray} 1} 2 2 {\color{gray} 1} {\color{gray} 1} {\color{gray} 1} {\color{gray} 1} {\color{gray} 1} {\color{gray} 1} 2 {\color{gray} 1} {\color{gray} 1} {\color{gray} 1} {\color{gray} 1} {\color{gray} 1} {\color{gray} 1} {\color{gray} 1} {\color{gray} 1} {\color{gray} 1} {\color{gray} 1} {\color{gray} 1} {\color{gray} 1} {\color{gray} 1} {\color{gray} 1} {\color{gray} 1} {\color{gray} 1} {\color{gray} 1} {\color{gray} 1} {\color{gray} 1} {\color{gray} 1} {\color{gray} 1} \\
t = 11 & \cdots {\color{gray} 1} {\color{gray} 1} {\color{gray} 1} {\color{gray} 1} {\color{gray} 1} 2 3 3 {\color{gray} 1} {\color{gray} 1} 2 2 {\color{gray} 1} {\color{gray} 1} {\color{gray} 1} {\color{gray} 1} {\color{gray} 1} {\color{gray} 1} {\color{gray} 1} 2 {\color{gray} 1} {\color{gray} 1} {\color{gray} 1} {\color{gray} 1} {\color{gray} 1} {\color{gray} 1} {\color{gray} 1} {\color{gray} 1} {\color{gray} 1} {\color{gray} 1} {\color{gray} 1} {\color{gray} 1} {\color{gray} 1} {\color{gray} 1} {\color{gray} 1} {\color{gray} 1} {\color{gray} 1} {\color{gray} 1} {\color{gray} 1} {\color{gray} 1} {\color{gray} 1} {\color{gray} 1} \\
t = 12 & \cdots  {\color{gray} 1} {\color{gray} 1} 2 3 3 {\color{gray} 1} {\color{gray} 1} {\color{gray} 1} 2 2 {\color{gray} 1} {\color{gray} 1} {\color{gray} 1} {\color{gray} 1} {\color{gray} 1} {\color{gray} 1} {\color{gray} 1} {\color{gray} 1} 2 {\color{gray} 1} {\color{gray} 1} {\color{gray} 1} {\color{gray} 1} {\color{gray} 1} {\color{gray} 1} {\color{gray} 1} {\color{gray} 1} {\color{gray} 1} {\color{gray} 1} {\color{gray} 1} {\color{gray} 1} {\color{gray} 1} {\color{gray} 1} {\color{gray} 1} {\color{gray} 1} {\color{gray} 1} {\color{gray} 1} {\color{gray} 1} {\color{gray} 1} {\color{gray} 1} {\color{gray} 1} {\color{gray} 1} \\
\end{array}}
\]
\end{ex}

\begin{prop}
\label{prop:soliton_H2}
Let $r = 1$ and $\g$ of type $D_{n+1}^{(2)}$ or $A_{2n}^{(2)}$.
For a soliton $p = b_{-L} \otimes \cdots \otimes b_0 \in \Sol^{(r)}(\ell)$, we have
\begin{align*}
u < b_{-L} \leq \cdots \leq b_0
\end{align*}
where $y_1$ (and $\emptyset$) appears at most once and we consider $v < \emptyset$.
Moreover, we have $\ell = L + m_v + 1$.
\end{prop}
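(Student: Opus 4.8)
The plan is to follow the strategy of the proof of Proposition~\ref{prop:soliton_H1}, the essential new feature being that the soliton energy is now $H(f_1 u \otimes u) = -A_{0,1} = 2$ rather than $1$; this is precisely what distinguishes these two types from the cases of Proposition~\ref{prop:soliton_H1}, and is the reason they are treated separately. Thus I must analyze how a total local energy of $2$ can be distributed across the adjacent pairs of the padded state $\cdots \otimes u \otimes u \otimes b_{-L} \otimes \cdots \otimes b_0$ (with $u$ on both sides, via the simplified form of $E_1$), and in particular account for the new element $\emptyset \in B(0) \subseteq B^{1,1}$.

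First I would record the necessary values of the local energy, using that $H$ is constant on $I_0$-components together with Table~\ref{table:adjoint_local_energy} (with the corrected entries for these types noted in the remark following Theorem~\ref{thm:adjoint_local_energy}). Since $u$ is classically highest, raising $b_0 \otimes u$ by $e_i$ for $i \in I_0$ acts only on the left tensor factor, so its $I_0$-highest weight representative has the form $b' \otimes u$; reading off the table then gives $H(b_0 \otimes u) = 0$ if $b_0 = u$, $H(\emptyset \otimes u) = 1$, and $H(b_0 \otimes u) = 2$ for every $b_0 \in B(\clfw_1) \setminus \{u\}$ (no $B(\clfw_1)$-element tensored with $u$ lands in a component of energy $1$). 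The same reduction gives $H(u \otimes b) = 0$ for $b \in B(\clfw_1)$ and $H(u \otimes \emptyset) = 1$, and, raising toward $u$ on whichever side carries the $B(\clfw_1)$-factor, $H(c \otimes \emptyset) = H(\emptyset \otimes c) = 1$ for all $c \in B(\clfw_1)$ while $H(\emptyset \otimes \emptyset) = 2$.

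With these values in hand the argument is a case analysis on the occurrences of $\emptyset$, using $E_1(p) = 2$ and nonnegativity of $H$. If no factor equals $\emptyset$, then every $b_k \in B(\clfw_1)$, so the left boundary term $H(u \otimes b_{-L})$ vanishes while $H(b_0 \otimes u) = 2$ (as $b_0 \neq u$ by the definition of a soliton); hence the entire budget sits at the right boundary, every internal term $H(b_j \otimes b_{j+1})$ vanishes, and $b_{-L} \otimes \cdots \otimes b_0$ lies in $B\bigl((L+1)\clfw_1\bigr)$. By the single-row tableau description of this component (the analog of Proposition~\ref{prop:minuscule_description} for the vector representation recalled after it) this is exactly an increasing chain $u < b_{-L} \leq \cdots \leq b_0 \leq v$ in which the zero-weight element $y_1$ occurs at most once. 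If exactly one factor equals $\emptyset$, then each neighbor of $\emptyset$ contributes a $1$ to $E_1$; since two such contributions together with a right-boundary value would exceed $2$, the element $\emptyset$ can occur neither in the interior nor at the left (unless it is the unique factor, giving the length-$1$ soliton $\emptyset$), so $\emptyset = b_0$, the right boundary contributes $H(\emptyset \otimes u) = 1$, the pair $H(b_{-1} \otimes \emptyset) = 1$, and all remaining terms vanish; thus $b_{-L} \otimes \cdots \otimes b_{-1} \in B(L\clfw_1)$ is again an increasing chain with $y_1$ at most once, capped by $\emptyset$ on the right. Declaring $v < \emptyset$ (so that $\emptyset$ is the maximum, consistent with $v$ being the maximal element of $B(\clfw_1)$) records both cases uniformly as $u < b_{-L} \leq \cdots \leq b_0$ with $y_1$ and $\emptyset$ each occurring at most once; two or more copies of $\emptyset$ are impossible since $H(\emptyset \otimes \emptyset) = 2$ alone forces $E_1(p) \geq 4$.

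Finally I would verify the length formula as in Proposition~\ref{prop:soliton_H1}: because the $1$-arrows of $B(\clfw_1)$ occur only immediately below $u$ and immediately above $v$, every factor in $B(\clfw_1) \setminus \{v\}$ contributes $N_1 = 1$, the element $v$ contributes $N_1 = 2$, and $\emptyset$ contributes $N_1 = 1$ by definition, whence $\sum_{k=0}^{L} N_1(b_{-k}) = (L+1) + m_v = L + m_v + 1$. I expect the main obstacle to be the first step: pinning down the corrected values $H(b_0 \otimes u)$ and the $\emptyset$-values rigorously for each of $D_{n+1}^{(2)}$ and $A_{2n}^{(2)}$, since this is exactly where Table~\ref{table:adjoint_local_energy} differs from~\cite{BFKL06} and requires care with the $I_0$-component structure; once these are secured, the tight energy budget of $2$ makes the remaining case analysis essentially forced.
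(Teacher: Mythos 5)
Your proof is correct and takes essentially the same route as the paper's: since $1 \sim 0$ in these types, the local energy values are read off from Theorem~\ref{thm:adjoint_local_energy}/Table~\ref{table:adjoint_local_energy} (with the corrected entries), giving $H(u_{\varpi} \otimes u) = H(\emptyset \otimes \emptyset) = 2$ and $H(b \otimes \emptyset) = H(\emptyset \otimes b) = 1$, after which the energy-budget argument of Proposition~\ref{prop:soliton_H1} forces the increasing-chain structure and the length count. The paper's own proof is exactly this, stated as a short sketch deferring to Proposition~\ref{prop:soliton_H1}; your case analysis on the placement of $\emptyset$ (only as the rightmost factor, or as the sole factor) and the count of $1$-arrows are precisely the details that sketch leaves implicit.
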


\begin{proof}
In both of these cases, we have $1 \sim 0$, and so the local energy is given by Theorem~\ref{thm:adjoint_local_energy}.
In particular, we have $H(u_{\varpi} \otimes u) = 2$.
Note that $H(\emptyset \otimes \emptyset) = 2$ and $H(b \otimes \emptyset) = H(\emptyset \otimes b) = 1$ for all $b \in B(\clfw_r)$.
The remainder of the proof is similar to the proof of Proposition~\ref{prop:soliton_H1}.
\end{proof}

We note that the description of a soliton for $r = 1$ with $\g$ of nonexceptional type in Proposition~\ref{prop:soliton_H1} and Proposition~\ref{prop:soliton_H2} agrees with the description from~\cite[Sec.~2.3]{HKOTY02} by~\cite[Lemma~2.5]{HKOTY02}.
Proposition~\ref{prop:soliton_H1} also covers the cases considered by~\cite{binMohamad12,Yamada04,Yamada07}.
The proofs are similar to ours except we use the uniform statements Theorem~\ref{thm:minuscule_local_energy} and Theorem~\ref{thm:adjoint_local_energy} to compute the local energy instead of type specific computations.

\begin{ex}
\label{ex:SCA_Dtwisted}
Consider an SCA starting with a state in $\Sol^{(1)}(1, 2, 4)$ of type $D_4^{(2)}$:
\[
{\begin{array}{c|c}
t = 0 & \cdots {\color{gray} 1} {\color{gray} 1} {\color{gray} 1} {\color{gray} 1} {\color{gray} 1} {\color{gray} 1} {\color{gray} 1} {\color{gray} 1} {\color{gray} 1} {\color{gray} 1} {\color{gray} 1} {\color{gray} 1} {\color{gray} 1} {\color{gray} 1} {\color{gray} 1} {\color{gray} 1} {\color{gray} 1} {\color{gray} 1} {\color{gray} 1} {\color{gray} 1} {\color{gray} 1} {\color{gray} 1} {\color{gray} 1} {\color{gray} 1} {\color{gray} 1} {\color{gray} 1} {\color{gray} 1} {\color{gray} 1} {\color{gray} 1} {\color{gray} 1} {\color{gray} 1} \emptyset {\color{gray} 1} {\color{gray} 1} {\color{gray} 1} 3 0 {\color{gray} 1} {\color{gray} 1} {\color{gray} 1} {\color{gray} 1} 2 \overline{3} \overline{1} {\color{gray} 1} \\
t = 1 & \cdots {\color{gray} 1} {\color{gray} 1} {\color{gray} 1} {\color{gray} 1} {\color{gray} 1} {\color{gray} 1} {\color{gray} 1} {\color{gray} 1} {\color{gray} 1} {\color{gray} 1} {\color{gray} 1} {\color{gray} 1} {\color{gray} 1} {\color{gray} 1} {\color{gray} 1} {\color{gray} 1} {\color{gray} 1} {\color{gray} 1} {\color{gray} 1} {\color{gray} 1} {\color{gray} 1} {\color{gray} 1} {\color{gray} 1} {\color{gray} 1} {\color{gray} 1} {\color{gray} 1} {\color{gray} 1} {\color{gray} 1} {\color{gray} 1} {\color{gray} 1} \emptyset {\color{gray} 1} {\color{gray} 1} 3 0 {\color{gray} 1} {\color{gray} 1} 2 \overline{3} \overline{1} {\color{gray} 1} {\color{gray} 1} {\color{gray} 1} {\color{gray} 1} {\color{gray} 1} \\
t = 2 & \cdots {\color{gray} 1} {\color{gray} 1} {\color{gray} 1} {\color{gray} 1} {\color{gray} 1} {\color{gray} 1} {\color{gray} 1} {\color{gray} 1} {\color{gray} 1} {\color{gray} 1} {\color{gray} 1} {\color{gray} 1} {\color{gray} 1} {\color{gray} 1} {\color{gray} 1} {\color{gray} 1} {\color{gray} 1} {\color{gray} 1} {\color{gray} 1} {\color{gray} 1} {\color{gray} 1} {\color{gray} 1} {\color{gray} 1} {\color{gray} 1} {\color{gray} 1} {\color{gray} 1} {\color{gray} 1} {\color{gray} 1} {\color{gray} 1} \emptyset 3 0 \overline{3} {\color{gray} 1} 2 \overline{1} {\color{gray} 1} {\color{gray} 1} {\color{gray} 1} {\color{gray} 1} {\color{gray} 1} {\color{gray} 1} {\color{gray} 1} {\color{gray} 1} {\color{gray} 1} \\
t = 3 & \cdots {\color{gray} 1} {\color{gray} 1} {\color{gray} 1} {\color{gray} 1} {\color{gray} 1} {\color{gray} 1} {\color{gray} 1} {\color{gray} 1} {\color{gray} 1} {\color{gray} 1} {\color{gray} 1} {\color{gray} 1} {\color{gray} 1} {\color{gray} 1} {\color{gray} 1} {\color{gray} 1} {\color{gray} 1} {\color{gray} 1} {\color{gray} 1} {\color{gray} 1} {\color{gray} 1} {\color{gray} 1} {\color{gray} 1} {\color{gray} 1} {\color{gray} 1} 3 0 \overline{3} \emptyset \overline{2} {\color{gray} 1} 2 2 {\color{gray} 1} {\color{gray} 1} {\color{gray} 1} {\color{gray} 1} {\color{gray} 1} {\color{gray} 1} {\color{gray} 1} {\color{gray} 1} {\color{gray} 1} {\color{gray} 1} {\color{gray} 1} {\color{gray} 1} \\
t = 4 & \cdots {\color{gray} 1} {\color{gray} 1} {\color{gray} 1} {\color{gray} 1} {\color{gray} 1} {\color{gray} 1} {\color{gray} 1} {\color{gray} 1} {\color{gray} 1} {\color{gray} 1} {\color{gray} 1} {\color{gray} 1} {\color{gray} 1} {\color{gray} 1} {\color{gray} 1} {\color{gray} 1} {\color{gray} 1} {\color{gray} 1} {\color{gray} 1} {\color{gray} 1} {\color{gray} 1} 3 0 \overline{3} \emptyset {\color{gray} 1} {\color{gray} 1} {\color{gray} 1} \overline{2} 2 2 {\color{gray} 1} {\color{gray} 1} {\color{gray} 1} {\color{gray} 1} {\color{gray} 1} {\color{gray} 1} {\color{gray} 1} {\color{gray} 1} {\color{gray} 1} {\color{gray} 1} {\color{gray} 1} {\color{gray} 1} {\color{gray} 1} {\color{gray} 1} \\
t = 5 & \cdots {\color{gray} 1} {\color{gray} 1} {\color{gray} 1} {\color{gray} 1} {\color{gray} 1} {\color{gray} 1} {\color{gray} 1} {\color{gray} 1} {\color{gray} 1} {\color{gray} 1} {\color{gray} 1} {\color{gray} 1} {\color{gray} 1} {\color{gray} 1} {\color{gray} 1} {\color{gray} 1} {\color{gray} 1} 3 0 \overline{3} \emptyset {\color{gray} 1} {\color{gray} 1} {\color{gray} 1} {\color{gray} 1} {\color{gray} 1} {\color{gray} 1} \overline{1} 2 {\color{gray} 1} {\color{gray} 1} {\color{gray} 1} {\color{gray} 1} {\color{gray} 1} {\color{gray} 1} {\color{gray} 1} {\color{gray} 1} {\color{gray} 1} {\color{gray} 1} {\color{gray} 1} {\color{gray} 1} {\color{gray} 1} {\color{gray} 1} {\color{gray} 1} {\color{gray} 1} \\
t = 6 & \cdots {\color{gray} 1} {\color{gray} 1} {\color{gray} 1} {\color{gray} 1} {\color{gray} 1} {\color{gray} 1} {\color{gray} 1} {\color{gray} 1} {\color{gray} 1} {\color{gray} 1} {\color{gray} 1} {\color{gray} 1} {\color{gray} 1} 3 0 \overline{3} \emptyset {\color{gray} 1} {\color{gray} 1} {\color{gray} 1} {\color{gray} 1} {\color{gray} 1} {\color{gray} 1} {\color{gray} 1} {\color{gray} 1} \overline{1} {\color{gray} 1} 2 {\color{gray} 1} {\color{gray} 1} {\color{gray} 1} {\color{gray} 1} {\color{gray} 1} {\color{gray} 1} {\color{gray} 1} {\color{gray} 1} {\color{gray} 1} {\color{gray} 1} {\color{gray} 1} {\color{gray} 1} {\color{gray} 1} {\color{gray} 1} {\color{gray} 1} {\color{gray} 1} {\color{gray} 1} \\
t = 7 & \cdots {\color{gray} 1} {\color{gray} 1} {\color{gray} 1} {\color{gray} 1} {\color{gray} 1} {\color{gray} 1} {\color{gray} 1} {\color{gray} 1} {\color{gray} 1} 3 0 \overline{3} \emptyset {\color{gray} 1} {\color{gray} 1} {\color{gray} 1} {\color{gray} 1} {\color{gray} 1} {\color{gray} 1} {\color{gray} 1} {\color{gray} 1} {\color{gray} 1} {\color{gray} 1} \overline{1} {\color{gray} 1} {\color{gray} 1} 2 {\color{gray} 1} {\color{gray} 1} {\color{gray} 1} {\color{gray} 1} {\color{gray} 1} {\color{gray} 1} {\color{gray} 1} {\color{gray} 1} {\color{gray} 1} {\color{gray} 1} {\color{gray} 1} {\color{gray} 1} {\color{gray} 1} {\color{gray} 1} {\color{gray} 1} {\color{gray} 1} {\color{gray} 1} {\color{gray} 1} \\
t = 8 & \cdots {\color{gray} 1} {\color{gray} 1} {\color{gray} 1} {\color{gray} 1} {\color{gray} 1} 3 0 \overline{3} \emptyset {\color{gray} 1} {\color{gray} 1} {\color{gray} 1} {\color{gray} 1} {\color{gray} 1} {\color{gray} 1} {\color{gray} 1} {\color{gray} 1} {\color{gray} 1} {\color{gray} 1} {\color{gray} 1} {\color{gray} 1} \overline{1} {\color{gray} 1} {\color{gray} 1} {\color{gray} 1} 2 {\color{gray} 1} {\color{gray} 1} {\color{gray} 1} {\color{gray} 1} {\color{gray} 1} {\color{gray} 1} {\color{gray} 1} {\color{gray} 1} {\color{gray} 1} {\color{gray} 1} {\color{gray} 1} {\color{gray} 1} {\color{gray} 1} {\color{gray} 1} {\color{gray} 1} {\color{gray} 1} {\color{gray} 1} {\color{gray} 1} {\color{gray} 1} \\
t = 9 & \cdots {\color{gray} 1} 3 0 \overline{3} \emptyset {\color{gray} 1} {\color{gray} 1} {\color{gray} 1} {\color{gray} 1} {\color{gray} 1} {\color{gray} 1} {\color{gray} 1} {\color{gray} 1} {\color{gray} 1} {\color{gray} 1} {\color{gray} 1} {\color{gray} 1} {\color{gray} 1} {\color{gray} 1} \overline{1} {\color{gray} 1} {\color{gray} 1} {\color{gray} 1} {\color{gray} 1} 2 {\color{gray} 1} {\color{gray} 1} {\color{gray} 1} {\color{gray} 1} {\color{gray} 1} {\color{gray} 1} {\color{gray} 1} {\color{gray} 1} {\color{gray} 1} {\color{gray} 1} {\color{gray} 1} {\color{gray} 1} {\color{gray} 1} {\color{gray} 1} {\color{gray} 1} {\color{gray} 1} {\color{gray} 1} {\color{gray} 1} {\color{gray} 1} {\color{gray} 1} \\
\end{array}}
\]
\end{ex}

\begin{prop}
\label{prop:soliton_adjoint}
Let $\g$ be of affine type except type $A_n^{(1)}$, $C_n^{(1)}$, $D_{n+1}^{(2)}$, $A_{2n}^{(2)}$ or $A_{2n}^{(2)\dagger}$.
Suppose $r \sim 0$.
For a soliton $p = b_{-L} \otimes \cdots \otimes b_0 \in \Sol^{(r)}(\ell)$, we have
\[
u < b_{-L} \leq \cdots \leq b_0 \leq v_{\varpi}.
\]
where $\ell = L + 1$ and
\begin{itemize}
\item in type $B_n^{(1)}$, there is at most one $x_{\clfw_1}$ or $x_{\clfw_1 - \alpha_1}$,
\item in type $G_2^{(1)}$, there is at most one $x_{\alpha_1+\alpha_2}$ or $x_{2\alpha_1+\alpha_2}$.
\end{itemize}
\end{prop}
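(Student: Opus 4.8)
The plan is to mirror the proofs of Proposition~\ref{prop:soliton_H1} and Proposition~\ref{prop:soliton_H2}, replacing the type-specific energy computations by the uniform values in Table~\ref{table:adjoint_local_energy}. Since $r \sim 0$, the local energy on $B^{r,1} \otimes B^{r,1}$ is given by Theorem~\ref{thm:adjoint_local_energy}. The excluded types $D_{n+1}^{(2)}$ and $A_{2n}^{(2)}$ are precisely the $r \sim 0$ cases with $A_{0,r} = -2$; for every remaining type we have $A_{0,r} = -1$, so Table~\ref{table:adjoint_local_energy} gives $H(u_\varpi \otimes u) = H\bigl((f_r u) \otimes u\bigr) = 1$. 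Hence the soliton condition $E_1(p) = H(f_r u_1 \otimes u_1)$ becomes $E_1(p) = 1$, and since $E_1(p)$ is a sum of nonnegative local energies over consecutive pairs (including the two boundary pairs formed with a vacuum $u$), exactly one pair contributes $1$ and all the others contribute $0$.

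First I would analyze the rightmost pair $b_0 \otimes u$. Using $e_i(b_0 \otimes u) = (e_i b_0) \otimes u$ for $i \in I_{0,r}$ (as $\varphi_i(u) = 0$ there) together with the $I_0$-invariance of $H$, the value $H(b_0 \otimes u)$ depends only on the $I_{0,r}$-component of $b_0$, so it suffices to evaluate it on the $I_{0,r}$-highest weight representatives, where $b_0 \otimes u$ is classically highest weight and its value is listed in Table~\ref{table:adjoint_local_energy}. Closing these representatives under the $I_{0,r}$-operators yields $H(b_0 \otimes u) = 0$ iff $b_0 = u$; $H(b_0 \otimes u) = 1$ iff $b_0 \in B(\varpi)$ (the $U_q(\g_{0,r})$-component of $u_\varpi = f_r u$) or $b_0 = \emptyset$; and $H(b_0 \otimes u) = 2$ for the remaining elements, namely the other $x_\alpha$, the $y_i$, and $v$. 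Together with $H(b \otimes \emptyset) = H(\emptyset \otimes b) = 1$ and $H(\emptyset \otimes \emptyset) = 2$, any occurrence of $\emptyset$ or of an $H = 2$ element forces $E_1(p) \geq 2$; this rules out $\emptyset$, $v$, the $y_i$, and the lower $x_\alpha$ entirely from $p$. Therefore the unique $H = 1$ pair is the rightmost one, with $b_0 \in B(\varpi)$, while every internal pair $b_j \otimes b_{j+1}$ and the left boundary pair $u \otimes b_{-L}$ lies in the zero-energy classical component $B(2\clfw_r)$.

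Next I would propagate this constraint leftward. Because $v_\varpi$ is the $I_{0,r}$-lowest weight element of $B(\varpi)$, it is the poset maximum of $B(\varpi)$, and one checks the down-set $\{c \mid c \leq v_\varpi\}$ in $B(\clfw_r)$ equals $\{u\} \cup B(\varpi)$ (the $y_i$ and lower $x_\alpha$ lie strictly above $v_\varpi$). When $\clfw_r$ is minuscule, Proposition~\ref{prop:minuscule_description} identifies $B(2\clfw_r)$ with the set of weakly increasing pairs, so each zero-energy pair gives $b_j \leq b_{j+1}$; starting from $b_0 \leq v_\varpi$ and descending, every $b_j \in \{u\} \cup B(\varpi)$, and since $b_j \neq u$ we obtain $u < b_{-L} \leq \cdots \leq b_0 \leq v_\varpi$ with all factors in $B(\varpi)$. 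For the length, every $b \in B(\varpi)$ satisfies $N_r(b) = 1$, since raising $b$ to $u_{\clfw_r}$ traverses exactly one $r$-edge, namely $u \xrightarrow{f_r} u_\varpi$; as no factor equals $v$ we have $m_v = 0$, whence $\ell = \sum_{k=0}^{L} N_r(b_{-k}) = L + 1$.

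The main obstacle is the two non-minuscule cases $B_n^{(1)}$ and $G_2^{(1)}$, where $B(\varpi)$ is not minuscule: in type $G_2^{(1)}$ it is a single $A_1$-string of length four, and in type $B_n^{(1)}$ it is $B(\varpi_1) \otimes B(\varpi_3)$ whose second factor is the vector representation of $B_{n-2}$, carrying a zero weight. Here $B(2\clfw_r)$ is strictly smaller than the set of weakly increasing pairs, so the zero-energy condition on the internal pairs imposes an extra constraint, and I would extract it by the same $A_1$ and single-row-tableau analysis already invoked in the excerpt for the $B_n$ vector crystal. Computing the highest component of $B(\varpi)^{\otimes 2}$ shows that a pair $b_j \otimes b_{j+1}$ is zero-energy only when its left factor is $u_\varpi$ or its right factor is $v_\varpi$; consequently an increasing zero-energy chain may pass through a non-extreme (middle) element at most once, which is exactly the assertion that there is at most one $x_{2\alpha_1+\alpha_2}$ or $x_{\alpha_1+\alpha_2}$ in type $G_2^{(1)}$ and at most one $x_{\clfw_1}$ or $x_{\clfw_1-\alpha_1}$ in type $B_n^{(1)}$. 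Making the identification of these specific weight vectors with the middle elements of $B(\varpi)$ precise, and confirming that the remaining types behave minuscule-like for this tensor-square computation, is the step I expect to require the most care.
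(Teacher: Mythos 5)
Your treatment of the boundary pair and of $\emptyset$ is essentially the paper's: Theorem~\ref{thm:adjoint_local_energy} gives $H(u_{\varpi} \otimes u) = 1$ for the non-excluded types, the classification of $H(b_0 \otimes u)$ follows by passing to highest weight representatives, and $\emptyset$ is excluded because it forces two consecutive pairs to contribute. The genuine gaps are in the propagation step, which is the core of the proof.

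First, your dichotomy ``when $\clfw_r$ is minuscule'' covers no cases at all: for every type in this proposition $r \sim 0$, so $B(\clfw_r)$ is the (little) adjoint crystal and contains the weight-zero elements $y_i$; hence $\clfw_r$ is never minuscule, and Proposition~\ref{prop:minuscule_description} never applies to $B(2\clfw_r) \subseteq B(\clfw_r)^{\otimes 2}$. Worse, the global statement you want from it --- $x \otimes y \in B(2\clfw_r)$ if and only if $x \leq y$ --- is genuinely false for these crystals; the ``at most one'' clauses in the proposition are witnesses to its failure (e.g.\ $x_{2\alpha_1+\alpha_2} \otimes x_{2\alpha_1+\alpha_2}$ in type $G_2^{(1)}$ is weakly increasing but not zero-energy). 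So the key implication ``$H(b_j \otimes b_{j+1}) = 0$ implies $b_j \leq b_{j+1}$'' is assumed rather than proved, in every type; your down-set observation cannot replace it, since invoking the down-set already presupposes that implication. The paper proves exactly this equivalence, for pairs whose right factor lies in $B(\varpi)$, by a finite computation in the exceptional types, and in types $B_n^{(1)}$, $D_n^{(1)}$, $A_{2n-1}^{(2)}$ by three ingredients your proposal omits: the characterization of Proposition~\ref{prop:minuscule_description} applied to $B(s\varpi)$ regarded as a $U_q(\g_{0,r})$-crystal, the identity $e_r^2(u_{\varpi} \otimes u_{\varpi}) = u \otimes u$ (which places that $U_q(\g_{0,r})$-component inside $B(2\clfw_r)$, giving the ``if'' direction), and an induction on lowering operators for the ``only if'' direction, in which the case $i = r$ is killed by the tensor product rule.

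Second, your description of the zero-energy pairs in the two ``hard'' cases --- zero-energy only when the left factor is $u_{\varpi}$ or the right factor is $v_{\varpi}$, hence at most one non-extreme factor in the whole chain --- is false in type $B_n^{(1)}$; it holds in type $G_2^{(1)}$ only because there $B(\varpi)$ is a four-element chain whose non-extreme elements are exactly $x_{2\alpha_1+\alpha_2}$ and $x_{\alpha_1+\alpha_2}$. The paper itself provides a counterexample: the length-five soliton of Example~\ref{ex:decoupling_r=2} in type $B_4^{(1)}$,
\[
\young(1,3) \otimes \young(1,0) \otimes \young(1,\bfo) \otimes \young(2,\bfo) \otimes \young(2,\bth)\ ,
\]
contains the internal pair $\young(1,\bfo) \otimes \young(2,\bfo)$, which must have local energy $0$ even though neither factor equals $u_{\varpi} = \young(1,3)$ or $v_{\varpi} = \young(2,\bth)$, and this soliton has three non-extreme factors. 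The correct constraint in type $B_n^{(1)}$ is far weaker: only the two elements whose $B_{n-2}$-component is the weight-zero letter, namely $x_{\clfw_1}$ and $x_{\clfw_1 - \alpha_1}$, may occur at most once in total --- this is the single-row Kashiwara--Nakashima caveat recorded after Proposition~\ref{prop:minuscule_description}. As written, your argument would ``prove'' that most of the $B_n^{(1)}$ solitons exhibited in the paper are not solitons.
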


\begin{proof}
We have $H(u_{\varpi} \otimes u) = 1$.

We note that any $r' \sim r$ is a special node in $\widehat{\g}_{0,r}$.
Therefore, we have $\mathcal{B}^{(r)}(s) \iso B(s\varpi)$ as $U_q(\g_{0,r})$-crystals.
Next, we note that if $H(b_0 \otimes u) = 1$ then either $b_0 \in B(\varpi)$ or $b_0 = \emptyset$.
The proof of this is similar to the proof given in Proposition~\ref{prop:soliton_H1}, and the lowest weight element is $v_{\varpi} \otimes u$.
We note that we cannot have $\emptyset \otimes u$ as $H(x \otimes \emptyset) = 1$ for all $x \in B(\clfw_r)$, which would give a state energy of at least $2$.

Next, consider $b_{-1} \otimes b_0$ with $b_1 \neq u$ and for a fixed $u_{\varpi} \leq b_0 \leq v_{\varpi}$.
We claim $H(b_{-1} \otimes b_0) = 0$ if and only if $b_{-1} \in B(\varpi)$ and $b_{-1} \leq b_0$.
For the exceptional types, this is a finite computation.
Thus, we assume $\g$ is of type $B_n^{(1)}$, $D_n^{(1)}$, $A_{2n-1}^{(2)}$, or $A_{2n}^{(2)\dagger}$, and we note that Proposition~\ref{prop:minuscule_description} holds for $B(s\varpi)$ in all of these cases (even if $r'$ is not necessarily minuscule).
If $b_{-1} \in B(\varpi)$ and $b_{-1} \leq b_0$, then $H(b_{-1} \otimes b_0) = 0$ by Proposition~\ref{prop:minuscule_description} and that $e_r^2(u_{\varpi} \otimes u_{\varpi}) = u \otimes u$.
To show the converse, we proceed by induction.
The base case is $u_{\varpi} \otimes b_0$, for which the claim immediately follows.
Assume the claim holds for some $b_{-1} \otimes b_0$, and consider $f_i(b_{-1} \otimes b_0) = (f_i b_{-1}) \otimes b_0 \neq 0$.
If $i \neq r$, then the claim follows from Proposition~\ref{prop:minuscule_description}.
Next, for $i = r$, we must have $f_r b_0 \neq 0$ since $b_{-1} \leq b_0$.
However, this contradicts the tensor product rule, and hence we must have $b_{-1} \in X$ and $b_{-1} \leq b_0$.
Similarly, we must have $b_i \leq b_{i+1}$ for all $-L \leq j < 0$.
\end{proof}

We give an example of the computation of Proposition~\ref{prop:soliton_adjoint}.
We note that $B(\clfw_3)$ in type $C_3$, which comes from $F_4^{(1)}$, is also characterized by Proposition~\ref{prop:minuscule_description}, but not $B(\clfw_4)$ in type $C_4$.

\begin{lstlisting}
sage: C3 = crystals.Tableaux(['C',3], shape=[1]*3)
sage: P = Poset(C3.digraph())
sage: C32 = crystals.Tableaux(['C',3], shape=[2]*3)
sage: all(P.le(C3(*(list(x)[:3])), C3(*(list(x)[3:])))
....:     for x in C32)
True
sage: C4 = crystals.Tableaux(['C',4], shape=[1]*4)
sage: P = Poset(C4.digraph())
sage: C42 = crystals.Tableaux(['C',4], shape=[2]*4)
sage: all(P.le(C4(*(list(x)[:4])), C4(*(list(x)[4:])))
....:     for x in C42)
False
\end{lstlisting}

\begin{ex}
\label{ex:SCA_B_2}
Consider an SCA starting with a state in $\Sol^{(2)}(2,4)$ in type $B_4^{(1)}$:
\[
{\begin{array}{c|c}
t = 0 & \cdots {\color{gray} \makebox[6pt]{$\begin{array}{c}1\\2\end{array}$}} {\color{gray} \makebox[6pt]{$\begin{array}{c}1\\2\end{array}$}} {\color{gray} \makebox[6pt]{$\begin{array}{c}1\\2\end{array}$}} {\color{gray} \makebox[6pt]{$\begin{array}{c}1\\2\end{array}$}} {\color{gray} \makebox[6pt]{$\begin{array}{c}1\\2\end{array}$}} {\color{gray} \makebox[6pt]{$\begin{array}{c}1\\2\end{array}$}} {\color{gray} \makebox[6pt]{$\begin{array}{c}1\\2\end{array}$}} {\color{gray} \makebox[6pt]{$\begin{array}{c}1\\2\end{array}$}} {\color{gray} \makebox[6pt]{$\begin{array}{c}1\\2\end{array}$}} {\color{gray} \makebox[6pt]{$\begin{array}{c}1\\2\end{array}$}} {\color{gray} \makebox[6pt]{$\begin{array}{c}1\\2\end{array}$}} {\color{gray} \makebox[6pt]{$\begin{array}{c}1\\2\end{array}$}} {\color{gray} \makebox[6pt]{$\begin{array}{c}1\\2\end{array}$}} {\color{gray} \makebox[6pt]{$\begin{array}{c}1\\2\end{array}$}} {\color{gray} \makebox[6pt]{$\begin{array}{c}1\\2\end{array}$}} {\color{gray} \makebox[6pt]{$\begin{array}{c}1\\2\end{array}$}} {\color{gray} \makebox[6pt]{$\begin{array}{c}1\\2\end{array}$}} {\color{gray} \makebox[6pt]{$\begin{array}{c}1\\2\end{array}$}} {\color{gray} \makebox[6pt]{$\begin{array}{c}1\\2\end{array}$}} {\color{gray} \makebox[6pt]{$\begin{array}{c}1\\2\end{array}$}} {\color{gray} \makebox[6pt]{$\begin{array}{c}1\\2\end{array}$}} \makebox[6pt]{$\begin{array}{c}1\\4\end{array}$} \makebox[6pt]{$\begin{array}{c}2\\\overline{3}\end{array}$} {\color{gray} \makebox[6pt]{$\begin{array}{c}1\\2\end{array}$}} {\color{gray} \makebox[6pt]{$\begin{array}{c}1\\2\end{array}$}} {\color{gray} \makebox[6pt]{$\begin{array}{c}1\\2\end{array}$}} \makebox[6pt]{$\begin{array}{c}1\\3\end{array}$} \makebox[6pt]{$\begin{array}{c}1\\3\end{array}$} \makebox[6pt]{$\begin{array}{c}2\\0\end{array}$} \makebox[6pt]{$\begin{array}{c}2\\\overline{3}\end{array}$} {\color{gray} \makebox[6pt]{$\begin{array}{c}1\\2\end{array}$}}  \\
t = 1 & \cdots {\color{gray} \makebox[6pt]{$\begin{array}{c}1\\2\end{array}$}} {\color{gray} \makebox[6pt]{$\begin{array}{c}1\\2\end{array}$}} {\color{gray} \makebox[6pt]{$\begin{array}{c}1\\2\end{array}$}} {\color{gray} \makebox[6pt]{$\begin{array}{c}1\\2\end{array}$}} {\color{gray} \makebox[6pt]{$\begin{array}{c}1\\2\end{array}$}} {\color{gray} \makebox[6pt]{$\begin{array}{c}1\\2\end{array}$}} {\color{gray} \makebox[6pt]{$\begin{array}{c}1\\2\end{array}$}} {\color{gray} \makebox[6pt]{$\begin{array}{c}1\\2\end{array}$}} {\color{gray} \makebox[6pt]{$\begin{array}{c}1\\2\end{array}$}} {\color{gray} \makebox[6pt]{$\begin{array}{c}1\\2\end{array}$}} {\color{gray} \makebox[6pt]{$\begin{array}{c}1\\2\end{array}$}} {\color{gray} \makebox[6pt]{$\begin{array}{c}1\\2\end{array}$}} {\color{gray} \makebox[6pt]{$\begin{array}{c}1\\2\end{array}$}} {\color{gray} \makebox[6pt]{$\begin{array}{c}1\\2\end{array}$}} {\color{gray} \makebox[6pt]{$\begin{array}{c}1\\2\end{array}$}} {\color{gray} \makebox[6pt]{$\begin{array}{c}1\\2\end{array}$}} {\color{gray} \makebox[6pt]{$\begin{array}{c}1\\2\end{array}$}} {\color{gray} \makebox[6pt]{$\begin{array}{c}1\\2\end{array}$}} {\color{gray} \makebox[6pt]{$\begin{array}{c}1\\2\end{array}$}} \makebox[6pt]{$\begin{array}{c}1\\4\end{array}$} \makebox[6pt]{$\begin{array}{c}2\\\overline{3}\end{array}$} {\color{gray} \makebox[6pt]{$\begin{array}{c}1\\2\end{array}$}} \makebox[6pt]{$\begin{array}{c}1\\3\end{array}$} \makebox[6pt]{$\begin{array}{c}1\\3\end{array}$} \makebox[6pt]{$\begin{array}{c}2\\0\end{array}$} \makebox[6pt]{$\begin{array}{c}2\\\overline{3}\end{array}$} {\color{gray} \makebox[6pt]{$\begin{array}{c}1\\2\end{array}$}} {\color{gray} \makebox[6pt]{$\begin{array}{c}1\\2\end{array}$}} {\color{gray} \makebox[6pt]{$\begin{array}{c}1\\2\end{array}$}} {\color{gray} \makebox[6pt]{$\begin{array}{c}1\\2\end{array}$}} {\color{gray} \makebox[6pt]{$\begin{array}{c}1\\2\end{array}$}} \\
t = 2 & \cdots {\color{gray} \makebox[6pt]{$\begin{array}{c}1\\2\end{array}$}} {\color{gray} \makebox[6pt]{$\begin{array}{c}1\\2\end{array}$}} {\color{gray} \makebox[6pt]{$\begin{array}{c}1\\2\end{array}$}} {\color{gray} \makebox[6pt]{$\begin{array}{c}1\\2\end{array}$}} {\color{gray} \makebox[6pt]{$\begin{array}{c}1\\2\end{array}$}} {\color{gray} \makebox[6pt]{$\begin{array}{c}1\\2\end{array}$}} {\color{gray} \makebox[6pt]{$\begin{array}{c}1\\2\end{array}$}} {\color{gray} \makebox[6pt]{$\begin{array}{c}1\\2\end{array}$}} {\color{gray} \makebox[6pt]{$\begin{array}{c}1\\2\end{array}$}} {\color{gray} \makebox[6pt]{$\begin{array}{c}1\\2\end{array}$}} {\color{gray} \makebox[6pt]{$\begin{array}{c}1\\2\end{array}$}} {\color{gray} \makebox[6pt]{$\begin{array}{c}1\\2\end{array}$}} {\color{gray} \makebox[6pt]{$\begin{array}{c}1\\2\end{array}$}} {\color{gray} \makebox[6pt]{$\begin{array}{c}1\\2\end{array}$}} {\color{gray} \makebox[6pt]{$\begin{array}{c}1\\2\end{array}$}} {\color{gray} \makebox[6pt]{$\begin{array}{c}1\\2\end{array}$}} {\color{gray} \makebox[6pt]{$\begin{array}{c}1\\2\end{array}$}} \makebox[6pt]{$\begin{array}{c}1\\4\end{array}$} \makebox[6pt]{$\begin{array}{c}3\\\overline{3}\end{array}$} \makebox[6pt]{$\begin{array}{c}1\\3\end{array}$} \makebox[6pt]{$\begin{array}{c}2\\0\end{array}$} \makebox[6pt]{$\begin{array}{c}2\\\overline{3}\end{array}$} {\color{gray} \makebox[6pt]{$\begin{array}{c}1\\2\end{array}$}} {\color{gray} \makebox[6pt]{$\begin{array}{c}1\\2\end{array}$}} {\color{gray} \makebox[6pt]{$\begin{array}{c}1\\2\end{array}$}} {\color{gray} \makebox[6pt]{$\begin{array}{c}1\\2\end{array}$}} {\color{gray} \makebox[6pt]{$\begin{array}{c}1\\2\end{array}$}} {\color{gray} \makebox[6pt]{$\begin{array}{c}1\\2\end{array}$}} {\color{gray} \makebox[6pt]{$\begin{array}{c}1\\2\end{array}$}} {\color{gray} \makebox[6pt]{$\begin{array}{c}1\\2\end{array}$}} {\color{gray} \makebox[6pt]{$\begin{array}{c}1\\2\end{array}$}} \\
t = 3 & \cdots {\color{gray} \makebox[6pt]{$\begin{array}{c}1\\2\end{array}$}} {\color{gray} \makebox[6pt]{$\begin{array}{c}1\\2\end{array}$}} {\color{gray} \makebox[6pt]{$\begin{array}{c}1\\2\end{array}$}} {\color{gray} \makebox[6pt]{$\begin{array}{c}1\\2\end{array}$}} {\color{gray} \makebox[6pt]{$\begin{array}{c}1\\2\end{array}$}} {\color{gray} \makebox[6pt]{$\begin{array}{c}1\\2\end{array}$}} {\color{gray} \makebox[6pt]{$\begin{array}{c}1\\2\end{array}$}} {\color{gray} \makebox[6pt]{$\begin{array}{c}1\\2\end{array}$}} {\color{gray} \makebox[6pt]{$\begin{array}{c}1\\2\end{array}$}} {\color{gray} \makebox[6pt]{$\begin{array}{c}1\\2\end{array}$}} {\color{gray} \makebox[6pt]{$\begin{array}{c}1\\2\end{array}$}} {\color{gray} \makebox[6pt]{$\begin{array}{c}1\\2\end{array}$}} {\color{gray} \makebox[6pt]{$\begin{array}{c}1\\2\end{array}$}} {\color{gray} \makebox[6pt]{$\begin{array}{c}1\\2\end{array}$}} \makebox[6pt]{$\begin{array}{c}1\\4\end{array}$} \makebox[6pt]{$\begin{array}{c}1\\0\end{array}$} \makebox[6pt]{$\begin{array}{c}3\\\overline{1}\end{array}$} \makebox[6pt]{$\begin{array}{c}2\\\overline{3}\end{array}$} {\color{gray} \makebox[6pt]{$\begin{array}{c}1\\2\end{array}$}} {\color{gray} \makebox[6pt]{$\begin{array}{c}1\\2\end{array}$}} {\color{gray} \makebox[6pt]{$\begin{array}{c}1\\2\end{array}$}} {\color{gray} \makebox[6pt]{$\begin{array}{c}1\\2\end{array}$}} {\color{gray} \makebox[6pt]{$\begin{array}{c}1\\2\end{array}$}} {\color{gray} \makebox[6pt]{$\begin{array}{c}1\\2\end{array}$}} {\color{gray} \makebox[6pt]{$\begin{array}{c}1\\2\end{array}$}} {\color{gray} \makebox[6pt]{$\begin{array}{c}1\\2\end{array}$}} {\color{gray} \makebox[6pt]{$\begin{array}{c}1\\2\end{array}$}} {\color{gray} \makebox[6pt]{$\begin{array}{c}1\\2\end{array}$}} {\color{gray} \makebox[6pt]{$\begin{array}{c}1\\2\end{array}$}} {\color{gray} \makebox[6pt]{$\begin{array}{c}1\\2\end{array}$}} {\color{gray} \makebox[6pt]{$\begin{array}{c}1\\2\end{array}$}} \\
t = 4 & \cdots {\color{gray} \makebox[6pt]{$\begin{array}{c}1\\2\end{array}$}} {\color{gray} \makebox[6pt]{$\begin{array}{c}1\\2\end{array}$}} {\color{gray} \makebox[6pt]{$\begin{array}{c}1\\2\end{array}$}} {\color{gray} \makebox[6pt]{$\begin{array}{c}1\\2\end{array}$}} {\color{gray} \makebox[6pt]{$\begin{array}{c}1\\2\end{array}$}} {\color{gray} \makebox[6pt]{$\begin{array}{c}1\\2\end{array}$}} {\color{gray} \makebox[6pt]{$\begin{array}{c}1\\2\end{array}$}} {\color{gray} \makebox[6pt]{$\begin{array}{c}1\\2\end{array}$}} {\color{gray} \makebox[6pt]{$\begin{array}{c}1\\2\end{array}$}} {\color{gray} \makebox[6pt]{$\begin{array}{c}1\\2\end{array}$}} \makebox[6pt]{$\begin{array}{c}1\\4\end{array}$} \makebox[6pt]{$\begin{array}{c}1\\0\end{array}$} \makebox[6pt]{$\begin{array}{c}2\\\overline{4}\end{array}$} \makebox[6pt]{$\begin{array}{c}3\\\overline{3}\end{array}$} \makebox[6pt]{$\begin{array}{c}2\\4\end{array}$} {\color{gray} \makebox[6pt]{$\begin{array}{c}1\\2\end{array}$}} {\color{gray} \makebox[6pt]{$\begin{array}{c}1\\2\end{array}$}} {\color{gray} \makebox[6pt]{$\begin{array}{c}1\\2\end{array}$}} {\color{gray} \makebox[6pt]{$\begin{array}{c}1\\2\end{array}$}} {\color{gray} \makebox[6pt]{$\begin{array}{c}1\\2\end{array}$}} {\color{gray} \makebox[6pt]{$\begin{array}{c}1\\2\end{array}$}} {\color{gray} \makebox[6pt]{$\begin{array}{c}1\\2\end{array}$}} {\color{gray} \makebox[6pt]{$\begin{array}{c}1\\2\end{array}$}} {\color{gray} \makebox[6pt]{$\begin{array}{c}1\\2\end{array}$}} {\color{gray} \makebox[6pt]{$\begin{array}{c}1\\2\end{array}$}} {\color{gray} \makebox[6pt]{$\begin{array}{c}1\\2\end{array}$}} {\color{gray} \makebox[6pt]{$\begin{array}{c}1\\2\end{array}$}} {\color{gray} \makebox[6pt]{$\begin{array}{c}1\\2\end{array}$}} {\color{gray} \makebox[6pt]{$\begin{array}{c}1\\2\end{array}$}} {\color{gray} \makebox[6pt]{$\begin{array}{c}1\\2\end{array}$}} {\color{gray} \makebox[6pt]{$\begin{array}{c}1\\2\end{array}$}} \\
t = 5 & \cdots {\color{gray} \makebox[6pt]{$\begin{array}{c}1\\2\end{array}$}} {\color{gray} \makebox[6pt]{$\begin{array}{c}1\\2\end{array}$}} {\color{gray} \makebox[6pt]{$\begin{array}{c}1\\2\end{array}$}} {\color{gray} \makebox[6pt]{$\begin{array}{c}1\\2\end{array}$}} {\color{gray} \makebox[6pt]{$\begin{array}{c}1\\2\end{array}$}} {\color{gray} \makebox[6pt]{$\begin{array}{c}1\\2\end{array}$}} \makebox[6pt]{$\begin{array}{c}1\\4\end{array}$} \makebox[6pt]{$\begin{array}{c}1\\0\end{array}$} \makebox[6pt]{$\begin{array}{c}2\\\overline{4}\end{array}$} \makebox[6pt]{$\begin{array}{c}2\\\overline{3}\end{array}$} {\color{gray} \makebox[6pt]{$\begin{array}{c}1\\2\end{array}$}} \makebox[6pt]{$\begin{array}{c}1\\3\end{array}$} \makebox[6pt]{$\begin{array}{c}2\\4\end{array}$} {\color{gray} \makebox[6pt]{$\begin{array}{c}1\\2\end{array}$}} {\color{gray} \makebox[6pt]{$\begin{array}{c}1\\2\end{array}$}} {\color{gray} \makebox[6pt]{$\begin{array}{c}1\\2\end{array}$}} {\color{gray} \makebox[6pt]{$\begin{array}{c}1\\2\end{array}$}} {\color{gray} \makebox[6pt]{$\begin{array}{c}1\\2\end{array}$}} {\color{gray} \makebox[6pt]{$\begin{array}{c}1\\2\end{array}$}} {\color{gray} \makebox[6pt]{$\begin{array}{c}1\\2\end{array}$}} {\color{gray} \makebox[6pt]{$\begin{array}{c}1\\2\end{array}$}} {\color{gray} \makebox[6pt]{$\begin{array}{c}1\\2\end{array}$}} {\color{gray} \makebox[6pt]{$\begin{array}{c}1\\2\end{array}$}} {\color{gray} \makebox[6pt]{$\begin{array}{c}1\\2\end{array}$}} {\color{gray} \makebox[6pt]{$\begin{array}{c}1\\2\end{array}$}} {\color{gray} \makebox[6pt]{$\begin{array}{c}1\\2\end{array}$}} {\color{gray} \makebox[6pt]{$\begin{array}{c}1\\2\end{array}$}} {\color{gray} \makebox[6pt]{$\begin{array}{c}1\\2\end{array}$}} {\color{gray} \makebox[6pt]{$\begin{array}{c}1\\2\end{array}$}} {\color{gray} \makebox[6pt]{$\begin{array}{c}1\\2\end{array}$}} {\color{gray} \makebox[6pt]{$\begin{array}{c}1\\2\end{array}$}} \\
t = 6 & \cdots {\color{gray} \makebox[6pt]{$\begin{array}{c}1\\2\end{array}$}} {\color{gray} \makebox[6pt]{$\begin{array}{c}1\\2\end{array}$}} \makebox[6pt]{$\begin{array}{c}1\\4\end{array}$} \makebox[6pt]{$\begin{array}{c}1\\0\end{array}$} \makebox[6pt]{$\begin{array}{c}2\\\overline{4}\end{array}$} \makebox[6pt]{$\begin{array}{c}2\\\overline{3}\end{array}$} {\color{gray} \makebox[6pt]{$\begin{array}{c}1\\2\end{array}$}} {\color{gray} \makebox[6pt]{$\begin{array}{c}1\\2\end{array}$}} {\color{gray} \makebox[6pt]{$\begin{array}{c}1\\2\end{array}$}} \makebox[6pt]{$\begin{array}{c}1\\3\end{array}$} \makebox[6pt]{$\begin{array}{c}2\\4\end{array}$} {\color{gray} \makebox[6pt]{$\begin{array}{c}1\\2\end{array}$}} {\color{gray} \makebox[6pt]{$\begin{array}{c}1\\2\end{array}$}} {\color{gray} \makebox[6pt]{$\begin{array}{c}1\\2\end{array}$}} {\color{gray} \makebox[6pt]{$\begin{array}{c}1\\2\end{array}$}} {\color{gray} \makebox[6pt]{$\begin{array}{c}1\\2\end{array}$}} {\color{gray} \makebox[6pt]{$\begin{array}{c}1\\2\end{array}$}} {\color{gray} \makebox[6pt]{$\begin{array}{c}1\\2\end{array}$}} {\color{gray} \makebox[6pt]{$\begin{array}{c}1\\2\end{array}$}} {\color{gray} \makebox[6pt]{$\begin{array}{c}1\\2\end{array}$}} {\color{gray} \makebox[6pt]{$\begin{array}{c}1\\2\end{array}$}} {\color{gray} \makebox[6pt]{$\begin{array}{c}1\\2\end{array}$}} {\color{gray} \makebox[6pt]{$\begin{array}{c}1\\2\end{array}$}} {\color{gray} \makebox[6pt]{$\begin{array}{c}1\\2\end{array}$}} {\color{gray} \makebox[6pt]{$\begin{array}{c}1\\2\end{array}$}} {\color{gray} \makebox[6pt]{$\begin{array}{c}1\\2\end{array}$}} {\color{gray} \makebox[6pt]{$\begin{array}{c}1\\2\end{array}$}} {\color{gray} \makebox[6pt]{$\begin{array}{c}1\\2\end{array}$}} {\color{gray} \makebox[6pt]{$\begin{array}{c}1\\2\end{array}$}} {\color{gray} \makebox[6pt]{$\begin{array}{c}1\\2\end{array}$}} {\color{gray} \makebox[6pt]{$\begin{array}{c}1\\2\end{array}$}} \\
\end{array}}
\]
Note that in this example, the phase shift is $0$.
\end{ex}

Proposition~\ref{prop:soliton_adjoint} yields the solitons as given in~\cite[Prop.~8]{MOW12}, and similar to the above, our proof is essentially the same as~\cite[Prop.~8]{MOW12}.

Our next result shows that Conjecture~\ref{conj:SCA}(\ref{conj:SCA_bijection}) holds for some special cases, which we show by direct computation.

\begin{prop}
\label{prop:soliton_crystal_bijection}
Let $r$ be a special node or $\g$ is not of type $A_n^{(1)}$ with $r \sim 0$.
There exists a $U_q(\g_{0,r})$-crystal isomorphism
\[
\Psi \colon \Sol^{(r)}(\ell) \to \mathcal{B}^{(r)}(\ell).
\]
\end{prop}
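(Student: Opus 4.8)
The plan is to use the explicit descriptions of solitons to present both $\Sol^{(r)}(\ell)$ and $\mathcal{B}^{(r)}(\ell)$ as crystals of weakly increasing chains and then match them. First I would assemble the structure theorems: Propositions~\ref{prop:soliton_H1}, \ref{prop:soliton_H2}, and~\ref{prop:soliton_adjoint} show that every soliton $p = b_{-L} \otimes \cdots \otimes b_0 \in \Sol^{(r)}(\ell)$ is a weakly increasing chain $u < b_{-L} \leq \cdots \leq b_0$ whose entries lie in $B(\varpi) \subseteq B(\clfw_r)$, possibly together with the minimal element $v$ (types $C_n^{(1)}$, $A_{2n}^{(2)\dagger}$) or $\emptyset$ (types $D_{n+1}^{(2)}$, $A_{2n}^{(2)}$) appearing at the bottom, subject to the stated multiplicity-one restrictions on $y_r$ and the exceptional elements. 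The operators $e_i, f_i$ for $i \in I_{0,r}$ leave the local energy $H$ unchanged (it varies only for $i = 0$), hence preserve both $E_1(p)$ and the length, and since $f_i u = 0$ for $i \neq r$ they cannot create a factor $u_1$; therefore $\Sol^{(r)}(\ell)$ is closed under these operators and is a $U_q(\g_{0,r})$-subcrystal of the ambient SCA crystal, with the operators computed by the signature rule.

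Next I would settle the cases in which no extra minimal element occurs, namely $r$ special in a dual untwisted type and the (nonexceptional) cases of Proposition~\ref{prop:soliton_adjoint}, where $\ell = L + 1$ and a soliton is a genuine length-$\ell$ chain in $B(\varpi)$. As recorded in the proof of Proposition~\ref{prop:soliton_adjoint}, each $r' \sim r$ is a special node of $\widehat{\g}_{0,r}$ and $\varpi_{r'}$ is a minuscule weight of $\g_{0,r}$, so Proposition~\ref{prop:minuscule_description} applies to $B(s\varpi)$ even when $r'$ is not literally minuscule; restricting each factor of $\mathcal{B}^{(r)}(\ell)$ to $U_q(\g_{0,r})$ and taking the outer tensor product over the distinct components of $\g_{0,r}$ yields the single classical component $\mathcal{B}^{(r)}(\ell) \cong B(\ell\varpi)$. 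The map $\Psi$ then sends the chain $b_{-L} \otimes \cdots \otimes b_0$ to the single-row element of $B(\ell\varpi)$ it defines under Proposition~\ref{prop:minuscule_description}. Because source and target are the same set of chains carrying the same restricted signature-rule structure, $\Psi$ is a $U_q(\g_{0,r})$-crystal isomorphism once one confirms that $\overline{\wt}(p)$ equals the $\g_{0,r}$-weight of the image.

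The hard part will be the cases in which $\mathcal{B}^{(r)}(\ell)$ fails to be $U_q(\g_{0,r})$-irreducible, which is exactly where an extra bottom element or a nonunit scaling $\modgamma_r/\gamma_{r'}$ intervenes: the $w = v$ cases $C_n^{(1)}, A_{2n}^{(2)\dagger}$ of Proposition~\ref{prop:soliton_H1}, the $\emptyset$-cases $D_{n+1}^{(2)}, A_{2n}^{(2)}$ of Proposition~\ref{prop:soliton_H2}, and the split factors ($\modgamma_r/\gamma_{r'} \in \{1/2, 1/3\}$) appearing in Conjecture~\ref{conj:SCA}(\ref{conj:SCA_bijection}). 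Here a soliton of length $\ell$ is a chain of length $L + 1 < \ell$ (via $\ell = L + m_v + 1$) carrying $m_v$ trailing copies of $v$ or an $\emptyset$, and the lower classical components of $\mathcal{B}^{(r)}(\ell)$ must be matched against the solitons with $m_v > 0$. I would build $\Psi$ by peeling off the extra bottom entries, reading the remaining chain into the maximal component by Proposition~\ref{prop:minuscule_description}, and accounting for the removed entries through the scaling factor—for example, using that the maximal classical component of a split factor $B^{r',\lfloor \ell/2\rfloor} \otimes B^{r',\lceil \ell/2\rceil}$ is $B(\ell\varpi_{r'})$, with its lower summands carrying the multiplicities. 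The exceptional types $F_4^{(1)}, E_6^{(2)}, G_2^{(1)}, D_4^{(3)}$ I would dispatch by a direct finite computation on the small crystals $B(\clfw_r)$, exactly as in the verification of the soliton descriptions. The delicate technical point throughout is checking that these piecewise matchings remain compatible with the $I_{0,r}$-signature rule across the component boundaries.
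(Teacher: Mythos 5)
Your proposal is correct and takes essentially the same approach as the paper's proof: both rely on Propositions~\ref{prop:soliton_H1}, \ref{prop:soliton_H2}, and~\ref{prop:soliton_adjoint} together with Proposition~\ref{prop:minuscule_description} to identify solitons with single-row elements, treat $v$ and $\emptyset$ as weight-zero entries (contributing $2$ and $1$ to the length) that account for the lower classical components of $\mathcal{B}^{(r)}(\ell)$, and handle the split-factor case (type $C_2^{(1)}$) separately. The one organizational difference is that the paper avoids the ``delicate point'' you flag---compatibility of a pointwise matching with the $I_{0,r}$-signature rule across component boundaries---by defining $\Psi$ only on the $I_{0,r}$-highest weight solitons, via $u_{\varpi}^{\otimes m} \otimes v^{\otimes \overline{m}} \otimes \emptyset^{\otimes m_*} \mapsto u_{m\varpi}$, and extending it as a $U_q(\g_{0,r})$-crystal morphism, which makes that check automatic.
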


\begin{proof}
We first need to take care of the special case of type $C_2^{(1)}$, where we define $\Psi$ by
\[
u_{\varpi}^{\otimes m} \otimes v^{\otimes \overline{m}} \mapsto (f_1^{\overline{m}} u_{\lfloor \ell/2 \rfloor}) \otimes u_{\lceil \ell/2 \rceil},
\]
where $\ell = m + 2\overline{m}$, and extended as a $U_q(\g_{0,r})$-crystal morphism.
It is straightforward to see that this is a $U_q(\g_{0,r})$-crystal isomorphism as $I_{0,r} = \{1\}$.

We note that $v$ and $\emptyset$ both have weight $0$ as $U_q(\g_{0,r})$-crystals and contribute $2$ and $1$, respectively, to the length of the soliton.
These are also the only elements that appear in the solitons that are not in $B(\varpi)$.
Therefore, we define $\Psi$ by
\[
u_{\varpi}^{\otimes m} \otimes v^{\otimes \overline{m}} \otimes \emptyset^{\otimes m_*} \mapsto u_{m\varpi},
\]
where $m + 2\overline{m} + m_* = \ell$ (and $m_* \in \{0, 1\}$), and extend as a $U_q(\g_{0,r})$-crystal morphism.

Note that the defining condition of the solitons from Proposition~\ref{prop:soliton_H1}, Proposition~\ref{prop:soliton_H2}, or Proposition~\ref{prop:soliton_adjoint} and elements in $B(k\varpi)$ from Proposition~\ref{prop:minuscule_description} agree with the description of single row tableaux, which are given pairwise by $x \leq y$ for $x \otimes y$ with $y_r$ appearing at most once.
Therefore $\Psi$ is a bijection by considering the classical decompositions of $\mathcal{B}^{(r)}(\ell)$.
It is clear that the $U_q(\g_{0,r})$-weights agree and $\Psi(f_i p) = f_i \Psi(p)$ for all $i \in I_{0,r}$.
Hence, the map $\Psi$ is a $U_q(\g_{0,r})$-crystal isomorphism as desired.
\end{proof}

We can describe the map $\Psi$ given by Proposition~\ref{prop:soliton_crystal_bijection} explicitly using Kashiwara--Nakashima (KN) tableaux~\cite{FOS09, KN94} for $r = 2$, which is adjacent to $0$, in types $A_{2n-1}^{(2)}$, $B_n^{(1)}$, and $D_n^{(1)}$.\footnote{For type $B_3^{(1)}$, the map is slightly more complicated because the right factor should instead be $B_A^{1,\lfloor \ell/2 \rfloor} \otimes B_A^{1,\lceil \ell/2 \rceil}$. This map is similar to the $C_2^{(1)}$ for $r = 1$ case, and we leave the details to the reader.}
We consider type $D_n^{(1)}$~\cite{MW13}, but the other types are similar.
We have
\[
\vdomino{x_{-L}}{y_{-L}} \otimes \cdots \otimes \vdomino{x_0}{y_0} \mapsto
\begin{array}{|c|c|c|} \hline x_{-L} & \cdots & x_0 \\\hline \end{array} \otimes \mathbf{y}_{\downarrow} \in B_A^{1,\ell} \otimes B_D^{1,\ell},
\]
where $B_A^{1,\ell}$ is a KR crystal of type $A_1^{(1)}$ and $B_D^{1,\ell}$ is a KR crystal of type $D_{n-2}^{(1)}$ and $\mathbf{y}_{\downarrow}$ decreases all (barred) entries by $2$ of
\[
\mathbf{y} = \begin{array}{|c|c|c|} \hline y_{-L} & \cdots & y_0 \\\hline \end{array}\ .
\]

\begin{ex}
\label{ex:decoupling_r=2}
Let $\g$ be of type $B_4^{(1)}$, then we have
\[
\Psi\left( \young(1,3) \otimes \young(1,0) \otimes \young(1,\bfo) \otimes \young(2,\bfo) \otimes \young(2,\bth) \right)
= \young(11122) \otimes \young(10\btw\btw\bon)\ ,
\]
where the image is in $B^{1,5}_A \otimes B^{1,5}_B$ of type $A_1^{(1)} \times B_2^{(1)}$.
\end{ex}

A similar description of $\Psi$ exists for $r = 1$ in types $C_n^{(1)}$, $D_{n+1}^{(2)}$, $A_{2n}^{(2)}$, and $A_{2n}^{(2)\dagger}$ using KN tableaux.
In this case, it is a single row tableaux mapping to a single row tableaux, removing all $\bon$ and $\emptyset$ entries, and decreasing all (barred) entries by $1$.
In~\cite[Prop.~8]{MOW12}, the map $\Psi$ was described explicitly for $r = 2$ in type $G_2^{(1)}$.

\begin{prop}
\label{prop:conserved_quantities}
Fix $r$ such that $\mathcal{B}^{(r)}(\ell) \iso B(\ell \varpi)$ as $U_q(\g_{0,r})$-crystals.
Let $p \in \Sol^{(r)}(\ell)$.
Then we have $E_k(p) = t_r^{\vee} \min(k, \ell)$ and $T_k(p)$ moves the soliton $\min(k, \ell)$ steps to the left.
\end{prop}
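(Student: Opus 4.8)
The plan is to reduce both assertions to a single, explicitly chosen representative soliton and then to compute directly by tracking the carrier $u_k$ as it passes through that soliton. The hypothesis $\mathcal{B}^{(r)}(\ell) \iso B(\ell\varpi)$ together with Proposition~\ref{prop:soliton_crystal_bijection} shows that $\Sol^{(r)}(\ell)$ is a single $U_q(\g_{0,r})$-crystal component isomorphic to $B(\ell\varpi)$. Both quantities in the statement are constant on this component: the state energy $E_k$ is a classical invariant (this follows from the invariance of $H$ on classical components together with the fact that $T_k$ commutes with $e_i, f_i$ for all $i \in I_0$), and the displacement of the soliton under $T_k$ is a classical invariant because $T_k$ commutes with $e_i, f_i$ for $i \in I_0$ (the carrier $u_k$ is classically highest weight, so the signature rule never lets a classical operator act on it), whence such an operator changes only the entries of the soliton and not the positions it occupies. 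It therefore suffices to prove the proposition for the classically highest weight element $p_0 = \Psi^{-1}(u_{\ell\varpi})$, which by the explicit formula in the proof of Proposition~\ref{prop:soliton_crystal_bijection} is $p_0 = u_{\varpi}^{\otimes \ell}$, the $v$- and $\emptyset$-free soliton of length $\ell$.

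The heart of the argument is a carrier-tracking lemma for $p_0$. Embedding $p_0$ as the state $\cdots \otimes u_1 \otimes u_1 \otimes u_{\varpi}^{\otimes \ell}$ and letting $u_k$ enter from the right, I would show: (i) over the right-hand vacuum the carrier is unchanged and contributes $H = 0$; (ii) each of the first $\min(k,\ell)$ factors $u_{\varpi}$ is picked up, the carrier passing from $f_r^{j} u_k$ to $f_r^{j+1} u_k$ while emitting $u_1$, with local energy $H\bigl(u_{\varpi} \otimes f_r^{j} u_k\bigr) = t_r^{\vee}$; (iii) if $k \le \ell$ the carrier is full after these $k$ pickups and each of the remaining $\ell - k$ factors $u_{\varpi}$ passes straight through with $H = 0$; and (iv) in the left-hand vacuum the carrier empties, emitting one $u_{\varpi}$ per step with $H = 0$. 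Summing the local energies gives $E_k(p_0) = t_r^{\vee}\min(k,\ell)$, while reading off the emitted factors shows that $T_k(p_0)$ is exactly $p_0$ translated $\min(k,\ell)$ steps to the left; both conclusions then pass to all of $\Sol^{(r)}(\ell)$ by the reduction above. At $k=1$ this recovers $E_1(p_0) = t_r^{\vee} = H(f_r u_1 \otimes u_1)$, consistent with the definition of a soliton.

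To establish the carrier lemma I would pin down the $R$-matrix identities using that the combinatorial $R$-matrix is the unique classical crystal morphism sending $u_1 \otimes u_k \mapsto u_k \otimes u_1$ and that it commutes with all $f_i$: writing $u_{\varpi} \otimes f_r^{j} u_k$ as the image of $u_1 \otimes u_k$ under an explicit monomial in the $f_i$ lets one read off its image under $R$, and the capacity $\varphi_r(u_k) = k$ identifies when the carrier becomes full (after which $f_r^{k+1} u_k = 0$ forces pass-through). For a special node the per-pickup value $H\bigl(u_{\varpi} \otimes f_r^{j} u_k\bigr) = t_r^{\vee} = 1$ follows directly from Theorem~\ref{thm:minuscule_local_energy} (a single $r$-arrow from $u_{\varpi} = f_r u$ to $u_{\clfw_r}$) after passing to the classically highest weight representative of the relevant component via Proposition~\ref{prop:minuscule_description}.

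The main obstacle is the case $r \sim 0$, where Theorem~\ref{thm:adjoint_local_energy} records the local energy only on $B^{r,1} \otimes B^{r,1}$. Here I would need to propagate that single-box computation to $B^{r,1} \otimes B^{r,k}$ for all $k$, controlling both the per-pickup energy $t_r^{\vee} = -A_{0,r}$ and, crucially, the carrier's effective capacity, which governs the crossover from slope $t_r^{\vee}$ to slope $0$ and hence produces the $\min(k,\ell)$. This propagation, rather than the position bookkeeping, is where the real work lies, and it is the step I expect to require the most care, possibly with a few type-by-type checks for the exceptional adjoint nodes.
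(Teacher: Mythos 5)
Your first step --- reducing to the unique $I_{0,r}$-highest weight soliton $p_0 = u_{\varpi}^{\otimes \ell}$ via the isomorphism $\Sol^{(r)}(\ell) \iso B(\ell\varpi)$ --- is exactly the paper's first step, though your justification overreaches: $T_k$ does \emph{not} commute with $e_r, f_r$ (the carrier has $\varphi_r(u_k) = k > 0$ and $\varepsilon_i(u_k)=\varphi_i(u_k)=0$ only for $i \in I_{0,r}$, so the signature rule can and does route $r$-operators into the carrier). This is harmless only because the soliton component is a $U_q(\g_{0,r})$-crystal, so the reduction needs commutation just for $i \in I_{0,r}$, which is what the paper uses.

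The genuine gap is your second step. The carrier-tracking lemma that carries the whole argument --- per-pickup energy $t_r^{\vee}$, carrier capacity, and the crossover producing $\min(k,\ell)$ --- is not proved in your proposal, and you acknowledge as much for $r \sim 0$, where Theorem~\ref{thm:adjoint_local_energy} gives $H$ only on $B^{r,1} \otimes B^{r,1}$ and no tool in the paper propagates it to $B^{r,1} \otimes B^{r,k}$; even for special $r$, identifying the classical component of $u_{\varpi} \otimes f_r^j u_k$ so that Theorem~\ref{thm:minuscule_local_energy} applies is left as a sketch. The paper sidesteps all of this with one observation you are missing: after the reduction, every element that ever appears (in the state, the carrier, and the emitted factors) lies on the $r$-strings through the maximal elements, so the time evolution and local energy of the reduced system coincide, up to the overall factor $t_r^{\vee}$, with those of the type $A_1^{(1)}$ box-ball system under the identification $u_{\clfw_r} \mapsto$ empty box, $u_{\varpi} \mapsto$ ball; the conclusion is then quoted from the known box-ball result \cite[Lemma~4.1]{FOY00}. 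In other words, the hard analysis you defer is precisely the content of an existing lemma in a rank-one system, and the proof should route through that identification rather than redo the carrier computation in type $\g$.
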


\begin{proof}
Since $\mathcal{B}^{(r)}(\ell) \iso B(\ell \varpi)$, there exist a unique $I_{0,r}$-highest weight element.
Therefore, we can apply $e_i$, where $i \in I_{0,r}$, as many times as possible to obtain the $I_{0,r}$-highest weight element
\[
p' = u_{\varpi} \otimes \cdots \otimes u_{\varpi}.
\]
Note that $H$ does not change and $e_i$ commutes with the combinatorial $R$-matrix.
Therefore, we have $E_k(p') = E_k(p)$.

Next, we note that our time evolution and local energy correspond to those for the box-ball system under identifying $u_{\clfw_r}$ and $u_{\varpi}$ with an empty box and a ball, respectively.
Hence, the claim follows from~\cite[Lemma~4.1]{FOY00}.
\end{proof}

Now we consider the case when $\g$ is of type $B_n^{(1)}$ and $r = n$.
We recall that $B^{n,1} \iso B(\clfw_n)$ as $U_q(\g_0)$-crystals and $B(\clfw_n)$ is a minuscule representation.

\begin{prop}
\label{prop:soliton_Bn}
Let $r = n$ and $\g$ of type $B_n^{(1)}$.
Let $v_*$ denote the lowest weight vector in the $U_q(\g_{0,r})$-subcrystal $B_*$ generated by $u_* := f_n f_{n-1} f_n u$.
For a soliton $p = b_{-L} \otimes \cdots \otimes b_0 \in \Sol^{(r)}(\ell)$, we have
\begin{align*}
u < b_{-L} \leq \cdots \leq b_0 \leq v_*.
\end{align*}
Moreover, we have $\ell = L + m_* + 1$, where $m_*$ are the number of elements in $B_*$ in the soliton. 
\end{prop}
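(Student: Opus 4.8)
The plan is to follow the template of Proposition~\ref{prop:soliton_H1} and Proposition~\ref{prop:soliton_adjoint}: reduce the soliton condition $E_1(p) = H(f_n u \otimes u)$ to statements about the local energy $H$ on $B^{n,1} \otimes B^{n,1}$, and then read off both the ordering and the length formula. The essential new feature is that $n$ is neither special nor adjacent to $0$, so neither Theorem~\ref{thm:minuscule_local_energy} nor Theorem~\ref{thm:adjoint_local_energy} is available; the local energy must instead be computed directly from the affine crystal structure of the (classically minuscule) spin crystal $B^{n,1} \iso B(\clfw_n)$, and this is exactly where the real work lies.

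First I would set up the spin (sign) model of $B(\clfw_n)$, writing $b = (\sigma_1, \dotsc, \sigma_n)$ with $\sigma_i \in \{+,-\}$, so that $f_i$ ($i<n$) exchanges $(\sigma_i,\sigma_{i+1}) = (+,-) \mapsto (-,+)$, $f_n$ sends $\sigma_n = + \mapsto -$, and $e_0$ sends $(\sigma_1,\sigma_2) = (+,+) \mapsto (-,-)$. In this model $u = (+^n)$, the statistic $N_n(b)$ is the number of $-$ signs of $b$, the $I_{0,n}$-highest weight elements are exactly $b^{(k)} := (+^k,-^{n-k})$ for $0 \le k \le n$ (with $u_\varpi = b^{(n-1)}$ and $u_* = f_n f_{n-1} f_n u = b^{(n-2)}$), and $v_* = (-,-,+^{n-2}) = e_0 u$. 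Because $H$ is constant on classical components and $f_i(b_0 \otimes u) = (f_i b_0)\otimes u$ for $i \in I_{0,n}$, the quantity $H(b_0 \otimes u)$ depends only on the $I_{0,n}$-component of $b_0$, so it suffices to evaluate $H$ on the finitely many elements $b^{(k)} \otimes u$.

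The crux is this energy computation, which I expect to be the main obstacle precisely because the uniform energy theorems do not apply. The clean part rests on the identity $v_* = e_0 u$: the step $e_0(u\otimes u) = (e_0 u)\otimes u = v_* \otimes u$ is of type (LL), whence $H(v_*\otimes u) = H(u\otimes u) + 1 = 1$, and since $v_*$ and $u_*$ lie in the common $I_{0,n}$-component $B_*$ we get $H(u_*\otimes u) = 1$. An analogous short computation in the affine crystal gives $H(u_\varpi\otimes u) = 1$, so the soliton value is $c := H(f_n u \otimes u) = 1$ (this is also forced by the requirement that $f_n u \otimes u$ be the minimal excitation). Applying $e_0$ once more, $e_0(u_\varpi\otimes u) = (e_0 u_\varpi)\otimes u$ is again of type (LL) and $e_0 u_\varpi$ has three $-$ signs; iterating, the computation yields $H(b^{(k)}\otimes u) = \lceil (n-k)/2 \rceil$, the ceiling reflecting that $e_0$ lowers two signs at once. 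In particular $H(b_0 \otimes u) = 1$ exactly when $b_0$ has one or two $-$ signs, i.e.\ $b_0 \in B(\varpi) \sqcup B_*$, and it is $\ge 2$ once $b_0$ has three or more $-$ signs. A direct check in the minuscule poset $B(\clfw_n)$ then identifies $B(\varpi)\sqcup B_*$ with the set $\{b : u < b \le v_*\}$: every one- or two-$-$-sign element lies in the order ideal of $v_*$ (for the one-$-$-sign case one uses that the highest root $\theta$ has $\theta - \alpha_n$ a nonnegative combination of simple roots), and nothing with more $-$ signs can descend to $v_*$.

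With the energy in hand, the remainder is the standard argument. By Proposition~\ref{prop:minuscule_description} (applicable since $\clfw_n$ is minuscule), $H(b_j\otimes b_{j+1}) = 0$ iff $b_j\otimes b_{j+1} \in B(2\clfw_n)$ iff $b_j \le b_{j+1}$; in particular $H(u\otimes b_{-L}) = 0$ because $u$ is the minimum of the poset. Writing $E_1(p) = H(u\otimes b_{-L}) + \sum_{j=-L}^{-1} H(b_j\otimes b_{j+1}) + H(b_0\otimes u)$ with all summands nonnegative and $H(b_0\otimes u)\ge 1$ (as $b_0\ne u$), the soliton condition $E_1(p) = c = 1$ forces $H(b_0\otimes u) = 1$, hence $u < b_0 \le v_*$, together with $H(b_j\otimes b_{j+1}) = 0$ for all $j$, hence $b_{-L}\le\cdots\le b_0$; this gives $u < b_{-L}\le\cdots\le b_0\le v_*$. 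Finally, each factor lies in $B(\varpi)$ or in $B_*$ and contributes one or two $-$ signs respectively, so $\sum_k N_n(b_{-k})$ equals the number of factors $L+1$ plus the number $m_*$ of factors in $B_*$, yielding $\ell = L + m_* + 1$ as claimed.
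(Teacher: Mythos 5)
Your proposal is correct and takes essentially the same route as the paper's proof: identify the classically highest weight elements of $B^{n,1} \otimes B^{n,1}$ (your $b^{(k)} \otimes u$ are the paper's $u^{(k)}$), compute $H(u^{(k)}) = \lceil k/2 \rceil$ from the affine crystal structure, and finish with the minuscule characterization of $B(2\clfw_n)$ and the energy bookkeeping of Proposition~\ref{prop:soliton_H1}. Two small remarks: the sign you use for an (LL) application of $e_0$ (namely $+1$) is the one consistent with Theorem~\ref{thm:minuscule_local_energy} and Table~\ref{table:adjoint_local_energy} rather than with the literal signs in~\eqref{eq:local_energy}, and the odd-case anchor $H(u_{\varpi} \otimes u) = 1$ is not ``forced'' as your parenthetical suggests --- it genuinely requires the short (RR)-arrow and component-identification computation you allude to, which does go through.
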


\begin{proof}
All classically highest weight elements in $B^{n,1} \otimes B^{n,1}$ are of the form
\[
u^{(k)} := k \overline{k+1} \otimes u,
\]
where the element on the left is written as a minuscule word.
A straightforward computation shows that $H(u^{(k)}) = \lceil k / 2 \rceil$.
Note that $u^{(1)} = u_{\varpi} \otimes u$ and $u^{(2)} = u_* \otimes u$.
Since $B(\clfw_n)$ is a minuscule representation, the subset $B(2\clfw_n) \subseteq B(\clfw_n) \otimes B(\clfw_n)$ is given by the elements $x \otimes y$ such that $x \leq y$.
Thus, the remainder of the proof is similar to the proof of Proposition~\ref{prop:soliton_H1}.
\end{proof}

\begin{prop}
\label{prop:soliton_crystal_bijection_B}
Let $\g$ be of type $B_n^{(1)}$ and $r = n$.
Define $u_* := f_n f_{n-1} f_n u$.
There exists a $U_q(\g_{0,r})$-crystal isomorphism
\[
\Psi \colon \Sol^{(r)}(\ell) \to \mathcal{B}^{(r)}(\ell), 
\]
where we remove the left factor if $\ell = 1$,
defined by
\[
u^{\otimes m} \otimes u_*^{\otimes m_*} \mapsto (f_{n-1}^{m_*} u_{\lfloor \ell / 2 \rfloor \varpi_{n-1}}) \otimes u_{\lceil \ell / 2 \rceil \varpi_{n-1}}.
\]
\end{prop}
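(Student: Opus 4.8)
The plan is to follow the template of the proof of Proposition~\ref{prop:soliton_crystal_bijection}: define $\Psi$ on the $I_{0,r}$-highest weight elements, extend it as a $U_q(\g_{0,r})$-crystal morphism, and then verify that it restricts to a weight-preserving bijection between the highest weight elements of the two crystals. Since both $\Sol^{(r)}(\ell)$ and $\mathcal{B}^{(r)}(\ell)$ are closed under $e_i, f_i$ for $i \in I_{0,r}$ and hence decompose into connected components each isomorphic to some $B(\lambda)$, such a weight-preserving bijection of highest weight elements extends automatically to the asserted isomorphism, exactly as before. Here $\g_{0,r} = A_{n-1}$, with fundamental weights $\varpi_1, \dotsc, \varpi_{n-1}$ (and $\varpi_0 := 0$).

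First I would classify the highest weight solitons. By Proposition~\ref{prop:soliton_Bn} a soliton is a weakly increasing chain $u < b_{-L} \le \cdots \le b_0 \le v_*$ whose entries lie in $B(\varpi) \sqcup B_*$, where $B(\varpi)$ is generated by $u_\varpi = f_n u$ and $B_*$ by $u_* = f_n f_{n-1} f_n u$. A weight computation shows $B(\varpi) \iso B(\varpi_{n-1})$ and $B_* \iso B(\varpi_{n-2})$ as $U_q(\g_{0,r})$-crystals, so $u_\varpi$ and $u_*$ are the only $I_{0,r}$-highest weight elements appearing among soliton entries. Using the signature rule together with the minuscule structure of $B(\clfw_n)$ — in the same way Proposition~\ref{prop:minuscule_description} is used in Proposition~\ref{prop:soliton_crystal_bijection} — one checks that an $I_{0,r}$-highest weight soliton must have every tensor factor $I_{0,r}$-highest weight, and the ordering $u_\varpi \le u_*$ then forces it to be $u_\varpi^{\otimes m} \otimes u_*^{\otimes m_*}$ with $\ell = m + 2m_*$. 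Hence
\[
\Sol^{(r)}(\ell) \iso \bigoplus_{m + 2m_* = \ell} B\bigl(m \varpi_{n-1} + m_* \varpi_{n-2}\bigr),
\]
where $m_*$ ranges over $0 \le m_* \le \lfloor \ell/2 \rfloor$.

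Next I would analyze the target. As $U_q(\g_{0,r})$-crystals, $\mathcal{B}^{(r)}(\ell) = B^{n-1, \lfloor \ell/2 \rfloor} \otimes B^{n-1, \lceil \ell/2 \rceil}$ restricts to $B(\lfloor \ell/2 \rfloor \varpi_{n-1}) \otimes B(\lceil \ell/2 \rceil \varpi_{n-1})$. By the tensor product rule its highest weight elements are $b \otimes u_{\lceil \ell/2 \rceil \varpi_{n-1}}$ with $\varepsilon_i(b) = 0$ for $i \le n-2$ and $\varepsilon_{n-1}(b) \le \lceil \ell/2 \rceil$; realizing $B(\lfloor \ell/2 \rfloor \varpi_{n-1})$ by rectangular tableaux, the elements $b$ with $\varepsilon_i(b) = 0$ for all $i \le n-2$ are exactly $f_{n-1}^{j} u_{\lfloor \ell/2 \rfloor \varpi_{n-1}}$ for $0 \le j \le \lfloor \ell/2 \rfloor$, and each of these automatically satisfies $\varepsilon_{n-1} = j \le \lceil \ell/2 \rceil$. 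A short computation using $\alpha_{n-1} = 2\varpi_{n-1} - \varpi_{n-2}$ in $A_{n-1}$ shows that $f_{n-1}^{j} u_{\lfloor \ell/2 \rfloor \varpi_{n-1}} \otimes u_{\lceil \ell/2 \rceil \varpi_{n-1}}$ has weight $(\ell - 2j) \varpi_{n-1} + j \varpi_{n-2}$. Thus taking $j = m_*$ makes $\Psi$ a weight-preserving bijection from the highest weight solitons onto the highest weight elements of $\mathcal{B}^{(r)}(\ell)$; it is well defined since $m_* \le \lfloor \ell/2 \rfloor$, and the degeneration $\ell = 1$ simply drops the trivial left factor $B(0 \cdot \varpi_{n-1})$. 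Extending as a $U_q(\g_{0,r})$-crystal morphism yields the isomorphism.

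The main obstacle I expect is the classification of the highest weight elements on both sides. On the soliton side one must rule out $I_{0,r}$-highest weight chains whose individual factors are not highest weight; this is precisely where the minuscule weakly increasing structure does the work, since via the signature rule a nonzero $\varepsilon_i$ contributed by the rightmost factor cannot be cancelled from the left, and an induction then propagates highest weight-ness through the remaining factors. On the target side the crux is the tableau computation identifying the $\{1, \dotsc, n-2\}$-highest weight elements of the rectangle $B(\lfloor \ell/2 \rfloor \varpi_{n-1})$ with the $f_{n-1}$-string through its highest weight element. Once these two classifications are in place, the matching of weights and the extension to a crystal isomorphism are routine.
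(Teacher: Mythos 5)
Your proposal is correct and follows essentially the same route as the paper's proof: identify the $I_{0,r}$-highest weight elements on both sides, check that $\Psi$ matches them weight-preservingly, and extend as a $U_q(\g_{0,r})$-crystal morphism. The only differences are that you derive the classification of highest weight elements of $B(\lfloor \ell/2 \rfloor \varpi_{n-1}) \otimes B(\lceil \ell/2 \rceil \varpi_{n-1})$ by a direct tableau/tensor-product-rule computation where the paper simply cites~\cite{Stembridge01}, and you spell out the domain-side classification (that highest weight solitons are exactly $u_{\varpi}^{\otimes m} \otimes u_*^{\otimes m_*}$, using Proposition~\ref{prop:soliton_Bn} and the signature rule) which the paper treats as implicit.
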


\begin{proof}
Since each $u_*$ contributes 2 to the length of the soliton, we have $m_* \leq \ell / 2$.
It is straightforward to see that $\Psi$ is an $U_q(\g_{0,r})$-crystal morphism.
From~\cite{Stembridge01}, the highest weight elements of $B(\lfloor \ell/2 \rfloor \varpi_{n-1}) \otimes B(\lceil \ell/2 \rceil \varpi_{n-1})$ are given by
\[
(f_{n-1}^{m_*} u_{\lfloor \ell / 2 \rfloor \varpi_{n-1}}) \otimes u_{\lceil \ell / 2 \rceil \varpi_{n-1}}.
\]
Hence, the map $\Psi$ is a $U_q(\g_{0,r})$-crystal isomorphism.
\end{proof}

\begin{ex}
\label{ex:SCA_B_spin}
We begin with the spin representation of~\cite{KN94} (see also~\cite{BS17,HK02}) to represent elements of $B(\clfw_n)$, where the elements are given by a sequence $(s_1, \dotsc, s_n)$ with $s_i = \pm$.
Next, we consider this to be the binary representation of an integer written in reverse order with $+ \mapsto 0$ and $- \mapsto 1$.
For example, we have
\begin{align*} 0 = 000 \dotsm 0 & = u_{\clfw_n} = (+, \dotsc, +, +, +),
\\ 2^{n-1} = 100 \dotsm 0 & = f_n u_{\clfw_n} = u_{\varpi} = (+, \dotsc, +, +, -),
\\ 2^{n-1} + 2^{n-2} = 110 \dotsm 0 & = u_* = (+, \dotsc, +, -, -).
\end{align*}
Thus, consider an SCA starting with an initial state in $\Sol^{(3)}(1,3)$ in type $B_3^{(1)}$:
\[
\begin{array}{c|c}
t = 0 & \cdots {\color{gray} 0000000000000} 1 {\color{gray} 00} 46 {\color{gray} 0} \\
t = 1 & \cdots {\color{gray} 000000000000} 146 {\color{gray} 0000} \\
t = 2 & \cdots {\color{gray} 0000000000} 47 {\color{gray} 0000000} \\
t = 3 & \cdots {\color{gray} 0000000} 43 {\color{gray} 0} 4 {\color{gray} 00000000} \\
t = 4 & \cdots {\color{gray} 0000} 43 {\color{gray} 000} 4{\color{gray} 000000000} \\
t = 5 & \cdots {\color{gray} 0} 43 {\color{gray} 00000} 4{\color{gray} 0000000000}
\end{array}
\]
Note that the right soliton after scattering $u_{\varpi} \otimes u_*$ is not connected to $u_{\varpi} \otimes u_{\varpi}$ by $e_1$ and $e_2$ operators.
Furthermore, we have
\begin{align*}
\Psi(1) & = \young(2,3)\ ,
&
\Psi(4) & = \young(1,2)\ ,
\\
\Psi(46) & = \young(1,3) \otimes \young(11,22)\ ,
& 
\Psi(43) & = \young(2,3) \otimes \young(11,23)\ .
\end{align*}
Note that $46$ and $\Phi(46)$ are $I_{0,3}$-highest weight elements, but $66$ another $I_{0,r}$-highest weight element. This demonstrates the necessity of the two tensor factors as there is only a unique $I_{0,3}$ highest weight element for any KR crystal $B^{r,s}$ of type $A_2^{(1)}$.
\end{ex}

\begin{ex}
\label{ex:SCA_B_spin2}
Keeping the same conventions as Example~\ref{ex:SCA_B_spin}, we give an SCA with an initial state in $\Sol^{(3)}(1,3)$ in type $B_3^{(1)}$:
\[
\begin{array}{c|c}
t = 0 & \cdots {\color{gray} 0000000000000} 1 {\color{gray} 00} 421 {\color{gray} 0} \\
t = 1 & \cdots {\color{gray} 000000000000} 1421 {\color{gray} 0000} \\
t = 2 & \cdots {\color{gray} 0000000000} 431 {\color{gray} 0000000} \\
t = 3 & \cdots {\color{gray} 0000000} 43 {\color{gray} 00} 1 {\color{gray} 00000000} \\
t = 4 & \cdots {\color{gray} 0000} 43 {\color{gray} 0000} 1 {\color{gray} 000000000} \\
t = 5 & \cdots {\color{gray} 0} 43 {\color{gray} 000000} 1 {\color{gray} 0000000000}
\end{array}
\]
Also, we have $\Psi(421) = \young(1,2) \otimes \young(12,33)\ .$
\end{ex}

Next, we consider $r = 1$ for type $G_2^{(1)}$.
The local energy on highest weight elements is given by
\[
H(1 \otimes 1) = 0,
\qquad
H(2 \otimes 1) = 1,
\qquad
H(0 \otimes 1) = 1,
\qquad
H(\bon \otimes 1) = 2.
\]
We have $H(b \otimes b') = 0$ if and only $b \leq b'$ and we do not have $b = b' = 0$.

\begin{prop}
\label{prop:soliton_G2}
Let $r = 1$ and $\g$ of type $G_2^{(1)}$.
For a soliton $p = b_{-L} \otimes \cdots \otimes b_0 \in \Sol^{(r)}(\ell)$, we have
\begin{align*}
1 < b_{-L} \leq \cdots \leq b_0 \leq \btw,
\end{align*}
where $0$ appears at most once.
Moreover, we have $\ell = L + m_0 + 2(m_{\bth} + m_{\btw})$, where $m_b$ denotes the number of occurrences of $b$ in the soliton.
\end{prop}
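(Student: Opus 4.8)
The plan is to mirror the proof of Proposition~\ref{prop:soliton_H1}, now using the explicit local energy values recorded just above the statement together with the fact that the $U_q(G_2)$-crystal $B(\clfw_1)$ is the chain
\[
1 \xrightarrow{1} 2 \xrightarrow{2} 3 \xrightarrow{1} 0 \xrightarrow{1} \bth \xrightarrow{2} \btw \xrightarrow{1} \bon,
\]
so that the poset order on $B(\clfw_1)$ is the total order $1 < 2 < 3 < 0 < \bth < \btw < \bon$ and $u = u_{\clfw_1} = 1$. First I would identify the allowed rightmost factor $b_0$. Since $H$ is constant on classical components, I would apply $e_i$ for $i \in I_0$ to reduce each $b_0 \otimes u$ to one of the $I_0$-highest weight elements $1 \otimes 1$, $2 \otimes 1$, $0 \otimes 1$, $\bon \otimes 1$, whose energies $0,1,1,2$ are given. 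A short check (e.g.\ $e_2(3 \otimes 1) = 2 \otimes 1$, $e_1(\bth \otimes 1) = 0 \otimes 1$, $e_2(\btw \otimes 1) = \bth \otimes 1$) yields $H(3 \otimes 1) = H(0 \otimes 1) = H(\bth \otimes 1) = H(\btw \otimes 1) = 1$ and $H(\bon \otimes 1) = 2$, so $H(b_0 \otimes u) = 1$ exactly for $b_0 \in \{2, 3, 0, \bth, \btw\}$. Because the local energy is nonnegative and $E_1(p)$ is a sum of such contributions, the soliton condition $E_1(p) = H(f_1 u_1 \otimes u_1) = H(2 \otimes 1) = 1$ forces every interior term to vanish and forces $b_0 \leq \btw$ (while $b_0 \neq u = 1$).

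Next I would control the interior using the characterization stated just above the proposition: $H(b \otimes b') = 0$ if and only if $b \leq b'$ and not both equal $0$. Demanding $H(b_{j} \otimes b_{j+1}) = 0$ for each adjacent pair, together with the (automatic) vanishing $H(u \otimes b_{-L}) = 0$ coming from $1 \leq b_{-L}$, gives the weakly increasing chain $1 < b_{-L} \leq \cdots \leq b_0 \leq \btw$. The exclusion of the case $b = b' = 0$ then forces $0$ to occur at most once, since equal entries are necessarily adjacent in a weakly increasing chain. Conversely, any $p$ satisfying these conditions has a single nonzero energy term $H(b_0 \otimes u) = 1$ with all others zero, so $E_1(p) = 1$ and $p$ is a soliton; this establishes the first assertion.

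For the length, I would compute $N_1(b)$ directly from the chain by counting the node-$1$ arrows along the unique path from $b$ up to $1$ under the $e_i$. This gives $N_1(2) = N_1(3) = 1$, $N_1(0) = 2$, and $N_1(\bth) = N_1(\btw) = 3$, where the extra arrows arise from the length-two node-$1$ string $3 \xrightarrow{1} 0 \xrightarrow{1} \bth$ passing through the zero weight. Summing over the $L+1$ tensor factors, each factor contributes a base count of one, while $0$, $\bth$, and $\btw$ contribute a surplus of $1$, $2$, and $2$ respectively; collecting these surpluses gives the term $m_0 + 2(m_{\bth} + m_{\btw})$ and hence the asserted length formula.

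The genuinely new input compared with Proposition~\ref{prop:soliton_H1} is the length bookkeeping. Here $B(\clfw_1)$ is quasiminuscule rather than minuscule, so the zero-weight element $0$ and the elements $\bth, \btw$ beyond it each carry more than one node-$1$ arrow. I expect pinning down these coefficients (rather than the energy and ordering arguments, which are routine given the explicit local energy data) to be the main point requiring care, and the one place where the noninjective, quasiminuscule nature of the representation genuinely enters.
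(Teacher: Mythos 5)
Your argument is correct and is, in substance, the same as the paper's: the paper's entire proof is that the statement ``reduces to a computation of local energy on $B^{1,1} \otimes B^{1,1}$, which is a finite computation,'' and your write-up simply organizes that finite computation the way the proof of Proposition~\ref{prop:soliton_H1} does — reduction to classically highest weight elements, the stated characterization of when $H = 0$, and the definition of length. Your individual crystal computations (the chain structure of $B(\clfw_1)$, the identities $e_2(3 \otimes 1) = 2 \otimes 1$, $e_1(\bth \otimes 1) = 0 \otimes 1$, $e_2(\btw \otimes 1) = \bth \otimes 1$ giving $H = 1$ on $\{2,3,0,\bth,\btw\}$, and the values $N_1(2)=N_1(3)=1$, $N_1(0)=2$, $N_1(\bth)=N_1(\btw)=3$) all check out.

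The one step you must fix is the last one. Your own (correct) count gives
\[
\ell = \sum_{k=0}^{L} N_1(b_{-k}) = (L+1) + m_0 + 2(m_{\bth} + m_{\btw}),
\]
since the base count of one runs over all $L+1$ tensor factors; this is \emph{not} the formula $\ell = L + m_0 + 2(m_{\bth} + m_{\btw})$ asserted in the proposition, yet you claim the two agree. The discrepancy is an off-by-one typo in the proposition itself, not an error in your arithmetic: every factor of a soliton contributes at least one node-$1$ arrow, so $\ell \geq L+1$ always, which the printed formula violates when $m_0 = m_{\bth} = m_{\btw} = 0$; the companion results (Propositions~\ref{prop:soliton_H1}, \ref{prop:soliton_H2}, \ref{prop:soliton_adjoint}, \ref{prop:soliton_Bn}) all carry the corresponding ``$+1$''; and the paper's own example after Proposition~\ref{prop:soliton_crystal_bijection_G}, namely $p = 2 \otimes 3 \otimes 3 \otimes 0 \otimes \bth \otimes \btw$ with $L = 5$ and $\ell = 11$, satisfies your formula ($6 + 1 + 4 = 11$) but not the printed one ($5 + 1 + 4 = 10$). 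So you should state the formula you actually derived and flag that the proposition as printed is missing the $+1$; as written, your final sentence asserts a conclusion that your own computation contradicts.
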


\begin{proof}
This reduces to a computation of local energy on $B^{1,1} \otimes B^{1,1}$, which is a finite computation.
\end{proof}

\begin{prop}
\label{prop:soliton_crystal_bijection_G}
Let $\g$ be of type $G_2^{(1)}$ and $r = 1$.
Define
\[
\Psi \colon \Sol^{(r)}(\ell) \to \mathcal{B}^{(r)}(\ell) := B^{r', \lfloor \ell / 3 \rfloor} \otimes B^{r', \lfloor \ell / 3 \rfloor + \sigma} \otimes B^{r', \lfloor \ell / 3 \rfloor + \tau},
\]
where $\sigma = 1$ (resp.~$\tau = 1$) if the remainder of $\ell / 3$ is at least $1$ (resp.\ equals $2$) and $0$ otherwise (we remove factors of $B^{r',0}$), by
\[
\Psi(p) = f_1^d \bigl( (f_1^{m_{\bth}-M} u_{\lfloor \ell / 3 \rfloor \varpi_1}) \otimes (f_1^{m_0+2M} u_{(\lfloor \ell / 3 \rfloor + \sigma) \varpi_1}) \otimes u_{(\lfloor \ell / 3 \rfloor) \varpi_1 + \tau} \bigr),
\]
for
\[
p = 2^{\otimes m_2} \otimes 3^{\otimes m_3} \otimes 0^{\otimes m_0} \otimes \bth^{\otimes m_{\bth}} \otimes \btw^{\otimes m_{\btw}},
\]
where $M = \min\{m_3, m_{\bth}\}$ and $d = m_3 - M + m_{\btw}$.
Then $\Psi$ is a $U_q(\g_{0,r})$-crystal isomorphism.
\end{prop}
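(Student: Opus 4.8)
The plan is to check the three defining conditions of a $U_q(\g_{0,r})$-crystal isomorphism—preservation of weight, intertwining of the crystal operators, and bijectivity—directly from the explicit formula. The decisive simplification is that for $r=1$ in type $G_2^{(1)}$ we have $I_{0,r}=\{2\}$, so both $\Sol^{(r)}(\ell)$ and $\mathcal{B}^{(r)}(\ell)$ are rank-one ($\mathfrak{sl}_2$) crystals, and it suffices to control the single pair $e_2,f_2$ on the domain against $e_1,f_1$ on the codomain.

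First I would record the $U_q(\g_{0,r})$-structure on the domain. By Proposition~\ref{prop:soliton_G2} every soliton is the sorted word $p=2^{\otimes m_2}\otimes 3^{\otimes m_3}\otimes 0^{\otimes m_0}\otimes\bth^{\otimes m_{\bth}}\otimes\btw^{\otimes m_{\btw}}$ with $m_0\in\{0,1\}$, and on $B(\clfw_1)$ the only nontrivial node-$2$ arrows are $2\xrightarrow{2}3$ and $\bth\xrightarrow{2}\btw$. Applying the signature rule to these contributions yields $\varepsilon_2(p)=m_3-M+m_{\btw}$ and $\varphi_2(p)=m_2+m_{\bth}-M$ with $M=\min\{m_3,m_{\bth}\}$; in particular $d=\varepsilon_2(p)$, so that $p$ is $I_{0,r}$-highest weight exactly when $d=0$. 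This pins down the highest weight solitons and the length of each $f_2$-string. On the codomain side I would note that each factor $B^{r',s}$ is the single-row $A_1^{(1)}$ crystal $B(s\varpi_1)$, so $\mathcal{B}^{(r)}(\ell)$ is a threefold tensor product of $\mathfrak{sl}_2$-crystals whose $I_{0,r}$-highest weight vectors and string lengths are governed by the iterated $\mathfrak{sl}_2$ Clebsch--Gordan (ballot-word) rule, in the same spirit as the highest weight description invoked in Proposition~\ref{prop:soliton_crystal_bijection_B}.

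The core computation is then to verify that $\Psi$ intertwines the operators and preserves weight. Since the domain is a disjoint union of $f_2$-strings, it is enough to show that $\Psi$ sends each $I_{0,r}$-highest weight soliton (the case $d=0$) to an $I_{0,r}$-highest weight element of $\mathcal{B}^{(r)}(\ell)$ of equal weight whose $f_1$-string has the same length, and that along each string $\Psi(f_2 p)=f_1\Psi(p)$; the latter I would check by applying the signature rule to both sides and comparing the resulting words letter by letter. Once the operators are intertwined and the weights match, bijectivity follows by comparing the classical ($U_q(\g_{0,r})$) decompositions of the two crystals and counting highest weight elements of each weight, exactly as in Proposition~\ref{prop:soliton_crystal_bijection}.

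The hard part will be the bookkeeping forced by the ratio $\modgamma_r/\gamma_{r'}=1/3$: the three factor sizes $\lfloor\ell/3\rfloor$, $\lfloor\ell/3\rfloor+\sigma$, $\lfloor\ell/3\rfloor+\tau$, the quantity $M$ distributing the barred content between the first two factors, and the differing length contributions of the letters (with $0$ and $\btw$ counting as more than one box) must all be tracked simultaneously against a single $\ell$. In particular one must establish \emph{well-definedness}, namely that each iterated application of $f_1$ in the formula is nonzero so that no factor overflows its allotted boxes, and one must dispose of the degenerate small-$\ell$ cases in which a factor $B^{r',0}$ is dropped. I expect the cleanest route through this obstacle is to reduce everything to the highest weight elements isolated in the first two steps—where the floor-function data is determined by $\ell$ alone—and then transport the verification along $f_1$-strings.
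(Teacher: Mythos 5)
Your proposal is correct and follows essentially the same route as the paper's proof: restrict first to the $I_{0,r}$-highest weight solitons (exactly the case $d = \varepsilon_2(p) = 0$, i.e.\ $m_3 \leq m_{\bth}$ and $m_{\btw} = 0$), verify that $\Psi$ there is a weight-preserving bijection onto $I_{0,r}$-highest weight elements of $\mathcal{B}^{(r)}(\ell)$, and then extend to the general case by checking that $\Psi$ commutes with the crystal operators along the $f$-strings. The paper compresses the signature-rule bookkeeping and well-definedness checks you outline into ``it is clear'' and ``it is straightforward,'' so your write-up is a more explicit rendering of the same argument rather than a different one.
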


\begin{proof}
We first consider $I_{0,r}$-highest weight elements, where $m_{\bth} - m_3 \geq 0$ and $m_{\btw} = 0$.
So we have $M = m_3$ and $d = 0$.
It is clear that $\Psi$ is a weight preserving bijection and $\Psi(p)$ is $I_{0,r}$-highest weight.
For the general case, it is straightforward to see that $\Psi$ commutes with the crystal operators.
\end{proof}


\begin{ex}
Consider $p = 2330\bth\btw$, and so $\ell = 11$. Thus, we have $\sigma = \tau = 1$ and
\begin{align*}
\Psi(p) & = f_1^2\bigl( (f_1^1 u_{3\varpi_1}) \otimes (f_1^3 u_{4\varpi_1}) \otimes u_{4\varpi_1} \bigr)
\\ & = f_1^2 \bigl( \young(112) \otimes \young(1222) \otimes \young(1111) \bigr)
\\ & = \young(122) \otimes \young(1222) \otimes \young(1112)\,.
\end{align*}
\end{ex}


\section{SCA using rigged configurations}
\label{sec:SCA_RC}

In this section, we give a proof of Conjecture~\ref{conj:SCA} using rigged configurations under various assumptions.
Throughout this section, we assume Conjecture~\ref{conj:bijection} and Conjecture~\ref{conj:R_matrix_identity} hold.
We note that our results are known in general for type $A_n^{(1)}$~\cite{FOY00,HHIKTT01,KOSTY06,Sakamoto08,Sakamoto09,Yamada04}, some of which also use rigged configurations in their proofs, and generalize the other cases done in~\cite{binMohamad12,HKOTY02,HKT00,MOW12,MW13,TNS96,Yamada07}.
We start with our first main result, where the partition $\nu^{(r)}$ encodes the sizes of the solitons under $\Phi$.

\begin{thm}
\label{thm:RC_soliton_sizes}
Suppose $r$ is a special node or $r \sim 0$.
Consider a state $b$ with solitons of type $\g$ of sizes $\ell_1 > \ell_2 > \cdots > \ell_m$ that are sufficiently far apart (not necessarily in that order).
Then we have $\nu^{(r)} = (\ell_1, \ell_2, \dotsc, \ell_m)$, where $(\nu, J) = \Phi(b)$.
\end{thm}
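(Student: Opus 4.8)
The plan is to compute the shape $\nu^{(r)}$ directly from the algorithm for $\Phi$, after first normalizing each soliton. The key starting observation is that the crystal operators $e_i,f_i$ on $\RC(B)$ with $i \in I_{0,r}$ only add or remove a box from $\nu^{(i)}$, and so leave $\nu^{(r)}$ unchanged; since $\Phi$ is a $U_q(\g_0)$-crystal isomorphism by Conjecture~\ref{conj:bijection}, the partition $\nu^{(r)}$ is constant on each $U_q(\g_{0,r})$-component. Because the vacuum element $u_{\clfw_r}$ is $I_0$-highest weight with $\varphi_i(u_{\clfw_r}) = \delta_{ir}$, applying $e_i$ for $i \in I_{0,r}$ acts only inside the solitons and never on the vacuum, so I may replace each soliton independently by its $I_{0,r}$-highest weight representative without changing $\nu^{(r)}$ or the spatial separation of the solitons. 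By Proposition~\ref{prop:soliton_crystal_bijection} (together with Propositions~\ref{prop:soliton_H1}, \ref{prop:soliton_H2}, and \ref{prop:soliton_adjoint}), this representative of a soliton of length $\ell$ is $u_{\varpi}^{\otimes m} \otimes v^{\otimes \overline{m}} \otimes \emptyset^{\otimes m_*}$ with $m + 2\overline{m} + m_* = \ell$; for special $r$ one has $\overline{m} = m_* = 0$ and $m = \ell$.

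Next I would treat a single soliton (padded by vacuum) and trace $\Phi^{-1}$, which builds the rigged configuration by adding the factors from right to left. Each vacuum factor $u_{\clfw_r}$ gives the empty path and contributes no boxes. A factor $u_{\varpi} = f_r u_{\clfw_r}$ has the one-step path $u_{\varpi} \xrightarrow{e_r} u_{\clfw_r}$, contributing exactly one box to $\nu^{(r)}$; as the algorithm keeps the previously created row of $\nu^{(r)}$ singular, this box is appended to it, lengthening the single row by one. A factor $v = v(B^{r,1})$ sits at the $I_0$-lowest weight, so its path to $u_{\clfw_r}$ passes through the node $r$ twice (as in Example~\ref{ex:carrier_type_C}, using the rule permitting a row to be selected twice at a negative root), contributing two boxes to $\nu^{(r)}$; and $\emptyset$ contributes one box through the quasisingular-row selection used when entering $y_0 = \emptyset$. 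In each case the $r$-boxes are appended to the same growing singular row, so after all factors are processed $\nu^{(r)}$ is a single row of length $m + 2\overline{m} + m_* = \ell$. This bookkeeping reproduces the length formulas of Propositions~\ref{prop:soliton_H1}--\ref{prop:soliton_adjoint}.

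Finally I would assemble the solitons. Since they are sufficiently far apart, I expect the computation of $\Phi^{-1}$ to decouple: the long runs of vacuum between consecutive solitons leave the configuration untouched, and the accumulated riggings should force the rows produced by the already-processed solitons to be non-singular from the viewpoint of the next soliton, so that each soliton contributes a fresh single row to $\nu^{(r)}$ of length equal to its size. As the sizes $\ell_1 > \cdots > \ell_m$ are distinct, these rows assemble into the partition $(\ell_1,\dotsc,\ell_m)$. The main obstacle is exactly this decoupling: one must show, directly from the definition of $\delta$ and the far-apart hypothesis, that the box-selection for one soliton cannot reach into the rows created by another, i.e.\ that no spurious merging of rows occurs. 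I would isolate this as a locality lemma for $\Phi^{-1}$, proved by induction on the number of solitons together with a careful analysis of how the vacancy numbers and the singularity of rows propagate across a block of vacuum factors; the single-soliton analysis above then furnishes the base case, while Conjecture~\ref{conj:R_matrix_identity} ensures the result is independent of the left-to-right ordering of the solitons.
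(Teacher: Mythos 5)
There are two genuine gaps, and together they mean the proposal only establishes the single-soliton case.

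The first gap is the normalization step. You claim that, because vacuum factors are transparent to $e_i$ with $i \in I_{0,r}$, you may replace each soliton \emph{independently} by its $I_{0,r}$-highest weight representative without changing $\nu^{(r)}$. What is actually true (granting Conjecture~\ref{conj:bijection}) is only that $\nu^{(r)}$ is constant on $I_{0,r}$-components; but the state in which every soliton is separately highest weight generally lies in a \emph{different} $I_{0,r}$-component from the original state. The signature rule couples the solitons, and precisely because vacuum factors contribute nothing to the $i$-signature for $i \in I_{0,r}$, spatial separation does nothing to decouple them. Already for $U_q(\mathfrak{sl}_2)$ and $B(\varpi) \otimes B(\varpi)$, the element $v_{\varpi} \otimes u_{\varpi}$ is highest weight (a weight-zero singleton component) and cannot be raised to $u_{\varpi} \otimes u_{\varpi}$. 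So in an $I_{0,r}$-highest weight multi-soliton state only the \emph{rightmost} soliton can be taken in the form $u_{\varpi}^{\otimes m} \otimes v^{\otimes \overline{m}} \otimes \emptyset^{\otimes m_*}$; the other solitons remain general elements constrained by the signature rule (this is exactly how Proposition~\ref{prop:sides_commute} is organized, via induction on the number of solitons). Consequently your single-soliton bookkeeping must be carried out for \emph{arbitrary} soliton entries, which is what the paper's proof does: for minuscule $r$ it uses that every entry of a soliton has exactly one $r$-arrow on its path to $u_{\clfw_r}$, and for $r \sim 0$ it uses the ordering $b_{-1} \leq b_0$ coming from Propositions~\ref{prop:soliton_H1}, \ref{prop:soliton_H2}, and \ref{prop:soliton_adjoint}.

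The second gap is the one you flag yourself: the ``locality lemma'' asserting that the box-selection for one soliton never reaches the rows created by another. This is not a technicality that can be isolated and deferred; it is the substance of the theorem, and the paper's proof consists essentially of this argument. For minuscule $r$: since the solitons are far apart, the differences between the riggings $J_i^{(r)}$ and the vacancy numbers $p_i^{(r)}$ attached to rows created by previously processed solitons can be made arbitrarily large, so those rows are never singular when the next soliton is processed, and hence each soliton produces exactly one singular row of $\nu^{(r)}$, which grows by one box per entry. For $r \sim 0$: when the rightmost entry $b_0$ of a soliton is added there are no singular rows in $\nu^{(r)}$, so a fresh row is created whose length is the number of $r$-arrows from $b_0$ to $u$, and each subsequent entry $b_i$ can select at most the rows selected by $b_{i+1}$, so all of its boxes land in that same row. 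Your proposal's skeleton (trace $\Phi^{-1}$, count $r$-arrows entry by entry, invoke far-apartness) matches the paper's, but without this vacancy-number/singularity analysis the inductive assembly of the partition $(\ell_1, \dotsc, \ell_m)$ is unproven.
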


\begin{proof}
We first consider the case $r$ is a minuscule node.
We note that when we add every box of a soliton, we add exactly one box to $\nu^{(r)}$ as there is only one $r$ arrow in the path from any entry of the soliton to $u_{\clfw_r}$ or for $\emptyset$.
Because the solitons are sufficiently far apart, we can make the difference between the riggings $J_i^{(r)}$ and the vacancy numbers $p_i^{(r)}$ as large as we want.
Therefore, we have precisely one singular row in $\nu^{(r)}$ for each soliton and the length corresponds to the number of elements of the currently soliton we have added.

Next, when $r \sim 0$, we have $x \otimes y$ in a soliton if and only if $x \leq y$ by Proposition~\ref{prop:soliton_H1}, Proposition~\ref{prop:soliton_H2}, and Proposition~\ref{prop:soliton_adjoint}.
Thus, when we add $b_0$, we add a row of length equal to the number of $r$-arrows in the path from $b_0 \to u$ to $\nu^{(r)}$ as there are no singular rows in $\nu^{(r)}$, where we add the first box under $\Phi^{-1}$.
Thus, the newly added row is the only singular row in $\nu^{(r)}$ as the solitons are far apart.
Therefore, for each subsequent $b_i$, we can select at most the same rows that were previously selected by $b_{i+1}$.
Hence, all boxes added to $\nu^{(r)}$ are in the same row and the claim follows.

For $r = 1$ in type $B_n^{(1)}$, the proof is similar to the minuscule case.
For $r = n$ in type $C_n^{(1)}$, we note that the only box in the column $(x_1, \dotsc, x_n)$ that would change $\nu^{(n)}$ is $n < x_n < \bon$ as $x_k < n$ for all $k < n$.
Moreover, the addition of $x_n$ only adds a single box to $\nu^{(n)}$, and hence the proof is similar to the minuscule case.
\end{proof}

Theorem~\ref{thm:RC_soliton_sizes} suggests the following as an equivalent definition of solitons.
This suggested equivalence also comes from the description of solitons from inverse scattering transform and the $R$-matrix invariance of rigged configurations (see Proposition~\ref{prop:top_commutes} below).

\begin{conj}
\label{conj:soliton_partition}
Let $p$ be a state and $(\nu, J) = \Phi^{-1}(p)$.
Then $p$ corresponds to a soliton of length $\ell$ if and only if $\nu^{(r)} = (\ell)$.
Moreover, Theorem~\ref{thm:RC_soliton_sizes} holds for all $r \in I_0$.
\end{conj}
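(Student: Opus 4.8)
The plan is to deduce both claims from a single identity expressing the conserved state energies in terms of the partition $\nu^{(r)}$ of $(\nu, J) = \Phi^{-1}(p)$: writing $\nu^{(r)}_j$ for the $j$-th part, I would show
\[
E_k(p) = t_r^{\vee} \sum_{j \geq 1} \min(k, \nu^{(r)}_j)
\]
for every state $p$ and every $k$. Granting this, the conjecture follows formally. Setting $k = 1$ shows that $E_1(p)$ equals $t_r^{\vee}$ times the number of parts of $\nu^{(r)}$, and because $H(f_r u_1 \otimes u_1) = t_r^{\vee}$ (the value of $E_1$ on a single soliton), the defining condition $E_1(p) = H(f_r u_1 \otimes u_1)$ holds exactly when $\nu^{(r)}$ has a single part; comparing the successive differences $E_k(p) - E_{k-1}(p)$ then identifies that part with the soliton length, giving $\nu^{(r)} = (\ell)$. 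The same identity yields the ``moreover'' statement at once: for far-apart solitons the left side is additive, so the parts of $\nu^{(r)}$ must be exactly the individual soliton lengths, which is Theorem~\ref{thm:RC_soliton_sizes} for all $r$.

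To prove the displayed identity I would first observe that both sides are invariant under the time evolution $T_{\ell}$. The right side is invariant because, under the rigged-configuration description of the dynamics (Proposition~\ref{prop:top_commutes}), $T_{\ell}$ fixes the configuration $\nu$ and merely shifts the riggings $J^{(r)}$; the left side is invariant because the $E_k$ are conserved. I would then apply $T_{\infty}$ sufficiently many times to separate the solitons, so that in the resulting state no interaction terms survive in $\sum H(b_j \otimes b_{j+1})$ and $E_k(p)$ becomes the sum of the contributions of the individual solitons. This reduces the identity to the single-soliton case, where it reads $E_k = t_r^{\vee} \min(k, \ell)$ together with $\nu^{(r)} = (\ell)$, that is, Proposition~\ref{prop:conserved_quantities} and the single-row case of Theorem~\ref{thm:RC_soliton_sizes}.

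The obstacle is that these single-soliton statements are presently available only for special nodes, for $r \sim 0$, and for the extra cases handled in Section~\ref{sec:solitons}. Extending them to an arbitrary $r \in I_0$ requires, for each such $r$: a characterization of the $I_{0,r}$-highest weight elements of $B^{r,1} \otimes B^{r,1}$ on which $H$ vanishes --- equivalently, the admissible chains $b_{-L} \leq \cdots \leq b_0$ of soliton entries and the identification $\mathcal{B}^{(r)}(\ell) \cong B(\ell \varpi)$; a computation of $E_k$ for such a soliton via reduction to the box-ball system as in Proposition~\ref{prop:conserved_quantities}; and the verification that $\Phi^{-1}$ sends the soliton into a single row of $\nu^{(r)}$ of length $\sum_k N_r(b_{-k})$.

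The hard part will be this last box-counting for interior nodes $r$ that are neither special nor adjacent to $0$. There $B^{r,1}$ has an intricate $U_q(\g_0)$-decomposition and the step map $\delta$ of $\Phi$ involves box-splitting, so one cannot simply invoke the minuscule description of Proposition~\ref{prop:minuscule_description}. A uniform argument would need to show that the box added to $\nu^{(r)}$ at each application of $\delta$ is forced into the unique singular row left by the preceding soliton entry, using only that the entries form a $\leq$-chain and that, for well-separated solitons, the vacancy numbers $p_\ell^{(r)}$ keep every other row strictly non-singular. Absent a type-independent description of the local energy for general $r$, I expect this to require either a case analysis or genuinely new input on the combinatorics of $\Phi$ for arbitrary $B^{r,1}$, which is precisely why the statement is left as a conjecture.
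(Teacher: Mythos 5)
You should first be clear about the status of this statement in the paper: it is posed as Conjecture~\ref{conj:soliton_partition} and is \emph{not proved there}. The authors offer only motivation --- Theorem~\ref{thm:RC_soliton_sizes} (which settles the ``moreover'' part for special nodes and $r \sim 0$), plus the inverse-scattering heuristic and the $R$-matrix invariance of rigged configurations via Proposition~\ref{prop:top_commutes}. So there is no proof of the paper's to compare yours against; the only question is whether your proposal closes the gap the authors left open, and, as you say yourself in your last paragraph, it does not. What you have written is a reduction, not a proof, and it should be assessed as such.

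As a reduction it is sound and fits the paper's own machinery. The master identity $E_k(p) = t_r^{\vee} \sum_{j} \min\bigl(k, \nu_j^{(r)}\bigr)$ does follow, for an arbitrary state, from Conjecture~\ref{conj:bijection} and Conjecture~\ref{conj:R_matrix_identity}: one combines Proposition~\ref{prop:top_commutes} with $\cc = D \circ \Phi \circ \theta$ exactly as in the paper's proof of Proposition~\ref{prop:state_movement}, the point being that $A_k$ raises cocharge by $t_r^{\vee} \sum_j \min\bigl(k, \nu_j^{(r)}\bigr)$. Your remark that a partition is recovered from the numbers $\sum_j \min(k, \cdot)$, $k \geq 1$, is correct, and you are right to route the single-soliton input through Proposition~\ref{prop:conserved_quantities} rather than Proposition~\ref{prop:state_movement}: the latter \emph{assumes} the very conjecture at issue, so using it would be circular.

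The genuine gap is the one you name, but it is worth recording that it contains strictly more than ``extend the local energy computations.'' (i) Your identity only shows that $E_1(p) = t_r^{\vee}$ iff $\nu^{(r)}$ has a single part; identifying that part with the length $\ell = \sum_k N_r(b_{-k})$ of Definition~\ref{def:soliton} requires the box-counting statement that under $\Phi^{-1}$ each factor $b_{-k}$ contributes exactly $N_r(b_{-k})$ boxes to $\nu^{(r)}$, all landing in one row. This is the actual content of Theorem~\ref{thm:RC_soliton_sizes}, whose proof uses the explicit algorithm for $\Phi$, and weight considerations cannot substitute for it: already at an adjoint node the element $\emptyset$ contributes $c_r$ boxes to $\nu^{(r)}$ (e.g.\ $c_r = 2$ for $r = 2$ in type $D_n^{(1)}$) while $N_r(\emptyset) = 1$, so ``boxes added'' and ``length'' are not synonyms and the single-row claim is genuinely algorithmic. (ii) You use $H(f_r u_1 \otimes u_1) = t_r^{\vee}$ without argument; it holds in every case the paper treats, but is itself a case-by-case verification, not a formal consequence of anything stated. (iii) Proposition~\ref{prop:conserved_quantities} needs $\mathcal{B}^{(r)}(\ell) \iso B(\ell \varpi)$, so even your reduced single-soliton energy statement is unavailable for most interior nodes; and the additivity of $E_k$ over far-apart solitons (your separation step) also deserves a proof, via the fact that the carrier returns to $u_k$ between solitons. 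Items (i)--(iii) are precisely what the conjecture asserts beyond the cases of Section~\ref{sec:solitons} and Section~\ref{sec:SCA_RC}, so your proposal, while a sensible and arguably sharper blueprint than the paper's stated motivation, leaves the statement exactly as open as the paper does.
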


We can now give a description of time evolution on rigged configurations.
Let $A_s$ denote the map on rigged configurations given by adding $\min(i, s)$ to all riggings $J_i^{(r)}$ for all $i \in \ZZ_{\geq 0}$.
The following was shown in~\cite[Prop.~2.6]{KOSTY06} (see also~\cite[Thm.~5.2]{KSY11}), and we include a proof for completeness.

\begin{prop}
\label{prop:top_commutes}
We have
\[
A_s \circ \Phi^{-1} = \Phi^{-1} \circ T_s.
\]
\end{prop}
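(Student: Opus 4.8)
The plan is to realize the time evolution $T_s$ through its defining carrier and transport the resulting $R$-matrix computation to rigged configurations, where it becomes the shift $A_s$. Writing $b$ for the state and attaching the carrier $u_s = u(B^{r,s})$ on the right, the definition of $T_s$ gives $u_s \otimes T_s(b) = \cdots R_{-1} R_0(b \otimes u_s)$, so that $u_s \otimes T_s(b)$ is obtained from $b \otimes u_s$ by the combinatorial $R$-matrix moving the single $B^{r,s}$ factor to the far left. Invoking Conjecture~\ref{conj:R_matrix_identity} (assumed throughout this section), we get $\Phi^{-1}(b \otimes u_s) = \Phi^{-1}(u_s \otimes T_s(b))$ as the same element of $\RC(B \otimes B^{r,s})$. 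The whole proof then reduces to the observation that attaching the maximal carrier has different effects on the rigged configuration depending on the side, and that this difference is precisely $A_s$.

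First I would establish the right-attachment lemma $\Phi^{-1}(b \otimes u_s) = A_s\bigl(\Phi^{-1}(b)\bigr)$. Under $\Phi^{-1}$ the factors are added from right to left, so $u_s$ is added first; being maximal it box-splits into copies of $u_{\clfw_r}$ and contributes an empty path, adding no boxes and leaving the partitions and the pattern of selected rows unchanged. However, the factor $B^{r,s}$ is then present for the entire remaining computation, so each time a row of $\nu^{(r)}$ is created or grown its singular rigging is recorded against a vacancy number larger by $\min(\ell, s)$, the contribution of $B^{r,s}$ to $p_\ell^{(r)}$ in~\eqref{eq:vacancy}. Since the algorithm for $\Phi^{-1}$ depends only on coriggings (singularity) and the crystal structure, raising all riggings and all vacancy numbers of $\nu^{(r)}$ by $\min(\ell, s)$ commutes with the construction; hence the coriggings are preserved while every rigging $J_i^{(r)}$ increases by exactly $\min(i, s)$, which is the definition of $A_s$.

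Next I would establish the left-attachment lemma $\Phi^{-1}(u_s \otimes c) = \Phi^{-1}(c)$ for any state $c$. Now $u_s$ is the leftmost factor and hence added last; being maximal it again contributes an empty path and selects no rows, so it alters neither the partitions nor the already-recorded riggings, merely raising the vacancy numbers afterward. This is exactly the stabilization statement that appending vacuum at the tail does not change the stored rigged configuration. Combining the three facts yields
\[
A_s\bigl(\Phi^{-1}(b)\bigr) = \Phi^{-1}(b \otimes u_s) = \Phi^{-1}\bigl(u_s \otimes T_s(b)\bigr) = \Phi^{-1}\bigl(T_s(b)\bigr),
\]
which is the assertion $A_s \circ \Phi^{-1} = \Phi^{-1} \circ T_s$.

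The main obstacle is a careful, type-independent justification of the two attachment lemmas, especially the claim that the incremental $\Phi^{-1}$ is governed entirely by coriggings so that right-attachment shifts the riggings uniformly by $\min(i, s)$; this needs that $u(B^{r,s})$ is transparent to the bijection (an empty path, even after box-splitting) together with a bookkeeping of the vacancy numbers~\eqref{eq:vacancy} across the intermediate steps. One must also handle the infinite vacuum tail of a state, which is controlled precisely by the left-attachment/stabilization lemma, and note that for $r$ not minuscule the box-splitting map enters the verification. These steps are essentially the argument of~\cite[Prop.~2.6]{KOSTY06} and~\cite[Thm.~5.2]{KSY11}, which I would follow.
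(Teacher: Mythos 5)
Your proposal is correct and follows essentially the same route as the paper: the right-attachment claim $\Phi^{-1}(b \otimes u_s) = A_s\bigl(\Phi^{-1}(b)\bigr)$ justified by the corigging-only dependence of $\Phi^{-1}$ and the emptiness of the rigged configuration after absorbing $u_s$, then Conjecture~\ref{conj:R_matrix_identity} to trade $b \otimes u_s$ for $u_s \otimes T_s(b)$, and finally the stabilization fact that a leftmost maximal factor leaves the rigged configuration unchanged. The only cosmetic difference is that you isolate the two attachment lemmas explicitly and handle the vacuum tail through the left-attachment statement, whereas the paper compresses these into the same three-step chain of equalities.
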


\begin{proof}
Let $u_s = u(B^{r,s})$.
Consider a state $b$ and $(\nu, J) = \Phi^{-1}(b)$.
We claim $\Phi^{-1}(b \otimes u_s) = (\nu, J') = A_s(\nu, J)$, where $J'$ is formed by adding $\min(i, s)$ to all riggings in $J_i^{(r)}$ for all $i$.
This follows from the fact that $\Phi$ is only based upon the coriggings, and that after adding $u_s$, we still have the empty rigged configuration.
Thus, every subsequent step is the same as for $\Phi^{-1}$ applied to $b$ except $p_i^{(r)}$ has increased by $i$ for all $i \in \ZZ_{\geq 0}$, and the claim follows from the fact that we make every changed row singular.
Next, we have  $\Phi^{-1}(b \otimes u_s) = \Phi^{-1}\bigl(u_s \otimes T_{\infty}(b)\bigr)$ from Conjecture~\ref{conj:R_matrix_identity}.
Furthermore, from the definition of $\Phi^{-1}$, we have $\Phi^{-1}\bigl(u_s \otimes T_{\infty}(b)\bigr) = \Phi^{-1}\bigl(T_{\infty}(b)\bigr)$, and the claim follows from Theorem~\ref{thm:RC_soliton_sizes}.
\end{proof}

We use rigged configurations to give an alternative (and uniform) proof of Proposition~\ref{prop:conserved_quantities} and the conservation laws.
These were shown in~\cite[Thm.~3.2]{FOY00}, where the proof given is also uniform using the Yang--Baxter equation.

\begin{prop}
\label{prop:state_movement}
Suppose Conjecture~\ref{conj:soliton_partition} holds.
For $p \in \Sol^{(r)}(\ell)$, we have $E_s(p) = t_r^{\vee} \min(s, \ell)$ and $T_s(p)$ moves the soliton $\min(s, \ell)$ steps to the left.
\end{prop}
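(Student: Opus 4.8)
The plan is to reduce both assertions to a single statement about the unique length-$\ell$ row of $\nu^{(r)}$ and then read them off from Proposition~\ref{prop:top_commutes}. By Conjecture~\ref{conj:soliton_partition}, writing $(\nu, J) = \Phi^{-1}(p)$ we have $\nu^{(r)} = (\ell)$; thus $\nu^{(r)}$ is a single row of length $\ell$ carrying a single rigging $x := J_\ell^{(r)}$, with $m_\ell^{(r)} = 1$ and $m_k^{(r)} = 0$ for $k \neq \ell$. Proposition~\ref{prop:top_commutes} gives $\Phi^{-1}(T_s(p)) = A_s(\nu, J)$, and by definition $A_s$ fixes $\nu$ (hence every multiplicity $m_k^{(i)}$) and all riggings except $x$, which it raises by $\min(\ell, s)$. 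Both parts of the proposition follow by interpreting this single rigging shift.

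For the movement, I would argue that $x$ measures the position of the soliton, with slope one: translating $p$ one site to the left corresponds to raising $x$ by exactly $1$. Concretely, moving the soliton left past one vacuum factor $u_1 \in B^{r,1}$ increases $L_1^{(r)}$ by one and so, by~\eqref{eq:vacancy}, raises $p_\ell^{(r)}$ by $\min(\ell, 1) = 1$ while leaving $\nu$ unchanged; equivalently, in running $\Phi^{-1}$ the length-$\ell$ row is reconstructed one step later for each additional vacuum standing between the soliton and the right boundary. Hence ``translate left by $m$'' corresponds to ``add $m$ to $x$'' (as calibrated by Example~\ref{ex:carrier_type_C}), and the shift $\min(\ell, s)$ furnished by Proposition~\ref{prop:top_commutes} moves $p$ exactly $\min(\ell, s)$ sites to the left. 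The image is again a soliton of length $\ell$ because $A_s$ preserves $\nu^{(r)} = (\ell)$, so Conjecture~\ref{conj:soliton_partition} applies to $T_s(p)$ as well.

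For the state energy the key is the identity $E_s(p) = \cc\bigl(A_s(\nu, J)\bigr) - \cc(\nu, J)$, expressing $E_s(p)$ as the jump in cocharge produced by appending the carrier $u_s = u(B^{r,s})$; this is the energy--cocharge correspondence underlying Proposition~\ref{prop:top_commutes} (see~\cite{KOSTY06, KSY11}). Granting it, I would evaluate the right-hand side directly from the cocharge formula: its quadratic term depends only on the multiplicities $m_\ell^{(i)}$, which $A_s$ leaves fixed and which therefore cancel, while the linear term $\sum_{(i,\ell)} t_i^\vee \sum_{x \in J_\ell^{(i)}} x$ increases by $t_r^\vee \sum_k m_k^{(r)} \min(k, s) = t_r^\vee \min(\ell, s)$, using $m_\ell^{(r)} = 1$ and $m_k^{(r)} = 0$ otherwise. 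This gives $E_s(p) = t_r^\vee \min(s, \ell)$.

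The main obstacle is precisely the identity $E_s(p) = \cc(A_s(\nu, J)) - \cc(\nu, J)$. One must avoid routing it naively through $\cc = D \circ \Phi \circ \theta$ of Conjecture~\ref{conj:bijection}: since $A_s$ raises the relevant vacancy numbers and riggings in lockstep, the coriggings read off by $\theta$ are unchanged and that route collapses to $0$ (consistently with $D$ being conserved under time evolution, as $T_s$ is realized by the combinatorial $R$-matrix). Instead one tracks the cocharge of the rigged configurations themselves, where appending a factor is shown to increment cocharge by exactly the local energy accumulated, as in the cited references. A second, more routine point is the position--rigging dictionary invoked for the movement; once the vacancy-number bookkeeping of~\eqref{eq:vacancy} is in place, the slope-one correspondence is straightforward.
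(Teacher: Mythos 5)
Your treatment of the movement claim matches the paper's: both deduce from Proposition~\ref{prop:top_commutes} that the single rigging $J_{\ell}^{(r)}$ increases by $\min(s,\ell)$, invoke the slope-one dictionary between trailing vacuum factors and that rigging (the paper phrases this as ``nonvacuum factors $\min(s,\ell)$ steps earlier under $\Phi^{-1}$''), and note the soliton's content is unchanged because $\Phi^{-1}$ depends only on the coriggings. Your direct evaluation of $\cc\bigl(A_s(\nu, J)\bigr) - \cc(\nu, J) = t_r^{\vee} \min(s,\ell)$ from the linear term of the cocharge formula is also exactly the computation the paper asserts (and you spell it out more carefully than the paper does).

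The gap is the identity $E_s(p) = \cc\bigl(A_s(\nu, J)\bigr) - \cc(\nu, J)$, which you correctly isolate as the crux of the energy half but then outsource to~\cite{KOSTY06, KSY11} instead of proving. Moreover, your stated reason for refusing the route through $\cc = D \circ \Phi \circ \theta$ is mistaken: that is precisely the route the paper takes. Your ``collapses to $0$'' objection applies only if you compare $\theta \circ \Phi^{-1}$ of $T_s(p)$ and of $p$ over the \emph{same} tensor product $(B^{r,1})^{\otimes K}$; the paper's fix is to append the carrier. Truncate to $p_K$, and use the definition~\eqref{eq:energy_function} of $D$ together with $R\bigl(u_s \otimes T_s(p_K)\bigr) = p_K \otimes u_s$ and $D(u_s) = 0$ to get $D\bigl(u_s \otimes T_s(p_K)\bigr) - D\bigl(T_s(p_K)\bigr) = E_s(p_K)$. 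Over the enlarged product, the extra factor $B^{r,s}$ raises the vacancy numbers $p_i^{(r)}$ by $\min(i,s)$, exactly offsetting the rigging shift of $A_s$, so $(\theta \circ \Phi^{-1})\bigl(u_s \otimes T_s(p_K)\bigr) = (\nu, J)$, and Conjecture~\ref{conj:bijection} then converts the $D$-difference into the cocharge difference you computed. Without this carrier-appended comparison (or an actual proof of the cocharge-increment identity you cite), the energy half of your argument is incomplete, even though your outline and both side computations are sound.
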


\begin{proof}
Let $(\nu, J) = \Phi(p)$, and note that $\nu^{(r)} = (\ell)$ from Conjecture~\ref{conj:soliton_partition}.
From Proposition~\ref{prop:top_commutes}, $\Phi^{-1}\bigl(T_s(p)\bigr)$ differs from $\Phi^{-1}(p)$ by adding $\min(s, \ell)$ to the rigging $J_{\ell}^{(r)}$.
Thus, we have nonvacuum factors $\min(s, \ell)$ steps earlier under $\Phi^{-1}$.
Moreover, the image of the soliton under $\Phi^{-1}$ must be the same as $\Phi^{-1}$ only depends on the coriggings.
Hence $T_s(p)$ moves the soliton $\min(s, \ell)$ steps to the left.

Next, we note that $\cc\bigl(A_s(\nu, J)\bigr) - \cc(\nu, J) = t_r^{\vee} \min(k, \ell)$.
We consider the state $p$ truncated to $K \gg 1$ factors, which we denote by $p_K$.
From the definition of energy, we have
\[
D\bigl(u_s \otimes T_s(p_K)\bigr) - D\bigl(T_s(p_K)\bigr) = E_s(p_K)
\]
since $R\bigl(u_s \otimes T_s(p_K)\bigr) = p_K \otimes u_s$ and $D(u_s) = 0$.
Note that $(\theta \circ \Phi^{-1}) \bigl(u_s \otimes T_s(p_K) \bigr) = (\nu, J)$ since $A_s$ preserves coriggings and the extra $u_s$ factor increases the vacancy numbers $p_i^{(r)}$ by $\min(i, s)$.
Hence, we have $E_s(p_K) = t_r^{\vee} \min(s, \ell)$ since $\cc = D \circ \Phi \circ \theta$.
\end{proof}

\begin{prop}[Conservation laws]
Suppose Conjecture~\ref{conj:soliton_partition} holds.
We have
\[
T_s \circ T_{s'} = T_{s'} \circ T_s,
\qquad\qquad
E_s \circ T_{s'} = E_s.
\]
\end{prop}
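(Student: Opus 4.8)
The plan is to transport both identities through the bijection $\Phi$ and reduce them to elementary statements about the operators $A_s$ on rigged configurations, working throughout under the standing hypothesis that Conjecture~\ref{conj:soliton_partition} holds (which is what licenses Proposition~\ref{prop:top_commutes} and Proposition~\ref{prop:state_movement} for arbitrary $r$). For a suitably truncated state $p$ write $\rho = \Phi^{-1}(p)$, so that Proposition~\ref{prop:top_commutes} gives $\Phi^{-1}(T_s p) = A_s \rho$, where $A_s$ adds $\min(i,s)$ to every rigging in $J_i^{(r)}$ and leaves the configuration $\nu$ and every part $J_i^{(j)}$ with $j \neq r$ untouched.

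For the commutation relation I would first note that $A_s$ and $A_{s'}$ commute: each merely increments the riggings of the $r$-th part, by $\min(i,s)$ and $\min(i,s')$ respectively, so either order of composition adds $\min(i,s)+\min(i,s')$ to $J_i^{(r)}$ and neither alters $\nu$. Applying Proposition~\ref{prop:top_commutes} twice then yields
\[
\Phi^{-1}(T_s T_{s'} p) = A_s A_{s'} \rho = A_{s'} A_s \rho = \Phi^{-1}(T_{s'} T_s p),
\]
and since $\Phi$ is a bijection the identity $T_s \circ T_{s'} = T_{s'} \circ T_s$ follows.

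For the energy identity the key step is to show that $E_s(p)$ is a function of the single partition $\nu^{(r)}$ alone. Repeating the cocharge computation from the proof of Proposition~\ref{prop:state_movement}, but without specializing to a single soliton, one has $E_s(p) = D(u_s \otimes T_s p) - D(T_s p)$; combining this with $\cc = D \circ \Phi \circ \theta$ (Conjecture~\ref{conj:bijection}), the $R$-matrix invariance of $D$, Conjecture~\ref{conj:R_matrix_identity}, and the coincidence of coriggings established there gives
\[
E_s(p) = \cc\bigl(\theta(\rho)\bigr) - \cc\bigl(\theta(A_s \rho)\bigr).
\]
Now $\theta(\rho)$ and $\theta(A_s\rho)$ carry the same configuration $\nu$, and since $A_s$ only lowers the coriggings of the $r$-th part by $\min(i,s)$, these two rigged configurations differ solely in the riggings attached to the rows of $\nu^{(r)}$, each lowered by $\min(i,s)$. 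The quadratic part of $\cc$ depends only on $\nu$ and hence cancels, so the difference collapses to
\[
E_s(p) = t_r^{\vee} \sum_{i} m_i^{(r)} \min(i,s),
\]
which depends only on $\nu^{(r)}$. Finally $A_{s'}$ preserves $\nu$, so $\nu^{(r)}$ agrees for $\rho$ and for $A_{s'}\rho = \Phi^{-1}(T_{s'}p)$; substituting into the displayed formula gives $E_s(T_{s'}p) = E_s(p)$.

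The main obstacle is the bookkeeping behind the formula $E_s(p) = \cc(\theta\rho) - \cc(\theta A_s\rho)$: one must track how appending the carrier $u_s$ raises the vacancy numbers $p_i^{(r)}$ by $\min(i,s)$ (so that it is the coriggings, not the riggings, that are preserved), and how $\theta$ converts this into the stated corigging shift on the $r$-th part, exactly in the manner of Proposition~\ref{prop:top_commutes} and Proposition~\ref{prop:state_movement}. Once that identity is in hand, both conservation laws reduce to the elementary observation that $A_s$ never changes the configuration $\nu$.
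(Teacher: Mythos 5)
Your argument is correct and runs along the same backbone as the paper's proof: both identities are transported through $\Phi$ via Proposition~\ref{prop:top_commutes}, the commutation law reduces to the evident commutativity of $A_s$ and $A_{s'}$, and the energy law reduces to the fact that the cocharge increment produced by $A_s$ depends only on $\nu^{(r)}$, which $A_{s'}$ never touches. The one substantive difference is how the energy identity is closed. The paper simply invokes Proposition~\ref{prop:state_movement}: for $p \in \Sol^{(r)}(\ell)$ both $E_s(p)$ and $E_s(T_{s'}p)$ equal $t_r^{\vee}\min(s,\ell)$, since $T_{s'}$ preserves $\nu^{(r)} = (\ell)$; strictly read, this establishes $E_s \circ T_{s'} = E_s$ only on single-soliton states. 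You instead rerun the cocharge computation of that proof without the single-soliton specialization, arriving at
\[
E_s(p) \;=\; \cc\bigl(\theta(\rho)\bigr) - \cc\bigl(\theta(A_s\rho)\bigr) \;=\; t_r^{\vee}\sum_{i} m_i^{(r)}\min(i,s),
\]
which depends on $\nu^{(r)}$ alone, so conservation follows for \emph{every} state from the invariance of $\nu$ under $A_{s'}$. This buys the operator identity in the generality in which it is stated (multi-soliton and intermediate states included), at the modest cost of checking that the identity $E_s(p) = D(u_s \otimes T_s p) - D(T_s p)$ and the corigging bookkeeping behind $\theta\Phi^{-1}(u_s \otimes T_s p)$ do not use the soliton hypothesis --- and indeed they do not, as nothing in that part of the paper's argument for Proposition~\ref{prop:state_movement} uses $\nu^{(r)} = (\ell)$ until the final evaluation. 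As a side benefit, your formula $\cc(\theta\rho) - \cc(\theta A_s\rho)$ is the careful version of the paper's bookkeeping (the paper's statement $(\theta\circ\Phi^{-1})(u_s \otimes T_s(p_K)) = (\nu,J)$ elides a $\theta$, and the displayed equation in its conservation-law proof has $A_s$ and $A_{s'}$ interchanged in one spot), so your write-up is, if anything, the cleaner one.
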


\begin{proof}
It is clear that $T_s \circ T_{s'} = T_{s'} \circ T_s$ from Proposition~\ref{prop:top_commutes} and the description of $A_s$.
Next, we have
\[
\cc\bigl((A_{s'} \circ A_s)(\nu, J)\bigr) - \cc\bigl(A_s(\nu, J)\bigr)
= \cc\bigl(A_s(\nu, J)\bigr) - \cc(\nu, J)
= E_s(p)
\]
by Proposition~\ref{prop:state_movement}.
Therefore, we have $E_s \circ T_{s'} = E_s$.
\end{proof}

We now prove the desired decoupling rules using rigged configurations.

\begin{prop}
\label{prop:RC_restriction}
Let $B = \bigotimes_{i=1}^N B^{r, s_i}$ of type $\g$.
Let $\mu = \{\widetilde{s}_1, \dotsc, \widetilde{s}_{\widetilde{N}}\}$ be a partition and
\[
\mathcal{B}_{\mu} = \bigotimes_{r' \sim r} \bigotimes_{i=1}^{\widetilde{N}} \begin{cases}
B^{r', \gamma_r \widetilde{s}_i} & \text{if $\modgamma_r / \gamma_{r'} \in \ZZ$}, \\
B^{r', \lfloor \widetilde{s}_i / 2 \rfloor} \otimes B^{r', \lceil \widetilde{s}_i / 2 \rceil} & \text{if $\modgamma_r / \gamma_{r'} = 1/2$}, \\
B^{r', \lfloor \widetilde{s}_i / 3 \rfloor} \otimes B^{r', \lfloor \widetilde{s}_i / 3 \rfloor + \sigma_i} \otimes B^{r', \lfloor \widetilde{s}_i / 3 \rfloor + \tau_i} & \text{if $\modgamma_r / \gamma_{r'} = 1/3$},
\end{cases}
\]
of type $\widehat{\g}_{r,0}$, where $\sigma_i = 1$ (resp.~$\tau_i = 1$) if the remainder of $\widetilde{s}_i / 3$ at least $1$ (resp.\ equals $2$) and $0$ otherwise.
Then, the map
\begin{equation}
\label{eq:decoupling_rule}
B^{(r)} \colon \RC(B) \to \bigoplus_{\mu} \RC(\mathcal{B}_{\mu})^{\oplus m_{\mu}},
\end{equation}
where $m_{\mu} = \lvert \{ (\nu, J) \in \hwRC(B) \mid \nu^{(r)} = \mu \} \rvert$,
given by deleting $\nu^{(r)}$ is a $U_q(\g_{0,r})$-crystal isomorphism.
\end{prop}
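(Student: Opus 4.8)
The plan is to realize $B^{(r)}$ as a forgetful map whose fibers are controlled by one explicit invariant, and to reduce the whole statement to a single identity of vacancy numbers. First I would record what $B^{(r)}$ forgets. Since $B=\bigotimes_i B^{r,s_i}$ has factors only at node $r$, we have $L_\ell^{(i)}=0$ for every $i\in I_{0,r}$, so by \eqref{eq:vacancy}
\[
p_\ell^{(i)}(\nu;B)=-\sum_{j\in I_{0,r}}\sum_k\frac{A_{ij}}{\gamma_j}\min(\modgamma_i\ell,\modgamma_j k)m_k^{(j)}-\sum_k\frac{A_{ir}}{\gamma_r}\min(\modgamma_i\ell,\modgamma_r k)m_k^{(r)}
\]
for $i\in I_{0,r}$. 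The operators $e_i,f_i$ with $i\in I_{0,r}$ never add or remove a box of $\nu^{(r)}$ and by construction preserve the colabels of every unselected row; hence they fix $\nu^{(r)}$ together with the multiset of colabels attached to it. Thus $\bigl(\nu^{(r)},\text{colabels of }\nu^{(r)}\bigr)$ is constant on each $U_q(\g_{0,r})$-component, and this is precisely the data discarded by $B^{(r)}$; the multiplicities $m_\mu$ must therefore count its realizations.

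The heart of the argument is the identity, for $(\nu,J)\in\RC(B)$ with $\nu^{(r)}=\mu$ and $i\in I_{0,r}$,
\[
p_\ell^{(i)}(\nu;B)=p_\ell^{(i)}\bigl(B^{(r)}(\nu,J);\mathcal{B}_\mu\bigr),
\]
where the right-hand side is the vacancy number of type $\widehat{\g}_{0,r}$. I would prove it by matching the two pieces separately. Deleting node $r$ leaves the Dynkin subdiagram on $I_{0,r}$ unchanged, so the interaction entries agree, $\widehat{A}_{ij}=A_{ij}$ for $i,j\in I_{0,r}$; the only delicacy is that the scaling factors of $\g$ and of $\widehat{\g}_{0,r}$ from Table~\ref{table:scaling_factors} differ on some nodes, and one checks type by type that the products $\tfrac{A_{ij}}{\gamma_j}\min(\modgamma_i\ell,\modgamma_j k)$ are unchanged (a uniform rescaling within each affine component cancels). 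The discarded term $j=r$ must then reproduce exactly the boundary contribution $\sum_k\widehat{L}_k^{(i)}\min(\ell,k)$ coming from the KR factors of $\mathcal{B}_\mu$: a row of length $k$ in $\mu$ spawns, for each $r'\sim r$, the factors prescribed by the three cases, and the matching is the elementary identity
\[
\min(\ell,\lfloor k/2\rfloor)+\min(\ell,\lceil k/2\rceil)=\min(2\ell,k)
\]
(with its threefold analogue yielding $\min(3\ell,k)$), which is exactly what converts the scaled $\min(\modgamma_i\ell,\modgamma_r k)$ into integer boundary data; the integer case is a direct matching. I expect this computation, with its type-dependent scaling-factor bookkeeping, to be the main obstacle.

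Granting the identity, the morphism property is immediate. Because the vacancy numbers, and hence the colabels, of the $I_{0,r}$-partitions agree on both sides, a rigging is admissible for $B$ if and only if its image is admissible for $\mathcal{B}_\mu$, so $B^{(r)}$ is well defined; and since $e_i,f_i$ for $i\in I_{0,r}$ are computed purely from the $I_{0,r}$-partitions, their riggings, and the matched $p_\ell^{(i)}$, the map intertwines them. The $U_q(\g_{0,r})$-weight then agrees as well, since it is determined by the pairings $\langle h_i,\cdot\rangle$ for $i\in I_{0,r}$, which the crystal operators fix; equivalently, one verifies that $\overline{\wt}(\nu,J)$ restricted to $\overline{P}_{0,r}$ equals the $\mathcal{B}_\mu$-weight, the $-m_\ell^{(r)}\clsr_r$ contribution being absorbed into the new fundamental-weight terms.

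Finally I would settle bijectivity and the multiplicity one component at a time. On a fixed $U_q(\g_{0,r})$-component the forgotten data $(\mu,\text{colabels})$ is constant, so $B^{(r)}$ is injective there and, by reconstructing the unique $J^{(r)}$ from a prescribed colabel pattern and checking admissibility via the identity, surjective onto a component of $\RC(\mathcal{B}_\mu)$; thus each component of $\RC(B)$ with $\nu^{(r)}=\mu$ maps isomorphically to one of $\RC(\mathcal{B}_\mu)$, and each admissible colabel pattern on $\nu^{(r)}=\mu$ contributes one full copy. It then remains to show the number of such patterns is $m_\mu$. I would do this by transporting the count through the corigging involution $\theta$, which exchanges riggings and colabels while fixing each $\nu^{(i)}$: under $\theta$ the realizable colabel patterns on $\nu^{(r)}=\mu$ correspond to the classically highest weight configurations with $\nu^{(r)}=\mu$, which are exactly the $m_\mu$ elements of $\{(\nu,J)\in\hwRC(B)\mid\nu^{(r)}=\mu\}$. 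Assembling the components gives the claimed $U_q(\g_{0,r})$-crystal isomorphism onto $\bigoplus_\mu\RC(\mathcal{B}_\mu)^{\oplus m_\mu}$.
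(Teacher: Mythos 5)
Your first three paragraphs are a faithful, fleshed-out version of the paper's own proof, which consists of exactly two sentences: reduce to $I_{0,r}$-highest weight rigged configurations, where the claim ``follows from the definition of the vacancy numbers,'' and then extend ``by the definition of the crystal operators.'' Your key identity $p_\ell^{(i)}(\nu;B)=p_\ell^{(i)}\bigl(B^{(r)}(\nu,J);\mathcal{B}_\mu\bigr)$ for $i\in I_{0,r}$ --- with the discarded $j=r$ term of \eqref{eq:vacancy} re-emerging as the $\sum_k \widehat{L}_k^{(i)}\min(\ell,k)$ contribution via $\min(\ell,\lfloor k/2\rfloor)+\min(\ell,\lceil k/2\rceil)=\min(2\ell,k)$ and its threefold analogue --- is precisely the computation the paper is invoking, and your observation that the $I_{0,r}$-crystal operators preserve $\nu^{(r)}$ together with its coriggings is the content of the second sentence. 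Up to there your write-up is correct and takes the same route, with the scaling-factor bookkeeping made explicit.

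The genuine gap is your final paragraph, the multiplicity count via $\theta$. The claimed correspondence between realizable corigging patterns on $\nu^{(r)}=\mu$ and elements of $\{(\nu,J)\in\hwRC(B)\mid\nu^{(r)}=\mu\}$ does not hold: $\theta$ swaps riggings and coriggings only on \emph{highest weight} rigged configurations (and is then extended as a classical crystal map), and a corigging pattern realized at a non-highest-weight element need not transport to an admissible highest weight rigging, since after the swap the rigging can exceed the vacancy number. Concretely, take type $A_2^{(1)}$, $r=1$, $B=B^{1,1}\otimes B^{1,1}$, so $I_{0,r}=\{2\}$ and $\widehat{\g}_{0,r}=A_1^{(1)}$. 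Here $\hwRC(B)$ has two elements, the empty configuration and $\nu^{(1)}=(1)$ with rigging $0$, so $m_{(1)}=1$ and $m_{(2)}=m_{(1,1)}=0$. But $\RC(B)$ contains \emph{two} distinct $\{2\}$-components with $\nu^{(1)}=(1)$: one through $f_1$ of the empty configuration, i.e.\ $\nu^{(1)}=(1)$ with rigging $-1$ (corigging $1$), and one through the highest weight element with rigging $0$ (corigging $0$). The pattern with corigging $1$ corresponds under your swap to a rigging $1>p_1^{(1)}=0$, hence to no element of $\hwRC(B)$; so your count gives $2\neq m_{(1)}$. The ``one full copy per pattern'' claim fails as well: the part of $\RC(B)$ with $\nu^{(1)}=(1,1)$ is a single $\{2\}$-singlet, whereas $\RC(\mathcal{B}_{(1,1)})=\RC(B^{r',1}\otimes B^{r',1})$ has four elements, so the deletion map embeds that component into $\RC(\mathcal{B}_{(1,1)})$ without exhausting a copy of it; similarly there is a $\{2\}$-triplet with $\nu^{(1)}=(2)$ even though $m_{(2)}=0$. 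What your argument actually establishes is that each $I_{0,r}$-component with $\nu^{(r)}=\mu$ embeds, with matching vacancy numbers, into $\RC(\mathcal{B}_\mu)$; neither the multiplicities $m_\mu$ nor surjectivity onto direct sums of copies follows. To be fair, the paper's own two-sentence proof is completely silent on this bookkeeping --- the multiplicity claim is the one part you tried to supply beyond the paper, and it is exactly where the argument breaks.
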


\begin{proof}
For $I_{0,r}$-highest weight rigged configurations, this follows from the definition of the vacancy numbers.
The remaining cases follow from the definition of the crystal operators on rigged configurations.
\end{proof}

We note that Proposition~\ref{prop:soliton_crystal_bijection} follows immediately from Theorem~\ref{thm:RC_soliton_sizes} and Proposition~\ref{prop:RC_restriction}.
Moreover, Equation~\eqref{eq:decoupling_rule} is the decoupling rule on rigged configurations.
Thus, we have a proof of Proposition~\ref{prop:soliton_crystal_bijection} using rigged configurations.

\begin{ex}
Consider soliton $p$ from Example~\ref{ex:decoupling_r=2}. The corresponding rigged configuration $(\nu, J)$ under $\Phi$ is
\[
\begin{tikzpicture}[scale=.35,baseline=-25]
\rpp{2}{-2}{-2}
 \begin{scope}[xshift=5cm]
 \rpp{5}{2}{2}
 \end{scope}
 \begin{scope}[xshift=13cm]
 \rpp{4,1}{1,-1}{1,-1}
 \end{scope}
 \begin{scope}[xshift=20cm]
 \rpp{7}{-5}{-5}
 \end{scope}
\end{tikzpicture}.
\]
Therefore, we have
\[
B^{(2)}(\nu, J) =
\begin{tikzpicture}[scale=.35,baseline=-23]
\rpp{2}{-2}{-2}
 \begin{scope}[xshift=6cm]
 \rpp{4,1}{1,-1}{1,-1}
 \end{scope}
 \begin{scope}[xshift=13cm]
 \rpp{7}{-5}{-5}
 \end{scope}
\end{tikzpicture}
\]
in $\RC(B^{1,5})$ in type $A^{(1)}_1 \times B_2^{(1)}$. Therefore, we have
\begin{align*}
\begin{tikzpicture}[scale=.35,baseline=-23]
\rpp{2}{-2}{-2}
\end{tikzpicture}
& \xrightarrow[\hspace{20pt}]{\Phi^{-1}_A} \young(11122)\ ,
\\
\begin{tikzpicture}[scale=.35,baseline=-23]
\rpp{4,1}{1,-1}{1,-1}
 \begin{scope}[xshift=7cm]
 \rpp{7}{-5}{-5}
 \end{scope}
\end{tikzpicture}
& \xrightarrow[\hspace{20pt}]{\Phi^{-1}_B} \young(10\btw\btw\bon)\ ,
\end{align*}
which agrees with $\Psi(p)$.
\end{ex}

\begin{lemma}
\label{lemma:description_irreducible_decoupled}
We have $\mathcal{B}^{(r)}(\ell) \iso B(\ell \varpi)$ as $U_q(\g_{0,r})$-crystals if and only if for all $r' \sim r$, we must have $r'$ a special node and $\modgamma_r / \gamma_{r'} \in \ZZ$.
\end{lemma}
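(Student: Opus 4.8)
The plan is to reduce the assertion to a purely classical question — whether the $U_q(\g_{0,r})$-crystal $\mathcal{B}^{(r)}(\ell)$ is irreducible — and then resolve that question using the known classical decompositions of KR crystals. The first step is to observe that, in every case of the definition of $\mathcal{B}^{(r)}(\ell)$, the tensor factor attached to a neighbour $r'$ has maximal $U_q(\g_{0,r})$-weight equal to $\ell\varpi_{r'}$: the indices $\lfloor\ell/2\rfloor+\lceil\ell/2\rceil$, the three indices summing over the $\modgamma_r/\gamma_{r'}=1/3$ row, and the single index $\ell\modgamma_r/\gamma_{r'}$ all produce the same top classical weight once the scaling factors $(\gamma_i)$ are accounted for, and this weight occurs with multiplicity one. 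Hence the maximal element of $\mathcal{B}^{(r)}(\ell)$ is unique and generates a subcrystal isomorphic to $B(\ell\varpi)$, so $\mathcal{B}^{(r)}(\ell)\iso B(\ell\varpi)$ holds if and only if $\mathcal{B}^{(r)}(\ell)$ is classically irreducible. It therefore suffices to characterize irreducibility.

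Next I would use that the Dynkin diagram of $\g_0$ is a tree, so deleting $r$ separates its neighbours into pairwise distinct connected components of $\g_{0,r}$. Consequently $\mathcal{B}^{(r)}(\ell)$ is an outer tensor product, over the components of $\g_{0,r}$, of the factors attached to the various $r'\sim r$, and it is irreducible exactly when each of these factors is irreducible over its own simple component. This localizes the problem to a single neighbour $r'$.

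For a fixed $r'\sim r$ I would split into two cases. If $\modgamma_r/\gamma_{r'}\in\ZZ$, the factor is the single KR crystal $B^{r',\,\ell\modgamma_r/\gamma_{r'}}$, and I would invoke the classical decomposition of KR crystals (from~\cite{FOS09} in the nonexceptional cases, and~\cite{JS10,KMOY07,Yamane98} in the exceptional ones) to conclude that $B^{r',s}\iso B(s\varpi_{r'})$ is classically irreducible precisely when $r'$ is a special node of $\widehat{\g}_{0,r}$, while a non-special $r'$ carries strictly smaller classical components once $s$ is large. If instead $\modgamma_r/\gamma_{r'}\in\{1/2,1/3\}$, the factor is a tensor product of two or three KR crystals of the same component, all nontrivial for $\ell\geq2$ (resp.\ $\ell\geq3$); a tensor product of nonzero crystals with positive top weight over a single simple factor is always reducible, independently of whether $r'$ is special. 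Combining the two cases yields exactly the stated equivalence, the $1/2,1/3$ rows being precisely what forces the requirement $\modgamma_r/\gamma_{r'}\in\ZZ$.

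The main obstacle is the integer-ratio case of the previous paragraph: establishing that classical irreducibility of $B^{r',s}$ is equivalent to $r'$ being special, together with the scaling bookkeeping confirming that each factor's maximal weight is genuinely $\ell\varpi_{r'}$. This is the point where the proof must appeal to the explicit, partly type-dependent structure of the KR crystals rather than a purely formal tensor-product argument; everything else is formal once that input is secured. I would also check the small-$\ell$ boundary — where a factor $B^{r',0}$ or a rounded index degenerates and can make $\mathcal{B}^{(r)}(1)$ accidentally irreducible — so that the ``only if'' direction is witnessed by a genuinely reducible $\mathcal{B}^{(r)}(\ell)$ at a suitable $\ell$.
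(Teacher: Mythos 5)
The paper offers no actual argument for this lemma—its proof reads, in full, ``This is a straightforward check,'' meaning a finite type-by-type verification against Tables~2--4—so the real question is whether your structural argument is sound. Your reduction is: (i) the top classical weight of $\mathcal{B}^{(r)}(\ell)$ is always the weight the paper denotes $\ell\varpi$, with multiplicity one, so the isomorphism holds iff $\mathcal{B}^{(r)}(\ell)$ is classically irreducible; (ii) since the classical diagram is a tree, distinct neighbours $r'$ lie in distinct simple components of $\g_{0,r}$, so irreducibility localizes to the factor of each $r'$; (iii) the $\modgamma_r/\gamma_{r'}\in\{1/2,1/3\}$ factors are genuine tensor products of nontrivial crystals for a single simple component, hence reducible for $\ell$ large. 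These steps are fine, though in (i) ``accounting for the scaling factors'' is doing real work: what must be checked is that $-A_{r'r}=\max(1,\modgamma_r/\gamma_{r'})$, so that the top weight of, say, $B^{r',\ell\modgamma_r/\gamma_{r'}}$ (which is $\ell(\modgamma_r/\gamma_{r'})$ times a fundamental weight, not $\ell$ times it) agrees with the restriction of $\ell$ copies of $\wt(f_r u_{\clfw_r})$.

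The genuine gap is the key fact you invoke in the integer-ratio case: ``$B^{r',s}$ is classically irreducible precisely when $r'$ is special.'' This is false. By the classical decompositions of~\cite{FOS09}, the spin node $n$ of $B_n^{(1)}$ satisfies $B^{n,s}\iso B(s\clfw_n)$ for every $s$, yet $n$ is not special (the only nontrivial automorphism of the $B_n^{(1)}$ diagram swaps $0$ and $1$). This exception occurs inside the scope of the lemma: take $\g=F_4^{(1)}$ and $r=4$, so that $\widehat{\g}_{0,4}=B_3^{(1)}$ (Table~2), $r'=3$ is its spin node, and $\modgamma_4/\gamma_3=1$; then $\mathcal{B}^{(4)}(\ell)=B^{3,\ell}$ of type $B_3^{(1)}$ is classically irreducible and isomorphic to $B(\ell\varpi)$ even though $r'$ is not special in $\widehat{\g}_{0,4}$ (nor in $F_4^{(1)}$). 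So the ``only if'' direction of your argument fails there, and the failure is not repairable by a better citation: this is a true instance of irreducibility without specialness, hence in fact a counterexample to the lemma as literally stated (which suggests the paper's ``straightforward check'' slipped on the same point; note also that the Summary and this lemma already disagree about $F_4^{(1)}$). A correct treatment must either restate the criterion—e.g., require that each neighbour $r'$ satisfy $B^{r',s}\iso B(s\varpi_{r'})$ for all $s$, which is the property actually used downstream only via the additional ``special'' hypothesis needed for Theorem~\ref{thm:minuscule_local_energy}—or explicitly carve out the $F_4^{(1)}$, $r=4$ case. As written, your proposal establishes the ``if'' direction and the reducibility of the $1/2$, $1/3$ cases, but not the stated equivalence.
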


\begin{proof}
This is a straightforward check.
\end{proof}

\begin{prop}
\label{prop:sides_commute}
Fix an $r$ such that $\mathcal{B}^{(r)}(\ell) \iso B(\ell\varpi)$ as $U_q(\g_{0,r})$-crystals.
Let $\Sol^{(r)}(\ell_1, \dotsc, \ell_k; K)$ denote the truncation of states in $\Sol^{(r)}(\ell_1, \dotsc, \ell_k)$ to $(B^{r,1})^{\otimes K}$ from $(B^{r,1})^{\otimes \infty}$.
For $K \gg 1$, the diagram
\[
\xymatrixcolsep{4em}
\xymatrixrowsep{3em}
\xymatrix{
\Sol^{(r)}(\ell_1, \dotsc, \ell_k; K) \ar[d]_{\Psi} \ar[r]^-{\Phi^{-1}} & \RC\bigl( (B^{1,1})^{\otimes K} \bigr) \ar[d]^{B^{(r)}}
\\ \mathcal{B}^{(r)}(\ell_1, \dotsc, \ell_k) \ar[r]_-{\Phi^{-1}} & \RC\bigl( \mathcal{B}^{(r)}(\ell_1, \dotsc, \ell_k) \bigr)
}
\]
commutes.
\end{prop}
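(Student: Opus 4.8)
The plan is to exploit that every arrow in the square is a $U_q(\g_{0,r})$-crystal isomorphism, so that both composites around the square are $U_q(\g_{0,r})$-crystal morphisms, and then reduce the commutativity to a check on $I_{0,r}$-highest weight elements. First I would record the relevant morphism properties. The top horizontal $\Phi^{-1}$ is a $U_q(\g_0)$-crystal isomorphism by Conjecture~\ref{conj:bijection}, hence in particular a $U_q(\g_{0,r})$-crystal morphism; the vertical $B^{(r)}$ is a $U_q(\g_{0,r})$-crystal isomorphism by Proposition~\ref{prop:RC_restriction}; the map $\Psi$ is a $U_q(\g_{0,r})$-crystal isomorphism by Proposition~\ref{prop:soliton_crystal_bijection}, extended to several solitons as the decoupling rule of Definition~\ref{def:decoupling}; and the bottom $\Phi^{-1}$ is a classical crystal isomorphism of type $\widehat{\g}_{0,r}$, whose canonical semisimple subalgebra is exactly $\g_{0,r}$, so it too is a $U_q(\g_{0,r})$-crystal isomorphism. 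Consequently $B^{(r)} \circ \Phi^{-1}$ and $\Phi^{-1} \circ \Psi$ are $U_q(\g_{0,r})$-crystal isomorphisms out of the seminormal crystal $\Sol^{(r)}(\ell_1, \dotsc, \ell_k; K)$.

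Since a morphism of seminormal crystals (a disjoint union of some $B(\lambda)$) is determined by its restriction to the $I_{0,r}$-highest weight elements, one per connected component, it suffices to verify the square on these elements; equivalently, the automorphism $(\Phi^{-1} \circ \Psi)^{-1} \circ (B^{(r)} \circ \Phi^{-1})$ of $\Sol^{(r)}(\ell_1, \dotsc, \ell_k; K)$ fixes each $I_{0,r}$-highest weight element. Using $\Psi$, I would identify these elements as the preimages of the $I_{0,r}$-highest weight elements of $\mathcal{B}^{(r)}(\ell_1) \otimes \cdots \otimes \mathcal{B}^{(r)}(\ell_k) \iso B(\ell_1 \varpi) \otimes \cdots \otimes B(\ell_k \varpi)$, the last isomorphism holding under the standing hypothesis $\mathcal{B}^{(r)}(\ell) \iso B(\ell\varpi)$.

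On such an element $p$, both paths land in $I_{0,r}$-highest weight rigged configurations indexed by $I_{0,r}$, so the task is to match the partitions $(\nu^{(i)})_{i \in I_{0,r}}$ and their riggings. Here I would invoke Theorem~\ref{thm:RC_soliton_sizes} together with the far-apart hypothesis, valid for $K \gg 1$: when running $\Phi^{-1}$ on $p$, the rows of $\nu^{(r)}$ have mutually very negative, distinct riggings, so the algorithm processes each soliton independently, each contributing exactly one row to $\nu^{(r)}$ of length equal to its size and its internal data to $(\nu^{(i)})_{i \in I_{0,r}}$. Deleting $\nu^{(r)}$ via $B^{(r)}$ then leaves precisely the data assembled soliton-by-soliton, which is exactly what $\Phi^{-1}$ of type $\widehat{\g}_{0,r}$ produces from $\Psi(p)$; the riggings agree because, by the vacancy-number computation in the proof of Proposition~\ref{prop:RC_restriction}, the vacancy numbers \eqref{eq:vacancy} of the type-$\g$ configuration restricted to $i \in I_{0,r}$ with $\nu^{(r)}$ removed coincide with the type-$\widehat{\g}_{0,r}$ vacancy numbers.

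The main obstacle will be the honest bookkeeping in this last step: verifying that the ad-hoc, type-dependent algorithm for $\Phi^{-1}$ genuinely factors soliton-by-soliton on $I_{0,r}$-highest weight states and that the resulting $I_{0,r}$-riggings match across the change of type from $\g$ to $\widehat{\g}_{0,r}$. I would dispatch the single-soliton case $k = 1$ first, where $\Sol^{(r)}(\ell) \iso B(\ell\varpi)$ is irreducible and the global highest weight $u_{\varpi}^{\otimes \ell}$ maps to the empty configuration on both sides, and then build up to general $k$, using the far-apart hypothesis to guarantee that the rows selected by $\Phi^{-1}$ for distinct solitons never interfere so that the internal data and riggings are assembled componentwise.
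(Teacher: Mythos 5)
Your proposal takes essentially the same route as the paper's proof: both reduce the check to $I_{0,r}$-highest weight elements using that all four maps are $U_q(\g_{0,r})$-crystal isomorphisms, settle the base case $k=1$ by Proposition~\ref{prop:RC_restriction}, and then build up soliton-by-soliton using the far-apart hypothesis so that distinct solitons never interfere under $\Phi^{-1}$. The bookkeeping you defer is exactly what the paper's induction on $k$ resolves: writing an $I_{0,r}$-highest weight state as $p = \overline{p} \otimes u_{\varpi}^{\otimes \ell_k}$, both composites differ from their $(k-1)$-soliton counterparts by adding $\min(i,\ell_k)$ to every rigging $J_i^{(r')}$ with $r' \sim r$ (on top because the extra row of $\nu^{(r)}$ shifts the vacancy numbers, on the bottom because the extra maximal tensor factor does), which closes the induction.
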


\begin{proof}
For $k = 1$, this follows from Proposition~\ref{prop:RC_restriction}.

Next, assume it holds for $k-1$.
We note that $\Phi^{-1}$, $B^{(r)}$, and $\Psi$ are $U_q(\g_{0,r})$-crystal isomorphisms, so it is sufficient to consider the case when we have a $I_{0,r}$-highest weight element.
Additionally, note that there is a unique $I_{0,r}$-highest weight element $\widetilde{u}_{\ell_1} \in \mathcal{B}^{(r)}(\ell_1)$.
Consider a state $p \in \Sol^{(r)}(\ell_1, \dotsc, \ell_k; K)$.
Therefore, the rightmost soliton of $p$ is $u_{\varpi}^{\otimes \ell_k}$, which maps to $\widetilde{u}_{\ell_1}$ under $\Psi$.

Let $p = \overline{p} \otimes u_{\varpi}^{\otimes \ell_k}$.
Let
\begin{align*}
(\nu, J) & = \Phi^{-1}(p), & (\overline{\nu}, \overline{J}) & = \Phi^{-1}(\overline{p}),
\\ (\widetilde{\nu}, \widetilde{J}) & = \Phi^{-1}\bigl( \Psi(p) \bigr), & (\widetilde{\overline{\nu}}, \widetilde{\overline{J}}) & = \Phi^{-1}\bigl( \Psi(\overline{p}) \bigr).
\end{align*}
We obtain $(\widetilde{\nu}, \widetilde{J})$ from $(\widetilde{\overline{\nu}}, \widetilde{\overline{J}})$ by adding $\min(i, \ell_k)$ to each rigging of $J_i^{(r')}$ for all $r' \sim r$ as adding $u\bigl(\mathcal{B}^{(r)}(\ell_k)\bigr)$ does not change the rigged configuration.
Next we consider how $(\nu, J)$ differs from $(\overline{\nu}, \overline{J})$.
We note that adding $u_{\varpi}^{\otimes \ell_k}$ only adds a single row of length $\ell_k$ to $\nu^{(r)}$ similar to the proof of Proposition~\ref{prop:soliton_crystal_bijection}.
Since the solitons are far apart, we never change this row, so it does not affect the remaining steps of $\Phi^{-1}$ other than the final riggings.
Hence, the riggings $J_i^{(r')}$ are $\min(i, \ell_k)$ larger than the riggings $\overline{J}_i^{(r')}$ for all $r' \sim r$.
Hence, we have $B^{(r)} \circ \Phi^{-1} = \Phi^{-1} \circ \Psi$ by induction.
\end{proof}

\begin{conj}
\label{conj:sides_commute}
Proposition~\ref{prop:sides_commute} holds for all $r$. 
\end{conj}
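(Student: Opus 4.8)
The plan is to prove the statement by induction on the number $k$ of solitons, following the scheme of Proposition~\ref{prop:sides_commute} but dropping the hypothesis $\mathcal{B}^{(r)}(\ell) \iso B(\ell\varpi)$. All maps in the square---$\Phi^{-1}$, $\Psi$, and $B^{(r)}$---are $U_q(\g_{0,r})$-crystal isomorphisms ($\Psi$ by Proposition~\ref{prop:soliton_crystal_bijection}, Proposition~\ref{prop:soliton_crystal_bijection_B}, and Proposition~\ref{prop:soliton_crystal_bijection_G}; and $B^{(r)}$ by Proposition~\ref{prop:RC_restriction}, which is proved for \emph{all} $r$), so it suffices to check commutativity on one $I_{0,r}$-highest weight element of each connected component. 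The base case $k=1$ is exactly the content of Proposition~\ref{prop:RC_restriction} and therefore already holds for every $r$. Throughout I would assume Conjecture~\ref{conj:soliton_partition}, so that Theorem~\ref{thm:RC_soliton_sizes} (that $\nu^{(r)}$ records the soliton lengths) is available for all $r$, not only for special nodes or $r\sim 0$.

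The one place the proof of Proposition~\ref{prop:sides_commute} uses irreducibility is the claim that the rightmost soliton of an $I_{0,r}$-highest weight state equals $u_\varpi^{\otimes\ell_k}$ and hence maps under $\Psi$ to the \emph{maximal} element $u\bigl(\mathcal{B}^{(r)}(\ell_k)\bigr)$; adding the maximal element leaves every partition of the rigged configuration unchanged and merely shifts the $r'$-riggings by $\min(i,\ell_k)$, as in Proposition~\ref{prop:top_commutes}. When $\mathcal{B}^{(r)}(\ell_k)$ is reducible (some $r'\sim r$ non-special, or $\modgamma_r/\gamma_{r'}\in\{1/2,1/3\}$, by Lemma~\ref{lemma:description_irreducible_decoupled}) the space $\Sol^{(r)}(\ell_k)$ has several $I_{0,r}$-highest weight elements, so for a highest weight state $p=\overline{p}\otimes b_1$ the rightmost soliton $b_1$ is only guaranteed to be \emph{some} $I_{0,r}$-highest weight element, and $w:=\Psi(b_1)$ is a possibly non-maximal highest weight element of $\mathcal{B}^{(r)}(\ell_k)$. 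The inductive step then reduces to the following relativized identity: adding the single far-away soliton $b_1$ to $\overline{p}$, applying $\Phi^{-1}$ and deleting $\nu^{(r)}$, yields the same rigged configuration as adding $w$ to $\Phi^{-1}\bigl(\Psi(\overline{p})\bigr)$ via $\Phi^{-1}$. Since $\Phi^{-1}$ processes tensor factors from right to left and depends only on the coriggings, and since the far-apart hypothesis forces the length-$\ell_k$ row that $b_1$ contributes to $\nu^{(r)}$ to acquire an arbitrarily negative rigging, the processing of $b_1$ is carried out first and in isolation: the boxes it adds to each $\nu^{(r')}$, $r'\sim r$, are exactly those it would add as an isolated soliton, and the later processing of $\overline{p}$ is affected only through the induced $\min(i,\ell_k)$ shift of the vacancy numbers $p_i^{(r')}$. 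Deleting $\nu^{(r)}$ matches this against the $k=1$ case applied to $b_1$ (Proposition~\ref{prop:RC_restriction}), and the induction hypothesis disposes of $\overline{p}$.

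The main obstacle is this \emph{far-apart decoupling lemma}: one must show rigorously that, once the solitons are sufficiently separated, the run of the general-type $\Phi^{-1}$ algorithm on $b_1$ inside $\overline{p}\otimes b_1$ is genuinely independent of $\overline{p}$ down to the level of \emph{which} singular rows are selected in the partitions $\nu^{(r')}$. For special nodes and $r\sim 0$ this is transparent from the explicit local description of $\Phi^{-1}$ used in Theorem~\ref{thm:RC_soliton_sizes}, but for general $r$ the algorithm involves box-splitting and the more intricate selection rules of~\cite{KSS02, OSSS16, OSS17, Scrimshaw17}, so the interference analysis must be carried out at that level of generality. A secondary point to verify is that, in the $\modgamma_r/\gamma_{r'}\in\{1/2,1/3\}$ cases, the decoupled-side $\Phi^{-1}$ of the factors $B^{r',\lfloor\ell/2\rfloor}\otimes B^{r',\lceil\ell/2\rceil}$ (resp.\ the three $G_2^{(1)}$ factors) reproduces precisely the same $\nu^{(r')}$-increments as the direct soliton $b_1$; this again follows from the $k=1$ identity of Proposition~\ref{prop:RC_restriction} together with the explicit form of $\Psi$, but it should be recorded explicitly rather than left to the irreducible analogue.
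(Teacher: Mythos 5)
There is a genuine gap here --- and in fact the paper itself offers no proof of this statement: it is stated as Conjecture~\ref{conj:sides_commute}, and the authors' only comment is that they ``expect a similar proof to work in general, but describing the difference between $(\nu, J)$ and $(\overline{\nu}, \overline{J})$ is more complicated.'' Your proposal reproduces exactly the inductive scheme of Proposition~\ref{prop:sides_commute} and then, to your credit, names the missing ingredient yourself: the ``far-apart decoupling lemma,'' i.e.\ that the run of the general-type $\Phi^{-1}$ algorithm (with box-splitting and the full selection rules) on the rightmost soliton $b_1$ inside $\overline{p}\otimes b_1$ selects the same rows and produces the same increments of each $\nu^{(r')}$ as it would on $b_1$ in isolation, with the only residue being the $\min(i,\ell_k)$ shift of riggings. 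That lemma is precisely the ``more complicated'' difference the paper could not describe, and your proposal does not prove it --- it only asserts that it ``must be carried out at that level of generality.'' A plan that defers its crux to an unproved lemma is not a proof, so as it stands your argument establishes nothing beyond what Proposition~\ref{prop:sides_commute} already covers.

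Two further points are glossed over. First, your claim that ``the base case $k=1$ is exactly the content of Proposition~\ref{prop:RC_restriction} and therefore already holds for every $r$'' is too quick: Proposition~\ref{prop:RC_restriction} gives that $B^{(r)}\circ\Phi^{-1}$ and $\Phi^{-1}\circ\Psi$ are \emph{both} $U_q(\g_{0,r})$-crystal isomorphisms with the same target, but two crystal isomorphisms need not coincide. In the irreducible case $\mathcal{B}^{(r)}(\ell)\iso B(\ell\varpi)$ this is harmless (a unique $I_{0,r}$-highest weight element per component), but in the reducible cases of Lemma~\ref{lemma:description_irreducible_decoupled} --- the very cases your proposal targets --- classical components can repeat (compare Example~\ref{ex:SCA_B_spin}, where $46$ and $66$ are distinct $I_{0,3}$-highest weight elements), so one must verify element-by-element that the two maps agree on each $I_{0,r}$-highest weight soliton; this is your ``secondary point,'' but it is part of the base case, not an afterthought. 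Second, for truly arbitrary $r$ the map $\Psi$ on the left edge of the diagram is itself only conjectural (Conjecture~\ref{conj:SCA}(1)); the paper constructs $\Psi$ explicitly only for special nodes, $r\sim 0$, $r=n$ in type $B_n^{(1)}$, and $r=1$ in type $G_2^{(1)}$. Without either assuming Conjecture~\ref{conj:SCA}(1) or defining $\Psi$ via $\Phi\circ B^{(r)}\circ\Phi^{-1}$ (which would make the statement vacuous), the diagram you propose to verify is not even well posed for all $r$.
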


We expect a similar proof to work in general, but describing the difference between $(\nu, J)$ and $(\overline{\nu}, \overline{J})$ is more complicated.

Next, we give our second main result, a proof of scattering using rigged configurations.

\begin{thm}
\label{thm:RC_scattering}
Suppose Conjecture~\ref{conj:sides_commute} holds.
Let $\Psi \colon \Sol^{(r)}(\ell_1, \ell_2) \to \mathcal{B}^{(r)}(\ell_1, \ell_2)$, where $\ell_1 > \ell_2$.
Then we have
\[
\Psi \circ T_{\infty}^k = R \circ \Psi
\]
for $k \gg 1$.
\end{thm}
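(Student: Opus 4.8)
The plan is to transport the entire identity to the level of rigged configurations, where each of the three operators $\Psi$, $T_\infty$, and $R$ becomes transparent. First I would fix $p \in \Sol^{(r)}(\ell_1,\ell_2)$, truncate to $(B^{r,1})^{\otimes K}$ with $K \gg 1$, and set $(\nu, J) = \Phi^{-1}(p)$; by Theorem~\ref{thm:RC_soliton_sizes} we have $\nu^{(r)} = (\ell_1, \ell_2)$. The key observation is that $A_\infty$ leaves the underlying configuration $\nu$ unchanged and only increases the riggings $J_i^{(r)}$ by $i$, whereas the decoupling map $B^{(r)}$ of Proposition~\ref{prop:RC_restriction} discards $\nu^{(r)}$ together with its riggings. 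Consequently $B^{(r)} \circ A_\infty^k = B^{(r)}$ for every $k$, which is the combinatorial heart of the argument.

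With this in hand I would run the following diagram chase. Applying $\Phi^{-1}$ and using Proposition~\ref{prop:top_commutes} (so that $\Phi^{-1} \circ T_\infty^k = A_\infty^k \circ \Phi^{-1}$) together with Proposition~\ref{prop:sides_commute} for the ordering $(\ell_2,\ell_1)$ (valid in general by the assumed Conjecture~\ref{conj:sides_commute}), the left-hand side becomes
\[
\Phi^{-1} \circ \Psi \circ T_\infty^k = B^{(r)} \circ A_\infty^k \circ \Phi^{-1}.
\]
For the right-hand side, Conjecture~\ref{conj:R_matrix_identity} says $R$ intertwines with the identity on rigged configurations, so $\Phi^{-1} \circ R = \Phi^{-1}$ (the two copies of $\Phi^{-1}$ differing only by the reordering of tensor factors), and then Proposition~\ref{prop:sides_commute} for the ordering $(\ell_1,\ell_2)$ gives
\[
\Phi^{-1} \circ R \circ \Psi = \Phi^{-1} \circ \Psi = B^{(r)} \circ \Phi^{-1}.
\]
The two expressions agree precisely because $B^{(r)} \circ A_\infty^k = B^{(r)}$, and since $\Phi^{-1}$ is a bijection I may cancel it to conclude $\Psi \circ T_\infty^k = R \circ \Psi$.

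Before the chase is legitimate I must check that for $k \gg 1$ the evolved state $T_\infty^k(p)$ is again a far-apart two-soliton state, now lying in $\Sol^{(r)}(\ell_2,\ell_1)$, so that $\Psi$ is defined on it and lands in $\mathcal{B}^{(r)}(\ell_2) \otimes \mathcal{B}^{(r)}(\ell_1)$, matching the codomain of $R$. This also follows from the rigged-configuration picture: $A_\infty^k$ increases the rigging of the length-$\ell_i$ row of $\nu^{(r)}$ by $k\ell_i$, so since $\ell_1 > \ell_2$ the difference of riggings, and hence the separation of the solitons read off by $\Phi^{-1}$, grows without bound while the length-$\ell_1$ soliton overtakes the length-$\ell_2$ one; the invariance of $\nu^{(r)} = (\ell_1,\ell_2)$ under $A_\infty$ guarantees the sizes persist (cf.\ Proposition~\ref{prop:state_movement}).

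I expect the main obstacle to be exactly this bookkeeping around orderings and the far-apart hypothesis: one must ensure that the swap of tensor factors effected by $R$ on $\mathcal{B}^{(r)}(\ell_1) \otimes \mathcal{B}^{(r)}(\ell_2)$ is the same reordering as the swap of soliton positions effected by $T_\infty^k$, and that the truncation length $K$ and the evolution time $k$ can be chosen uniformly so that no soliton reaches the boundary and all maps in the square are defined. Once Conjecture~\ref{conj:sides_commute} is granted, the only genuinely new content beyond this verification is the elementary identity $B^{(r)} \circ A_\infty^k = B^{(r)}$.
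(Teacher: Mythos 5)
Your proposal is correct and takes essentially the same approach as the paper: the paper assembles the identical ingredients---Proposition~\ref{prop:top_commutes} ($T_{\infty}^k$ versus $A_{\infty}^k$), Conjecture~\ref{conj:R_matrix_identity} ($R$ acting as the identity on rigged configurations), Conjecture~\ref{conj:sides_commute} ($\Psi$ versus $B^{(r)}$), and the observation that $B^{(r)} \circ A_{\infty}^k = \id \circ B^{(r)}$ because $A_{\infty}$ only modifies the riggings of $\nu^{(r)}$, which $B^{(r)}$ deletes---into a commuting cube and concludes that the front face commutes via \cite[Lemma~5.3]{KSS02} and the bijectivity of $\Phi$. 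You chase the same diagram linearly and cancel the bijection $\Phi^{-1}$, which is the same argument in a different presentation.
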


\begin{proof}
Fix some $k \gg 1$.
Let $\Sol^{(r)}(\ell_1, \ell_2)_k$ denote the set of states $p$ such that $T_{\infty}^k(p) \in \Sol^{(r)}(\ell_2, \ell_1)$.
Let $\RC(\Sol)$ denote the image of the states $\Sol$ under $\Phi^{-1}$, which is well defined since adding the left factors of $u_1$ does not change the rigged configuration under $\Phi^{-1}$.
Consider the cube
\[
\xymatrixcolsep{2.1em}
\xymatrixrowsep{2.5em}
\xymatrix{
\Sol^{(r)}(\ell_1, \ell_2)_k \ar[rrr]^{T_{\infty}^k} \ar[ddd]_{\Psi} \ar[dr]^{\Phi^{-1}} & & & \Sol^{(r)}(\ell_2, \ell_1) \ar[ddd]^{\Psi} \ar[dl]_{\Phi^{-1}}
\\ & \RC\bigl( \Sol^{(r)}(\ell_1, \ell_2)_k \bigr) \ar[r]^{A_{\infty}^k} \ar[d]_{B^{(r)}} & \RC\bigl( \Sol^{(r)}(\ell_2, \ell_1) \bigr) \ar[d]^{B^{(r)}} &
\\ & \RC\bigl( \mathcal{B}^{(r)}(\ell_1,\ell_2) \bigr) \ar[r]_{\operatorname{id}} & \RC\bigl( \mathcal{B}^{(r)}(\ell_2,\ell_1) \bigr) &
\\ \mathcal{B}^{(r)}(\ell_1,\ell_2) \ar[rrr]_{R} \ar[ur]_{\Phi^{-1}} & & & \mathcal{B}^{(r)}(\ell_2,\ell_1) \ar[ul]^{\Phi^{-1}}
}
\]

We first show the back face commutes.
Considering the path of $B^{(r)} \circ A_{\infty}^k$, we first change the riggings associated with the partition $\nu^{(r)}$, and then $B$ deletes the partition $\nu^{(r)}$.
Therefore, we have the new rigged configurations $\RC\bigl( \mathcal{B}^{(r)}(\ell_2,\ell_1) \bigr)$ without  the partition $\nu^{(r)}$.
Next, if we begin with the path of $B^{(r)}$ and $\id$, we will delete the partition $\nu^{(r)}$ first by $B^{(r)}$ and then change nothing by $\id$, so we will also get the rigged configurations $\RC\bigl( \mathcal{B}^{(r)}(\ell_2,\ell_1) \bigr)$.
Hence, we have $B^{(r)} \circ A_{\infty}^k = \id \circ B^{(r)}$ as desired.

The top face commutes by Proposition~\ref{prop:top_commutes}.
The bottom face commutes by our assumption that Conjecture~\ref{conj:R_matrix_identity} holds.
The left and right faces commute by Conjecture~\ref{conj:sides_commute}.
Hence, by~\cite[Lemma~5.3]{KSS02} and that $\Phi$ is a bijection, the front face commutes as desired.
\end{proof}

Finally, we give our last main result: a description of the phase shift and a proof using rigged configurations.

\begin{thm}
\label{thm:RC_phase_shift}
Fix an $r$ such that $\mathcal{B}^{(r)}(\ell) \iso B(\ell\varpi)$ as $U_q(\g_{0,r})$-crystals.
Consider a two soliton state with solitons $s_1$ and $s_2$ of lengths $\ell_1 < \ell_2$.
The phase shift is
\[
\delta = 2\ell_1 + A_{rr'} H\bigl(\Psi(s_1) \otimes \Psi(s_2)\bigr),
\]
where $r' \sim r$.
\end{thm}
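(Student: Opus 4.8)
The plan is to compute the phase shift as the change in the vacancy number $p_{\ell_2}^{(r)}$ of the larger soliton's row that is induced by the presence of the smaller soliton, and then to split this change into its diagonal ($A_{rr}$) and off-diagonal ($A_{rr'}$) contributions via \eqref{eq:vacancy}. Since $\Psi$, $\Phi$, and the decoupling map $B^{(r)}$ are all $U_q(\g_{0,r})$-crystal isomorphisms, $H$ is constant on $U_q(\g_{0,r})$-components, and the phase shift is itself $U_q(\g_{0,r})$-invariant, I would first reduce to the case where the two-soliton state is $I_{0,r}$-highest weight. In that case $\Psi(s_2)$ is the maximal element $u\bigl(\mathcal{B}^{(r)}(\ell_2)\bigr)$, which contributes nothing to $\nu^{(r')}$, so every box of $\nu^{(r')}$ comes from $s_1$.

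First I would pin down how the position of a soliton is recorded by its rigged-configuration data. By Proposition~\ref{prop:top_commutes} and Proposition~\ref{prop:state_movement}, applying $T_{\infty}$ leaves $\nu$ unchanged while increasing $J_{\ell}^{(r)}$ by $\ell$ and moving the soliton $\ell$ steps to the left; measuring position from the right, this forces the location of $s_2$ to be $x_2 - p_{\ell_2}^{(r)} + c(\ell_2)$, where $x_2 \in J_{\ell_2}^{(r)}$ and $c(\ell_2)$ depends only on $\ell_2$ and the common vacuum background. Comparing the genuine two-soliton evolution with the free motion of $s_2$ alone against the same $L_k^{(r)}$ background and the same rigging, the only difference lies in the $\nu^{(r)}$- and $\nu^{(r')}$-dependent part of $p_{\ell_2}^{(r)}$; this yields $\delta = -\Delta p_{\ell_2}^{(r)}$, where $\Delta$ denotes the change produced by inserting $s_1$ and where the $L_k^{(r)}$ terms are discarded exactly as in the informal description.

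Next I would evaluate $\Delta p_{\ell_2}^{(r)}$. By Theorem~\ref{thm:RC_soliton_sizes} the smaller soliton adds a single row of length $\ell_1$ to $\nu^{(r)}$, and by Proposition~\ref{prop:RC_restriction} and Proposition~\ref{prop:sides_commute} it contributes to $\nu^{(r')}$ precisely the decoupled data of $\Psi(s_1)$. The diagonal term $j=r$ gives $-\tfrac{A_{rr}}{\gamma_r}\min(\modgamma_r\ell_2,\modgamma_r\ell_1) = -2\ell_1$, using $A_{rr}=2$, $\ell_1<\ell_2$, and $\modgamma_r=\gamma_r$ in the covered cases, so $-\Delta$ contributes $+2\ell_1$. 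The off-diagonal terms $j=r'\sim r$ give $\tfrac{A_{rr'}}{\gamma_{r'}}\sum_m \min(\modgamma_r\ell_2,\modgamma_{r'}m)\,\Delta m_m^{(r')}$; since every row of $\nu^{(r')}$ has length at most $\ell_1\modgamma_r/\gamma_{r'}$, one has $\min(\modgamma_r\ell_2,\modgamma_{r'}m)=\modgamma_{r'}m$ on every such row, so after the scaling cancels this collapses to $A_{rr'}\,\lvert\nu^{(r')}\rvert$.

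Finally I would identify $\lvert\nu^{(r')}\rvert$ with $H\bigl(\Psi(s_1)\otimes\Psi(s_2)\bigr)$. By Lemma~\ref{lemma:description_irreducible_decoupled} the relevant $r'\sim r$ is a special node of $\widehat{\g}_{0,r}$, so Theorem~\ref{thm:minuscule_local_energy} applied inside $\widehat{\g}_{0,r}$ identifies $H\bigl(\Psi(s_1)\otimes u(\cdots)\bigr)$ with the number of $r'$-arrows from $\Psi(s_1)$ to the $I_{0,r}$-highest element; and this count equals $\lvert\nu^{(r')}\rvert$ because $\overline{\wt}$ records $\lvert\nu^{(r')}\rvert$ as the coefficient of $-\clsr_{r'}$. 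Combining the three contributions gives $\delta = 2\ell_1 + A_{rr'} H\bigl(\Psi(s_1)\otimes\Psi(s_2)\bigr)$. The main obstacle I anticipate is the bookkeeping of scaling factors, namely checking $\modgamma_r=\gamma_r$ and $\modgamma_{r'}=\gamma_{r'}$ throughout the cases where $\mathcal{B}^{(r)}(\ell)\iso B(\ell\varpi)$ (so that the diagonal term is exactly $2\ell_1$ and the off-diagonal scaling genuinely cancels), together with a rigorous justification of the position-versus-corigging identity, which requires controlling how $\Phi^{-1}$ interleaves vacuum factors as a function of the coriggings.
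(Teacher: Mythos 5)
Your proposal is correct and takes essentially the same route as the paper's proof: reduce to $I_{0,r}$-highest weight states via the crystal isomorphisms, read the soliton position off the rigging and the truncated vacancy number $P_{\ell_2}^{(r)}$, split the change produced by inserting $s_1$ into the diagonal term $2\ell_1$ and the off-diagonal term $A_{rr'}\lvert\nu^{(r')}\rvert$, and identify $\lvert\nu^{(r')}\rvert$ with $H\bigl(\Psi(s_1)\otimes\Psi(s_2)\bigr)$ by counting $r'$-arrows via Theorem~\ref{thm:minuscule_local_energy} and Lemma~\ref{lemma:description_irreducible_decoupled}. The paper itself remarks after its proof that the phase shift is precisely the change in $P_{\ell_2}^{(r)}$ caused by adding $s_1$ under $\Phi$, which is exactly the quantity $-\Delta p_{\ell_2}^{(r)}$ you compute.
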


\begin{proof}
Note that for $\mathcal{B}^{(r)}(\ell) \iso B(\ell\varpi)$ as $U_q(\g_{0,r})$-crystals, for all $r', r'' \sim r$, we have $A_{rr'} = A_{rr''}$, so the statement is well defined.
Moreover, our assumptions satisfy the assumptions of Proposition~\ref{prop:sides_commute} (and hence, Conjecture~\ref{conj:sides_commute} holds).

Consider a state $b \in \Sol^{(r)}(\ell_1, \ell_2)$ with $\ell_1 < \ell_2$ and is $I_{0,r}$-highest weight, and let $(\nu, J) = \Phi^{-1}(b)$.
Let $s_1$ and $s_2$ denote the solitons of length $\ell_1$ and $\ell_2$ respectively.
Without loss of generality, assume there are no vacuums to the right of $s_2$.
By Proposition~\ref{prop:soliton_crystal_bijection}/Proposition~\ref{prop:RC_restriction}, it is sufficient to consider $p$ to be a $I_{0,r}$-highest weight states.

We modify the vacancy numbers by
\[
P_i^{(a)}(\nu) = P_i^{(a)} = - \sum_{(b,j) \in \HH_0} \frac{A_{ab}}{\gamma_b} \min(\modgamma_a i, \modgamma_b j) m_j^{(b)},
\]
and note that for $B = (B^{r,1})^{\otimes \kappa}$, we have $p_i^{(a)} = P_i^{(a)} + \delta_{ar} \kappa$.
Let $\zeta$ denote the starting position of the right soliton $s_2$ and $j$ denote the rigging of $J_{\ell_2}^{(r)}$.
By our assumptions, we have $\Psi(s_2) = u\bigl(\mathcal{B}^{(r)}(\ell_2)\bigr)$; in particular, the only nonempty partition of $\Phi(s_2)$ is $\nu^{(r)} = (\ell_2)$ with rigging $j = -\zeta = -\ell_2$.
Let
\[
\widetilde{\jmath} = j + k \ell_2,
\]
which is the rigging of $\widetilde{J}_{\ell_2}^{(r)}$ after $k$ time evolutions.
We choose $k \gg 1$ such that $\Phi(\nu, \widetilde{J}) \in \Sol^{(r)}(\ell_2, \ell_1)$: we have two solitons of length $\ell_2 > \ell_1$ that are not interacting.
Let $\xi$ be such that $\widetilde{\jmath} = \xi + P_{\ell_2}^{(r)}$, and so
\[
\xi = \widetilde{\jmath} - P_{\ell_2}^{(r)} = j + k \ell_2 - P_{\ell_2}^{(r)}
\]
is the position of the left soliton $\widetilde{s}_2$.
Note that
\[
P_{\ell_2}^{(r)} = -2(\ell_1 + \ell_2) + \sum_{r' \sim r} A_{rr'} \lvert \nu^{(r')} \rvert
\]
since $\ell_1 < \ell_2$ and by Lemma~\ref{lemma:description_irreducible_decoupled}.
Note that each box added to $\nu^{(r')}$ when performing $\Phi$ on $b \otimes u_{\ell_2 \varpi}$ in type $\widehat{\g}_{0,r}$ corresponds to an $r'$ arrow from $b$ to $u_{\ell_1 \varpi}$.
Moreover, we add one to the local energy for each of these $r'$ arrows by Theorem~\ref{thm:minuscule_local_energy}.
Hence, we have $H\bigl(\Psi(s_1) \otimes \Psi(s_2)\bigr) = \sum_{r' \sim r} \lvert \nu^{(r')} \rvert$.
Thus, the phase shift is
\begin{align*}
\delta & = \xi - (\ell_2 + k\ell_2) = j - P_{\ell_2}^{(r)} - \ell_2
\\ & = 2\ell_1 + \sum_{r' \sim r} A_{rr'} \lvert \nu^{(r')} \rvert = 2\ell_1 + A_{rr'} H\bigl(\Psi(s_1) \otimes \Psi(s_2)\bigr),
\end{align*}
for some $r' \sim r$ as desired.
\end{proof}

From the proof above, observe that the phase shift is determined by the change in the vacancy number $P_{\ell_2}^{(r)}$ by adding the soliton $s_1$ under $\Phi$.

\begin{ex}
Consider an SCA in type $E_6^{(1)}$ with an initial sate in $\Sol^{(1)}(1,2)$, where we separate tensor factors with `$|$':
\[
\scalebox{.93}{$
{\begin{array}{c|c@{}*{17}{@{{\color{gray}|}}c@{}}c}
t = 0 & \cdots \vace & \vace & \vace & \vace & \vace & \vace & \vace & \vace & \vace & \vace & \vace & \vace & \btw5 & \vace & \vace & \vace & 2\bfive6 & \btw4\bsix \\
t = 1 & \cdots \vace & \vace & \vace & \vace & \vace & \vace & \vace & \vace & \vace & \vace & \vace & \btw5 & \vace & \vace & 2\bfive6 & \btw4\bsix & \vace & \vace \\
t = 2 & \cdots \vace & \vace & \vace & \vace & \vace & \vace & \vace & \vace & \vace & \vace & \btw5 & \vace & 2\bfive6 & \btw4\bsix & \vace & \vace & \vace & \vace \\
t = 3 & \cdots \vace & \vace & \vace & \vace & \vace & \vace & \vace & \vace & \vace & \btw5 & 2\bfive6 & \btw4\bsix & \vace & \vace & \vace & \vace & \vace & \vace \\
t = 4 & \cdots \vace & \vace & \vace & \vace & \vace & \vace & \vace & \btw5 & \btw4\bfive6 & 2\bsix & \vace & \vace & \vace & \vace & \vace & \vace & \vace & \vace \\
t = 5 & \cdots \vace & \vace & \vace & \vace & \vace & \btw5 & \btw4\bfive6 & \vace & 2\bsix & \vace & \vace & \vace & \vace & \vace & \vace & \vace & \vace & \vace \\
t = 6 & \cdots \vace & \vace & \vace & \btw5 & \btw4\bfive6 & \vace & \vace & 2\bsix & \vace & \vace & \vace & \vace & \vace & \vace & \vace & \vace & \vace & \vace \\
t = 7 & \cdots \vace & \btw5 & \btw4\bfive6 & \vace & \vace & \vace & 2\bsix & \vace & \vace & \vace & \vace & \vace & \vace & \vace & \vace & \vace & \vace & \vace \\
\end{array}}
$}
\]
Here we have
\begin{align*}
\Psi(\btw5) & = \begin{array}{|c|} \hline \raisebox{-1pt}{$2\bfo$} \\\hline \end{array}\ ,
& \Psi(2\bfive6 \otimes \btw4\bsix) & = \begin{array}{|c|c|}\hline \raisebox{-1pt}{$1\btw4$} &  \raisebox{-1pt}{$\bon3\bfo$} \\\hline \end{array}\ ,
\\
\Psi(2\bsix) & = \begin{array}{|c|} \hline \raisebox{-1pt}{$\bon4$} \\\hline \end{array}\ ,
& \Psi(\btw5 \otimes \btw4\bfive6) & = \begin{array}{|c|c|}\hline \raisebox{-1pt}{$2\bfo$} &  \raisebox{-1pt}{$1\btw3\bfo$} \\\hline \end{array}\ ,
\end{align*}
see Figure~\ref{fig:crystal_graphs} for the crystal graphs.
We also have
\begin{align*}
R\bigl( \begin{array}{|c|} \hline \raisebox{-1pt}{$2\bfo$} \\\hline \end{array} \otimes \begin{array}{|c|c|}\hline \raisebox{-1pt}{$1\btw4$} &  \raisebox{-1pt}{$\bon3\bfo$} \\\hline \end{array} \bigr)
& = \begin{array}{|c|c|}\hline \raisebox{-1pt}{$2\bfo$} &  \raisebox{-1pt}{$1\btw3\bfo$} \\\hline \end{array} \otimes \begin{array}{|c|} \hline \raisebox{-1pt}{$\bon4$} \\\hline \end{array}\ ,
\\ H\bigl( \begin{array}{|c|} \hline \raisebox{-1pt}{$2\bfo$} \\\hline \end{array} \otimes \begin{array}{|c|c|}\hline \raisebox{-1pt}{$1\btw4$} &  \raisebox{-1pt}{$\bon3\bfo$} \\\hline \end{array} \bigr) & = 1,
\end{align*}
which agrees with the phase shift of $2 \cdot 1 - 1 = 1$.
Furthermore, the corresponding rigged configuration $(\nu, J_t)$ after $t$ time evolutions is
\[
\begin{tikzpicture}[scale=.35,baseline=-25]
\rpp{2,1}{2t,t+4}{}
 \begin{scope}[xshift=6cm]
 \rpp{2}{-1}{-1}
 \end{scope}
 \begin{scope}[xshift=11.7cm]
 \rpp{2,1}{0,0}{0,0}
 \end{scope}
 \begin{scope}[xshift=17cm]
 \rpp{2,1}{1,0}{1,0}
 \end{scope}
 \begin{scope}[xshift=22cm]
 \rpp{2}{-1}{0}
 \end{scope}
 \begin{scope}[xshift=28cm]
 \rpp{1}{-1}{-1}
 \end{scope}
\end{tikzpicture},
\]
where we write the vacancy number of the left and the rigging on the right of each partition (we omit the vacancy numbers $p_i^{(1)}$).
Note that $P_2^{(1)}(\nu) = -3$ and $P_1^{(1)}(\nu) = -2$.
In contrast, the rigged configuration $\Phi(2\bfive6 \otimes \btw4\bsix) = (\overline{\nu}, \overline{J})$ is
\[
\begin{tikzpicture}[scale=.35,baseline=-18]
\rpp{2}{0}{0}
 \begin{scope}[xshift=5cm]
 \rpp{1}{-1}{-1}
 \end{scope}
 \begin{scope}[xshift=9.3cm]
 \rpp{2}{0}{0}
 \end{scope}
 \begin{scope}[xshift=14.3cm]
 \rpp{2}{1}{1}
 \end{scope}
 \begin{scope}[xshift=19.3cm]
 \rpp{2}{-1}{-1}
 \end{scope}
 \begin{scope}[xshift=25.3cm]
 \rpp{1}{-1}{-1}
 \end{scope}
\end{tikzpicture},
\]
and we have $P_2^{(1)}(\overline{\nu}) = -2$.
Note that $P_2^{(1)}(\overline{\nu}) - P_2^{(1)}(\nu) = 1$, which agrees with the phase shift.
\end{ex}
%
%


\section{Summary}
\label{sec:summary}

In this section, we summarize the cases that are proven by our results.
We continue assuming that Conjecture~\ref{conj:bijection} and Conjecture~\ref{conj:R_matrix_identity} hold.

We note that Conjecture~\ref{conj:soliton_partition} is equivalent to Conjecture~\ref{conj:SCA}(\ref{conj:SCA_bijection}) by Proposition~\ref{prop:RC_restriction} and Proposition~\ref{prop:sides_commute}.
Conjecture~\ref{conj:soliton_partition}/Theorem~\ref{thm:RC_soliton_sizes} holds for all special nodes and $r \sim 0$ in all affine types.

Proposition~\ref{prop:sides_commute} and Theorem~\ref{thm:RC_phase_shift}  hold
\begin{itemize}
\item for all $I_0$ in every simply-laced type, $A_{2n-1}^{(2)}$, and $D_4^{(3)}$;
\item $r \neq n$ in type $B_n^{(1)}$;
\item $r=1,2,3$ in type $E_6^{(2)}$; and
\item $r = 2$ in type $G_2^{(1)}$.
\end{itemize}
Therefore, Conjecture~\ref{conj:SCA}(\ref{conj:SCA_scattering},~\ref{conj:SCA_phase_shift}) holds in these cases for those nodes which are adjoint or special in the above list.
If we additionally assume Conjecture~\ref{conj:soliton_partition}, then Conjecture~\ref{conj:SCA}(\ref{conj:SCA_scattering},~\ref{conj:SCA_phase_shift}) holds for all cases in the above list.

Next, we discuss some aspects of the phase shift.
As mentioned above, the phase shift is precisely how the vacancy number $P_{\ell_2}^{(r)}$ changes when adding the second soliton.
Furthermore, using the results of this paper, we can construct SCA where the phase shift is \emph{negative}: the larger soliton is shifted to the \emph{right} (equivalently, the smaller soliton is shifted to the \emph{left}) by 1 step after scattering.
This is a phenomenon that had only been previously observed in types $D_4^{(3)}$~\cite{Yamada07} and $G_2^{(1)}$~\cite{MOW12}.

\begin{ex}
\label{ex:SCA_E_6_neg_shift}
Consider a state $p \in \Sol^{(2)}(1, 3)$ in type $E_6^{(1)}$ such that
\[
\Psi(p) = \young(4,5,6) \otimes \young(111,222,333)\ .
\]
Note that $H\bigl(\Psi(p)\bigr) = 3$, and so the resulting phase shift is $\delta = 2 \cdot 2 + (-1) \cdot 3 = -1$.
Explicitly, we have the SCA
\[
{\begin{array}{c|c}
t = 0 & \cdots {\color{gray} u} {\color{gray} u} {\color{gray} u} {\color{gray} u} {\color{gray} u} {\color{gray} u} {\color{gray} u} {\color{gray} u} {\color{gray} u} {\color{gray} u} {\color{gray} u} {\color{gray} u} {\color{gray} u} y {\color{gray} u} {\color{gray} u} x x x {\color{gray} u} \\
t = 1 & \cdots {\color{gray} u} {\color{gray} u} {\color{gray} u} {\color{gray} u} {\color{gray} u} {\color{gray} u} {\color{gray} u} {\color{gray} u} {\color{gray} u} {\color{gray} u} {\color{gray} u} {\color{gray} u} y x x x {\color{gray} u} {\color{gray} u} {\color{gray} u} {\color{gray} u} \\
t = 2 & \cdots {\color{gray} u} {\color{gray} u} {\color{gray} u} {\color{gray} u} {\color{gray} u} {\color{gray} u} {\color{gray} u} {\color{gray} u} {\color{gray} u} {\color{gray} u} {\color{gray} u} z x {\color{gray} u} {\color{gray} u} {\color{gray} u} {\color{gray} u} {\color{gray} u} {\color{gray} u} {\color{gray} u} \\
t = 3 & \cdots {\color{gray} u} {\color{gray} u} {\color{gray} u} {\color{gray} u} {\color{gray} u} {\color{gray} u} {\color{gray} u} {\color{gray} u} x x \emptyset {\color{gray} u} {\color{gray} u} {\color{gray} u} {\color{gray} u} {\color{gray} u} {\color{gray} u} {\color{gray} u} {\color{gray} u} {\color{gray} u} \\
t = 4 & \cdots {\color{gray} u} {\color{gray} u} {\color{gray} u} {\color{gray} u} {\color{gray} u} x x y x {\color{gray} u} {\color{gray} u} {\color{gray} u} {\color{gray} u} {\color{gray} u} {\color{gray} u} {\color{gray} u} {\color{gray} u} {\color{gray} u} {\color{gray} u} {\color{gray} u} \\
t = 5 & \cdots {\color{gray} u} {\color{gray} u} x x y {\color{gray} u} {\color{gray} u} x {\color{gray} u} {\color{gray} u} {\color{gray} u} {\color{gray} u} {\color{gray} u} {\color{gray} u} {\color{gray} u} {\color{gray} u} {\color{gray} u} {\color{gray} u} {\color{gray} u} {\color{gray} u}
\end{array}}
\]
where
\begin{align*}
x & = f_2 u = u_{\varpi},
\\ y & = f_4 f_3 f_1 f_5 f_4 f_3 f_6 f_5 f_4 f_2 u,
\\ z & = f_2 f_2 f_4 f_3 f_1 f_5 f_4 f_3 f_6 f_5 f_4 f_2 u.
\end{align*}
\end{ex}

\begin{ex}
\label{ex:SCA_B_2_neg_shift}
Consider an SCA starting with a state in $\Sol^{(2)}(1,3)$ in type $D_4^{(1)}$, $B_4^{(1)}$, or $A_9^{(2)}$:
\[
{\begin{array}{c|c}
t = 0 & \cdots {\color{gray} \makebox[6pt]{$\begin{array}{c}1\\2\end{array}$}} {\color{gray} \makebox[6pt]{$\begin{array}{c}1\\2\end{array}$}} {\color{gray} \makebox[6pt]{$\begin{array}{c}1\\2\end{array}$}} {\color{gray} \makebox[6pt]{$\begin{array}{c}1\\2\end{array}$}} {\color{gray} \makebox[6pt]{$\begin{array}{c}1\\2\end{array}$}} {\color{gray} \makebox[6pt]{$\begin{array}{c}1\\2\end{array}$}} {\color{gray} \makebox[6pt]{$\begin{array}{c}1\\2\end{array}$}} {\color{gray} \makebox[6pt]{$\begin{array}{c}1\\2\end{array}$}} {\color{gray} \makebox[6pt]{$\begin{array}{c}1\\2\end{array}$}} {\color{gray} \makebox[6pt]{$\begin{array}{c}1\\2\end{array}$}} {\color{gray} \makebox[6pt]{$\begin{array}{c}1\\2\end{array}$}} {\color{gray} \makebox[6pt]{$\begin{array}{c}1\\2\end{array}$}} {\color{gray} \makebox[6pt]{$\begin{array}{c}1\\2\end{array}$}} {\color{gray} \makebox[6pt]{$\begin{array}{c}1\\2\end{array}$}} {\color{gray} \makebox[6pt]{$\begin{array}{c}1\\2\end{array}$}} {\color{gray} \makebox[6pt]{$\begin{array}{c}1\\2\end{array}$}} {\color{gray} \makebox[6pt]{$\begin{array}{c}1\\2\end{array}$}} {\color{gray} \makebox[6pt]{$\begin{array}{c}1\\2\end{array}$}} {\color{gray} \makebox[6pt]{$\begin{array}{c}1\\2\end{array}$}} {\color{gray} \makebox[6pt]{$\begin{array}{c}1\\2\end{array}$}} {\color{gray} \makebox[6pt]{$\begin{array}{c}1\\2\end{array}$}} \makebox[6pt]{$\begin{array}{c}2\\\overline{3}\end{array}$} {\color{gray} \makebox[6pt]{$\begin{array}{c}1\\2\end{array}$}} {\color{gray} \makebox[6pt]{$\begin{array}{c}1\\2\end{array}$}} {\color{gray} \makebox[6pt]{$\begin{array}{c}1\\2\end{array}$}} {\color{gray} \makebox[6pt]{$\begin{array}{c}1\\2\end{array}$}} \makebox[6pt]{$\begin{array}{c}1\\3\end{array}$} \makebox[6pt]{$\begin{array}{c}2\\4\end{array}$} \makebox[6pt]{$\begin{array}{c}2\\\overline{3}\end{array}$} {\color{gray} \makebox[6pt]{$\begin{array}{c}1\\2\end{array}$}} \\
t = 1 & \cdots {\color{gray} \makebox[6pt]{$\begin{array}{c}1\\2\end{array}$}} {\color{gray} \makebox[6pt]{$\begin{array}{c}1\\2\end{array}$}} {\color{gray} \makebox[6pt]{$\begin{array}{c}1\\2\end{array}$}} {\color{gray} \makebox[6pt]{$\begin{array}{c}1\\2\end{array}$}} {\color{gray} \makebox[6pt]{$\begin{array}{c}1\\2\end{array}$}} {\color{gray} \makebox[6pt]{$\begin{array}{c}1\\2\end{array}$}} {\color{gray} \makebox[6pt]{$\begin{array}{c}1\\2\end{array}$}} {\color{gray} \makebox[6pt]{$\begin{array}{c}1\\2\end{array}$}} {\color{gray} \makebox[6pt]{$\begin{array}{c}1\\2\end{array}$}} {\color{gray} \makebox[6pt]{$\begin{array}{c}1\\2\end{array}$}} {\color{gray} \makebox[6pt]{$\begin{array}{c}1\\2\end{array}$}} {\color{gray} \makebox[6pt]{$\begin{array}{c}1\\2\end{array}$}} {\color{gray} \makebox[6pt]{$\begin{array}{c}1\\2\end{array}$}} {\color{gray} \makebox[6pt]{$\begin{array}{c}1\\2\end{array}$}} {\color{gray} \makebox[6pt]{$\begin{array}{c}1\\2\end{array}$}} {\color{gray} \makebox[6pt]{$\begin{array}{c}1\\2\end{array}$}} {\color{gray} \makebox[6pt]{$\begin{array}{c}1\\2\end{array}$}} {\color{gray} \makebox[6pt]{$\begin{array}{c}1\\2\end{array}$}} {\color{gray} \makebox[6pt]{$\begin{array}{c}1\\2\end{array}$}} {\color{gray} \makebox[6pt]{$\begin{array}{c}1\\2\end{array}$}} \makebox[6pt]{$\begin{array}{c}2\\\overline{3}\end{array}$} {\color{gray} \makebox[6pt]{$\begin{array}{c}1\\2\end{array}$}} {\color{gray} \makebox[6pt]{$\begin{array}{c}1\\2\end{array}$}} \makebox[6pt]{$\begin{array}{c}1\\3\end{array}$} \makebox[6pt]{$\begin{array}{c}2\\4\end{array}$} \makebox[6pt]{$\begin{array}{c}2\\\overline{3}\end{array}$} {\color{gray} \makebox[6pt]{$\begin{array}{c}1\\2\end{array}$}} {\color{gray} \makebox[6pt]{$\begin{array}{c}1\\2\end{array}$}} {\color{gray} \makebox[6pt]{$\begin{array}{c}1\\2\end{array}$}} {\color{gray} \makebox[6pt]{$\begin{array}{c}1\\2\end{array}$}} \\
t = 2 & \cdots {\color{gray} \makebox[6pt]{$\begin{array}{c}1\\2\end{array}$}} {\color{gray} \makebox[6pt]{$\begin{array}{c}1\\2\end{array}$}} {\color{gray} \makebox[6pt]{$\begin{array}{c}1\\2\end{array}$}} {\color{gray} \makebox[6pt]{$\begin{array}{c}1\\2\end{array}$}} {\color{gray} \makebox[6pt]{$\begin{array}{c}1\\2\end{array}$}} {\color{gray} \makebox[6pt]{$\begin{array}{c}1\\2\end{array}$}} {\color{gray} \makebox[6pt]{$\begin{array}{c}1\\2\end{array}$}} {\color{gray} \makebox[6pt]{$\begin{array}{c}1\\2\end{array}$}} {\color{gray} \makebox[6pt]{$\begin{array}{c}1\\2\end{array}$}} {\color{gray} \makebox[6pt]{$\begin{array}{c}1\\2\end{array}$}} {\color{gray} \makebox[6pt]{$\begin{array}{c}1\\2\end{array}$}} {\color{gray} \makebox[6pt]{$\begin{array}{c}1\\2\end{array}$}} {\color{gray} \makebox[6pt]{$\begin{array}{c}1\\2\end{array}$}} {\color{gray} \makebox[6pt]{$\begin{array}{c}1\\2\end{array}$}} {\color{gray} \makebox[6pt]{$\begin{array}{c}1\\2\end{array}$}} {\color{gray} \makebox[6pt]{$\begin{array}{c}1\\2\end{array}$}} {\color{gray} \makebox[6pt]{$\begin{array}{c}1\\2\end{array}$}} {\color{gray} \makebox[6pt]{$\begin{array}{c}1\\2\end{array}$}} {\color{gray} \makebox[6pt]{$\begin{array}{c}1\\2\end{array}$}} \makebox[6pt]{$\begin{array}{c}2\\\overline{3}\end{array}$} \makebox[6pt]{$\begin{array}{c}1\\3\end{array}$} \makebox[6pt]{$\begin{array}{c}2\\4\end{array}$} \makebox[6pt]{$\begin{array}{c}2\\\overline{3}\end{array}$} {\color{gray} \makebox[6pt]{$\begin{array}{c}1\\2\end{array}$}} {\color{gray} \makebox[6pt]{$\begin{array}{c}1\\2\end{array}$}} {\color{gray} \makebox[6pt]{$\begin{array}{c}1\\2\end{array}$}} {\color{gray} \makebox[6pt]{$\begin{array}{c}1\\2\end{array}$}} {\color{gray} \makebox[6pt]{$\begin{array}{c}1\\2\end{array}$}} {\color{gray} \makebox[6pt]{$\begin{array}{c}1\\2\end{array}$}} {\color{gray} \makebox[6pt]{$\begin{array}{c}1\\2\end{array}$}} \\
t = 3 & \cdots {\color{gray} \makebox[6pt]{$\begin{array}{c}1\\2\end{array}$}} {\color{gray} \makebox[6pt]{$\begin{array}{c}1\\2\end{array}$}} {\color{gray} \makebox[6pt]{$\begin{array}{c}1\\2\end{array}$}} {\color{gray} \makebox[6pt]{$\begin{array}{c}1\\2\end{array}$}} {\color{gray} \makebox[6pt]{$\begin{array}{c}1\\2\end{array}$}} {\color{gray} \makebox[6pt]{$\begin{array}{c}1\\2\end{array}$}} {\color{gray} \makebox[6pt]{$\begin{array}{c}1\\2\end{array}$}} {\color{gray} \makebox[6pt]{$\begin{array}{c}1\\2\end{array}$}} {\color{gray} \makebox[6pt]{$\begin{array}{c}1\\2\end{array}$}} {\color{gray} \makebox[6pt]{$\begin{array}{c}1\\2\end{array}$}} {\color{gray} \makebox[6pt]{$\begin{array}{c}1\\2\end{array}$}} {\color{gray} \makebox[6pt]{$\begin{array}{c}1\\2\end{array}$}} {\color{gray} \makebox[6pt]{$\begin{array}{c}1\\2\end{array}$}} {\color{gray} \makebox[6pt]{$\begin{array}{c}1\\2\end{array}$}} {\color{gray} \makebox[6pt]{$\begin{array}{c}1\\2\end{array}$}} {\color{gray} \makebox[6pt]{$\begin{array}{c}1\\2\end{array}$}} {\color{gray} \makebox[6pt]{$\begin{array}{c}1\\2\end{array}$}} {\color{gray} \makebox[6pt]{$\begin{array}{c}1\\2\end{array}$}} \makebox[6pt]{$\begin{array}{c}4\\\overline{1}\end{array}$} \makebox[6pt]{$\begin{array}{c}2\\\overline{3}\end{array}$} {\color{gray} \makebox[6pt]{$\begin{array}{c}1\\2\end{array}$}} {\color{gray} \makebox[6pt]{$\begin{array}{c}1\\2\end{array}$}} {\color{gray} \makebox[6pt]{$\begin{array}{c}1\\2\end{array}$}} {\color{gray} \makebox[6pt]{$\begin{array}{c}1\\2\end{array}$}} {\color{gray} \makebox[6pt]{$\begin{array}{c}1\\2\end{array}$}} {\color{gray} \makebox[6pt]{$\begin{array}{c}1\\2\end{array}$}} {\color{gray} \makebox[6pt]{$\begin{array}{c}1\\2\end{array}$}} {\color{gray} \makebox[6pt]{$\begin{array}{c}1\\2\end{array}$}} {\color{gray} \makebox[6pt]{$\begin{array}{c}1\\2\end{array}$}} {\color{gray} \makebox[6pt]{$\begin{array}{c}1\\2\end{array}$}} \\
t = 4 & \cdots {\color{gray} \makebox[6pt]{$\begin{array}{c}1\\2\end{array}$}} {\color{gray} \makebox[6pt]{$\begin{array}{c}1\\2\end{array}$}} {\color{gray} \makebox[6pt]{$\begin{array}{c}1\\2\end{array}$}} {\color{gray} \makebox[6pt]{$\begin{array}{c}1\\2\end{array}$}} {\color{gray} \makebox[6pt]{$\begin{array}{c}1\\2\end{array}$}} {\color{gray} \makebox[6pt]{$\begin{array}{c}1\\2\end{array}$}} {\color{gray} \makebox[6pt]{$\begin{array}{c}1\\2\end{array}$}} {\color{gray} \makebox[6pt]{$\begin{array}{c}1\\2\end{array}$}} {\color{gray} \makebox[6pt]{$\begin{array}{c}1\\2\end{array}$}} {\color{gray} \makebox[6pt]{$\begin{array}{c}1\\2\end{array}$}} {\color{gray} \makebox[6pt]{$\begin{array}{c}1\\2\end{array}$}} {\color{gray} \makebox[6pt]{$\begin{array}{c}1\\2\end{array}$}} {\color{gray} \makebox[6pt]{$\begin{array}{c}1\\2\end{array}$}} {\color{gray} \makebox[6pt]{$\begin{array}{c}1\\2\end{array}$}} {\color{gray} \makebox[6pt]{$\begin{array}{c}1\\2\end{array}$}} \makebox[6pt]{$\begin{array}{c}2\\4\end{array}$} \makebox[6pt]{$\begin{array}{c}2\\\overline{3}\end{array}$} \makebox[6pt]{$\begin{array}{c}1\\\overline{1}\end{array}$} {\color{gray} \makebox[6pt]{$\begin{array}{c}1\\2\end{array}$}} {\color{gray} \makebox[6pt]{$\begin{array}{c}1\\2\end{array}$}} {\color{gray} \makebox[6pt]{$\begin{array}{c}1\\2\end{array}$}} {\color{gray} \makebox[6pt]{$\begin{array}{c}1\\2\end{array}$}} {\color{gray} \makebox[6pt]{$\begin{array}{c}1\\2\end{array}$}} {\color{gray} \makebox[6pt]{$\begin{array}{c}1\\2\end{array}$}} {\color{gray} \makebox[6pt]{$\begin{array}{c}1\\2\end{array}$}} {\color{gray} \makebox[6pt]{$\begin{array}{c}1\\2\end{array}$}} {\color{gray} \makebox[6pt]{$\begin{array}{c}1\\2\end{array}$}} {\color{gray} \makebox[6pt]{$\begin{array}{c}1\\2\end{array}$}} {\color{gray} \makebox[6pt]{$\begin{array}{c}1\\2\end{array}$}} {\color{gray} \makebox[6pt]{$\begin{array}{c}1\\2\end{array}$}} \\
t = 5 & \cdots {\color{gray} \makebox[6pt]{$\begin{array}{c}1\\2\end{array}$}} {\color{gray} \makebox[6pt]{$\begin{array}{c}1\\2\end{array}$}} {\color{gray} \makebox[6pt]{$\begin{array}{c}1\\2\end{array}$}} {\color{gray} \makebox[6pt]{$\begin{array}{c}1\\2\end{array}$}} {\color{gray} \makebox[6pt]{$\begin{array}{c}1\\2\end{array}$}} {\color{gray} \makebox[6pt]{$\begin{array}{c}1\\2\end{array}$}} {\color{gray} \makebox[6pt]{$\begin{array}{c}1\\2\end{array}$}} {\color{gray} \makebox[6pt]{$\begin{array}{c}1\\2\end{array}$}} {\color{gray} \makebox[6pt]{$\begin{array}{c}1\\2\end{array}$}} {\color{gray} \makebox[6pt]{$\begin{array}{c}1\\2\end{array}$}} {\color{gray} \makebox[6pt]{$\begin{array}{c}1\\2\end{array}$}} {\color{gray} \makebox[6pt]{$\begin{array}{c}1\\2\end{array}$}} \makebox[6pt]{$\begin{array}{c}2\\4\end{array}$} \makebox[6pt]{$\begin{array}{c}2\\\overline{3}\end{array}$} \makebox[6pt]{$\begin{array}{c}2\\\overline{3}\end{array}$} \makebox[6pt]{$\begin{array}{c}1\\3\end{array}$} {\color{gray} \makebox[6pt]{$\begin{array}{c}1\\2\end{array}$}} {\color{gray} \makebox[6pt]{$\begin{array}{c}1\\2\end{array}$}} {\color{gray} \makebox[6pt]{$\begin{array}{c}1\\2\end{array}$}} {\color{gray} \makebox[6pt]{$\begin{array}{c}1\\2\end{array}$}} {\color{gray} \makebox[6pt]{$\begin{array}{c}1\\2\end{array}$}} {\color{gray} \makebox[6pt]{$\begin{array}{c}1\\2\end{array}$}} {\color{gray} \makebox[6pt]{$\begin{array}{c}1\\2\end{array}$}} {\color{gray} \makebox[6pt]{$\begin{array}{c}1\\2\end{array}$}} {\color{gray} \makebox[6pt]{$\begin{array}{c}1\\2\end{array}$}} {\color{gray} \makebox[6pt]{$\begin{array}{c}1\\2\end{array}$}} {\color{gray} \makebox[6pt]{$\begin{array}{c}1\\2\end{array}$}} {\color{gray} \makebox[6pt]{$\begin{array}{c}1\\2\end{array}$}} {\color{gray} \makebox[6pt]{$\begin{array}{c}1\\2\end{array}$}} {\color{gray} \makebox[6pt]{$\begin{array}{c}1\\2\end{array}$}} \\
t = 6 & \cdots {\color{gray} \makebox[6pt]{$\begin{array}{c}1\\2\end{array}$}} {\color{gray} \makebox[6pt]{$\begin{array}{c}1\\2\end{array}$}} {\color{gray} \makebox[6pt]{$\begin{array}{c}1\\2\end{array}$}} {\color{gray} \makebox[6pt]{$\begin{array}{c}1\\2\end{array}$}} {\color{gray} \makebox[6pt]{$\begin{array}{c}1\\2\end{array}$}} {\color{gray} \makebox[6pt]{$\begin{array}{c}1\\2\end{array}$}} {\color{gray} \makebox[6pt]{$\begin{array}{c}1\\2\end{array}$}} {\color{gray} \makebox[6pt]{$\begin{array}{c}1\\2\end{array}$}} {\color{gray} \makebox[6pt]{$\begin{array}{c}1\\2\end{array}$}} \makebox[6pt]{$\begin{array}{c}2\\4\end{array}$} \makebox[6pt]{$\begin{array}{c}2\\\overline{3}\end{array}$} \makebox[6pt]{$\begin{array}{c}2\\\overline{3}\end{array}$} {\color{gray} \makebox[6pt]{$\begin{array}{c}1\\2\end{array}$}} {\color{gray} \makebox[6pt]{$\begin{array}{c}1\\2\end{array}$}} \makebox[6pt]{$\begin{array}{c}1\\3\end{array}$} {\color{gray} \makebox[6pt]{$\begin{array}{c}1\\2\end{array}$}} {\color{gray} \makebox[6pt]{$\begin{array}{c}1\\2\end{array}$}} {\color{gray} \makebox[6pt]{$\begin{array}{c}1\\2\end{array}$}} {\color{gray} \makebox[6pt]{$\begin{array}{c}1\\2\end{array}$}} {\color{gray} \makebox[6pt]{$\begin{array}{c}1\\2\end{array}$}} {\color{gray} \makebox[6pt]{$\begin{array}{c}1\\2\end{array}$}} {\color{gray} \makebox[6pt]{$\begin{array}{c}1\\2\end{array}$}} {\color{gray} \makebox[6pt]{$\begin{array}{c}1\\2\end{array}$}} {\color{gray} \makebox[6pt]{$\begin{array}{c}1\\2\end{array}$}} {\color{gray} \makebox[6pt]{$\begin{array}{c}1\\2\end{array}$}} {\color{gray} \makebox[6pt]{$\begin{array}{c}1\\2\end{array}$}} {\color{gray} \makebox[6pt]{$\begin{array}{c}1\\2\end{array}$}} {\color{gray} \makebox[6pt]{$\begin{array}{c}1\\2\end{array}$}} {\color{gray} \makebox[6pt]{$\begin{array}{c}1\\2\end{array}$}} {\color{gray} \makebox[6pt]{$\begin{array}{c}1\\2\end{array}$}} \\
t = 7 & \cdots {\color{gray} \makebox[6pt]{$\begin{array}{c}1\\2\end{array}$}} {\color{gray} \makebox[6pt]{$\begin{array}{c}1\\2\end{array}$}} {\color{gray} \makebox[6pt]{$\begin{array}{c}1\\2\end{array}$}} {\color{gray} \makebox[6pt]{$\begin{array}{c}1\\2\end{array}$}} {\color{gray} \makebox[6pt]{$\begin{array}{c}1\\2\end{array}$}} {\color{gray} \makebox[6pt]{$\begin{array}{c}1\\2\end{array}$}} \makebox[6pt]{$\begin{array}{c}2\\4\end{array}$} \makebox[6pt]{$\begin{array}{c}2\\\overline{3}\end{array}$} \makebox[6pt]{$\begin{array}{c}2\\\overline{3}\end{array}$} {\color{gray} \makebox[6pt]{$\begin{array}{c}1\\2\end{array}$}} {\color{gray} \makebox[6pt]{$\begin{array}{c}1\\2\end{array}$}} {\color{gray} \makebox[6pt]{$\begin{array}{c}1\\2\end{array}$}} {\color{gray} \makebox[6pt]{$\begin{array}{c}1\\2\end{array}$}} \makebox[6pt]{$\begin{array}{c}1\\3\end{array}$} {\color{gray} \makebox[6pt]{$\begin{array}{c}1\\2\end{array}$}} {\color{gray} \makebox[6pt]{$\begin{array}{c}1\\2\end{array}$}} {\color{gray} \makebox[6pt]{$\begin{array}{c}1\\2\end{array}$}} {\color{gray} \makebox[6pt]{$\begin{array}{c}1\\2\end{array}$}} {\color{gray} \makebox[6pt]{$\begin{array}{c}1\\2\end{array}$}} {\color{gray} \makebox[6pt]{$\begin{array}{c}1\\2\end{array}$}} {\color{gray} \makebox[6pt]{$\begin{array}{c}1\\2\end{array}$}} {\color{gray} \makebox[6pt]{$\begin{array}{c}1\\2\end{array}$}} {\color{gray} \makebox[6pt]{$\begin{array}{c}1\\2\end{array}$}} {\color{gray} \makebox[6pt]{$\begin{array}{c}1\\2\end{array}$}} {\color{gray} \makebox[6pt]{$\begin{array}{c}1\\2\end{array}$}} {\color{gray} \makebox[6pt]{$\begin{array}{c}1\\2\end{array}$}} {\color{gray} \makebox[6pt]{$\begin{array}{c}1\\2\end{array}$}} {\color{gray} \makebox[6pt]{$\begin{array}{c}1\\2\end{array}$}} {\color{gray} \makebox[6pt]{$\begin{array}{c}1\\2\end{array}$}} {\color{gray} \makebox[6pt]{$\begin{array}{c}1\\2\end{array}$}} \\
\end{array}}
\]
\end{ex}

Finally, we note that Conjecture~\ref{conj:SCA}(\ref{conj:SCA_phase_shift}) requires a little bit of care when defining what the positions of the solitons are.
We first consider an SCA in type $C_3^{(1)}$ with an initial state in $\Sol^{(1)}(1,2)$:
\[
{\begin{array}{c|c}
t = 0 & \cdots {\color{gray} 1} {\color{gray} 1} {\color{gray} 1} {\color{gray} 1} {\color{gray} 1} {\color{gray} 1} {\color{gray} 1} {\color{gray} 1} {\color{gray} 1} {\color{gray} 1} {\color{gray} 1} {\color{gray} 1} {\color{gray} 1} {\color{gray} 1} {\color{gray} 1} {\color{gray} 1} {\color{gray} 1} {\color{gray} 1} \underline{3} {\color{gray} 1} {\color{gray} 1} {\color{gray} 1} \underline{\underline{\overline{1}}} {\color{gray} 1} \\
t = 1 & \cdots {\color{gray} 1} {\color{gray} 1} {\color{gray} 1} {\color{gray} 1} {\color{gray} 1} {\color{gray} 1} {\color{gray} 1} {\color{gray} 1} {\color{gray} 1} {\color{gray} 1} {\color{gray} 1} {\color{gray} 1} {\color{gray} 1} {\color{gray} 1} {\color{gray} 1} {\color{gray} 1} {\color{gray} 1}\underline{3} {\color{gray} 1} {\color{gray} 1} \underline{\underline{\overline{1}}} {\color{gray} 1} {\color{gray} 1} {\color{gray} 1} \\
t = 2 & \cdots {\color{gray} 1} {\color{gray} 1} {\color{gray} 1} {\color{gray} 1} {\color{gray} 1} {\color{gray} 1} {\color{gray} 1} {\color{gray} 1} {\color{gray} 1} {\color{gray} 1} {\color{gray} 1} {\color{gray} 1} {\color{gray} 1} {\color{gray} 1} {\color{gray} 1} {\color{gray} 1} \underline{3} {\color{gray} 1} \underline{\underline{\overline{1}}} {\color{gray} 1} {\color{gray} 1} {\color{gray} 1} {\color{gray} 1} {\color{gray} 1} \\
t = 3 & \cdots {\color{gray} 1} {\color{gray} 1} {\color{gray} 1} {\color{gray} 1} {\color{gray} 1} {\color{gray} 1} {\color{gray} 1} {\color{gray} 1} {\color{gray} 1} {\color{gray} 1} {\color{gray} 1} {\color{gray} 1} {\color{gray} 1} {\color{gray} 1} 3 \underline{\overline{2}} \underline{\underline{2}} {\color{gray} 1} {\color{gray} 1} {\color{gray} 1} {\color{gray} 1} {\color{gray} 1} {\color{gray} 1} {\color{gray} 1} \\
t = 4 & \cdots {\color{gray} 1} {\color{gray} 1} {\color{gray} 1} {\color{gray} 1} {\color{gray} 1} {\color{gray} 1} {\color{gray} 1} {\color{gray} 1} {\color{gray} 1} {\color{gray} 1} {\color{gray} 1} {\color{gray} 1} 3 \overline{2} \underline{\underline{\underline{{\color{gray} 1}}}} 2 {\color{gray} 1} {\color{gray} 1} {\color{gray} 1} {\color{gray} 1} {\color{gray} 1} {\color{gray} 1} {\color{gray} 1} {\color{gray} 1} \\
t = 5 & \cdots {\color{gray} 1} {\color{gray} 1} {\color{gray} 1} {\color{gray} 1} {\color{gray} 1} {\color{gray} 1} {\color{gray} 1} {\color{gray} 1} {\color{gray} 1} {\color{gray} 1} 3 \overline{2} \underline{\underline{{\color{gray} 1}}} \underline{{\color{gray} 1}} 2 {\color{gray} 1} {\color{gray} 1} {\color{gray} 1} {\color{gray} 1} {\color{gray} 1} {\color{gray} 1} {\color{gray} 1} {\color{gray} 1} {\color{gray} 1} \\
t = 6 & \cdots {\color{gray} 1} {\color{gray} 1} {\color{gray} 1} {\color{gray} 1} {\color{gray} 1} {\color{gray} 1} {\color{gray} 1} {\color{gray} 1} 3 \overline{2} \underline{\underline{{\color{gray} 1}}} {\color{gray} 1} \underline{{\color{gray} 1}} 2 {\color{gray} 1} {\color{gray} 1} {\color{gray} 1} {\color{gray} 1} {\color{gray} 1} {\color{gray} 1} {\color{gray} 1} {\color{gray} 1} {\color{gray} 1} {\color{gray} 1} \\
t = 7 & \cdots {\color{gray} 1} {\color{gray} 1} {\color{gray} 1} {\color{gray} 1} {\color{gray} 1} {\color{gray} 1} 3 \overline{2} \underline{\underline{{\color{gray} 1}}} {\color{gray} 1} {\color{gray} 1} \underline{{\color{gray} 1}} 2 {\color{gray} 1} {\color{gray} 1} {\color{gray} 1} {\color{gray} 1} {\color{gray} 1} {\color{gray} 1} {\color{gray} 1} {\color{gray} 1} {\color{gray} 1} {\color{gray} 1} {\color{gray} 1} \\
\end{array}}
\]
In this case, note that
\[
H\bigl(\Psi(3) \otimes \Psi(\bon)\bigr) = H(2 \otimes \emptyset) = 1,
\]
but after scattering, the phase shift appears to be $2$ for the left soliton and $1$ for the right soliton.
However, we can fix this by considering the left soliton to consist of the right nonvacuum element, so the resulting phase shift is $2 \cdot 1 - 1 = 1$.

In this example, note that if the soliton $\bon$ was concentrated at one point, we should get to $t = 3$ before the solitons start to interact.
Instead, they begin to interact at $t = 3$, as if there is an additional $1$ linked together with the $\bon$; in other words, we should consider the soliton $\bon$ instead as $1\bon$.
With this modification, we see the phase shift would indeed be $1$ for the left soliton.

As another example, consider an initial state in $\Sol^{(1)}(3,4)$ in type $C_3^{(1)}$:
\[
{\begin{array}{c|c}
t = 0 & \cdots {\color{gray} 1} {\color{gray} 1} {\color{gray} 1} {\color{gray} 1} {\color{gray} 1} {\color{gray} 1} {\color{gray} 1} {\color{gray} 1} {\color{gray} 1} {\color{gray} 1} {\color{gray} 1} {\color{gray} 1} {\color{gray} 1} {\color{gray} 1} {\color{gray} 1} {\color{gray} 1} {\color{gray} 1} {\color{gray} 1} {\color{gray} 1} {\color{gray} 1} {\color{gray} 1} {\color{gray} 1} {\color{gray} 1} {\color{gray} 1} {\color{gray} 1} {\color{gray} 1} {\color{gray} 1} {\color{gray} 1} {\color{gray} 1} {\color{gray} 1} {\color{gray} 1} {\color{gray} 1} {\color{gray} 1} {\color{gray} 1} {\color{gray} 1} {\color{gray} 1} {\color{gray} 1} {\color{gray} 1} {\color{gray} 1} {\color{gray} 1} {\color{gray} 1} {\color{gray} 1} \underline{2 2 3} {\color{gray} 1} {\color{gray} 1} {\color{gray} 1} {\color{gray} 1} \underline{\underline{\overline{1} \overline{1}}} {\color{gray} 1} \\
t = 1 & \cdots {\color{gray} 1} {\color{gray} 1} {\color{gray} 1} {\color{gray} 1} {\color{gray} 1} {\color{gray} 1} {\color{gray} 1} {\color{gray} 1} {\color{gray} 1} {\color{gray} 1} {\color{gray} 1} {\color{gray} 1} {\color{gray} 1} {\color{gray} 1} {\color{gray} 1} {\color{gray} 1} {\color{gray} 1} {\color{gray} 1} {\color{gray} 1} {\color{gray} 1} {\color{gray} 1} {\color{gray} 1} {\color{gray} 1} {\color{gray} 1} {\color{gray} 1} {\color{gray} 1} {\color{gray} 1} {\color{gray} 1} {\color{gray} 1} {\color{gray} 1} {\color{gray} 1} {\color{gray} 1} {\color{gray} 1} {\color{gray} 1} {\color{gray} 1} {\color{gray} 1} {\color{gray} 1} {\color{gray} 1} {\color{gray} 1} \underline{2 2 3} {\color{gray} 1} {\color{gray} 1} {\color{gray} 1} \underline{\underline{\overline{1} \overline{1}}} {\color{gray} 1} {\color{gray} 1} {\color{gray} 1} {\color{gray} 1} {\color{gray} 1} \\
t = 2 & \cdots {\color{gray} 1} {\color{gray} 1} {\color{gray} 1} {\color{gray} 1} {\color{gray} 1} {\color{gray} 1} {\color{gray} 1} {\color{gray} 1} {\color{gray} 1} {\color{gray} 1} {\color{gray} 1} {\color{gray} 1} {\color{gray} 1} {\color{gray} 1} {\color{gray} 1} {\color{gray} 1} {\color{gray} 1} {\color{gray} 1} {\color{gray} 1} {\color{gray} 1} {\color{gray} 1} {\color{gray} 1} {\color{gray} 1} {\color{gray} 1} {\color{gray} 1} {\color{gray} 1} {\color{gray} 1} {\color{gray} 1} {\color{gray} 1} {\color{gray} 1} {\color{gray} 1} {\color{gray} 1} {\color{gray} 1} {\color{gray} 1} {\color{gray} 1} 2 \underline{2 3 \overline{2}} {\color{gray} 1} {\color{gray} 1} \underline{\underline{2 \overline{1}}} {\color{gray} 1} {\color{gray} 1} {\color{gray} 1} {\color{gray} 1} {\color{gray} 1} {\color{gray} 1} {\color{gray} 1} {\color{gray} 1} {\color{gray} 1} \\
t = 3 & \cdots {\color{gray} 1} {\color{gray} 1} {\color{gray} 1} {\color{gray} 1} {\color{gray} 1} {\color{gray} 1} {\color{gray} 1} {\color{gray} 1} {\color{gray} 1} {\color{gray} 1} {\color{gray} 1} {\color{gray} 1} {\color{gray} 1} {\color{gray} 1} {\color{gray} 1} {\color{gray} 1} {\color{gray} 1} {\color{gray} 1} {\color{gray} 1} {\color{gray} 1} {\color{gray} 1} {\color{gray} 1} {\color{gray} 1} {\color{gray} 1} {\color{gray} 1} {\color{gray} 1} {\color{gray} 1} {\color{gray} 1} {\color{gray} 1} {\color{gray} 1} {\color{gray} 1} 2 2 \underline{3 \overline{2} {\color{gray} 1}} {\color{gray} 1} \underline{\underline{{\color{gray} 1} 2}} \overline{1} {\color{gray} 1} {\color{gray} 1} {\color{gray} 1} {\color{gray} 1} {\color{gray} 1} {\color{gray} 1} {\color{gray} 1} {\color{gray} 1} {\color{gray} 1} {\color{gray} 1} {\color{gray} 1} {\color{gray} 1} \\
t = 4 & \cdots {\color{gray} 1} {\color{gray} 1} {\color{gray} 1} {\color{gray} 1} {\color{gray} 1} {\color{gray} 1} {\color{gray} 1} {\color{gray} 1} {\color{gray} 1} {\color{gray} 1} {\color{gray} 1} {\color{gray} 1} {\color{gray} 1} {\color{gray} 1} {\color{gray} 1} {\color{gray} 1} {\color{gray} 1} {\color{gray} 1} {\color{gray} 1} {\color{gray} 1} {\color{gray} 1} {\color{gray} 1} {\color{gray} 1} {\color{gray} 1} {\color{gray} 1} {\color{gray} 1} {\color{gray} 1} 2 2 3 \underline{\overline{2} {\color{gray} 1} {\color{gray} 1}} \underline{\underline{{\color{gray} 1} {\color{gray} 1}}} 2 \overline{1} {\color{gray} 1} {\color{gray} 1} {\color{gray} 1} {\color{gray} 1} {\color{gray} 1} {\color{gray} 1} {\color{gray} 1} {\color{gray} 1} {\color{gray} 1} {\color{gray} 1} {\color{gray} 1} {\color{gray} 1} {\color{gray} 1} {\color{gray} 1} {\color{gray} 1} \\
t = 5 & \cdots {\color{gray} 1} {\color{gray} 1} {\color{gray} 1} {\color{gray} 1} {\color{gray} 1} {\color{gray} 1} {\color{gray} 1} {\color{gray} 1} {\color{gray} 1} {\color{gray} 1} {\color{gray} 1} {\color{gray} 1} {\color{gray} 1} {\color{gray} 1} {\color{gray} 1} {\color{gray} 1} {\color{gray} 1} {\color{gray} 1} {\color{gray} 1} {\color{gray} 1} {\color{gray} 1} {\color{gray} 1} {\color{gray} 1} 2 2 3 \overline{2} \underline{{\color{gray} 1} {\color{gray} 1}} \underline{\underline{\underline{{\color{gray} 1}} {\color{gray} 1}}} {\color{gray} 1} 2 \overline{1} {\color{gray} 1} {\color{gray} 1} {\color{gray} 1} {\color{gray} 1} {\color{gray} 1} {\color{gray} 1} {\color{gray} 1} {\color{gray} 1} {\color{gray} 1} {\color{gray} 1} {\color{gray} 1} {\color{gray} 1} {\color{gray} 1} {\color{gray} 1} {\color{gray} 1} {\color{gray} 1} {\color{gray} 1} {\color{gray} 1} \\
t = 6 & \cdots {\color{gray} 1} {\color{gray} 1} {\color{gray} 1} {\color{gray} 1} {\color{gray} 1} {\color{gray} 1} {\color{gray} 1} {\color{gray} 1} {\color{gray} 1} {\color{gray} 1} {\color{gray} 1} {\color{gray} 1} {\color{gray} 1} {\color{gray} 1} {\color{gray} 1} {\color{gray} 1} {\color{gray} 1} {\color{gray} 1} {\color{gray} 1} 2 2 3 \overline{2} {\color{gray} 1} \underline{{\color{gray} 1}} \underline{\underline{\underline{{\color{gray} 1} {\color{gray} 1}}}} {\color{gray} 1} {\color{gray} 1} 2 \overline{1} {\color{gray} 1} {\color{gray} 1} {\color{gray} 1} {\color{gray} 1} {\color{gray} 1} {\color{gray} 1} {\color{gray} 1} {\color{gray} 1} {\color{gray} 1} {\color{gray} 1} {\color{gray} 1} {\color{gray} 1} {\color{gray} 1} {\color{gray} 1} {\color{gray} 1} {\color{gray} 1} {\color{gray} 1} {\color{gray} 1} {\color{gray} 1} {\color{gray} 1} {\color{gray} 1} \\
t = 7 & \cdots {\color{gray} 1} {\color{gray} 1} {\color{gray} 1} {\color{gray} 1} {\color{gray} 1} {\color{gray} 1} {\color{gray} 1} {\color{gray} 1} {\color{gray} 1} {\color{gray} 1} {\color{gray} 1} {\color{gray} 1} {\color{gray} 1} {\color{gray} 1} {\color{gray} 1} 2 2 3 \overline{2} {\color{gray} 1} {\color{gray} 1} \underline{\underline{\underline{{\color{gray} 1} {\color{gray} 1}}}} \underline{{\color{gray} 1}} {\color{gray} 1} {\color{gray} 1} 2 \overline{1} {\color{gray} 1} {\color{gray} 1} {\color{gray} 1} {\color{gray} 1} {\color{gray} 1} {\color{gray} 1} {\color{gray} 1} {\color{gray} 1} {\color{gray} 1} {\color{gray} 1} {\color{gray} 1} {\color{gray} 1} {\color{gray} 1} {\color{gray} 1} {\color{gray} 1} {\color{gray} 1} {\color{gray} 1} {\color{gray} 1} {\color{gray} 1} {\color{gray} 1} {\color{gray} 1} {\color{gray} 1} {\color{gray} 1} {\color{gray} 1} \\
t = 8 & \cdots {\color{gray} 1} {\color{gray} 1} {\color{gray} 1} {\color{gray} 1} {\color{gray} 1} {\color{gray} 1} {\color{gray} 1} {\color{gray} 1} {\color{gray} 1} {\color{gray} 1} {\color{gray} 1} 2 2 3 \overline{2} {\color{gray} 1} {\color{gray} 1} \underline{\underline{{\color{gray} 1} \underline{{\color{gray} 1}}}} \underline{{\color{gray} 1} {\color{gray} 1}} {\color{gray} 1} {\color{gray} 1} 2 \overline{1} {\color{gray} 1} {\color{gray} 1} {\color{gray} 1} {\color{gray} 1} {\color{gray} 1} {\color{gray} 1} {\color{gray} 1} {\color{gray} 1} {\color{gray} 1} {\color{gray} 1} {\color{gray} 1} {\color{gray} 1} {\color{gray} 1} {\color{gray} 1} {\color{gray} 1} {\color{gray} 1} {\color{gray} 1} {\color{gray} 1} {\color{gray} 1} {\color{gray} 1} {\color{gray} 1} {\color{gray} 1} {\color{gray} 1} {\color{gray} 1} {\color{gray} 1} {\color{gray} 1} {\color{gray} 1} \\
t = 9 & \cdots {\color{gray} 1} {\color{gray} 1} {\color{gray} 1} {\color{gray} 1} {\color{gray} 1} {\color{gray} 1} {\color{gray} 1} 2 2 3 \overline{2} {\color{gray} 1} {\color{gray} 1} \underline{\underline{{\color{gray} 1} {\color{gray} 1}}} \underline{{\color{gray} 1} {\color{gray} 1} {\color{gray} 1}} {\color{gray} 1} {\color{gray} 1} 2 \overline{1} {\color{gray} 1} {\color{gray} 1} {\color{gray} 1} {\color{gray} 1} {\color{gray} 1} {\color{gray} 1} {\color{gray} 1} {\color{gray} 1} {\color{gray} 1} {\color{gray} 1} {\color{gray} 1} {\color{gray} 1} {\color{gray} 1} {\color{gray} 1} {\color{gray} 1} {\color{gray} 1} {\color{gray} 1} {\color{gray} 1} {\color{gray} 1} {\color{gray} 1} {\color{gray} 1} {\color{gray} 1} {\color{gray} 1} {\color{gray} 1} {\color{gray} 1} {\color{gray} 1} {\color{gray} 1} {\color{gray} 1} {\color{gray} 1} {\color{gray} 1} \\
t = 10 & \cdots {\color{gray} 1} {\color{gray} 1} {\color{gray} 1} 2 2 3 \overline{2} {\color{gray} 1} {\color{gray} 1} \underline{\underline{{\color{gray} 1} {\color{gray} 1}}} {\color{gray} 1} \underline{{\color{gray} 1} {\color{gray} 1} {\color{gray} 1}} {\color{gray} 1} {\color{gray} 1} 2 \overline{1} {\color{gray} 1} {\color{gray} 1} {\color{gray} 1} {\color{gray} 1} {\color{gray} 1} {\color{gray} 1} {\color{gray} 1} {\color{gray} 1} {\color{gray} 1} {\color{gray} 1} {\color{gray} 1} {\color{gray} 1} {\color{gray} 1} {\color{gray} 1} {\color{gray} 1} {\color{gray} 1} {\color{gray} 1} {\color{gray} 1} {\color{gray} 1} {\color{gray} 1} {\color{gray} 1} {\color{gray} 1} {\color{gray} 1} {\color{gray} 1} {\color{gray} 1} {\color{gray} 1} {\color{gray} 1} {\color{gray} 1} {\color{gray} 1} {\color{gray} 1} {\color{gray} 1} {\color{gray} 1} {\color{gray} 1} \\
\end{array}}
\]
Here, we have
\[
H\bigl(\Psi(223) \otimes \Psi(\bon\bon)\bigr) = H(112 \otimes \emptyset) = 2.
\]
If, after scattering, we consider the left soliton to consist of the two right nonvacuum elements and add a nonvacuum to the right soliton, then the phase shift is precisely $2 \cdot 3 - 2 = 4$.

As in the previous example, we see them interacting at $t = 2$, when the two solitons are still separated by multiple vacuum elements.
This can be seen by the fact that the carrier has not returned the maximal element when it reaches the next soliton (see also Example~\ref{ex:carrier_type_C}).
This is indicating that every $\bon$ should be linked with a $1$ as part of the soliton.
With this interpretation, we obtain the description of the solitons in type $C_n^{(1)}$ given in~\cite[Sec.~2.3]{HKOTY02} (see also~\cite[Sec.~3.2]{HKT00}).
This would also yield the desired phase shift of $2$.
Contrast this with the (conjectural) definition of solitons from Definition~\ref{def:soliton} and the rigged configurations under $\Phi$.
Indeed, under $\Phi$ the $\bon$ should be a soliton with doubled weighting as removing the $\bon$ removes two boxes from $\nu^{(1)}$ (assuming $\nu^{(1)}$ is a single row as we consider the single soliton case here).

We give one more example with an initial state in $\Sol^{(1)}(2, 4)$ in type $C_3^{(1)}$:
\[
{\begin{array}{c|c}
t = 0 & \cdots {\color{gray} 1} {\color{gray} 1} {\color{gray} 1} {\color{gray} 1} {\color{gray} 1} {\color{gray} 1} {\color{gray} 1} {\color{gray} 1} {\color{gray} 1} {\color{gray} 1} {\color{gray} 1} {\color{gray} 1} {\color{gray} 1} {\color{gray} 1} {\color{gray} 1} {\color{gray} 1} {\color{gray} 1} {\color{gray} 1} {\color{gray} 1} {\color{gray} 1} {\color{gray} 1} {\color{gray} 1} {\color{gray} 1} {\color{gray} 1} {\color{gray} 1} {\color{gray} 1} {\color{gray} 1} \underline{2 3} {\color{gray} 1} {\color{gray} 1} {\color{gray} 1} {\color{gray} 1} \underline{\underline{\overline{1} \overline{1}}} {\color{gray} 1} \\
t = 1 & \cdots {\color{gray} 1} {\color{gray} 1} {\color{gray} 1} {\color{gray} 1} {\color{gray} 1} {\color{gray} 1} {\color{gray} 1} {\color{gray} 1} {\color{gray} 1} {\color{gray} 1} {\color{gray} 1} {\color{gray} 1} {\color{gray} 1} {\color{gray} 1} {\color{gray} 1} {\color{gray} 1} {\color{gray} 1} {\color{gray} 1} {\color{gray} 1} {\color{gray} 1} {\color{gray} 1} {\color{gray} 1} {\color{gray} 1} {\color{gray} 1} {\color{gray} 1} \underline{2 3} {\color{gray} 1} {\color{gray} 1} \underline{\underline{\overline{1} \overline{1}}} {\color{gray} 1} {\color{gray} 1} {\color{gray} 1} {\color{gray} 1} {\color{gray} 1} \\
t = 2 & \cdots {\color{gray} 1} {\color{gray} 1} {\color{gray} 1} {\color{gray} 1} {\color{gray} 1} {\color{gray} 1} {\color{gray} 1} {\color{gray} 1} {\color{gray} 1} {\color{gray} 1} {\color{gray} 1} {\color{gray} 1} {\color{gray} 1} {\color{gray} 1} {\color{gray} 1} {\color{gray} 1} {\color{gray} 1} {\color{gray} 1} {\color{gray} 1} {\color{gray} 1} {\color{gray} 1} 2 3 \underline{\overline{2} \overline{2}} \underline{\underline{2 2}} {\color{gray} 1} {\color{gray} 1} {\color{gray} 1} {\color{gray} 1} {\color{gray} 1} {\color{gray} 1} {\color{gray} 1} {\color{gray} 1} {\color{gray} 1} \\
t = 3 & \cdots {\color{gray} 1} {\color{gray} 1} {\color{gray} 1} {\color{gray} 1} {\color{gray} 1} {\color{gray} 1} {\color{gray} 1} {\color{gray} 1} {\color{gray} 1} {\color{gray} 1} {\color{gray} 1} {\color{gray} 1} {\color{gray} 1} {\color{gray} 1} {\color{gray} 1} {\color{gray} 1} {\color{gray} 1} 2 3 \overline{2} \overline{2} \underline{\underline{\underline{{\color{gray} 1} {\color{gray} 1}}}} 2 2 {\color{gray} 1} {\color{gray} 1} {\color{gray} 1} {\color{gray} 1} {\color{gray} 1} {\color{gray} 1} {\color{gray} 1} {\color{gray} 1} {\color{gray} 1} {\color{gray} 1} {\color{gray} 1} \\
t = 4 & \cdots {\color{gray} 1} {\color{gray} 1} {\color{gray} 1} {\color{gray} 1} {\color{gray} 1} {\color{gray} 1} {\color{gray} 1} {\color{gray} 1} {\color{gray} 1} {\color{gray} 1} {\color{gray} 1} {\color{gray} 1} {\color{gray} 1} 2 3 \overline{2} \overline{2} \underline{\underline{{\color{gray} 1} {\color{gray} 1}}} \underline{{\color{gray} 1} {\color{gray} 1}} 2 2 {\color{gray} 1} {\color{gray} 1} {\color{gray} 1} {\color{gray} 1} {\color{gray} 1} {\color{gray} 1} {\color{gray} 1} {\color{gray} 1} {\color{gray} 1} {\color{gray} 1} {\color{gray} 1} {\color{gray} 1} {\color{gray} 1} \\
t = 5 & \cdots {\color{gray} 1} {\color{gray} 1} {\color{gray} 1} {\color{gray} 1} {\color{gray} 1} {\color{gray} 1} {\color{gray} 1} {\color{gray} 1} {\color{gray} 1} 2 3 \overline{2} \overline{2} \underline{\underline{{\color{gray} 1} {\color{gray} 1}}} {\color{gray} 1} {\color{gray} 1} \underline{{\color{gray} 1} {\color{gray} 1}} 2 2 {\color{gray} 1} {\color{gray} 1} {\color{gray} 1} {\color{gray} 1} {\color{gray} 1} {\color{gray} 1} {\color{gray} 1} {\color{gray} 1} {\color{gray} 1} {\color{gray} 1} {\color{gray} 1} {\color{gray} 1} {\color{gray} 1} {\color{gray} 1} {\color{gray} 1} \\
t = 6 & \cdots {\color{gray} 1} {\color{gray} 1} {\color{gray} 1} {\color{gray} 1} {\color{gray} 1} 2 3 \overline{2} \overline{2} \underline{\underline{{\color{gray} 1} {\color{gray} 1}}} {\color{gray} 1} {\color{gray} 1} {\color{gray} 1} {\color{gray} 1} \underline{{\color{gray} 1} {\color{gray} 1}} 2 2 {\color{gray} 1} {\color{gray} 1} {\color{gray} 1} {\color{gray} 1} {\color{gray} 1} {\color{gray} 1} {\color{gray} 1} {\color{gray} 1} {\color{gray} 1} {\color{gray} 1} {\color{gray} 1} {\color{gray} 1} {\color{gray} 1} {\color{gray} 1} {\color{gray} 1} {\color{gray} 1} {\color{gray} 1} \\
t = 7 & \cdots {\color{gray} 1} 2 3 \overline{2} \overline{2} \underline{\underline{{\color{gray} 1} {\color{gray} 1}}} {\color{gray} 1} {\color{gray} 1} {\color{gray} 1} {\color{gray} 1} {\color{gray} 1} {\color{gray} 1} \underline{{\color{gray} 1} {\color{gray} 1}} 2 2 {\color{gray} 1} {\color{gray} 1} {\color{gray} 1} {\color{gray} 1} {\color{gray} 1} {\color{gray} 1} {\color{gray} 1} {\color{gray} 1} {\color{gray} 1} {\color{gray} 1} {\color{gray} 1} {\color{gray} 1} {\color{gray} 1} {\color{gray} 1} {\color{gray} 1} {\color{gray} 1} {\color{gray} 1} {\color{gray} 1} {\color{gray} 1} \\
\end{array}}
\]
where we have
\[
H\bigl(\Psi(23) \otimes \Psi(\bon\bon)\bigr) = H(12 \otimes \emptyset) = 2
\]
and a phase shift of $2 \cdot 2 - 2 = 2$.
For additional examples for this soliton in type $C_n^{(1)}$, see~\cite[Ex.~4.6]{HKOTY02} and~\cite[Ex.~3.5,3.6]{HKT00}.

\subsection*{Acknowledgments}

The authors thank Atsuo Kuniba and Masato Okado for valuable discussions and comments on an earlier version of this paper.
The authors thank Ben Salisbury for comments on an earlier version of this paper.
This work benefited from computations using {\sc SageMath}~\cite{sage, combinat}.
The authors would like to thank the referee for comments on our paper.


\appendix
\section{Examples with {\sc SageMath}}

We give some examples using {\sc SageMath}~\cite{sage}, which has been implemented by the second author (the examples given here are using~\cite{ScrimshawSCACode}).
We begin with the code used to construct Example~\ref{ex:SCA_Dtwisted}.

\begin{lstlisting}
sage: initial = [[1],[1],['E'],[1],[1],[1],
....:            [3],[0],[1],[1],[1],[1],[2],[-3],[-1]]
sage: SCA = SolitonCellularAutomata(initial, ['D',4,2])
sage: view(SCA.latex_states(10))
\end{lstlisting}

%
%
Next, we construct Example~\ref{ex:SCA_B_2_neg_shift} by using the following code.

\begin{lstlisting}
sage: I = [[2,1],[-3,2],[2,1],[2,1],[2,1],[2,1],
....:      [3,1],[4,2],[-4,2],[-3,2]]
sage: SCA = SolitonCellularAutomata(I, ['B',4,1], vacuum=2)
sage: view(SCA.latex_states(7))
\end{lstlisting}

The following code is used to construct Example~\ref{ex:SCA_B_spin}.

\begin{lstlisting}
sage: K = crystals.KirillovReshetikhin(['B',3,1], 3,1)
sage: u = K.module_generators[0]
sage: v = u.f(3)
sage: w = u.f_string([3,2,3])
sage: I = [u,v.f(2).f(1),u,u,v,w]
sage: SCA = SolitonCellularAutomata(I, ['B',3,1], vacuum=3)
sage: view(SCA.latex_states(6))
\end{lstlisting}

Continuing with the same variables, we construct Example~\ref{ex:SCA_B_spin2}.

\begin{lstlisting}
sage: I = [u,v.f(2).f(1),u,u,v,v.f(2),v.f(2).f(1)]
sage: SCA = SolitonCellularAutomata(I, ['B',3,1], vacuum=3)
sage: view(SCA.latex_states(6))
\end{lstlisting}

\section{Classical single row crystals for types $E_{6,7}$}
\label{sec:classical}

\begin{prop}
The classically highest weight elements in $B^{1,s} \otimes B^{1,s'}$, where $B^{1,s}$ and $B^{1,s'}$ are of type $E^{(1)}_6$, are given by
\[
[\underbrace{\bze1, \dotsc, \bze1}_{k_1}, \underbrace{\bze\bon3, \dotsc, \bze\bon3}_{k_2}, \underbrace{\bon6, \dotsc, \bon6}_{k_3}] \otimes [\underbrace{\bze1, \dotsc, \bze1}_{s'}]
\]
where $k_1 + k_2 + k_3 = s$ and $k_2 + k_3 \leq s'$.
\end{prop}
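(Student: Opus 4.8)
The plan is to combine the minuscule structure of $B^{1,s}$ with the tensor product rule, reducing the problem to characterizing the $\{2,3,4,5,6\}$-highest weight single-row tableaux in $B(s\clfw_1)$. Since $r=1$ lies in the orbit of $0$ under the order-$3$ diagram automorphism of $E_6^{(1)}$, it is a (minuscule) special node, so $B^{1,s} \iso B(s\clfw_1)$ and its elements are the sorted single-row tableaux $[b_1, \dotsc, b_s]$ with $b_1 \le \cdots \le b_s$ in $B(\clfw_1)$ by Proposition~\ref{prop:minuscule_description}. Because $r=1$ is special, Theorem~\ref{thm:minuscule_local_energy} guarantees that every classically highest weight element of $B^{1,s} \otimes B^{1,s'}$ has the form $b \otimes u(B^{1,s'})$, and $u(B^{1,s'}) = [\bze1, \dotsc, \bze1]$ ($s'$ boxes) is exactly the right-hand factor appearing in the statement.

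Next I would apply the tensor product rule to pin down the admissible left factors. Writing $u := u(B^{1,s'})$, one has $\varphi_i(u) = s'\delta_{i,1}$ for $i \in I_0$, and $b \otimes u$ is classically highest weight precisely when $\varepsilon_i(b) \le \varphi_i(u)$ for all $i \in I_0$; that is, $\varepsilon_i(b) = 0$ for $i \in J := \{2,3,4,5,6\}$ (so $b$ is highest weight for the $D_5$ subdiagram $J$) together with $\varepsilon_1(b) \le s'$. Thus the task reduces to describing the $J$-highest weight elements of $B(s\clfw_1)$ and computing their $\varepsilon_1$. Using the explicit crystal $B(\clfw_1)$ in Figure~\ref{fig:crystal_graphs} (equivalently the branching $27 = 1 \oplus 16 \oplus 10$ of $E_6 \downarrow D_5$), I would identify the three $J$-highest weight boxes as $\bze1$, $\bze\bon3$, $\bon6$, which form a chain $\bze1 < \bze\bon3 < \bon6$ in the minuscule poset, and claim the $J$-highest weight sorted tableaux are exactly $[\bze1^{k_1}, \bze\bon3^{k_2}, \bon6^{k_3}]$ with $k_1 + k_2 + k_3 = s$.

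The easy inclusion is immediate from the signature rule: if every entry is one of these three boxes, then for each $i \in J$ no entry contributes a $+$ to the $i$-signature, so $\varepsilon_i = 0$. For the remaining constraint I would read off from the crystal that $\varepsilon_1(\bze1) = 0$ while $\varepsilon_1(\bze\bon3) = \varepsilon_1(\bon6) = 1$, and that among the three boxes only $\bze1$ has $\varphi_1 \ne 0$ (with $\varphi_1(\bze1)=1$). Hence, since the tableau is sorted and $\bze1$ is smallest, the $1$-signature of $[\bze1^{k_1}, \bze\bon3^{k_2}, \bon6^{k_3}]$ reads as $k_1$ minuses followed by $k_2 + k_3$ pluses; no cancellation occurs, so $\varepsilon_1(b) = k_2 + k_3$. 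Combined with the previous paragraph, this produces precisely the constraint $k_2 + k_3 \le s'$, completing the forward reading of the statement.

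The main obstacle is the reverse inclusion: that a $J$-highest weight sorted tableau cannot contain any box other than these three. The difficulty is that the three $D_5$-strings can interleave in the total order on $B(\clfw_1)$, so the naive signature argument (pluses pushed to the right of minuses) does not close, and this forward step is genuinely necessary since extra $J$-highest weight boxes would create extra classically highest weight elements for large $s'$. I would handle this either by a multiplicity count — using that $\langle \varpi_1^\vee, \cdot\rangle$ is a $U(1)$-charge constant on $J$-components (taking the values $0,1,2$ on the three pieces) to show the total number of $D_5$-components of $B(s\clfw_1)$ is $\binom{s+2}{2}$, matching the number of triples $(k_1,k_2,k_3)$ already produced — or by a direct induction on $s$ exploiting the minuscule property that every $i$-string has length at most one together with the explicit adjacencies in Figure~\ref{fig:crystal_graphs}. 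I expect the counting route to be cleanest, since the easy inclusion already exhibits $\binom{s+2}{2}$ distinct $J$-highest weight elements.
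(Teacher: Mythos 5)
Your overall architecture coincides with the paper's proof: force the right tensor factor to be $u_{s'\clfw_1}$ (the paper extracts this directly from the tensor product rule rather than from Theorem~\ref{thm:minuscule_local_energy}, but both are legitimate), reduce to classifying the $J$-highest weight elements of $B(s\clfw_1)$ for $J = \{2,3,4,5,6\}$, and convert $\varepsilon_1(b) = k_2 + k_3 \leq \varphi_1(u_{s'\clfw_1}) = s'$ into the stated constraint. Your signature computations for the easy inclusion and for $\varepsilon_1(b) = k_2+k_3$ are correct and agree with the paper. The decisive difference is that the paper does not prove the classification of $J$-highest weight elements at all --- it cites \cite[Lemma~3.1]{JS10} --- whereas you attempt a self-contained proof and leave exactly this reverse inclusion open. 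So your write-up has a genuine gap precisely at the one point where the paper substitutes a citation, and you should either complete it or cite \cite{JS10} as the paper does.

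Of your two proposed repairs, the counting route fails as described: the charge $\inner{\varpi_1^{\vee}}{\cdot}$ is indeed constant on $J$-components, but it is not injective on your triples --- already for $s = 2$ the tableaux indexed by $(k_1,k_2,k_3) = (0,2,0)$ and $(1,0,1)$ carry the same charge --- so charge bookkeeping alone cannot count components, and proving that $B(s\clfw_1)$ has exactly $\binom{s+2}{2}$ $D_5$-components \emph{is} the $E_6 \downarrow D_5$ branching rule for $V(s\clfw_1)$, essentially the statement under proof. Your induction route, by contrast, does close, and is what you should run. Since $\varepsilon_i(x \otimes y) = \max\bigl(\varepsilon_i(y),\, \varepsilon_i(x) - \inner{h_i}{\wt(y)}\bigr)$ in the paper's convention, every right tail $b_k \otimes \cdots \otimes b_s$ of a $J$-highest weight element is again $J$-highest weight; by induction the tail has the claimed form with multiplicities $(m_1, m_2, m_3)$, whence $\inner{h_i}{\wt(\text{tail})} = m_2 \delta_{i,3} + m_3 \delta_{i,6}$ for $i \in J$, and $b_k$ must satisfy $\varepsilon_i(b_k) \leq m_2 \delta_{i,3} + m_3 \delta_{i,6}$ together with $b_k \leq \min(\text{tail})$. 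If $m_1 > 0$ this forces $b_k = \bze1$; if $m_1 = 0 < m_2$ then $b_k \leq \bze\bon3$ forces $b_k \in \{\bze1, \bze\bon3\}$; and if $m_1 = m_2 = 0$ then $b_k$ lies in the interval between $\bze1$ and $\bon6$, where inspection of Figure~\ref{fig:crystal_graphs} shows each of the seven elements strictly between $\bze\bon3$ and $\bon6$ has $\varepsilon_i = 1$ for some $i \in \{2,3,4,5\}$, leaving only $b_k \in \{\bze1, \bze\bon3, \bon6\}$. With that finite check written out, your argument becomes a complete, citation-free version of the paper's proof.
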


\begin{proof}
As noted in the proof~\cite[Lemma~3.1]{JS10}, the $\{2,3,4,5,6\}$-highest weight elements in $B(s\Lambda_1)$ are of the form
\[
b = [\underbrace{\bze1, \dotsc, \bze1}_{k_1}, \underbrace{\bze\bon3, \dotsc, \bze\bon3}_{k_2}, \underbrace{\bon6, \dotsc, \bon6}_{k_3}],
\]
with $\varepsilon_1(b) = k_2 + k_3$.
Note that $u_{s'\Lambda_1} = [\bze1, \dotsc, \bze1]$ is the unique classically highest weight element in $B^{1,s'}$ and that $\varphi_1(u_{s'\Lambda_1}) = s$.
By the tensor product rule, $e_1(b\otimes u_{s'\Lambda_1}) = 0$ if and only if $k_2+k_3 \leq s'$.
Similarly, we have $e_i(b\otimes u_{s\Lambda_1}) = 0$ for all $i \in \{2,3,4,5,6\}$.

Next, by the tensor product rule, any highest element in $B^{1,s} \otimes B^{1,s'}$ must be of the form $b \otimes u_{s'\Lambda_1}$.
Note that $\varepsilon_i(u_{s\Lambda_1}) = 0$ for all $i \in \{2,3,4,5,6\}$, so $b$ must be a $\{2,3,4,5,6\}$-highest weight element.
Therefore, we have described all classically highest weight elements above.
\end{proof}

\begin{prop}
The classically highest weight elements in $B^{7,s} \otimes B^{7,s'}$, where are $B^{7,s} \otimes B^{7,s'}$ are of type $E^{(1)}_7$, are given by
\[
[\underbrace{7, \dotsc, 7}_{k_1}, \underbrace{\bseven6, \dotsc, \bseven6}_{k_2}, \underbrace{\bseven1, \dotsc, \bseven1}_{k_3}, \underbrace{\bseven, \dotsc, \bseven}_{k_4}] \otimes [\underbrace{7, \dotsc, 7}_{s'}]
\]
where $k_1 + k_2 + k_3 + k_4= s$ and $k_2 + k_3 + k_4 \leq s'$.
\end{prop}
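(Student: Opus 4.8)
The plan is to follow the same strategy as the preceding proposition for node $1$ of $E_6^{(1)}$, with the role of $D_5$ there now played by $E_6$ here. Since the affine Dynkin diagram automorphism of $E_7^{(1)}$ interchanges $0$ and $7$, the node $7$ is special, and as $E_7^{(1)}$ is simply-laced (hence dual untwisted) it is minuscule; thus $B^{7,s} \cong B(s\clfw_7)$ and, by Proposition~\ref{prop:minuscule_description}, its elements are the single-row tableaux $[b_1, \dotsc, b_s]$ with $b_1 \leq \cdots \leq b_s$, where $I_{0,7} = \{1,\dotsc,6\}$ is of type $E_6$. First I would reduce, exactly as in the $E_6^{(1)}$ case, to the left factor: since $u_{s'\clfw_7}$ is the unique classically highest weight element of $B^{7,s'}$, any classically highest weight element of $B^{7,s} \otimes B^{7,s'}$ has its right factor forced to be classically highest weight and so is of the form $b \otimes u_{s'\clfw_7}$. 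The tensor product rule, together with $\varphi_i(u_{s'\clfw_7}) = 0$ for $i \leq 6$ and $\varphi_7(u_{s'\clfw_7}) = s'$, then shows this is classically highest weight if and only if $b$ is $\{1,\dotsc,6\}$-highest weight and $\varepsilon_7(b) \leq s'$.

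Next I would identify the $\{1,\dotsc,6\}$-highest weight single factors. Branching the $56$-dimensional minuscule representation to $E_6$ gives $B(\clfw_7) \cong \mathbf{1} \oplus \mathbf{27} \oplus \overline{\mathbf{27}} \oplus \mathbf{1}$, so $B(\clfw_7)$ has exactly four $E_6$-highest weight elements; reading off minuscule words these are $7$, $\bseven6$, $\bseven1$, and $\bseven$, and they form a chain $7 \leq \bseven6 \leq \bseven1 \leq \bseven$ in the crystal poset. From the minuscule words one reads $\varepsilon_7(7) = 0$ while $\varepsilon_7 = 1$ on the other three (and $\varphi_7 = 1$ only on $7$). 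I then claim the $\{1,\dotsc,6\}$-highest weight elements of $B(s\clfw_7)$ are precisely the weakly increasing tableaux in these four letters, namely $[7^{k_1}, (\bseven6)^{k_2}, (\bseven1)^{k_3}, \bseven^{k_4}]$ with $k_1 + k_2 + k_3 + k_4 = s$.

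The forward inclusion is the easy half: if every factor $b_j$ is $E_6$-highest weight then each has $\varepsilon_i(b_j) = 0$ and $E_6$-dominant weight for $i \leq 6$, so iterating $\varepsilon_i(b_2 \otimes b_1) = \max\bigl(\varepsilon_i(b_1),\, \varepsilon_i(b_2) - \langle h_i, \wt(b_1) \rangle\bigr) = 0$ shows the whole tensor is $\{1,\dotsc,6\}$-highest weight. The main obstacle is the converse: no \emph{other} weakly increasing tensor is $E_6$-highest weight, equivalently an $E_6$-highest weight weakly increasing tensor cannot contain a factor $b_j$ with $\varepsilon_i(b_j) = 1$ for some $i \leq 6$. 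I expect to handle this with the signature-rule analysis used for $E_6^{(1)}$ in~\cite{JS10}: such a factor contributes an unmatched $+$ in the $i$-signature that, because the factors increase in the crystal poset of the minuscule $B(\clfw_7)$ and the four $E_6$-components are totally ordered (stacked) by the grading from removing node $7$, has no later $\varphi_i$-contribution to cancel against, contradicting $\varepsilon_i(b) = 0$. Extracting this cancellation obstruction from the precise poset structure of $B(\clfw_7)$ is the delicate step.

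Finally, I would apply the signature rule for node $7$ to $b = [7^{k_1}, (\bseven6)^{k_2}, (\bseven1)^{k_3}, \bseven^{k_4}]$: the $k_1$ leading copies of $7$ each contribute $\varphi_7 = 1$, while the remaining $k_2 + k_3 + k_4$ factors each contribute $\varepsilon_7 = 1$ with $\varphi_7 = 0$, and since all the $-$'s precede all the $+$'s none cancel, giving $\varepsilon_7(b) = k_2 + k_3 + k_4$. The condition $\varepsilon_7(b) \leq s'$ from the first paragraph then becomes $k_2 + k_3 + k_4 \leq s'$, which together with $k_1 + k_2 + k_3 + k_4 = s$ is exactly the claimed description.
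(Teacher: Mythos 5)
Your proposal follows the same route as the paper's proof: reduce via the tensor product rule to elements of the form $b \otimes u_{s'\clfw_7}$ where $b \in B(s\clfw_7)$ is $\{1,\dotsc,6\}$-highest weight with $\varepsilon_7(b) \leq s'$, classify those $b$ as the weakly increasing words in the four letters $7$, $\bseven6$, $\bseven1$, $\bseven$, and compute $\varepsilon_7(b) = k_2 + k_3 + k_4$ by the signature rule. Your reduction, your forward inclusion (tensoring $E_6$-highest weight letters of dominant weight stays highest weight), and your $\varepsilon_7$ computation are complete and correct; in fact your $\varepsilon_7/\varphi_7$ bookkeeping is right where the paper's own proof contains typos ($\varepsilon_1(b)$ and $\varphi_1(u_{s'\Lambda_7})$ for what should be $\varepsilon_7$ and $\varphi_7$).

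The one point of divergence is the converse of the classification --- that a weakly increasing $\{1,\dotsc,6\}$-highest weight word cannot contain a letter outside the four-element chain --- which you attempt to prove directly but explicitly leave as a sketch (``the delicate step''), whereas the paper discharges it entirely by citing the argument of \cite[Lemma~3.1]{JS10}. Your caution there is warranted, and as written this is a genuine hole: the cancellation you need to rule out does occur in minuscule crystals in general, so no soft argument from minusculeness alone can close it. For instance, in $B(\clfw_2)$ of type $A_3$ the weakly increasing pair $13 \leq 24$ has $\varepsilon_2(13) = 1$ and $\varphi_2(24) = 1$, and in the $2$-signature of $13 \otimes 24$ the $+$ from the earlier letter cancels against the $-$ from the later one, giving $\varepsilon_2(13 \otimes 24) = 0$ even though the letter $13$ is not $\{2\}$-highest weight. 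So the claim that no such cancellation can happen \emph{simultaneously for all} $i \in \{1,\dotsc,6\}$ really does depend on the specific stacked structure of the four $E_6$-components of $B(\clfw_7)$, which is exactly the content of the \cite{JS10}-style analysis. Either carry out that poset analysis in detail or, as the paper does, simply invoke \cite[Lemma~3.1]{JS10}; with that one substitution your argument is a complete proof, essentially identical to the paper's.
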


\begin{proof}
As noted in the proof~\cite[Lemma~3.1]{JS10}, the $\{1,2,3,4,5,6\}$-highest weight elements in $B(s\Lambda_7)$ are of the form
\[
b = [\underbrace{7, \dotsc, 7}_{k_1}, \underbrace{\bseven6, \dotsc, \bseven6}_{k_2}, \underbrace{\bseven1, \dotsc, \bseven1}_{k_3}, \underbrace{\bseven, \dotsc, \bseven}_{k_4}],
\]
with $\varepsilon_1(b) = k_2 + k_3 + k_4$.
Note that $u_{s'\Lambda_7} = [7, \dotsc, 7]$ is the unique classically highest weight element in $B^{7,s'}$ and that $\varphi_1(u_{s'\Lambda_7}) = s$.
By the tensor product rule, $e_7(b\otimes u_{s'\Lambda_7}) = 0$ if and only if $k_2+k_3 + k_4 \leq s'$.
Similarly, we have $e_i(b\otimes u_{s\Lambda_7}) = 0$ for all $i \in \{1,2,3,4,5,6\}$.

Next, by the tensor product rule, any highest element in $B^{7,s} \otimes B^{7,s'}$ must be of the form $b \otimes u_{s'\Lambda_7}$.
Note that $\varepsilon_i(u_{s\Lambda_7}) = 0$ for all $i \in \{1,2,3,4,5,6\}$, so $b$ must be a $\{1,2,3,4,5,6\}$-highest weight element.
Therefore, we have described all classically highest weight elements above.
\end{proof}

\bibliographystyle{alpha}
\bibliography{solitons}{}
\end{document}